\documentclass[a4paper, 12pt, reqno]{amsart}
\usepackage{amsmath}
\usepackage{amssymb}
\usepackage{graphics}
\usepackage{mathrsfs}
\usepackage[pdflatex]{graphicx}
\usepackage{hyperref}
\usepackage[marginratio=1:1,tmargin=117pt, height=650pt]{geometry}
\usepackage{color}
\usepackage{amsthm}

\usepackage[all]{xy}

\usepackage{eurosym}

\usepackage[english]{babel}
\usepackage[T1]{fontenc}
\usepackage[latin1]{inputenc}
\selectlanguage{english}

\usepackage{marginnote}

%FROM DAVE SIXSMITH

\usepackage{amsmath,amsfonts,amsthm,amssymb,verbatim,enumerate,quotes,graphicx}
\usepackage[flushmargin]{footmisc}
\usepackage[noadjust]{cite}
\usepackage{color}
\newcommand{\comments}[1]{}
\numberwithin{equation}{section}
\makeatletter
\def\blfootnote{\xdef\@thefnmark{}\@footnotetext}
\makeatother

%\usepackage{multienum}

%\usepackage{makeidx}

%\setlength{\textwidth}{17cm}
%\setlength{\oddsidemargin}{-1cm}
%\setlength{\textheight}{22cm}
%\setlength{\topmargin}{-1cm}

%\setlength{\oddsidemargin}{1cm}
%\setlength{\hoffset}{-2cm}
%\setlength{\voffset}{-1cm}
%\setlength{\topmargin}{-1cm}
%\addtolength\headheight{4pt}

\definecolor{orange}{rgb}{1,0.5,0}
\newcommand{\orange}[1]{{\color{orange} #1}}
\newcommand{\ds}{\displaystyle}

\theoremstyle{plain}
\newtheorem{thm}{Theorem}[section]
\newtheorem{prop}[thm]{Proposition}
\newtheorem{cor}[thm]{Corollary}

\newtheorem{lem}[thm]{Lemma}

\theoremstyle{definition}
\newtheorem{dfn}[thm]{Definition}
\newtheorem{ex}[thm]{Example}

\theoremstyle{remark} 

\newtheorem{rmk}[thm]{Remark}

\newcommand{\C}{{\mathbb{C}}}
\newcommand{\CR}{{\hat{\mathbb{C}}}}
\newcommand{\CS}{{\mathbb{C}^*}}
\newcommand{\B}{\mathcal B}
\newcommand{\BL}{\mathcal B_{log}}
\newcommand{\R}{{\mathbb{R}}}
\newcommand{\Z}{{\mathbb{Z}}}
\newcommand{\N}{{\mathbb{N}}}
\newcommand{\Q}{{\mathbb{Q}}}
\newcommand{\D}{{\mathbb{D}}}

\newcommand{\re}{\operatorname{Re}\,}
\newcommand{\im}{\operatorname{Im}\,}
\newcommand{\av}{\operatorname{AV}}
\newcommand{\cv}{\operatorname{CV}}
\newcommand{\cp}{\operatorname{CP}}
\newcommand{\dist}{\operatorname{dist}\,}

\begin{document}

\bibliographystyle{amsalpha}

\title[Dynamic rays of bounded-type functions in $\C^*$]{Dynamic rays of bounded-type transcendental self-maps of the punctured plane}

\author[N. Fagella]{N\'uria Fagella}
\address{Departament de Matem\`atica Aplicada i An\`alisi\\ Universitat de Barcelona\\ Gran Via de les Corts Catalanes 585\\ 08007 Barcelona\\ Spain}
\email{fagella@maia.ub.es}

\author[D. Mart\'i-Pete]{David Mart\'i-Pete}
\address{Department of Mathematics and Statistics\\ The Open University\\ Walton Hall\\ Milton Keynes MK7 6AA\\ United Kingdom}
\email{david.martipete@open.ac.uk}

\date{\today}
%\thanks{This research was partially supported by...}
%\dedicatory{}

\maketitle

\begin{abstract}
We study the escaping set of functions in the class $\B^*$, that is, holomorphic functions $f:\C^*\to\C^*$ for which both zero and infinity are essential singularities, and the set  of singular values of $f$ is contained in a compact annulus of $\C^*$. For functions in the class $\B^*$, escaping points lie in their Julia set. If~$f$~is a composition of finite order transcendental self-maps of $\C^*$ (and hence, in the class~$\B^*$), then we show that every escaping point of $f$ can be connected to one of the essential singularities by a curve of points that escape uniformly. Moreover, for every essential itinerary $e\in\{0,\infty\}^\N$, we show that the escaping set of $f$ contains a Cantor bouquet of curves that accumulate to the set $\{0,\infty\}$ according to~$e$ under iteration by $f$. 
\end{abstract}

  \section{Introduction}

Complex dynamics concerns the iteration of a holomorphic function on a Riemann surface $S$. Given a point $z\in S$, we consider the sequence given by its iterates $f^n(z)=(f\circ\ds\mathop{\cdots}^{n}\circ f)(z)$ and study the possible behaviours as $n$ tends to infinity. We partition $S$ into the \textit{Fatou set}, or set of stable points,
$$
F(f):=\bigl\{z\in S\ :\ (f^n)_{n\in\mathbb N} \mbox{ is a normal family in some neighbourhood of } z\bigr\}
$$
and the \textit{Julia set} $J(f):=S\setminus  F(f)$, consisting of chaotic points. If $f:S\rightarrow S$ is holomorphic and $\CR\setminus S$ consists of essential singularities, then there are three interesting cases:
\begin{itemize}
\item $S=\CR:=\C\cup\{\infty\}$ and $f$ is a rational map;
\item $S=\C$ and $f$ is a transcendental entire function;
\item $S=\CS:=\C\setminus\{0\}$ and \textit{both} zero and infinity are essential singularities.
\end{itemize}
We study this third class of maps, which we call \textit{transcendental self-maps of}~$\CS$. Such maps are all of the form
\begin{equation}
f(z)=z^n\exp\bigl(g(z)+h(1/z)\bigr),
\label{eqn:bhat}
\end{equation}
where $n=\textup{ind}(f)\in\mathbb Z$ is the index (or winding number) of $f(\gamma)$ with respect to the origin for any positively oriented simple closed curve $\gamma$ around the origin, and $g,h$ are non-constant entire functions. Transcendental self-maps of $\C^*$ arise in a natural way in many instances, for example, when you complexify circle maps, like the so-called Arnol'd standard family: $f_{\alpha\beta}(z)=ze^{i\alpha}e^{\beta(z-1/z)/2}$, $0\leqslant \alpha\leqslant 2\pi,\ \beta\geqslant 0$ \cite{fagella99} (see Figure~\ref{fig:arnold-rays}). Note that if $f$ has three or more omitted points, then, by Picard's theorem, $f$ is constant and, consequently, a non-constant holomorphic function $f:\C^*\rightarrow \C^*$ has no omitted values. A basic reference on iteration theory in one complex variable is \cite{milnor06}. See \cite{bergweiler93} for a survey on the iteration of transcendental entire and meromorphic functions. 

Although the iteration of transcendental (entire) functions dates back to the time of Fatou \cite{fatou26},  R\r{a}dstr\"{o}m \cite{radstrom53} was the first to consider the iteration of holomorphic self-maps of $\C^*$. A complete list of references on this topic can be found in %Since then many other people have studied them; %for example see \cite{baker87}, \cite{kotus87}, \cite{makienko87},\margin{mention that\\ some rays\\ existed for\\ the std family?} \cite{keen86}, \cite{liping91}, \cite{mukhamedshin91},  \cite{hinkkanen94}, \cite{bergweiler95} and \cite{baker-dominguez98}.
\cite{martipete}. It is our goal in this paper to continue with the program started in \cite{martipete} of a systematic study of holomorphic self-maps of $\C^*$, extending the modern theory of	iteration of transcendental entire functions to this setting.  

To that end, we recall the definition of the 
 \textit{escaping set} of an entire function $f$ 
$$
I(f):=\{z\in\C\ :\ f^n(z)\rightarrow \infty \mbox{ as } n\rightarrow \infty\}
$$
whose investigation has provided important insight into the Julia set of entire maps. For polynomials, the escaping set consists of the basin of attraction of infinity and its boundary equals the Julia set. For transcendental entire functions, Eremenko showed that $I(f)\cap J(f)\neq \emptyset$, $J(f)=\partial I(f)$ and the components of $\overline{I(f)}$ are all unbounded \cite{eremenko89}. Similar properties \cite[Theorems 1.1, 1.3 and 1.4]{martipete} hold for transcendental self-maps of $\CS$ once the definition is adapted to take both essential singularities into account. More precisely, the escaping set of a transcen- dental self-map of $\C^*$ is given by 
$$
I(f):=\{z\in\CS\ :\ \omega(z,f)\subseteq \{0,\infty\}\}
$$
where $\omega(z,f)$ is the classical omega-limit set and the closure is taken in $\CR$, 
$$\omega(z,f):=\bigcap_{n\in\N}\overline{\{f^k(z)\ :\ k\geqslant n\}}.$$

As usual,  the {\em singular set}  $\mbox{sing}(f^{-1})$ which denotes the set of the critical values and the finite asymptotic values of $f$, plays an important role. In the entire setting, the so-called \textit{Eremenko-Lyubich class} 
$$
\B:=\{f \mbox{ transcendental entire function}\ :\ \mbox{sing}(f^{-1}) \mbox{ is bounded}\}
$$
consisting of {\em bounded-type} functions was introduced in \cite{eremenko-lyubich92} (see also \cite{sixsmith14}). Eremenko and Lyubich showed that if $f\in\B$, then $I(f)\subseteq J(f)$ or, in other words, the Fatou set has no escaping components. Functions in the class $\B$ have many other useful properties; see, for example, \cite{rrrs11, mihaljevic-rempe13, benini-fagella15}. In the context of holomorphic maps of $\C^*$, the analogous class to consider is that where the singular values stay away from both essential singularities, hence we introduce the  class of \textit{bounded-type} transcendental self-maps of the punctured plane as
$$
\B^*:=\{f \mbox{ transcendental self-map of } \CS\ :\ \mbox{sing}(f^{-1}) \mbox{ is bounded away from } 0,\infty\}
$$
and prove the following result. 

\begin{thm}
Let $f\in \B^*$. Then $I(f)\subseteq J(f)$.
\label{thm:I4}
\end{thm}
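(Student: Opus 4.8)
The plan is to adapt the Eremenko--Lyubich proof that $I(f)\subseteq J(f)$ for $f\in\B$, carried out on the cylinder $\C^*\cong\C/2\pi i\Z$ so that the essential singularities $0$ and $\infty$ are treated symmetrically. Assume for contradiction that there is a point $z_0\in I(f)\cap F(f)$, and fix $0<r<R$ with $\operatorname{sing}(f^{-1})\subseteq\{r\le|z|\le R\}$. Near $z_0$ the family $\{f^n\}$ is normal, hence equicontinuous for the spherical metric; combining this with $\omega(z_0,f)\subseteq\{0,\infty\}$ I would first produce a \emph{fixed} round disk $\Delta\ni z_0$ and an $N$ with $f^n(\Delta)\subseteq\{|z|<r\}\cup\{|z|>R\}$ for all $n\ge N$. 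After replacing $z_0$ by $f^N(z_0)$ and $\Delta$ by a small disk about it inside $f^N(\Delta)$, this holds for all $n\ge0$, and then $\Delta$, as well as each $f^n(\Delta)$, lies in a single component of $W:=f^{-1}\bigl(\{|z|<r\}\cup\{|z|>R\}\bigr)$.

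Next I would pass to logarithmic coordinates: let $\exp\colon\C\to\C^*$ be the universal covering, choose a holomorphic lift $F\colon\C\to\C$ of $f$ (so $\exp\circ F=f\circ\exp$ and $F(w+2\pi i)=F(w)+2\pi i\operatorname{ind}(f)$), and note that $\operatorname{sing}(F^{-1})$ projects onto $\operatorname{sing}(f^{-1})$, hence sits inside the vertical strip $\Sigma:=\{\log r\le\re w\le\log R\}$. Put $\widetilde W:=F^{-1}(\C\setminus\Sigma)=\exp^{-1}(W)$; since the two components $H^{+}=\{\re w>\log R\}$ and $H^{-}=\{\re w<\log r\}$ of $\C\setminus\Sigma$ are simply connected and free of singular values, $F$ restricts to a conformal isomorphism of each component $T$ of $\widetilde W$ (a \emph{logarithmic tract}) onto $H^{+}$ or $H^{-}$. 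The crucial point --- and this is where the real work lies --- is that each tract is disjoint from all its translates $T+2\pi i k$, $k\neq0$; equivalently, no component of $\{z\in\C^*:|f(z)|>R\}$ or $\{z\in\C^*:|f(z)|<r\}$ separates $0$ from $\infty$. This should follow, as in the Eremenko--Lyubich construction but now using that $0$ and $\infty$ are \emph{essential} singularities (so $1/f$ is bounded near neither of them), and it forces each tract $T$ to contain no open Euclidean disk of radius exceeding $\pi$. Koebe's $\tfrac14$-theorem applied to $(F|_T)^{-1}$ then yields the expansion estimate
\begin{equation*}
|F'(w)|\ \ge\ \frac{1}{4\pi}\,\dist\bigl(F(w),\Sigma\bigr)\qquad\text{for every }w\text{ in a tract.}
\end{equation*}

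With this in hand the contradiction is immediate. Let $w_0$ be a lift of $z_0$ and $\widetilde\Delta=D(w_0,\sigma)$ a small disk inside $\exp^{-1}(\Delta)$; by the reduction above, $F^n(\widetilde\Delta)$ lies in a single logarithmic tract $T_n$ for every $n\ge0$, and $F^n|_{\widetilde\Delta}$ is \emph{univalent}, being a composition of the injective maps $F|_{T_k}$. Set $w_n:=F^n(w_0)\in T_n$. Since $z_0\in I(f)$, the sequence $|f^n(z_0)|=e^{\re w_n}$ eventually leaves every compact subset of $(0,\infty)$, i.e.\ $|\re w_n|\to\infty$, so $\dist(w_{n+1},\Sigma)\to\infty$ and the estimate above gives $|F'(w_n)|\ge2$ for all large $n$; as $F'$ does not vanish on any tract, it follows that $|(F^n)'(w_0)|\to\infty$. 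On the other hand, Koebe's $\tfrac14$-theorem applied to the univalent maps $F^n|_{\widetilde\Delta}$ shows that $F^n(\widetilde\Delta)$ contains the disk $D\!\bigl(w_n,\tfrac14\sigma\,|(F^n)'(w_0)|\bigr)$, whose radius tends to infinity --- contradicting $F^n(\widetilde\Delta)\subseteq T_n$ and the fact that $T_n$ contains no disk of radius larger than $\pi$.

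The routine ingredients are thus the initial reduction (normality plus the $\omega$-limit description of $I(f)$) and the two applications of Koebe's theorem. The part I expect to be genuinely delicate is the logarithmic change of variables in $\C^*$, and specifically the claim that the tracts are ``thin'': one must rule out $2\pi i$-periodic tracts of $F$, equivalently components of $W$ that wind around the origin, despite $\operatorname{ind}(f)$ possibly being nonzero. A second point with no counterpart in the entire case is that escaping in $\C^*$ is oscillatory --- the orbit of $z_0$ may shuttle between neighbourhoods of $0$ and of $\infty$ --- so the tract structure and the expansion estimate must hold symmetrically at both ends; working on the cylinder and with the single strip $\Sigma$ takes care of this automatically.
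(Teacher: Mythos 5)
Your proposal is correct and follows essentially the same route as the paper: reduce by normality to an escaping disk whose forward images stay in the tracts, pass to logarithmic coordinates, invoke the Eremenko--Lyubich/Koebe expansion estimate to force $|(F^n)'|\to\infty$, and then use Koebe again to produce disks too large for any tract, the contradiction resting on the fact that $\exp$ is injective on each tract (so no vertical segment of length $2\pi$ fits). The only cosmetic difference is that the paper iterates the radius estimate $d_{n+1}\geqslant\tfrac14 d_n|F'(\zeta_n)|$ step by step, whereas you apply Koebe once to the composite $F^n|_{\widetilde\Delta}$ --- these are equivalent.
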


As shown in  \cite{martipete2}, functions outside the class $\B^*$ may have escaping Fatou components: either  \textit{Baker domains}, which are periodic Fatou components in $I(f)$, or \textit{wandering domains}, which are \mbox{Fatou} components $U$ such that $f^m(U)=f^n(U)$ if and only if $m=n$.  It remains an open question whether functions in the class~$\B^*$ can have wandering domains outside the escaping set, as it is the case for entire functions in the class $\B$ \cite[Theorem~17.1]{bishop15}. 

It is a natural question to investigate the relationship between entire functions in the class $\B$ and self-maps of $\C^*$ in the class $\B^*$.  
Keen \cite{keen88} showed that if $g$ and $h$ are polynomials and $n\in\Z$, then the function $f(z)=z^n\exp\bigl(g(z)+h(1/z)\bigr)$ has a finite number of singular values and hence belongs to the class $\B^*$. The next theorem extends this results to all functions in the class $\B$ when $n=0$.

\begin{thm}
Let $g$ and $h$ be entire functions in the class $\B$. Then the function \mbox{$f(z)=\exp\bigl(g(z)+h(1/z)\bigr)$} is in the class $\B^*$.
\label{thm:B-and-Bstar}
\end{thm}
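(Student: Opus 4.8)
The plan is to show directly that the singular values of $f(z) = \exp(g(z) + h(1/z))$ stay away from $0$ and $\infty$, using the characterization of the class $\B$ for $g$ and $h$. First I would recall the standard fact that for a composition and for sums of entire functions the singular set can be controlled: if $\varphi(z) = g(z) + h(1/z)$, then $\operatorname{sing}(\varphi^{-1})$ is contained in the closure of the union of $\operatorname{sing}(g^{-1})$, the set $\{h(1/z) : z \in \operatorname{sing}(h^{-1})\text{-preimage data}\}$, translated appropriately, together with the critical and asymptotic values arising from the two summands. More precisely, writing $\psi(w) = h(1/w)$, which is a holomorphic self-map of $\C^*$ with an essential singularity at $0$ and whose only singular values (in $\C$) are those of $h$, one checks that $\varphi = g + \psi$ has $\operatorname{sing}(\varphi^{-1})$ bounded \emph{above} in modulus, i.e. contained in a half-plane $\{\re \zeta < M\}$ for some $M$: indeed the critical points of $\varphi$ are where $g'(z) = \psi'(z)$, and the asymptotic values come from tracts of $g$ or of $\psi$, on each of which the relevant summand is large in modulus while the other stays bounded (this requires a small argument splitting according to whether $z \to \infty$ or $z \to 0$ along the asymptotic path).

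The key observation is that we do \emph{not} need $\operatorname{sing}(\varphi^{-1})$ to be a bounded subset of $\C$; we only need that $\exp$ maps it into a compact annulus of $\C^*$, which happens precisely when $\operatorname{sing}(\varphi^{-1})$ is contained in a vertical strip $\{m \le \re \zeta \le M\}$. So the real content is: (i) $\re \zeta$ is bounded above on $\operatorname{sing}(\varphi^{-1})$, and (ii) $\re \zeta$ is bounded below on $\operatorname{sing}(\varphi^{-1})$. For (i): since $g \in \B$, outside a large disk $g$ has the logarithmic-tract structure of Eremenko--Lyubich, and similarly $\psi$ near $0$; on the bulk region where both $g$ and $\psi$ are moderate, $\varphi$ is moderate too, and any singular value with large real part would have to arise from an asymptotic path escaping to a pole-free essential singularity, forcing one of the two summands to have large \emph{real} part along that path, contradicting boundedness of $\operatorname{sing}(g^{-1})$ or $\operatorname{sing}(h^{-1})$ after composing with $\exp$ — but wait, a singular value of $g$ bounded in modulus does not bound $\re g$; here we instead use that $\exp(\operatorname{sing}(g^{-1}))$ is bounded away from $\infty$ \emph{automatically} since $\operatorname{sing}(g^{-1})$ is bounded, so the composition $\exp \circ g$ has singular values in a bounded annulus, and analyzing $\exp(\varphi) = \exp(g)\cdot\exp(\psi)$ as a product reduces the problem to understanding products and compositions of bounded-type data. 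This product viewpoint — treating $f = (\exp \circ g) \cdot (\exp \circ h(1/\cdot))$ — is cleaner and is the route I would actually pursue.

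So the refined plan: set $G = \exp \circ g$ and $H(z) = \exp(h(1/z))$, both holomorphic self-maps of $\C^*$; since $g, h \in \B$, the sets $\operatorname{sing}(G^{-1})$ and $\operatorname{sing}(H^{-1})$ are contained in a compact annulus $A = \{r \le |w| \le R\}$ of $\C^*$ (the exponential of a bounded set). Now $f = G \cdot H$, and one shows that $\operatorname{sing}(f^{-1})$ is bounded away from $0$ and $\infty$ by a covering/monodromy argument: a point $c$ is a singular value of $f$ if some branch of $f^{-1}$ fails to extend along some curve, and by the product rule and the chain rule the critical points of $f$ are among those of $G$, of $H$, and solutions of $G'/G = -H'/H$, while asymptotic values of $f$ correspond to asymptotic paths along which $|G|$ or $|H|$ degenerates — but on such a path the degenerating factor tends to $0$ or $\infty$, and since it is $\exp$ of an entire (or entire-in-$1/z$) function with bounded singular set, the \emph{other} factor must then stay in a fixed annulus (this is exactly where the class-$\B$ hypothesis on $g$ and $h$ is used), and hence one of $0,\infty$ is approached, not a finite nonzero value. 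The main obstacle is making step (ii)/the asymptotic-value analysis rigorous: controlling the behavior of $G$ and $H$ simultaneously along an arbitrary asymptotic path, since the path may spiral and both factors may oscillate; I expect this to require the Eremenko--Lyubich logarithmic change of variable applied to $g$ near $\infty$ and to $w \mapsto h(1/w)$ near $0$, after which on each logarithmic tract the relevant summand is large in real part and dominates, pushing $\exp(\varphi)$ toward $0$ or $\infty$ rather than toward a finite nonzero limit, so no finite nonzero asymptotic value can accumulate near $0$ or $\infty$. Once the asymptotic values and critical values are both confined to a compact annulus, $\operatorname{sing}(f^{-1})$ is bounded away from $0$ and $\infty$, i.e. $f \in \B^*$, completing the proof.
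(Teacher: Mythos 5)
Your refined plan---writing $f$ as the product $G\cdot H$ with $G=\exp\circ g$ and $H(z)=\exp(h(1/z))$---is exactly the decomposition the paper uses: it introduces the auxiliary entire functions $f_\infty=\exp\circ g$ and $f_0=\exp\circ(-h)$ and reduces membership in $\B^*$ to the bounded-type property of $f_\infty,f_0$ and their reciprocals. Your asymptotic-value reasoning is also in the right spirit: near $\infty$ the factor $H$ tends to the constant $e^{h(0)}$, so $\av_\infty(f)=e^{h(0)}\av(f_\infty)$, and $\av(f_\infty)\setminus\{0\}=\exp(\av(g))$ lies in a compact annulus. (The spiralling worry is a non-issue here: if $\gamma(t)\to\infty$ and $f(\gamma(t))\to a\notin\{0,\infty\}$, then automatically $G(\gamma(t))\to a/e^{h(0)}$, so $\gamma$ is already an asymptotic path of $G$.) One small slip: $\operatorname{sing}(G^{-1})$ is \emph{not} contained in a compact annulus of $\C^*$---it contains $0$, because $\exp$ has $0$ as a direct asymptotic value; what is true and what you actually need is that $\operatorname{sing}(G^{-1})\setminus\{0\}$ is.

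The genuine gap is the critical-value estimate, which your sketch never supplies. You correctly identify the critical points of $f$ as the solutions of $G'/G=-H'/H$, but such points can lie anywhere in $\C^*$, and nothing in your argument prevents the critical values $f(c)$ from accumulating at $0$ or $\infty$. The mechanism that closes this (and which you gesture at but do not carry out) is the Eremenko--Lyubich derivative estimate for $f_\infty\in\B$: there is $R_0>0$ with $\left|zf_\infty'(z)/f_\infty(z)\right|\geq\tfrac{1}{4\pi}\bigl(\log|f_\infty(z)|-\log R_0\bigr)$, combined with the identity
\[
z\,\frac{f'(z)}{f(z)} \;=\; z\,\frac{f_\infty'(z)}{f_\infty(z)} \;-\; \frac{h'(1/z)}{z},
\]
whose last term is uniformly bounded on $|z|\geq 1$. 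Thus for $|z|\geq 1$ with $|f(z)|>R$ one forces $|zf'(z)/f(z)|\to\infty$ as $R\to\infty$, so no critical point with $|z|\geq 1$ can have $|f(z)|$ large; three symmetric estimates, using $f_0$ near $0$ and $1/f_\infty,1/f_0$ for small values of $|f|$, cover the remaining cases. Without this or an equivalent quantitative input, the proposal does not rule out critical values of $f$ tending to $0$ or $\infty$, and so does not establish $f\in\B^*$.
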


As opposed to the situation for entire functions, there is a deep relation between the bounded-type condition for holomorphic self-maps of $\C^*$ and their order of growth. To be more precise, recall that 
 the \textit{order} and \textit{lower order} of an entire function $f$ can be defined, respectively, as
$$
\rho(f):=\limsup_{r\rightarrow +\infty} \frac{\log \log M(r,f)}{\log r}\quad \mbox{ and } \quad \lambda(f):=\liminf_{r\rightarrow +\infty} \frac{\log \log M(r,f)}{\log r},
$$
where $M(r,f):=\max_{|z|=r}|f(z)|$. If $f$ is a transcendental self-map of $\C^*$, then we also need to take into account the essential singularity at zero. Hence the {\em order} of growth is given by the two quantities
$$
\rho_\infty(f):=\limsup_{r\to +\infty} \frac{\log \log M(r,f)}{\log r} \quad \mbox{ and } \quad \rho_0(f):=\limsup_{r\rightarrow 0} \frac{\log \log  1/m(r,f)}{\log 1/r}
$$
where $M(r,f)$ is as before and $m(r,f):=\min_{|z|=r}|f(z)|$. We say that $f$ has \textit{finite order} if both $\rho_\infty(f)<+\infty$ and $\rho_0(f)<+\infty$. Likewise, we can define two quantities associated with the lower order of such functions, $\lambda_\infty(f)$ and $\lambda_0(f)$, by replacing $\limsup$ by $\liminf$ in the expression above. 
An important property of entire functions $f\in\B$ is that $\lambda(f)\geqslant 1/2$ \cite{bergweiler-eremenko95, langley95} (see also \cite[Lemma~3.5]{rippon-stallard05b}). 
The next result shows that, surprisingly,  the lower order of a function in $\C^*$ always equals its order and hence it is not relevant in this setting. Moreover, if the order is finite, then it is an integer.

%\begin{thm}\margin{put only the\\ last statement\\ here to\\ simplify?}
%Let $f(z)=z^n\exp\bigl(g(z)+h(1/z)\bigr)$ with $n\in\Z$ and $g,h$ non-constant entire functions. If $\tilde{g}(z)=z^n\exp\bigl(g(z)\bigr)$ is in the class $\B$ then $\lambda_\infty(f)\geqslant 1/2$, and if $\tilde{h}(z)=z^{-n}\exp\bigl(h(z)\bigr)$ is in the class $\B$ then $\lambda_0(f)\geqslant 1/2$. In particular, if $f\in \B^*$ then both $\lambda_0(f),\lambda_\infty(f)\geqslant 1/2$.
%\label{thm:lower-order}
%\end{thm}

\begin{thm}
Let $f$ be a transcendental self-map of $\C^*$. Then $\lambda_0(f)=\rho_0(f)$ and $\lambda_\infty(f)=\rho_\infty(f)$. If $f$ has finite order, then $f(z)=z^n\exp\bigl(P(z)+Q(1/z)\bigr)$ where $n\in\Z$ and $P,Q$ are polynomials, and therefore $\rho_0(f),\rho_\infty(f)\in\Z$ and $f\in\B^*$. In particular, $\lambda_0(f),\lambda_\infty(f)\geqslant 1$.
\label{thm:lower-order}
\end{thm}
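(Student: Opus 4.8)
The plan is to extract everything from the canonical form \eqref{eqn:bhat}, namely $f(z)=z^n\exp\bigl(g(z)+h(1/z)\bigr)$ with $g,h$ non-constant entire functions, and to reduce the whole statement to a single growth dichotomy for the quantity $A(r,\phi):=\max_{|z|=r}\re\phi(z)$ attached to a non-constant entire function $\phi$. The point is that the growth of $|f|$ near $\infty$ is governed by $e^{g}$ and near $0$ by $e^{-h}$, and that $A(r,\phi)$ exhibits no ``lacunary'' behaviour, so that replacing $\limsup$ by $\liminf$ makes no difference.

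First I would record the asymptotics of $M(r,f)$ and $m(r,f)$. Since $h(1/z)\to h(0)$ as $z\to\infty$, on each circle $\{|z|=r\}$ one has $\re h(1/z)=\re h(0)+o(1)$, while $|z^{n}|=r^{n}$; hence
\[
\log M(r,f)=n\log r+A(r,g)+O(1)\qquad (r\to\infty).
\]
Symmetrically $g(z)\to g(0)$ as $z\to 0$, and $\min_{|z|=r}\re h(1/z)=-\max_{|w|=1/r}\re(-h(w))=-A(1/r,-h)$, so
\[
\log\frac{1}{m(r,f)}=-n\log r+A(1/r,-h)+O(1)\qquad (r\to 0).
\]
Moreover $A(r,g)\to\infty$ as $r\to\infty$ and $A(1/r,-h)\to\infty$ as $r\to 0$ (otherwise $e^{g}$, respectively $e^{-h}$, would be bounded and hence constant), so after applying $\log$ the terms $n\log r$ and $O(1)$ are negligible.

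The heart of the argument is then: for a non-constant entire function $\phi$, the ratio $\log A(r,\phi)/\log r$ \emph{converges} as $r\to\infty$, to $d$ if $\phi$ is a polynomial of degree $d$, and to $+\infty$ if $\phi$ is transcendental. If $\phi$ is a polynomial the leading term dominates, so $A(r,\phi)\sim|a_{d}|r^{d}$ and the limit is $d$. If $\phi$ is transcendental, then $M(r,\phi)\le Cr^{k}$ along some sequence $r\to\infty$ would force $\phi$ to be a polynomial by Cauchy's estimates; hence $\log M(r,\phi)/\log r\to\infty$ as a genuine limit. Combining this with the Borel--Carath\'eodory inequality, which with $R=2r$ gives $M(r,\phi)\le 2A(2r,\phi)+3|\phi(0)|$, i.e.\ $\log A(2r,\phi)\ge\log M(r,\phi)-O(1)$, yields $\log A(r,\phi)/\log r\to\infty$ as well. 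The one step needing care is precisely this transcendental branch: one must get the Borel--Carath\'eodory comparison in the right direction and check that the resulting lower bound for $A(r,\phi)$ holds for \emph{all} large $r$, so that the limit inferior (not merely the limit superior) is infinite; the rest is bookkeeping with the canonical form.

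Finally I would substitute the dichotomy into the two asymptotic formulas. The ratio $\tfrac{\log\log M(r,f)}{\log r}$ then has a limit as $r\to\infty$, equal to $\deg g$ if $g$ is a polynomial and to $+\infty$ if $g$ is transcendental; hence $\lambda_\infty(f)=\rho_\infty(f)$. Applying the same reasoning to $-h$ near infinity (note $\deg(-h)=\deg h$, and $-h$ is transcendental precisely when $h$ is) gives $\lambda_0(f)=\rho_0(f)$. If $f$ has finite order, then $\rho_\infty(f)<\infty$ and $\rho_0(f)<\infty$ force $g$ and $h$ to be polynomials $P$ and $Q$, so $f(z)=z^{n}\exp\bigl(P(z)+Q(1/z)\bigr)$ with $\rho_\infty(f)=\deg P\in\Z$ and $\rho_0(f)=\deg Q\in\Z$, and such an $f$ belongs to $\B^*$ by the result of Keen \cite{keen88} quoted above. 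Since $P=g$ and $Q=h$ are non-constant, $\deg P\ge 1$ and $\deg Q\ge 1$, whence $\lambda_0(f),\lambda_\infty(f)\ge 1$.
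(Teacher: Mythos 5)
Your proof is correct and follows essentially the same route as the paper: both reduce, via the canonical form $f(z)=z^n\exp\bigl(g(z)+h(1/z)\bigr)$, to studying $A(r,g)=\max_{|z|=r}\re g(z)$ at infinity (and the symmetric quantity for $-h$), and both use the Borel--Carath\'eodory inequality with $R=2r$ to transfer the fact that $\log M(r,g)/\log r\to\infty$ as a genuine limit for transcendental $g$ into the same statement for $A(r,g)$, while the polynomial case is immediate. The paper splits into finite-order and infinite-order cases and routes the polynomial conclusion through P\'olya's theorem (Lemma~\ref{lem:exp-poly} and Proposition~\ref{prop:finite-order-implies-poly-type}), whereas you phrase it as a single dichotomy using Cauchy estimates directly, but the substance is identical.
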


%\begin{thm}
%If $f$ is a transcendental self-map of $\CS$ in the class $\B^*$ then $I(f)\subseteq~J(f)$.
%\end{thm}

In \cite{eremenko89}, Eremenko conjectured that if $f$ is a transcendental entire function, then the components of $I(f)$ are all unbounded. A stronger version of this conjecture states that every escaping point can be joined to infinity by a curve of points that escape uniformly. Such curves are called \textit{ray tails} and their maximal extensions are called \textit{dynamic rays}. Douady and Hubbard \cite{douady-hubbard84} were the first to introduce dynamic rays to study the dynamics of polynomials, where $I(f)$ consists of the attracting basin of infinity which is connected. Devaney and Krych \cite{devaney-krych84} showed that for maps in the exponential family $E_\lambda(z)=\lambda e^z$, $\lambda\in (0,1/e)$, the Julia set consists of dynamic rays (that they called \textit{hairs}). \mbox{Devaney and Tangerman \cite{devaney-tangerman86}} proved that the same holds for certain \textit{finite-type} functions, that is, functions with finitely many singular values, satisfying additional technical conditions, such as the sine family $S_\lambda(z)=\lambda\sin(z)$, $\lambda\in (0,1)$. They coined the term \textit{Cantor bouquet} to describe the Julia set of these functions. They first defined a Cantor $N$-bouquet, where $N\in\N$, to be a subset of $J(f)$ homeomorphic to the product of a Cantor set and the half-line $[0,+\infty)$, and then a Cantor bouquet to be an increasing union of Cantor $N$-bouquets. However, this is somewhat different to the definition of Cantor bouquet used more recently (and in this paper) in terms of a topological object called a \textit{straight brush} which is due to Aarts and Oversteegen \cite{aarts-oversteegen93} (see Definition~\ref{dfn:cantor-bouquet}).

\begin{figure}[h!]
\centering
\includegraphics[width=\linewidth]{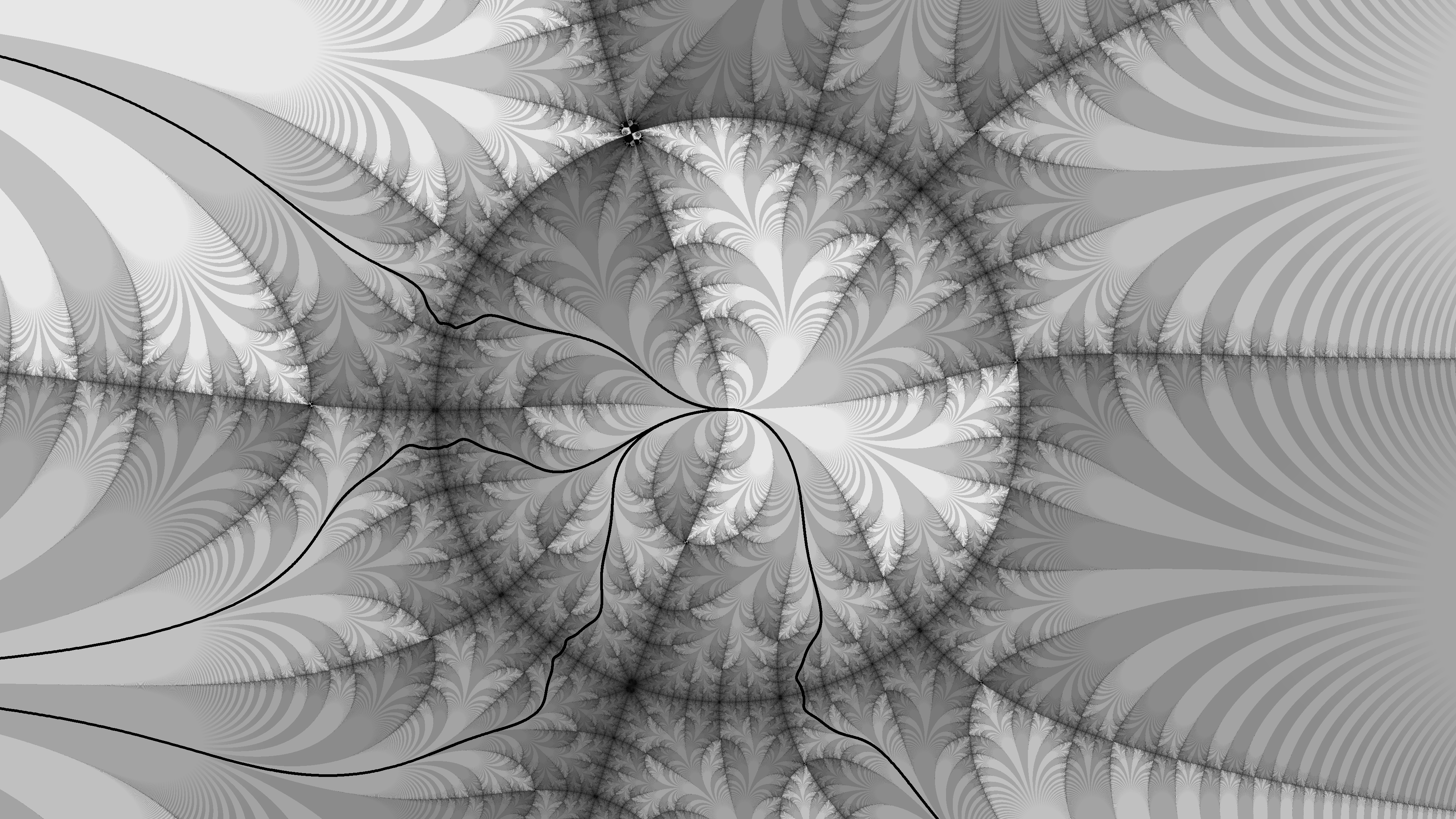}
\caption{Period 8 cycle of rays landing on a repelling period 4 orbit in the unit circle for the function $f_{\alpha\beta}(z)=ze^{i\alpha}e^{\beta(z-1/z)/2}$ from the Arnol'd standard family, with $\alpha=0.19725$ and $\beta=0.48348$.}
\label{fig:arnold-rays}
\end{figure}

Rottenfu\ss{}er, R\"uckert, Rempe and Schleicher proved in \cite[Theorem~1.2]{rrrs11} that the stronger version of Eremenko's conjecture holds for transcendental entire functions of bounded type and finite order or, more generally, a finite composition of such functions: every escaping point can be joined to infinity by a curve of points that escape uniformly. This result was proved independently by Bara\'nski \cite[Theorem~C]{baranski07} for \textit{disjoint-type} functions, that is, transcendental entire functions for which the Fatou set consists of a completely invariant component which is a basin of attraction. Shortly after, Bara\'nski, Jarque and Rempe proved that, actually, the Julia set of the functions considered in \cite{rrrs11} contains a Cantor bouquet \cite[Theorem~1.6]{baranski-jarque-rempe12}.

In this article we prove the existence of dynamic rays for transcendental self-maps of $\C^*$ by adapting the construction of \cite{rrrs11} to our setting. We use the notation $f^n_{|\gamma}\rightarrow \{0,\infty\}$ to mean that, under iteration by $f$, the points in $\gamma$ escape to zero, escape to infinity or accumulate to both of them and nowhere else.

\begin{thm}
Let $f$ be a transcendental self-map of $\CS$ of finite order or, more generally, a finite composition of such functions. Then every point $z \in I(f)$ can be connected to either zero or infinity by a curve $\gamma$ such that $f^n_{|\gamma}\rightarrow \{0,\infty\}$ uniformly in the spherical metric.
\label{thm:main}
\end{thm}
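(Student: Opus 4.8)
The plan is to adapt the logarithmic-change-of-variables machinery of Rottenfu{\ss}er--R\"uckert--Rempe--Schleicher \cite{rrrs11} to the punctured-plane setting, working uniformly with respect to both essential singularities $0$ and $\infty$. First I would reduce to the disjoint-type situation: since $f$ (or the composition) is in $\B^*$ by Theorem~\ref{thm:lower-order}, the singular set lies in a compact annulus $A=\{r_1\leqslant |z|\leqslant r_2\}\subset\CS$, and we may postcompose with a suitable multiplier (or pass to a linearly conjugate model) so that the complement of a large round annulus neighbourhood of $A$ is mapped strictly outside itself; this makes the two ``tracts'' near $0$ and near $\infty$ behave like the tracts over $\infty$ in the entire case. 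The key coordinate change is to pass to the cylinder via $w=\log z$: a transcendental self-map of $\CS$ of the form $z^n\exp(P(z)+Q(1/z))$ becomes, in the $w$-coordinate, a map that near $\re w\to+\infty$ looks like an entire bounded-type finite-order map (tract over $+\infty$) and near $\re w\to-\infty$ looks like the same thing reflected (tract over $-\infty$, coming from the essential singularity at $0$). The ``essential itinerary'' $e\in\{0,\infty\}^\N$ of Theorem's hypothesis records, at each step, whether the orbit is in the right or left half of the cylinder; over each fixed itinerary the dynamics is exactly that of an infinite iterated function scheme on logarithmic tracts, so the head-start / expansion estimates of \cite{rrrs11} apply verbatim once the geometry of the tracts is controlled.

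The core of the argument is then a head-start condition in the sense of \cite[Section~3--5]{rrrs11}. I would establish that the finite-order hypothesis gives the required derivative growth: on each logarithmic tract $T$ the map $F$ satisfies $|F'(w)|\geqslant c\,|\re F(w)|^{1/\rho}$ (or the analogous lower bound built from $\rho_0,\rho_\infty$), which is the quantitative substitute for ``bounded type and finite order'' used to run the contraction argument in the hyperbolic metric of the tract. With this in hand, for any point $z\in I(f)$ one shows its orbit eventually enters the tracts, records the itinerary, and then the standard argument produces, for each admissible address sequence, a unique point whose orbit follows that sequence; the set of such points for a fixed tail of the address forms an arc, and the arc through $z$ is the desired curve $\gamma$. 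Uniform escape of $f^n_{|\gamma}$ in the spherical metric follows because in the $w$-cylinder the real parts go to $\pm\infty$ uniformly along $\gamma$, and $w\mapsto e^w$ sends $\re w\to+\infty$ to a neighbourhood of $\infty$ and $\re w\to-\infty$ to a neighbourhood of $0$; the notation $f^n_{|\gamma}\to\{0,\infty\}$ is exactly what one gets when the itinerary $e$ is not eventually constant. For a finite composition $f=f_k\circ\cdots\circ f_1$ one works, as in \cite{rrrs11}, with the ``product'' logarithmic model obtained by alternately applying the logarithmic transforms of the individual factors; the head-start condition is stable under such composition, so the same conclusion holds.

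The main obstacle, and the place where genuinely new work beyond \cite{rrrs11} is needed, is the \emph{bookkeeping of the two essential singularities simultaneously}: in the entire case every tract lies over $\infty$ and all the escaping geometry is ``one-sided'', whereas here the logarithmic model lives on a bi-infinite cylinder with tracts accumulating to \emph{both} ends, and a single orbit may shuttle between the $0$-side and the $\infty$-side infinitely often. One must therefore set up the iterated function system over the full symbol space $\{0,\infty\}^\N\times(\text{tract labels})$, verify that the head-start condition and the expansion estimates are uniform across \emph{all} essential itineraries (not just eventually constant ones), and check that the resulting curves are genuinely unbounded in $\CS$ in the correct spherical sense rather than merely escaping to one singularity. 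Handling the index $n=\operatorname{ind}(f)\neq 0$ (so that $\log f$ is only well-defined up to the deck transformation $w\mapsto w+2\pi i$) requires choosing branches consistently along the orbit, which adds an extra layer to the symbolic dynamics but does not affect the metric estimates. Once these geometric and symbolic preliminaries are in place, the contraction argument and the arc-construction are essentially those of \cite{rrrs11, baranski-jarque-rempe12}, and the theorem follows.
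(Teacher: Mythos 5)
Your proposal follows essentially the same route as the paper: pass to logarithmic coordinates on the bi-infinite cylinder, organise the tracts by which essential singularity they accumulate to, set up a head-start condition in the sense of \cite{rrrs11} uniformly over the essential-itinerary symbol space $\{0,\infty\}^\N$, run the order-theoretic arc construction, and observe that linear head-start conditions are closed under composition. This is precisely the chain of reasoning in Sections~3--5 and 7--8 of the paper, with Proposition~\ref{prop:escaping-points-on-rays} used at the end to pass from ``some iterate is on a ray tail'' to ``$z$ itself is on a ray tail (or an iterate lands on a ray tail through an asymptotic value)''. The disjoint-type reduction you propose is a legitimate alternative to the normalised logarithmic transforms the paper actually uses (the paper's remark after Theorem~\ref{thm:good-geom-imply-HS} explicitly notes both are available).

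There is one point that would need to be repaired before the sketch could be turned into a proof. You locate the role of the finite-order hypothesis in a derivative bound of the form $|F'(w)|\gtrsim |\re F(w)|^{1/\rho}$ and plan to run the contraction argument from it. But the expansion estimate that actually drives the argument (Lemma~\ref{lem:expansivity}, the Eremenko--Lyubich inequality $|F'(z)|\geqslant \tfrac{1}{4\pi}|\re F(z)|-R$) is already linear and holds for \emph{every} function in $\BL^*$, regardless of order; since in $\CS$ the order is an integer $\geqslant 1$ (Theorem~\ref{thm:lower-order}), your proposed exponent $1/\rho$ is in fact weaker than what is already available. Finite order is not used to sharpen expansion. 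Its real contribution is geometric: Proposition~\ref{thm:finite-order-functions-have-good-geom} shows that for $f(z)=z^n\exp\bigl(P(z)+Q(1/z)\bigr)$ the logarithmic tracts can be chosen with zero slope and no wiggling, and it is this good geometry, combined with the (already available) Eremenko--Lyubich expansion, that makes the \emph{linear} head-start condition provable (Theorem~\ref{thm:good-geom-imply-HS}). You gesture at ``once the geometry of the tracts is controlled'', but you then attribute the control to a derivative estimate rather than to the shape of the tracts. This matters: bounded-type entire functions of infinite order can have escaping points lying on no ray tail at all, even though they satisfy the same expansion inequality; geometry is the bottleneck. So the step to fill in is an explicit analysis of the tracts of $z^n\exp(P(z)+Q(1/z))$ near $0$ and near $\infty$, showing they are asymptotically contained in horizontal strips in the cylinder, from which bounded slope and bounded wiggling follow.
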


Note that in the statement of Theorem~\ref{thm:main} there is no assumption of bounded-type. This is because, as we mentioned above, finite order transcendental self-maps of $\C^*$ are always in the class $\B^*$ (see Lemma \ref{lem:finite-order}). 

Given a holomorphic self-map of $\C^*$, a \textit{lift} of $f$ is an entire function $\tilde{f}$ satisfying $\exp \circ \tilde{f}=f\circ \exp$. Bergweiler \cite{bergweiler95} proved that $J(\tilde{f})=\exp^{-1} J(f)$. Seeing this result one might think that every result about entire functions could be extended to self-maps of $\C^*$ via their lifts. Unfortunately, this is not possible. In particular, a lift of a map of bounded type is never of bounded type, its singular set is contained in a vertical band and so, we cannot apply directly the results from \cite{rrrs11}. In fact, in the opposite direction, Theorem~\ref{thm:main} allows to construct dynamic rays for certain entire functions that are not in the class $\B$, but project to  functions in the class~$\B^*$ satisfying the hypothesis of Theorem~\ref{thm:main}.

\begin{cor}
Let $f$ be an entire transcendental function of finite order for which there exists $k\in \Z$ so that $f(z+2\pi i )=f(z)+ k 2\pi i $ for all $z\in\C$, or a finite composition of such functions. Then every point $z\in I(f)$ with $|\re f^n(z)|\to +\infty$ as $n\to \infty$ can be connected to infinity by a curve of points that escape uniformly.
\end{cor}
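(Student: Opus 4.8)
The plan is to deduce the corollary directly from Theorem~\ref{thm:main} by passing to the quotient under $z\mapsto e^z$. First I would observe that the periodicity condition $f(z+2\pi i)=f(z)+2\pi i k$ is exactly the statement that $f$ is a lift, via the exponential map $\exp\colon\C\to\C^*$, of a well-defined holomorphic self-map $F$ of $\C^*$; concretely one sets $F(w):=\exp(f(\log w))$, which is independent of the branch of $\log$ precisely because of the quasi-periodicity, and then $F\circ\exp=\exp\circ f$. Moreover $F$ is a transcendental self-map of $\C^*$: both $0$ and $\infty$ are essential singularities for $F$ because $f$ is transcendental and hence $\re f(z)$ is unbounded above and (since $f$ is not a polynomial plus linear, as it is $2\pi i$-quasi-periodic and transcendental) also unbounded below on suitable sequences. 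One should also check that a finite composition $f=f_m\circ\cdots\circ f_1$ of such maps projects to the corresponding composition $F=F_m\circ\cdots\circ F_1$ of self-maps of $\C^*$, which is immediate from $F_j\circ\exp=\exp\circ f_j$.

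The second step is the order bookkeeping: I need $F$ (or each $F_j$) to have finite order in the sense of $\rho_0,\rho_\infty$ so that Theorem~\ref{thm:main} applies. Since $f$ has finite order $\rho:=\rho(f)<\infty$, one has $M(r,f)\le \exp(r^{\rho+o(1)})$, and then for $F(w)=\exp(f(\log w))$ one estimates $\log|F(w)|=\re f(\log w)\le |f(\log w)|\le M(\log|w|,f)\cdot$(const), giving $\log\log M(R,F)\lesssim (\log\log R)(\rho+o(1))$ as $R\to\infty$, so $\rho_\infty(F)=0$; the computation near $0$ is symmetric using $m(r,F)$ and $\log|F(w)|=\re f(\log w)\ge -M(\log(1/|w|),f)\cdot$(const). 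Actually, by Theorem~\ref{thm:lower-order}, any finite-order transcendental self-map of $\C^*$ automatically lies in $\B^*$ and has integer order; here one only needs $\rho_0(F),\rho_\infty(F)<\infty$, which the above crude bounds already give (in fact $=0$). Alternatively, invoking Keen's observation and Theorem~\ref{thm:B-and-Bstar} one sees $F\in\B^*$ directly when $f$ is polynomial-type, but the order estimate is the cleaner route for general finite order $f$.

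The third step is to transport the conclusion of Theorem~\ref{thm:main} for $F$ back up to $f$. Let $z\in I(f)$ with $|\re f^n(z)|\to+\infty$. Then $w_0:=\exp(z)$ satisfies $F^n(w_0)=\exp(f^n(z))$, and $|\re f^n(z)|\to\infty$ says exactly that $|\log|F^n(w_0)||\to\infty$, i.e. $F^n(w_0)\to\{0,\infty\}$; in particular $w_0\in I(F)$. Apply Theorem~\ref{thm:main}: there is a curve $\sigma\subset\C^*$ with $w_0\in\sigma$ and $F^n_{|\sigma}\to\{0,\infty\}$ uniformly in the spherical metric. Now lift $\sigma$ through $\exp$ to the curve $\gamma$ starting at $z$ (possible and unique once the starting point is fixed, since $\exp$ is a covering map of $\C^*$ by $\C$, and $\sigma$ being an arc it lifts to an arc). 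I claim $\gamma\subset I(f)$ with $f^n_{|\gamma}\to\infty$: indeed $\exp\circ f^n = F^n\circ\exp$ on $\gamma$, so $|\re f^n(\zeta)|=|\log|F^n(\exp\zeta)||\to\infty$ uniformly for $\zeta\in\gamma$ by the uniform spherical escape of $F^n$ on $\sigma$; and since $\re f^n(z)\to+\infty$ (not $-\infty$) and $\gamma$ is connected with $f^n$ continuous, the sign is eventually constant along all of $\gamma$, so in fact $\re f^n_{|\gamma}\to+\infty$, whence $f^n_{|\gamma}\to\infty$. This gives the desired curve of points escaping (uniformly) to infinity through $z$.

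The main obstacle I anticipate is not any single hard estimate but the careful handling of the quotient/lift correspondence: verifying that $F$ is genuinely a \emph{transcendental} self-map of $\C^*$ (both punctures essential) rather than degenerating, checking the order transfers, and ensuring the lift $\gamma$ of $\sigma$ is again an arc with the uniform-escape property — in particular that "uniformly in the spherical metric" for $F^n$ on $\sigma$ translates to "$|\re f^n|\to\infty$ uniformly" on $\gamma$, using that $\exp$ sends the spherical metric on $\C^*$ to a metric comparable to $|\re(\cdot)|$-distance-to-$\pm\infty$ on $\C$. A minor additional point to dispatch is the possibility that $\sigma$ (hence $\gamma$) could be a closed curve rather than a half-line; but since the points of $\sigma$ accumulate only at $\{0,\infty\}$, $\sigma$ is non-compact in $\C^*$, so its lift is a non-compact arc in $\C$, and this is all that is needed.
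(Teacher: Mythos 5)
Your strategy — project $f$ down to a self-map $F$ of $\C^*$ via $\exp$, apply Theorem~\ref{thm:main}, and lift the ray tail back through the covering $\exp$ — is the one the paper has in mind, and your third step (lifting $\sigma$ and using connectedness of $\gamma$ to fix the sign of $\re f^n$, so that uniform spherical escape of $F^n$ translates into $\re f^n_{|\gamma}\to+\infty$ uniformly) is fine. However, there are genuine gaps in the first two steps.

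The serious one: it is \emph{not} automatic that both $0$ and $\infty$ are essential singularities of $F$. Writing $f(z)-kz=G(e^z)$, where $G$ is the Laurent series of the $2\pi i$-periodic part of $f$, the finite-order hypothesis forces $G$ to be a Laurent polynomial $G(w)=P(w)+Q(1/w)$ with $P,Q\in\C[z]$, so $F(w)=w^k\exp\bigl(P(w)+Q(1/w)\bigr)$; then $\infty$ is essential for $F$ if and only if $\deg P\geqslant 1$, and $0$ is essential if and only if $\deg Q\geqslant 1$. Neither is forced by "transcendental, finite order, quasi-periodic": for $f(z)=z+e^z$ (which satisfies the corollary's stated hypotheses with $k=1$) one has $Q\equiv 0$ and $F(w)=we^w$, an \emph{entire} map for which $0$ is a removable (indeed parabolic fixed) point, so $F\notin\B^*$ and Theorem~\ref{thm:main} does not apply. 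Your heuristic that ``$\re f$ is unbounded above and below'' is not sufficient — in fact $\re f(z)\to-\infty$ as $\re z\to-\infty$ is precisely what makes $0$ \emph{removable} (with $F(0)=0$), not essential. To repair this you must either add the hypothesis that $\deg P,\deg Q\geqslant 1$ (equivalently, that $F$ is $\C^*$-transcendental, which is what the paper's preceding discussion presupposes), or treat the degenerate cases separately — and in those cases the orbits with $\re f^n(z)\to-\infty$, for which $F^n(w_0)\to 0$, are handled by neither Theorem~\ref{thm:main} nor RRRS applied to the entire extension of $F$. Separately, your order estimate is incorrect: from $\log M(R,F)\leqslant M(\log R,f)\leqslant\exp\bigl((\log R)^{\rho+o(1)}\bigr)$ one gets $\log\log M(R,F)\leqslant(\log R)^{\rho+o(1)}$, not $(\rho+o(1))\log\log R$; dividing by $\log R$ gives $(\log R)^{\rho-1+o(1)}$, which is not $0$. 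In fact $\rho_\infty(F)=\deg P$ and $\rho_0(F)=\deg Q$ (cf.\ Example~\ref{ex:polynomial-type}), finite nonzero integers in the $\C^*$-transcendental case, consistent with Theorem~\ref{thm:lower-order}. The cleaner route you half-suggest is the right one: quasi-periodicity plus finite order pins $f$ down to the explicit form $kz+P(e^z)+Q(e^{-z})$, and then $F$ is literally one of the paper's model finite-order maps of $\C^*$ whenever $\deg P,\deg Q\geqslant 1$.
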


The main tool to prove Theorem~\ref{thm:main} is the use of logarithmic coordinates, introduced by Eremenko and Lyubich \cite{eremenko-lyubich92}, and the expansivity of the logarithmic transform near the essential singularities. The orbit of escaping points eventually enters the tracts (unbounded Jordan domains which are mapped to a neighbourhood of zero or infinity) and remains there. We partition each tract into fundamental domains, each with a corresponding symbol, and consider itineraries on them; see Section~5 for the precise definitions. Observe that the previous theorem contains no claim of which dynamic rays actually exist. Our next result shows that, under the hypothesis of Theorem~\ref{thm:main}, there is a unique dynamic ray for every sequence of fundamental domains that contains only finitely many symbols. Here $P(f)$ denotes the \textit{postsingular set} of~$f$ which is the closure of the union of all the (forward) iterates of $\mbox{sing}(f^{-1})$.  We say that a dynamic ray $\gamma$ \textit{lands} if $\overline{\gamma}\setminus\gamma$ is a single point.

\begin{thm}
Let $f$ be a transcendental self-map of $\CS$ of finite order or, more generally, a finite composition of such functions, and let $(D_n)$ be an admissible sequence of fundamental domains of $f$ containing finitely many symbols. Then the function $f$ has a unique nonempty dynamic ray $\gamma$ with itinerary $(D_n)$. \mbox{Furthermore}, if $(D_n)$ is periodic and the set $P(f)$ is bounded in $\C^*$, then the dynamic ray $\gamma$ lands.
\label{thm:periodic-rays}
\end{thm}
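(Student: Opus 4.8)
The plan is to adapt the classical construction of dynamic rays in logarithmic coordinates from \cite{rrrs11} and \cite{baranski-jarque-rempe12}. First I would pass to logarithmic coordinates: lift $f$ (more precisely, the finitely many "entire-type" pieces in the composition) via the exponential map near each essential singularity, obtaining a function $F$ defined on a disjoint union of logarithmic tracts $\mathcal{T}$, where each component of $\mathcal{T}$ is an unbounded Jordan domain mapped conformally onto a right or left half-plane. Because $f$ has finite order, the tracts have controlled geometry (a uniform bound on the hyperbolic-versus-Euclidean distortion, as in the finite-order case of \cite{rrrs11}); this is exactly what makes the head-start/expansion estimates work. Within each tract I would fix the partition into fundamental domains, so that an admissible sequence $(D_n)$ corresponds to a sequence of fundamental domains with $F(\overline{D_n})\supseteq \overline{D_{n+1}}$ after shrinking slightly; the hypothesis that only finitely many symbols occur bounds the "horizontal" translation terms and guarantees the relevant orbits stay deep inside the tracts.

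The core of the existence-and-uniqueness argument is a standard nested-preimage/contraction scheme. I would define, for a large parameter, the set of points whose orbit follows $(D_n)$ and whose real parts grow at least at a prescribed linear rate; call the $n$-th such set $\gamma_n$, obtained by pulling back a suitable curve in $D_0$ through the branches of $F$ prescribed by $D_0, D_1, \dots, D_{n-1}$. Using the expansion estimate for $F$ (the logarithmic transform is expanding once $|\re z|$ is large), the maps $\gamma_{n+1}\hookrightarrow \gamma_n$ are uniform contractions in the relevant (hyperbolic) metric, so $\bigcap_n \overline{\gamma_n}$ is a single arc $\gamma$; the standard bookkeeping shows every point of $\gamma$ escapes uniformly, that $\gamma$ is a ray tail, and that it is the unique such object with itinerary $(D_n)$. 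Finally one projects back via $\exp$: the projected curve is a dynamic ray of $f$ accumulating on $\{0,\infty\}$ according to the essential itinerary induced by $(D_n)$; uniqueness downstairs follows from uniqueness upstairs together with the covering property of $\exp$.

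For the landing statement, assume $(D_n)$ is periodic of period $p$ and $P(f)$ is bounded away from $0$ and $\infty$. Then $\gamma$ is (eventually) periodic, $f^{p}(\overline{\gamma})=\overline{\gamma}$ up to reparametrization, and $\overline{\gamma}\setminus\gamma$ is a forward-invariant compact connected subset of $\C^*$. The key point is hyperbolic contraction: since $P(f)$ is bounded in $\C^*$, the complement $\C^*\setminus \overline{P(f)}$ carries a hyperbolic metric in which $f$ is expanding on the part of the ray far from $\{0,\infty\}$, while the ray tail itself escapes; a normal-families / shrinking argument on the inverse branches along the periodic itinerary then forces $\overline{\gamma}\setminus\gamma$ to be a single point — this is the argument used for landing of periodic rays of entire functions of bounded type and finite order (cf. \cite{rrrs11, baranski-jarque-rempe12}), transported to $\C^*$ via the lift, using that $J(\tilde f)=\exp^{-1}J(f)$ by \cite{bergweiler95}. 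I expect the main obstacle to be the last point: controlling the inverse branches near the essential singularities simultaneously at $0$ and at $\infty$, since the lift of a bounded-type map is not of bounded type, so one cannot quote \cite{rrrs11} as a black box but must re-run the estimates in the two-ended logarithmic model, carefully tracking the essential itinerary $e\in\{0,\infty\}^{\N}$ to ensure the accumulation set of $\gamma$ cannot oscillate onto a nondegenerate continuum.
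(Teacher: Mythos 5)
Your approach is essentially the paper's, which simply packages the argument as citations to results already established in the text: Proposition~\ref{prop:periodic-cotinua} gives nonemptiness (an unbounded continuum in $J_{\underline{s}}(F)$ for admissible external addresses with finitely many symbols, where $\underline{s}=(T_n)$ is obtained from $(D_n)$ via Lemma~\ref{lem:fund-dom-correspondence}), Theorem~\ref{thm:main} together with Lemma~\ref{lem:ray-tails} shows that $J_{\underline{s}}(F)$ consists of a unique arc (the dynamic ray), and Proposition~\ref{lem:landing} gives landing. Your re-derivation of these ingredients --- nested preimages through the prescribed branches with expansion for existence and uniqueness, and hyperbolicity relative to $P(f)$ for landing --- is precisely the content under the hood of those citations.

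One place where the paper's route is cleaner concerns landing: you correctly flag that the lift $\tilde f$ is not of bounded type, so one cannot black-box the landing theorems for entire functions, and you propose to re-run the estimates in a two-ended logarithmic model. The paper sidesteps this entirely: Proposition~\ref{lem:landing} invokes Rempe's landing theorem for Riemann surfaces \cite[Theorem~B.1]{rempe08}, which applies directly to maps in $\B^*$ with $P(f)$ bounded away from $0$ and $\infty$, so there is no detour through the lift and no need to redo the hyperbolicity estimates by hand.
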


\vspace*{-10pt}
Observe that, for example, Theorem~\ref{thm:periodic-rays} implies that every fundamental domain contains exactly one fixed dynamic ray.

We associate to each escaping point an \textit{essential itinerary} $e=(e_n)\in\{0,\infty\}^\N$ defined by
$$
e_n:=\left\{
\begin{array}{ll}
0, & \mbox{ if } |f^n(z)|\leqslant 1,\vspace{5pt}\\
\infty, & \mbox{ if } |f^n(z)|>1,
\end{array}
\right.
$$
for all $n\in \N$. Consider, for each $e\in\{0,\infty\}^\N$, the set of points whose essential itinerary is eventually a shift of $e$, that is,
$$
I_e(f):=\{z\in I(f)\ :\ \exists \ell,k\in\N,\ \forall n\geqslant 0,\ |f^{n+\ell}(z)|>1\Leftrightarrow e_{n+k}=\infty\}.
$$
Each of the sets $I_e(f)$, $e\in\{0,\infty\}^\N$, should be regarded as the analogue of the whole of $I(f)$ for a transcendental entire function $f$. In \cite[Theorem~1.1]{martipete} it is shown that, for each $e\in\{0,\infty\}^\N$, $I_e(f)\cap J(f)\neq \emptyset$. We follow the methods of \cite{baranski-jarque-rempe12} to show that, in fact, under the hypothesis of Theorem \ref{thm:main}, each set $I_e(f)$ not only contains periodic ray tails (countably many) but a Cantor bouquet.  

\begin{thm}
%Let $f$ be a transcendental self-map of $\C^*$ of finite order. For each $e\in\{0,\infty\}^\N$ there exists a dynamic ray $\gamma\subset I_e(f)$. In particular, $f$ has an uncountable number of dynamic rays attached to each essential singularity.
Let $f$ be a transcendental self-map of $\CS$ of finite order or, more generally, a finite composition of such functions. For each $e\in\{0,\infty\}^\N$, there exists a Cantor bouquet $X_e\subseteq I_e(f)$ and, in particular, the set $I_e(f)$ contains uncountably many ray tails.
\label{thm:cantor-bouquet}
\end{thm}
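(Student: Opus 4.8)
The plan is to follow the strategy of Bar\'anski, Jarque and Rempe \cite{baranski-jarque-rempe12}, using the dynamic rays produced by Theorem~\ref{thm:main} as the hairs of the bouquet. There will be three steps: replace $f$ by a disjoint‑type model; build the Cantor bouquet for that model, organised according to the essential itinerary $e$; and transfer the conclusion back to $f$ by a quasiconformal conjugacy. As noted after Theorem~\ref{thm:main}, the finite‑order hypothesis already forces $f\in\B^*$, so no separate bounded‑type assumption is needed.

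\textit{A disjoint‑type model.} In logarithmic coordinates (as in the proof of Theorem~\ref{thm:main}, and for a finite composition taking the naturally composed model as in \cite{rrrs11}), $f$ is represented by a map satisfying a linear head‑start condition; by Theorem~\ref{thm:lower-order} the finite‑order hypothesis gives $f(z)=z^n\exp\bigl(P(z)+Q(1/z)\bigr)$ with $P,Q$ polynomials, so the tracts near $0$ and near $\infty$ are as regular as those of $e^{P(z)}$. I would then replace $f$ by a \textit{disjoint‑type} function $g\in\B^*$, obtained by precomposing with a contraction $z\mapsto\lambda z$ for suitable $\lambda$ (equivalently, pushing the $\infty$‑tracts far into $\{\re w\gg 0\}$ and the $0$‑tracts far into $\{\re w\ll 0\}$): for $g$ the forward orbit of $\operatorname{sing}(g^{-1})$ stays in a fixed compact annulus, the Fatou set is a single completely invariant doubly connected domain, and $g$ keeps the same fundamental domains and head‑start constant as $f$. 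Eremenko--Lyubich‑style quasiconformal surgery (as in \cite{rrrs11}) then furnishes a quasiconformal homeomorphism $\Phi$ of $\C^*$ that conjugates $g$ to $f$ on a neighbourhood of $J(f)$, maps fundamental domains of $g$ onto fundamental domains of $f$ of the same type, and hence preserves essential itineraries.

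\textit{Cantor bouquet for the model.} For $g$, the expansion estimate in logarithmic coordinates shows that each admissible address $\underline s=(D_n)$ is realised by a unique ray (hair) $h_{\underline s}\subseteq J(g)$, parametrised by a potential $t\in[t_{\underline s},\infty)$ along which all points escape uniformly with itinerary $\underline s$; moreover the endpoint map $\underline s\mapsto t_{\underline s}$ is continuous and $J(g)=\bigcup_{\underline s}h_{\underline s}$. Let $\Sigma_e$ be the set of admissible addresses whose associated essential itinerary equals $e$ from time $0$; since the fundamental domains near $0$ and near $\infty$ are each $\Z$‑indexed, $\Sigma_e$ is a closed, perfect, totally disconnected address set, two‑sided at every point. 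Setting
$$
X_e^{g}:=\bigcup_{\underline s\in\Sigma_e}\bigl\{\,z\in h_{\underline s}\ :\ \operatorname{pot}(z)\geq t_{\underline s}+1\,\bigr\},
$$
every point of $X_e^g$ has positive potential and therefore escapes with essential itinerary a shift of $e$, so $X_e^g\subseteq I_e(g)$; continuity of $\underline s\mapsto t_{\underline s}$ makes $X_e^g$ closed; and after the obvious reparametrisation $X_e^g$ is identified with $[0,\infty)\times\Sigma_e$, which one verifies to be a straight brush in the sense of Aarts--Oversteegen (Definition~\ref{dfn:cantor-bouquet}) exactly as in \cite{baranski-jarque-rempe12,aarts-oversteegen93}: closedness and the hairiness axiom are immediate, and the density/approximation axiom holds because $\Sigma_e$ is dense‑in‑itself and two‑sided. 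Hence $X_e^g$ is a Cantor bouquet contained in $I_e(g)$, whose periodic hairs are the landing rays supplied by Theorem~\ref{thm:periodic-rays}.

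\textit{Transfer, and the main difficulty.} Putting $X_e:=\Phi(X_e^g)$ and using that $\Phi$ is a homeomorphism onto its image carrying $J(g)$ to $J(f)$, preserving itineraries and conjugating the dynamics, $X_e$ is homeomorphic to a straight brush, hence a Cantor bouquet, and $X_e\subseteq I_e(f)$; its hairs are uncountably many ray tails, which gives the last assertion. I expect the main obstacle to be the middle step: carrying out the straight‑brush verification of \cite{baranski-jarque-rempe12} \textit{relative to a prescribed essential itinerary} $e$. In the entire case all hairs have their free ends at $\infty$; here the free ends accumulate on the two‑point set $\{0,\infty\}$ in the pattern dictated by $e$, so the whole construction has to be run inside the address subspace $\Sigma_e$, and one must check that $\Sigma_e$ still has the topological features (perfect, two‑sided, continuous endpoint map) required of a straight brush --- which is precisely where the linear head‑start condition, available for finite compositions of finite‑order self‑maps of $\C^*$ thanks to Theorem~\ref{thm:lower-order}, is used. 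A second, more routine point, handled by the truncation at potential $t_{\underline s}+1$, is to keep the bouquet inside the \textit{escaping} set rather than merely in $J(f)$.
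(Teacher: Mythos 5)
The main difficulty with your plan is that it is built on a disjoint-type reduction that does not transfer to $\C^*$, and the paper sidesteps this entirely by working directly with a head-start condition in logarithmic coordinates.

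Your first step proposes to pass to a disjoint-type model $g$ by precomposing $f$ with a contraction $z\mapsto\lambda z$, ``pushing the $\infty$-tracts far into $\{\re w\gg 0\}$ and the $0$-tracts far into $\{\re w\ll 0\}$.'' In the entire case this is the Eremenko--Lyubich trick: if $g(z)=f(\lambda z)$ then the tracts of $g$ are $\lambda^{-1}$-dilates of those of $f$, hence move away from the (unchanged) singular set, and in logarithmic coordinates they translate to the right. But a single scaling of $\C^*$ cannot simultaneously push the $\infty$-tracts towards $\infty$ and the $0$-tracts towards $0$. With $\lambda<1$, \emph{all} tracts of $g=f(\lambda\,\cdot)$ are dilated by $\lambda^{-1}>1$; those accumulating on $\infty$ move further out (good), but those accumulating on $0$ also move further out, i.e.\ \emph{towards} the annulus containing $\operatorname{sing}(g^{-1})$ (bad). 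In logarithmic coordinates this is the translation $w\mapsto w-\log\lambda$, which shifts both $\mathcal T_\infty$ and $\mathcal T_0$ to the right: exactly the wrong thing on the $0$ side. So the model $g$ you construct is not disjoint type, and the quasiconformal conjugacy $\Phi$ you rely on is not available from this construction. This is a genuine gap, not a detail: there is no natural analogue of the Eremenko--Lyubich rescaling in $\C^*$, which is one of the structural differences the paper is forced to address.

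The paper's proof of Theorem~\ref{thm:cantor-bouquet} avoids any disjoint-type reduction. It works with a $2\pi i$-periodic logarithmic transform $F$ satisfying a \emph{uniform head-start condition} (available by Theorem~\ref{thm:good-geom-imply-HS}, since compositions of finite-order self-maps have good geometry), takes the absorbing set of hairs $X_e'$ from Proposition~\ref{prop:brush}, compactifies the address space $\bold\Sigma_e$ by \emph{intermediate external addresses} to get $\tilde{\mathcal S}_e$, truncates via a potential function $\rho$ to form $Z_e\subseteq\tilde X_e$, and then shows $Z_e$ is a hairy arc: the accumulation axiom comes from Proposition~\ref{lem:accumulation}, which translates $F^n(z_0)$ by $\pm2\pi i$ and pulls back. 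Aarts--Oversteegen (Lemma~\ref{lem:ha-to-brush}) then gives the straight brush. Disjoint type only enters as a remark (where one can take $X_e=J_e(F)$ with no truncation). Your middle step waves at this machinery --- ``one verifies [$X_e^g$] to be a straight brush exactly as in \cite{baranski-jarque-rempe12}'' --- without engaging with the compactification, the potential function, or the hairy-arc verification, which is where the technical content of the argument lies; and the identification $X_e^g\cong[0,\infty)\times\Sigma_e$ is not right as stated (you truncate at $t_{\underline s}+1$, and continuity of $\underline s\mapsto t_{\underline s}$, which you assert, is precisely what must be proved via the compactified addresses).

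If you drop the disjoint-type detour and instead run the comb/hairy-arc construction of \cite{baranski-jarque-rempe12} directly on the logarithmic transform $F$ with the uniform head-start condition, as the paper does, the argument goes through. The one place disjoint type is genuinely convenient (the final remark of the proof) is exactly the place where a contraction precomposition would have worked for entire maps but fails here.
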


Although Theorem \ref{thm:main} is stated in terms of functions of finite order, its proof is more general and applies to a class of functions satisfying certain \textit{good geometry properties} (see Definition~\ref{dfn:good-geometry}). Rempe, Rippon and Stallard showed that, assuming an extra condition (namely, that the tracts have what they call \textit{bounded gulfs}), the ray tails constructed in \cite{rrrs11} consist of fast escaping points \cite[Theorem~1.2]{rempe-rippon-stallard10}. It seems likely that similar conditions would imply that the dynamic rays that we construct here are also fast escaping in the sense of \cite{martipete}.

\begin{rmk}
Lasse Rempe-Gillen pointed out that Theorem~\ref{thm:main} may also be proved using \textit{random iteration} as described in the last paragraph of \cite[Section~5]{rrrs11} by taking, for $R>0$ sufficiently large,
$$
f_1(z):=\left\{
\begin{array}{ll}
f(z) & \mbox{ if } |f(z)|>R,\\
1/f(z) & \mbox{ if } |f(z)|<1/R;
\end{array}
\right. \quad f_2(z):=\left\{
\begin{array}{ll}
f(1/z) & \mbox{ if } |f(1/z)|>R,\\
1/f(1/z) & \mbox{ if } |f(1/z)|<1/R;
\end{array}
\right.
$$
which both have a logarithmic transform in the class $\BL$ and then applying the results of \cite{rrrs11} to a non-autonomous sequence of these two functions. However, it seems natural to provide a direct proof.
\end{rmk}

%\vspace{10pt}

\noindent 
\textbf{Structure of the paper.} Roughly speaking, the first half of the paper is devoted to describing the basic properties of functions in the class $\B^*$ and in the second half we investigate the existence of dynamic rays for these functions. In Section 2, we study what is the relation between the classes $\B$ and $\B^*$; the proof of Theorem~\ref{thm:B-and-Bstar} is there. In Section 3, we describe the geometry of logarithmic coordinates of functions in the class $\B^*$ and give the proof of Theorem~\ref{thm:I4}. Finite order functions are introduced in Section 4, where we prove Theorem~\ref{thm:lower-order}, and are shown to be examples of functions with good geometry. In Section 5, we define symbolic dynamics, both in terms of essential itineraries (with respect to essential singularities) and external addresses (with respect to tracts). In contrast to what happens in the entire case, in our setting the Bernoulli shift map is a subshift of finite type, where only some sequences are admissible. In Section 6, we show that if an external address $\underline{s}$ is periodic, then the set $J_{\underline{s}}(F)$ consisting of all points with that address contains an unbounded continuum of fast escaping points - this is used later to prove Theorem~\ref{thm:periodic-rays} in Section 9. Dynamic rays are introduced in Section 7. Finally the proofs of Theorem~\ref{thm:main} and Theorem~\ref{thm:cantor-bouquet} are sketched in Section~8 and Section~9, respectively, focusing on the differences with the proofs of \cite[Theorem 1.2]{rrrs11} and \cite[Theorem~1.6]{baranski-jarque-rempe12}, which concern entire functions.

\vspace{10pt}

\noindent
\textbf{Notation.} In this paper $\N=\{0,1,2,\hdots\}$. If $z\in\C^*$ and $X\subseteq \C^*$, then $\mbox{dist}(z,X)$ denotes the Euclidean distance from $z$ to $X$. If $z_0\in\C$ and $0<r<r'$, we define the sets
$$
D(z_0,r):=\{z\in \C\ :\ |z-z_0|<r\},\quad A(r,r'):=\{z\in \mathbb C\ :\ r<|z|<r'\}.
$$
We define the half-planes $\mathbb H^+:=\{z\in\C\ :\ \re z>0\}$, $\mathbb H^-:=\{z\in\C\ :\ \re z<0\}$, and, for $r\in\R$, we put
$$
\mathbb H_r^+:=\{z\in\C\ :\ \re z>r\},\quad \mathbb H_r^-:=\{z\in\C\ :\ \re z<-r\},
$$
and, for $r>0$, $\mathbb H^\pm_r:=\{z\in\C\ :\ |\re z|>r\}=\mathbb H_r^+\cup \mathbb H_{r}^-$. If $X$ is a set in $\C^*$, then the topological operations $\overline{X}$ and $\partial X$ are taken in $\C^*$ unless stated otherwise, and we use $\widehat{X}$ to denote the closure of $X$ in $\CR$. Finally, if $X,Y$ are disjoint sets, we use $X\sqcup Y$ to denote the union of $X$ and $Y$.

%\margin{???}Given $f$ a transcendental self-map of $\C^*$, we denote by $F:\mathcal T\to H$ a logarithmic transform of $f$, which is only defined on the set $\mathcal T$, and we denote by $\widehat F$ a lift of~$f$, which is an entire function. Similarly, we try to use the variable $z\in\C^*$ for $f$ and the variable $w=\exp^{-1}(z)\in\C$ for $F$ and $\widehat F$.\\

%\margin{$B(r_1,r_2)$?\\ hyp metric?}

% $f:\mathcal V\rightarrow W$  $F:\mathcal T\rightarrow H$. The set of fundamental domains of $F$ is denoted by $\mathcal F$ and the fundamental domains of $f$ are called $\mathcal U$. The external addresses.. $\mbox{addr}(z)=\underline{s}, \underline{\tilde s}=(T_n)$... $\mathcal A$ denotes the set of all admissible... See Definition XXX for the meaning of $J_{\underline{s}}(F), J_{\underline{s}}^K(F), J_{e}(F),J_e^K(F),I_s(F),I_e(F)$.

%and $\overline{A}(r_1,r_2):=\overline{A(r_1,r_2)}$.

\vspace{10pt}

\noindent
\textbf{Acknowledgements}. The authors thank Lasse Rempe-Gillen, Phil Rippon and Gwyneth Stallard for useful discussions during the preparation of this paper and Dave Sixsmith for reading the paper carefully and making very helpful comments. We also thank Lasse Rempe-Gillen for kindly providing the picture from the intro- duction.

\section{Functions in the class $\B^*$}

Let $f$ be a transcendental entire function or a transcendental self-map of $\C^*$. We say that $v\in \CR$ is a \textit{critical value} of~$f$ if $v=f(c)$ with $f'(c)=0$. We say that $a\in \CR$ is an \textit{asymptotic value} of~$f$ if there is a continuous injective curve $\gamma:(0,+\infty)\longrightarrow \CR$ (the {\em asymptotic path}) such that as $t\rightarrow+\infty$, $\gamma(t)\rightarrow \alpha$, where $\alpha$ is an essential singularity of $f$, and $f\bigl(\gamma(t)\bigr)\rightarrow a$. Let $\mbox{CP}(f)$ denote the set of critical point of $f$. The set of singularities of the inverse function, $\mbox{sing}(f^{-1})$, consists of the critical values of $f$, $\mbox{CV}(f):=f(\mbox{CP}(f))$, and the finite asymptotic values of $f$, $\mbox{AV}(f)$, that is
$$
\mbox{sing}(f^{-1})=\mbox{CV}(f)\cup \mbox{AV}(f).
$$ 
Note that in $\C^*$ by finite asymptotic value we mean that $a\notin\{0,\infty\}$. For trans- cendental self-maps of~$\C^*$, we can decompose $\mbox{AV}(f)$ as
$$
\mbox{AV}(f)=\mbox{AV}_0(f)\cup \mbox{AV}_\infty(f),
$$
depending on whether $a\in\mbox{AV}(f)$ has an asymptotic path $\gamma$ to zero or to infinity. The set $\mbox{AV}_0(f)\cap \mbox{AV}_\infty(f)$ may be nonempty. Finally, we define the \textit{singular set}  of $f$, $S(f)$, and the \textit{postsingular set} of $f$, $P(f)$, as
$$
S(f):=\overline{\mbox{sing}(f^{-1})},\quad P(f):=\overline{\bigcup_{n\in\N} f^n\bigl(\mbox{sing}(f^{-1})\bigr)}.
$$
We say that $f$ has \textit{bounded type} if $S(f)$ is bounded. Similarly, we say that $f$ has \textit{finite type} if $S(f)$ is finite.

%\orange{Observe that since the exponential function is a holomorphic covering of $\C^*$, if~$f$~is a transcendental self-map of $\C^*$ and $\tilde{f}$ is a lift of $f$, that is, $\exp\circ \tilde{f}=f\circ \exp$, then $S(\tilde{f})=\exp^{-1}\bigl(S(f)\bigr)$ and $P(\tilde{f})\subseteq \exp^{-1}\bigl(P(f)\bigr)$. If $X\subseteq \C$, $\tilde{f}(X+2\pi i)=\tilde{f}(X)+\mbox{ind}(f)\cdot 2\pi i$. Therefore, if $|\mbox{ind}(f)|=1$, we have $P(\tilde{f})=\exp^{-1}\bigl(P(f)\bigr)$.}

The next result relates the singular set and the postsingular set of a transcendental self-map $f$ of $\C^*$ with the corresponding sets of a lift $\tilde{f}$ of $f$, which is a transcendental entire function satisfying $\exp\circ \tilde{f}=f\circ \exp$. Its proof is straightforward and we omit it.

\begin{lem}
Let $f$ be a transcendental self-map of $\C^*$ and let $\tilde{f}$ be a lift of $f$. Then $S(\tilde{f})=\exp^{-1}\bigl(S(f)\bigr)$ and $P(\tilde{f})\subseteq \exp^{-1}\bigl(P(f)\bigr)$. 
\end{lem}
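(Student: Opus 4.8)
The plan is to exploit the functional equation $\exp\circ\tilde f=f\circ\exp$ together with the fact that $\exp\colon\C\to\C^*$ is a covering map, via the standard local description of the singular set: for a transcendental entire function (or transcendental self-map of $\C^*$) $g$, a point $v$ lies outside $S(g)$ if and only if $v$ has a neighbourhood $U$ such that $g\colon g^{-1}(U)\to U$ is a covering map. One inclusion, $S(\tilde f)\subseteq\exp^{-1}(S(f))$, drops out of this immediately; the inclusion $P(\tilde f)\subseteq\exp^{-1}(P(f))$ then follows by iterating the functional equation; and for the reverse inclusion $\exp^{-1}(S(f))\subseteq S(\tilde f)$ I would argue directly on critical points and asymptotic paths.

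For the first inclusion, suppose $w\notin\exp^{-1}(S(f))$, so $v:=\exp w\notin S(f)$, and choose a round disc $U\ni v$ disjoint from $S(f)$ and small enough to be evenly covered by $\exp$; let $V$ be the component of $\exp^{-1}(U)$ containing $w$, so that $\exp\colon V\to U$ is a conformal isomorphism. Since $U$ is simply connected and disjoint from $S(f)$, the components $W_j$ of $f^{-1}(U)$ are mapped homeomorphically onto $U$ by $f$. Now $\tilde f^{-1}(V)=\exp^{-1}\bigl(f^{-1}(U)\bigr)=\bigsqcup_{j,k}W_{j,k}$, where the $W_{j,k}$ are the components of $\exp^{-1}(W_j)$, and on each $W_{j,k}$ the functional equation forces $\tilde f=(\exp|_V)^{-1}\circ f\circ\exp$, a composition of homeomorphisms onto $V$. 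Hence $\tilde f\colon\tilde f^{-1}(V)\to V$ is a covering map, $w\notin S(\tilde f)$, and $S(\tilde f)\subseteq\exp^{-1}(S(f))$.

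For the reverse inclusion, consider first the critical values. Differentiating the functional equation gives $\tilde f'(z)\exp(\tilde f(z))=f'(\exp z)\exp z$; since neither exponential vanishes, $\tilde f'(z)=0$ exactly when $f'(\exp z)=0$, so $\mbox{CP}(\tilde f)=\exp^{-1}(\mbox{CP}(f))$ and, applying $\tilde f$ and using the functional equation again, $\exp(\mbox{CV}(\tilde f))=f(\mbox{CP}(f))=\mbox{CV}(f)$. To recover the whole $\exp$-fibre of a critical value of $f$ one invokes the periodicity relation $\tilde f(z+2\pi i)=\tilde f(z)+2\pi i\,\textup{ind}(f)$, which, together with $\mbox{CP}(\tilde f)=\exp^{-1}(\mbox{CP}(f))$, sends a critical value $\tilde v$ of $\tilde f$ to $\tilde v+2\pi i k\,\textup{ind}(f)$ for all $k\in\Z$. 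For an asymptotic value $a\in\mbox{AV}(f)$ realised along an asymptotic path $\gamma$ tending to $0$ or to $\infty$, any continuous branch $\tilde\gamma$ of $\log\circ\gamma$ tends to infinity in $\C$, and $\exp(\tilde f\circ\tilde\gamma)=f\circ\gamma\to a$ forces $\tilde f\circ\tilde\gamma$ to converge to some $\tilde a\in\exp^{-1}(a)$ --- its real part tends to $\log|a|$, and a continuous curve with bounded real part whose exponential converges cannot have its imaginary part run off to infinity --- so $\tilde a\in\mbox{AV}(\tilde f)$, and varying the branch of $\log\circ\gamma$ moves $\tilde a$ around the fibre as before. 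Taking closures then yields $\exp^{-1}(S(f))\subseteq S(\tilde f)$.

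Finally, induction on the functional equation gives $\exp\circ\tilde f^{\,n}=f^{\,n}\circ\exp$ for every $n$, so that $\exp\bigl(\tilde f^{\,n}(\mbox{sing}(\tilde f^{-1}))\bigr)=f^{\,n}\bigl(\exp(\mbox{sing}(\tilde f^{-1}))\bigr)\subseteq f^{\,n}\bigl(S(f)\bigr)\subseteq\overline{f^{\,n}(\mbox{sing}(f^{-1}))}\subseteq P(f)$, using $\exp(\mbox{sing}(\tilde f^{-1}))\subseteq S(f)$ from the first inclusion together with $S(f)=\overline{\mbox{sing}(f^{-1})}$ and continuity of $f^{\,n}$. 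Taking the union over $n$ and then the closure gives $\exp(P(\tilde f))\subseteq P(f)$, that is, $P(\tilde f)\subseteq\exp^{-1}(P(f))$. I expect the main obstacle to be the reverse inclusion for $S$: propagating a single lifted singular value to the entire $\exp$-fibre through the index relation, and, in the asymptotic case, carefully ruling out that the lifted path escapes to infinity in the target of $\tilde f$.
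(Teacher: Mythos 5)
Your one-sided inclusions are fine. The covering-space argument for $S(\tilde f)\subseteq\exp^{-1}\bigl(S(f)\bigr)$ is sound, modulo a small imprecision: $\tilde f$ sends each component $W_{j,k}$ into \emph{some} sheet $V+2\pi i m$ of $\exp^{-1}(U)$, not necessarily into $V$ itself, so $\tilde f^{-1}(V)$ consists only of those $W_{j,k}$ that land in $V$; but since on each such component $\tilde f=(\exp|_{V})^{-1}\circ f\circ\exp$ is a homeomorphism onto $V$, the covering property over $V$ still holds and the conclusion stands. Your derivation of $P(\tilde f)\subseteq\exp^{-1}\bigl(P(f)\bigr)$ from this via $\exp\circ\tilde f^{n}=f^{n}\circ\exp$ is also correct.

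However, the reverse inclusion $\exp^{-1}\bigl(S(f)\bigr)\subseteq S(\tilde f)$ --- precisely the step you flag at the end as ``the main obstacle'' --- is not filled by the periodicity relation, and the obstacle is real rather than technical. Writing $n=\textup{ind}(f)$, the identity $\tilde f(z+2\pi i)=\tilde f(z)+2\pi i n$ propagates a singular value $\tilde v$ of $\tilde f$ only along the coset $\tilde v+2\pi i n\Z$, which is a \emph{proper} sub-lattice of the fibre $\exp^{-1}(\exp\tilde v)=\tilde v+2\pi i\Z$ as soon as $|n|\geqslant 2$, and the same deficiency occurs for the lifted asymptotic paths. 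A concrete instance: $f(z)=z^2 e^{z+1/z}$ has $n=2$ and lift $\tilde f(z)=2z+e^z+e^{-z}$; the critical points of $f$ are $c_\pm=-1\pm\sqrt{2}$ with distinct critical values $f(c_+)\neq f(c_-)$, one has $\mbox{CP}(\tilde f)=\exp^{-1}(\{c_+,c_-\})$, and since $\tilde f$ has no finite asymptotic values, $\mbox{sing}(\tilde f^{-1})=\{\tilde f(\tilde c_+)+4\pi i k : k\in\Z\}\cup\{\tilde f(\tilde c_-)+4\pi i k : k\in\Z\}$ is a closed discrete set of period $4\pi i$, strictly contained in $\exp^{-1}\bigl(S(f)\bigr)$, which has period $2\pi i$. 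Thus $S(\tilde f)\subsetneq\exp^{-1}\bigl(S(f)\bigr)$ here, and the equality claimed in the lemma needs $|\textup{ind}(f)|=1$ --- exactly the hypothesis the authors invoke in the remark that follows to promote the $P$-inclusion to an equality. Your diagnosis of where the difficulty lies was right; what is missing is the realisation that the gap cannot be closed, and without the index hypothesis only $S(\tilde f)\subseteq\exp^{-1}\bigl(S(f)\bigr)$ survives.
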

%\begin{proof}
%\orange{Since the exponential function is a holomorphic covering of $\C^*$, we have $S(\tilde{f})=\exp^{-1}\bigl(S(f)\bigr)$. By the semiconjugation, $\tilde{f}^n\bigl(S(\tilde{f})\bigr)\subseteq \exp^{-1}\bigl(f^n\bigl(S(f)\bigr)\bigr)$ for all $n\in\N$, and therefore $P(\tilde{f})\subseteq \exp^{-1}\bigl(P(f)\bigr)$. If $|\mbox{ind}(f)|=1$, then $\tilde{f}^n\bigl(S(\tilde{f})\bigr)=\exp^{-1}\bigl(f^n\bigl(S(f)\bigr)\bigr)$ for all $n\in\N$ and $P(\tilde{f})=\exp^{-1}\bigl(P(f)\bigr)$.}
%\end{proof}
%\begin{proof}
%A logarithmic transform of $f$ is a function of the form $\tilde{f}(w)=\log f(e^w)$, where $\log$ is a branch of $\exp^{-1}$. If $a\in \mbox{AV}(f)$ has an asymptotic path $\gamma$ tending to either $0$ or $\infty$, then the function $\tilde{f}$ tends to the values $\exp^{-1}(a)$ along the curves $\exp^{-1}(\gamma)$. Hence $\mbox{AV}(\tilde{f})=\exp^{-1}\bigl(\mbox{AV}(f)\bigr)$. The critical points of $\tilde{f}$ satisfy
%$$
%\tilde{f}'(w)=\frac{f'(e^w)e^w}{f(e^w)}=0
%$$
%and, since $f(e^w)\neq 0$, we deduce that $f'(e^w)=0$, so $\mbox{CP}(\tilde{f})=\exp^{-1}\bigl(\mbox{CP}(f)\bigr)$. Thus,
%$$
%\mbox{CV}(\tilde{f})=\tilde{f}\bigl(\mbox{CP}(\tilde{f})\bigr)=\tilde{f}\bigl(\exp^{-1}\bigl(\mbox{CP}(f)\bigr)\bigr)=\exp^{-1}\bigl(f\bigl(\mbox{CP}(f)\bigr)\bigr)=\exp^{-1}\bigl(\mbox{CV}(f)\bigr).
%$$
%Finally, iterating this argument with the singular values of $\tilde{f}$, we get that $P(\tilde{f})=\exp^{-1}\bigl(P(f)\bigr)$. Note that, by continuity, the set $\exp^{-1}\bigl(S(f)\bigr)$ is also closed.
%\end{proof}

Recall that if $f$ is a holomorphic self-map of $\C^*$, we define $\textup{ind}(f)$ to be the index of $f(\gamma)$ with respect to the origin, where $\gamma$ is any positively oriented simple closed curve around the origin. Observe that, in the hypothesis of the previous lemma, if $|\textup{ind}(f)|=1$, then $P(\tilde{f})=\exp^{-1}\bigl(P(f)\bigr)$.

The following lemma is a basic property about the singular values of the composition of two functions.

\begin{lem}
\label{lem:cpav}
Let $f$ and $g$ be meromorphic functions in $\C$. Then we have that $\cp(g\circ f)=\cp(f)\cup f^{-1}\bigl(\cp(g)\bigr)$, $\cv(g\circ f) \subseteq g\bigl(\cv(f)\bigr)\cup\cv(g)$ and $\av(g\circ f)= g(\av(f))\cup \av(g)$.
\end{lem}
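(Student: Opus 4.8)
The plan is to verify each of the three set identities directly from the definitions of critical points, critical values and asymptotic values, handling the composition $g\circ f$ by tracking how a point or an asymptotic path for $g\circ f$ decomposes into data for $f$ and for $g$. Throughout I would keep in mind that $f$ and $g$ are meromorphic on $\C$, so I must be slightly careful about poles: a critical point of $g\circ f$ could in principle sit at a pole of $f$, and an asymptotic path for $g\circ f$ could escape to a pole of $f$; I would note that these contribute the point $\infty$ or factor through the already-listed sets, so they do not affect the stated identities.

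First I would treat the critical points. For $\cp(g\circ f)=\cp(f)\cup f^{-1}(\cp(g))$: at a point $c$ where $f$ is holomorphic, the chain rule gives $(g\circ f)'(c)=g'(f(c))f'(c)$, which vanishes exactly when $f'(c)=0$ (so $c\in\cp(f)$) or $g'(f(c))=0$ (so $c\in f^{-1}(\cp(g))$). I would remark that multiple points of $f$ and branch behaviour at poles are subsumed in $\cp(f)$ and $f^{-1}(\cp(g))$ after using local coordinates, so the equality holds as stated.

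Next, the critical values. The inclusion $\cv(g\circ f)\subseteq g(\cv(f))\cup\cv(g)$ follows by applying $g\circ f$ to the identity for $\cp$: a critical value of $g\circ f$ is $(g\circ f)(c)$ for some $c\in\cp(f)\cup f^{-1}(\cp(g))$; if $c\in\cp(f)$ then $(g\circ f)(c)=g(f(c))\in g(\cv(f))$, and if $c\in f^{-1}(\cp(g))$ then $f(c)\in\cp(g)$ so $(g\circ f)(c)=g(f(c))\in\cv(g)$. I would point out that only an inclusion can be expected here because a point of $\cv(f)$ that is not itself a critical point of $g$ need not map to a critical value of $g\circ f$.

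Finally, the asymptotic values, which is the step I expect to be the main obstacle, since it requires genuinely producing and recognising asymptotic paths rather than a pointwise computation. For $g(\av(f))\cup\av(g)\subseteq\av(g\circ f)$: given $a\in\av(f)$ with path $\gamma(t)\to\infty$ and $f(\gamma(t))\to a$, the curve $\gamma$ (possibly after a tail, and avoiding the discrete set of poles of $g$) satisfies $(g\circ f)(\gamma(t))\to g(a)$, so $g(a)\in\av(g\circ f)$; and given $b\in\av(g)$ with path $\sigma(t)\to\infty$ and $g(\sigma(t))\to b$, I would lift $\sigma$ through $f$ — using that $f$ is a nonconstant meromorphic function, hence an open map omitting at most a small set, one can find a curve $\tau(t)\to\infty$ with $f(\tau(t))=\sigma(t)$ for large $t$ (or with $f(\tau(t))\to\infty$ along $\sigma$), giving $(g\circ f)(\tau(t))\to b$. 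For the reverse inclusion $\av(g\circ f)\subseteq g(\av(f))\cup\av(g)$: take $a\in\av(g\circ f)$ with path $\gamma(t)\to\infty$, $g(f(\gamma(t)))\to a$, and consider the image curve $f\circ\gamma$. If $f(\gamma(t))$ has a finite limit point, a subsequence argument shows this limit is an asymptotic value of $f$ and $a$ is its image under $g$, so $a\in g(\av(f))$; if instead $|f(\gamma(t))|\to\infty$, then $f\circ\gamma$ is (eventually, after restriction) an asymptotic path to $\infty$ along which $g\to a$, so $a\in\av(g)$. Care is needed to pass to injective sub-arcs so that $f\circ\gamma$ is a genuine asymptotic path; I would phrase this using the standard fact that one may always extract from the image curve a path tending to the relevant singularity, which is exactly where the argument has to be done with some attention to detail.
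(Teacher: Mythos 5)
Your route matches the paper's: chain rule for $\cp$ and $\cv$, then an analysis of how an asymptotic path for $g\circ f$ decomposes. You go further than the paper in flagging poles and in spelling out both inclusions for $\av$, and it is precisely in the $\av$ step, which you correctly identify as the hard part, that your sketch has two genuine gaps (gaps the paper also leaves, since it only dispatches this case in one sentence and declares the reverse inclusion to ``follow easily'').

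For $\av(g\circ f)\subseteq g(\av(f))\cup\av(g)$ you write that if $f(\gamma(t))$ has a finite limit point $w_0$ then ``a subsequence argument shows this limit is an asymptotic value of $f$.'' That does not follow: convergence of $f(\gamma(t_n))$ along a sequence $t_n\to\infty$ is much weaker than convergence along the whole path, and an asymptotic value requires the latter; $f\circ\gamma$ could a priori oscillate between several cluster points. The correct repair is a connectedness argument. The cluster set $L\subseteq\CR$ of $f\circ\gamma$ is a nested intersection of compact connected sets, hence a continuum. By continuity of $g$, every $w\in L\cap\C$ that is not a pole of $g$ satisfies $g(w)=a$; thus $L\cap\C\subseteq g^{-1}(a)\cup g^{-1}(\infty)$, a discrete set. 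A continuum that meets $\C$ only in a discrete set is a singleton, so either $L=\{\infty\}$ (then $f\circ\gamma$ is an asymptotic path of $g$ and $a\in\av(g)$) or $L=\{w_0\}\subset\C$ (then $w_0\in\av(f)$ and, since $w_0$ cannot be a pole of $g$, $g(w_0)=a$). This is what actually justifies the dichotomy the paper asserts in one breath.

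For the reverse inclusion, $g(\av(f))\subseteq\av(g\circ f)$ is indeed easy (push a path for $f$ through $g$, after discarding the finitely many $t$ where $f(\gamma(t))$ is a pole of $g$ and noting the image value must not be a pole of $g$). But your argument for $\av(g)\subseteq\av(g\circ f)$ via lifting $\sigma$ through $f$ is not complete: the maximal lift $\tau$ need not tend to $\infty$. It may terminate at a critical point of $f$, and, more seriously, it may stay bounded, in which case $\tau(t)$ accumulates at a pole of $f$ (since $f(\tau(t))=\sigma(t)\to\infty$), so $\tau$ is not an asymptotic path. Restarting the lift elsewhere can in principle run into the same obstruction every time, and appealing to $f$ being open and almost surjective does not by itself produce an unbounded lift. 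The safe statement, and the only one the paper actually uses, is the inclusion $\av(g\circ f)\subseteq g(\av(f))\cup\av(g)$; the claimed equality deserves either the extra hypotheses under which the lift can be controlled (for instance via tracts, as in Lemma~\ref{lem:tracts}) or the weaker form.
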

\begin{proof}
By the chain rule, $(g\circ f)'(z)=g'\bigl(f(z)\bigr)f'(z)$, and thus
$$
\begin{array}{rl}
\cp(g\circ f)\hspace{-6pt}&=\cp(f)\cup f^{-1}\bigl(\cp(g)\bigr),\vspace{5pt}\\
\cv(g\circ f)\hspace{-6pt}&=(g\circ f)\bigl(\cp(g\circ f)\bigr)\vspace{5pt}\\
&\subseteq (g\circ f)\cp(f)\cup g\bigl(\cp(g)\bigr)\vspace{5pt}\\
&=g\bigl(\cv(f)\bigr)\cup \cv(g).
\end{array}
$$
Observe that the set $f^{-1}\bigl(\cp(g)\bigr)$ may be empty, and hence the other inclusion does not hold in general.

Finally, if $\gamma$ is an asymptotic path of $g\circ f$ with asymptotic value $a$, then either $f\bigl(\gamma(t)\bigr))\rightarrow b\in \av(f)$ as $t\rightarrow+\infty$, where $g(b)=a$, or $f(\gamma)$ is an asymptotic path of $g$ and $a\in \av(g)$. Therefore $\av(g\circ f)\subseteq g\bigl(\av (f)\bigr)\cup \av(g)$ and the opposite inclusion follows easily.
\end{proof}

%Among all trancendental entire functions, functions from the class 
%$$
%\B:=\{f \mbox{ transcendental entire function}\ :\ \mbox{sing}(f^{-1}) \mbox{ is bounded}\}
%$$
%have several nice properties that makes them easier to study. In this paper we study the analogous class for transcendental self-maps of the punctured plane,
%$$
%\B^*:=\{f \mbox{ transcendental self-map of } \CS\ :\ \mbox{sing}(f^{-1}) \mbox{ is bounded away from } 0,\infty\}.
%$$

Let $\B$ and $\B^*$ be the bounded-type classes defined in the introduction. Observe that, by Lemma \ref{lem:cpav}, both $\B$ and $\B^*$ are closed under composition. Recall that Theorem \ref{thm:B-and-Bstar} establishes a way to construct functions in $\B^*$ from functions in $\B$. To prove this theorem, we need the following preliminary result. 

\begin{prop}
\label{prop:B-and-Bstar-i}
Let $f(z)=z^n\exp\bigl(g(z)+h(1/z)\bigr)$ with $n\in\Z$ and $g,h$ non-constant entire functions. If the functions $f_\infty(z):=z^n\exp\bigl(g(z)\bigr)$ and $f_0(z):=z^{n}\exp\bigl(-h(z)\bigr)$ as well as $1/f_\infty$ and $1/f_0$ have bounded type, then $f\in\B^*$.
\end{prop}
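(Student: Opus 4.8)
The plan is to verify that $\mbox{sing}(f^{-1})=\cv(f)\cup\av(f)$ is bounded away from $0$ and $\infty$, treating asymptotic and critical values separately, after two reductions. First, since $f$ maps each compact annulus $\overline{A(1/R,R)}$ onto a compact subset of $\CS$, the critical values coming from critical points in such an annulus already lie in a compact annulus; so it suffices to bound the asymptotic values of $f$ together with the critical values $f(c)$ for $|c|$ large (and, symmetrically, $|c|$ small). Second, $\widetilde f(z):=1/f(1/z)$ has the form $z^n\exp(\widetilde g(z)+\widetilde h(1/z))$ with $\widetilde g=-h$, $\widetilde h=-g$, so that $\widetilde f_\infty=f_0$ and $\widetilde f_0=f_\infty$; thus $\widetilde f$ satisfies the same hypotheses, and since $c\mapsto 1/c$ carries critical points of $f$ to critical points of $\widetilde f$ with reciprocal critical value, it is enough to bound away from $0$ and $\infty$ the asymptotic values of $f$ and the critical values $f(c)$ with $|c|$ large.

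For the asymptotic values, let $a\in\CS$ be an asymptotic value of $f$ along a path $\gamma(t)\to\infty$ (an essential singularity of $f_\infty$). Then $h(1/\gamma(t))\to h(0)$, so
\[
f_\infty(\gamma(t))=f(\gamma(t))\,e^{-h(1/\gamma(t))}\longrightarrow a\,e^{-h(0)}\in\CS ,
\]
hence $a\,e^{-h(0)}\in\av(f_\infty)$ and $e^{h(0)}/a\in\av(1/f_\infty)$; the two hypotheses on $f_\infty$ force both into compact subsets of $\CS$, and therefore so is $a$. If instead $\gamma(t)\to 0$, then $g(\gamma(t))\to g(0)$ and, with $\delta(t):=1/\gamma(t)\to\infty$,
\[
f_0(\delta(t))=\delta(t)^n e^{-h(\delta(t))}=\frac{e^{g(\gamma(t))}}{f(\gamma(t))}\longrightarrow \frac{e^{g(0)}}{a}\in\CS ,
\]
and the same argument with $f_0$ and $1/f_0$ bounds $a$ away from $0$ and $\infty$.

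For the critical values the key identity is that, since $f'/f=n/z+g'(z)-h'(1/z)/z^2$ and $f_\infty'/f_\infty=n/z+g'(z)$, every critical point $c$ of $f$ satisfies $f_\infty'(c)/f_\infty(c)=h'(1/c)/c^2$, whence
\[
\left|\frac{f_\infty'(c)}{f_\infty(c)}\right|\ \le\ \frac{C_0}{|c|^2}\quad(|c|\ge 1),\qquad C_0:=\max_{|w|\le 1}|h'(w)|;
\]
a critical point of $f$ of large modulus is thus an almost-critical point of $f_\infty$. Now invoke bounded type of $f_\infty$: for $R_1$ large enough (depending on $f_\infty$ and $1/f_\infty$), $f_\infty$ is a covering over $\{|w|>R_1\}$, the set $\{|f_\infty|>R_1\}$ is disjoint from $\{|z|=1\}$, and each component $T$ of $\{|f_\infty|>R_1\}$ contained in $\{|z|>1\}$ is an unbounded simply connected tract with $0\notin\overline T$ on which $\log f_\infty$ is single-valued and maps $T$ conformally onto $\{\re w>\log R_1\}$ (the $\CS$-analogue of the standard description of tracts of bounded-type maps; a component $\cong\D^*$ would be a punctured neighbourhood of the pole $0$ of $f_\infty$, excluded since $T\subseteq\{|z|>1\}$). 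If $|c|\ge 2$ and $|f_\infty(c)|>R_1$, then $c$ lies in such a $T$, and applying the Koebe $\tfrac14$-theorem to $(\log f_\infty|_T)^{-1}$ at $c$ together with $\dist(c,\partial T)\le|c|$ gives the Eremenko--Lyubich expansion estimate \cite{eremenko-lyubich92}
\[
\left|\frac{f_\infty'(c)}{f_\infty(c)}\right|\ \ge\ \frac{1}{4|c|}\,\log\frac{|f_\infty(c)|}{R_1}.
\]
Combining the two displays yields $\log(|f_\infty(c)|/R_1)\le 4C_0/|c|\le 2C_0$, i.e. $|f_\infty(c)|\le R_1 e^{2C_0}$; this bound holds trivially when $|f_\infty(c)|\le R_1$. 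Since $1/f_\infty$ satisfies the same hypotheses and $\bigl|(1/f_\infty)'(c)/(1/f_\infty)(c)\bigr|=\bigl|f_\infty'(c)/f_\infty(c)\bigr|$, the identical argument bounds $|f_\infty(c)|$ away from $0$. Finally $f(c)=f_\infty(c)\,e^{h(1/c)}$ with $e^{-C_1}\le|e^{h(1/c)}|\le e^{C_1}$, $C_1:=\max_{|w|\le 1}|\re h(w)|$, so $f(c)$ lies in a fixed compact annulus of $\CS$. With the two reductions this proves $f\in\B^*$.

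The step I expect to be the main obstacle is the critical-value estimate, and inside it the structural input that for large $R_1$ the components of $\{|f_\infty|>R_1\}$ lying in $\{|z|>1\}$ are simply connected tracts carrying a single-valued branch of $\log f_\infty$ — this is precisely what makes the Koebe (Eremenko--Lyubich) expansion estimate available. Once this, and the reduction of the asymptotic values of $f$ to those of $f_\infty$ and $f_0$ by dividing out the bounded factors $e^{h(1/z)}$ and $e^{g(z)}$, are in place, the rest is a short computation.
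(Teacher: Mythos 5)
Your proof is correct and follows essentially the same route as the paper's: compare $f$ with $f_\infty$ (respectively $f_0$) near $\infty$ (respectively $0$), where the perturbing factor coming from $h(1/z)$ (respectively $g(z)$) is uniformly controlled, and invoke the Eremenko--Lyubich/Koebe expansion estimate for $f_\infty, 1/f_\infty, f_0, 1/f_0$ to rule out singular values accumulating at the punctures. The only differences are organizational: you use the symmetry $z\mapsto 1/z$ (via $\widetilde f(z)=1/f(1/z)$) to collapse the four regimes into one, and you argue pointwise with the critical-point identity $f_\infty'(c)/f_\infty(c)=h'(1/c)/c^2$, whereas the paper phrases the same estimate in terms of the infimum of $\bigl|z f'(z)/f(z)\bigr|$ over $\{|f(z)|>R\}$ tending to $+\infty$.
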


Note that if $n\geqslant 0$, then $f_\infty$ and $f_0$ are transcendental entire functions, while if $n<0$, then they are meromorphic functions with a pole at the origin (which is omitted). 

\begin{proof}[Proof of Proposition \ref{prop:B-and-Bstar-i}]
We can express
$$
f(z)=z^n\exp\bigl(g(z)+h(1/z)\bigr)=f_\infty(z)\cdot \exp\bigl(h(1/z)\bigr).
$$
Outside a disk of radius $r$, the functions $f$ and $e^{h(0)}f_\infty$ are as close as we want provided that $r$ is large enough. Therefore $\mbox{AV}_\infty(f)\!=\! e^{h(0)}\cdot\mbox{AV}(f_\infty)$. Differentiating~$f$, we get 
$$
f'(z)=f(z)\left(-\frac{h'(1/z)}{z^2}+\frac{f_\infty'(z)}{f_\infty(z)}\right),
$$
or, equivalently,
$$
\frac{zf'(z)}{f(z)}=-\frac{h'(1/z)}{z}+\frac{zf_\infty'(z)}{f_\infty(z)}.
$$
It follows easily from \cite[Lemma~1]{eremenko-lyubich92} that if $f\in\B$ then there is a constant $R_0>0$ such that 
\begin{equation}
\left|z\frac{f'(z)}{f(z)}\right|\geqslant \frac{1}{4\pi}\bigl(\log|f(z)|-\log R_0\bigr),\quad \mbox{ for } z\in D(0,R_0),
\label{eqn:EL-ineq}
\end{equation}
and hence
\begin{equation}
\label{eq:eta-sixsmith}
\eta_f:=\lim_{R\to +\infty} \inf \left\{\left|z\frac{f'(z)}{f(z)}\right|\ :\ |f(z)|>R\right\}=+\infty.
\end{equation}
If $n<0$, the function $f_\infty$ is meromorphic but, since the pole at $z=0$ is omitted and $\mbox{sing}(f_\infty^{-1})$ is bounded away from the origin, the same proof of Lemma~\ref{lem:expansivity} can be used to obtain inequality~\eqref{eqn:EL-ineq} in this case as well. Suppose that $f_\infty$ has bounded type, then
$$
\inf \left\{\left|z\frac{f_\infty'(z)}{f_\infty(z)}\right|\ :\ |f_\infty(z)|>R\right\}\to +\infty \quad \mbox{ as } R\to+\infty.
$$ 
Since $f_\infty$ is entire, the components of the set $\{z\in\C\ :\ |f_\infty(z)|>R\}$ are all unbounded and tend to infinity as $R\to +\infty$ (see Lemma~\ref{lem:tracts}). Therefore, since 
$$
\exp\bigl(h(1/z)\bigr)\to\exp\bigl(h(0)\bigr)\quad \mbox{ and } \quad \frac{h'(1/z)}{z}\to 0 \quad \mbox{ as } z\to \infty,
$$ 
there exists $M,N>0$ such that if $|f(z)|>R$ and $|z|\geqslant 1$ then 
$$
|f_\infty(z)|=\frac{|f(z)|}{\exp\bigl(\re h(1/z)\bigr)}>\frac{R}{M}\quad \mbox{ and } \quad \left|\frac{h'(1/z)}{z}\right|<N,
$$
and so
$$
\inf \left\{\left|z\dfrac{f'(z)}{f(z)}\right|\, :\, |f(z)|\!>\!R,\ |z|\!\geqslant\! 1\right\}\geqslant\inf \left\{\left|z\dfrac{f_\infty'(z)}{f_\infty(z)}\right|\, :\, |f_\infty(z)|\!>\!\frac{R}{M}\right\}-N\to+\infty
$$
as $R\to+\infty$. Hence, $\cv(f)$ cannot contain a sequence of critical values whose critical points are in $\C\setminus\D$ that accumulate to infinity, because if $f(z)$ is a critical value, then the quantity $zf'(z)/f(z)=0$. Similarly, in a neighbourhood of zero,
$$
\inf \left\{\left|z\dfrac{f'(z)}{f(z)}\right|\, :\, |f(z)|\!<\!\frac{1}{R},\ |z|\!\leqslant\! 1\right\}\geqslant\inf \left\{\left|z\dfrac{f_0'(z)}{f_0(z)}\right|\, :\, |f_0(z)|\!>\!\frac{R}{M'}\right\}-N'\to+\infty
$$
as $R\to+\infty$, and thus $f$ has no critical values accumulating to zero whose critical points are in $\D$. Finally, since we are assuming that the functions $1/f_\infty$ and $1/f_0$ have bounded type too, $0\notin \mbox{sing}(f_\infty^{-1})'\cup \mbox{sing}(f_0^{-1})'$, so the sets
$$
\inf \left\{\left|z\dfrac{f'(z)}{f(z)}\right|\, :\, |f(z)|\!<\!\frac{1}{R},\ |z|\!\geqslant\! 1\right\},\ \inf \left\{\left|z\dfrac{f'(z)}{f(z)}\right|\, :\, |f(z)|\!>\!R,\ |z|\!\leqslant\! 1\right\}\to +\infty
$$
as $R\to +\infty$. And hence $f\in\B^*$.
% The critical points of $f$ satisfy
%$$
%f'(z)=f(z)\left(\frac{n}{z}+g'(z)-\frac{1}{z^2}h'(1/z)\right)=0 \quad \Leftrightarrow \quad nz+z^2g'(z)-h'(1/z)=0
%$$
%and $h'(1/z)=O(1)$. \red{Use Rouch\'e to show that the critical points of $f$ near $\infty$ are close to the critical points of $\tilde{g}$... Can we control that if $f(\mbox{CP(f)}))$ is bounded then $\hat{g}(\mbox{CP}(\hat{g}))$ is also bounded even if $\mbox{CP(f)},\mbox{CP}(\hat{g})$ tend to $\infty$?}\\
%Similarly, $\mbox{AV}_0(f)=\mbox{AV}(\hat{h})$ and $0\in\overline{\mbox{CV}(f)}$ if and only if $\mbox{CV}(\hat{h})$ is unbounded.\\
%\red{Alternatively, try to use the characterisation of class $\B$ from Sixsmith....}
\end{proof}

Sixsmith \cite{sixsmith14} showed that if $f\notin \B$, then $\eta_f=0$, where $\eta_f$ is the quantity defined in~\eqref{eq:eta-sixsmith}, and thus provided and alternative  characterisation of functions in the class~$\B$. This was later generalised by Rempe-Gillen and Sixsmith in \cite{rempe-sixsmith}.

Theorem \ref{thm:B-and-Bstar} states that if $g,h\in \B$, then the function $f(z)=\exp\bigl(g(z)+h(1/z)\bigr)$ is in class $\B^*$. Thus, it can be used to produce examples of functions in the class~$\B^*$ from functions in the class $\B$ (see Example \ref{ex:Bstar}). Recall that Keen proved that if $g$ and $h$ are polynomials and $n\in \Z$, then $f(z)=z^n\exp\bigl(g(z)+h(1/z)\bigr)$ is in the class $\B^*$ as well (see Proposition \ref{prop:finite-order-implies-poly-type} and Lemma \ref{lem:finite-order}).

\begin{proof}[Proof of Theorem \ref{thm:B-and-Bstar}]
Let $f_\infty=\exp\circ\,g$ where $g\in\B$. By Lemma \ref{lem:cpav}, 
$$
\begin{array}{l}
\av(f_\infty)=\av(\exp)\cup \exp(\av(g))=\exp(\av(g))\cup\{0\},\vspace{5pt}\\
\cp(f_\infty)=\cp(g)\cup g^{-1}\bigl(\cp(\exp)\bigr)=\cp(g)\cup g^{-1}\bigl(\emptyset\bigr)=\cp(g),
\end{array}
$$
and both $\cv(f_\infty)=\exp\bigl(\cv(g)\bigr)$ and $\av(f_\infty)$ are bounded in $\C$. On the other hand,
$$
\begin{array}{l}
\av(1/f_\infty)=\av(\exp)\cup \exp(\av(-g))=\exp(-\av(g))\cup\{0\},\vspace{5pt}\\
\cp(1/f_\infty)=\cp(-g)=\cp(g),
\end{array}
$$
and therefore $\cv(1/f_\infty)=\exp\bigl(-\cv(g)\bigr)$ and $\av(1/f_\infty)$ are bounded in $\C$ too. Similarly, since $h\in\B$ the functions $f_0(z)=\exp\bigl(-h(z)\bigr)$ and $1/f_0$ have bounded type. Therefore $f_\infty$ and $f_0$ satisfy the hypothesis of Proposition \ref{prop:B-and-Bstar-i} and the function $f(z)=\exp\bigl(g(z)+h(1/z)\bigr)$ is in the class~$\B^*$.
\end{proof}

\begin{rmk}
Observe that if $n\neq 0$ and $f(z)=z^n\exp\bigl(g(z)\bigr)$ with $g\in\B$, even if $\cv(g)$ is bounded the set $\cv(f)$ may accumulate to zero ($n>0$) or to infinity ($n<0$). Thus Theorem \ref{thm:B-and-Bstar} is optimal. 
\end{rmk}

\begin{rmk}
The converse of Theorem~\ref{thm:B-and-Bstar} is not true in general as the critical values of $g$ can be unbounded in a vertical band and the critical values of $f_\infty$ be bounded in an annulus. For example, the Fatou function $g(z)=z+1+e^{-1}$ is not in the class $\B$ while the function $f(z)=\exp\bigl(g(z)+1/z)$ is in the class $\B^*$.
\end{rmk}

%\begin{prop}
%Let $g$ be a transcendental entire function and let $n\in\Z$. If $n\neq 0$ then the function $f(z)=z^n\exp\bigl(g(z)\bigr)$ does not have any finite asympotic value except, possibly, $a=0$.
%\end{prop}
%\begin{proof}
%...\margin{add proof}
%\end{proof}

\begin{ex}
\label{ex:Bstar}
We give a couple of examples of functions in the class $\B^*$ cons- tructed from functions in the class $\B$ using Theorem~\ref{thm:B-and-Bstar}.
\begin{enumerate}
\item[(i)] The function $f(z)=\exp\bigl((\sin z+1)/z\bigr)$ is in $\B^*$ and $\mbox{sing}(f^{-1})$ is an infinite set which accumulates to $z=1$. 
\item[(ii)] The function $f(z)=\exp(\exp z+1/z)$ is in $\B^*$ and has a finite asymptotic value $a=1$.
\end{enumerate}
\end{ex}

%%%%%%%%%%%%%%%%
\section{Logarithmic coordinates for the class $\B^*$}

Let $a\in\CR$ and for $r>0$ choose $U(r)$ to be a connected component of $f^{-1}\bigl(D(a,r)\bigr)$ such that if $r_1<r_2$ then $U(r_1)\subseteq U(r_2)$. We say that $U$ is a \textit{logarithmic singularity} over $a$ if $f:U(r)\rightarrow D(a,r)\setminus\{a\}$ is a universal covering for some $r>0$ (see \cite{iversen} for a classification of the singularities of the inverse). Transcendental self-maps \mbox{of $\C^*$} have logarithmic singularities over both zero and infinity.

\pagebreak

\begin{dfn}[Logarithmic tract]
Let $f\in \B^*$ and let $A\subseteq \C$ be a topological annulus bounded away from zero and infinity that contains the set $S(f)$. Denote\linebreak $W=W_0\cup~W_\infty$, where $W_0$ and $W_\infty$ are the components of $\C^*\setminus A$ whose closure in $\CR$ contains, respectively, zero and infinity. A \textit{(logarithmic) tract} of $f$ is a connected component of $\mathcal V=f^{-1}(W_0)\cup f^{-1}(W_\infty)$.
%\index{tract}
\end{dfn}

%\begin{figure}[h!]
%\centering
%\input{imatges/tracts.pdf_tex}
%\caption{Logarithmic tracts of a function $f\in \B^*$.}
%\end{figure}

Note that if $V$ is a tract of $f$, then the map $f:V\rightarrow W_i$ is a universal covering, where $i\in \{0,\infty\}.$ The following lemma is a well-known classification of the coverings of the punctured disk $\D^*:=D(0,1)\setminus \{0\}$ (see, for example, \cite{hatcher02}). If~$X$ is a topological space, we say that two coverings $p_1:\widetilde{X_1}\to X$ and $p_2:\widetilde{X_2}\to X$ of $X$ are \textit{equivalent} if there exists a homeomorphism $p_{21}:\widetilde{X_2}\to\widetilde{X_1}$ such that $p_2=p_1\circ p_{21}$.

\begin{lem}[Coverings of $\D^*$]
\label{lem:tracts}
Let $U\subseteq \CR$ and let $f:U\rightarrow \D^*$ be a holomorphic covering. Then either $U$ is biholomorphic to $\D^*$ and $f$ is equivalent to $z^d$, or $U$ is simply connected and $f$ is the universal covering, hence equivalent to the exponential map.
\end{lem}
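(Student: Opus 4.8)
The statement is the classical classification of holomorphic coverings of the punctured disk $\D^*$, so the plan is to combine the topological classification of coverings with the uniformization/removable-singularity machinery of one complex variable. First I would recall that $\D^*$ has fundamental group $\pi_1(\D^*)\cong\Z$, generated by a small loop around the origin. By covering space theory, the connected coverings of $\D^*$ are classified, up to equivalence, by the conjugacy classes of subgroups of $\Z$; since $\Z$ is abelian these are just the subgroups $d\Z$ for $d\in\N\cup\{0\}$. The case $d=0$ (the trivial subgroup) gives the universal cover, and the cases $d\geqslant 1$ give the intermediate covers of degree $d$.

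Next I would identify each of these covers concretely as a Riemann surface. For $d\geqslant 1$, the map $z\mapsto z^d$ from $\D^*$ to $\D^*$ is a degree-$d$ holomorphic covering whose induced map on $\pi_1$ has image exactly $d\Z$; hence by the uniqueness part of the classification any covering of $\D^*$ with that subgroup is equivalent to $z^d$, and in particular its total space is biholomorphic to $\D^*$. For $d=0$, the exponential map $\exp:\mathbb H^-\to\D^*$ (or equivalently $z\mapsto e^{2\pi i z}$ from the upper half-plane) is a universal covering, its total space is simply connected, and again uniqueness of the universal cover shows that any simply connected holomorphic covering of $\D^*$ is equivalent to this one, with total space biholomorphic to $\mathbb H^-\cong\D$.

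The one point that needs care — and the only place where holomorphicity (not just topology) is used — is upgrading the \emph{topological} equivalence given by covering theory to a \emph{holomorphic} (biholomorphic) one, and pinning down the complex structure of $U$. Here I would argue as follows: given a holomorphic covering $f:U\to\D^*$, pull back the standard complex structure; the covering $p_{21}:U\to\D^*$ (resp. $U\to\mathbb H^-$) provided by the topological classification can be taken holomorphic because $f$ and the model map $z^d$ (resp. $\exp$) are both holomorphic local homeomorphisms and $p_{21}$ is locally $f$ followed by a local inverse of the model, hence locally holomorphic, hence holomorphic. This identifies $U$ biholomorphically with $\D^*$ in the finite-degree case and with a simply connected domain, necessarily biholomorphic to $\D$, in the infinite-degree case (the only subtlety being to exclude $U\cong\C$, which is ruled out since $\C$ admits no nonconstant holomorphic map to the bounded domain $\D^*$ — or more directly, since the universal cover of $\D^*$ is the disk, not the plane, by uniformization). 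I would regard this passage from topological to holomorphic equivalence as the main (though routine) obstacle, together with being careful that ``covering'' here means \emph{holomorphic} covering so that the two model maps $z^d$ and $\exp$ exhaust all cases.
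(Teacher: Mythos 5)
The paper does not actually prove Lemma~\ref{lem:tracts}; it is stated as a well-known classification with a pointer to \cite{hatcher02} and no argument is given. Your proof is correct and complete. It takes the standard route: classify the connected coverings of $\D^*$ topologically via the subgroups $d\Z\leqslant\pi_1(\D^*)\cong\Z$, exhibit the models $z\mapsto z^d$ and $\exp$, and then upgrade the topological equivalence from covering-space theory to a biholomorphic one by observing that the equivalence map is locally a holomorphic local inverse of the model composed with the given holomorphic covering. That last step, together with excluding $U\cong\C$ in the universal-cover case via Liouville, is exactly the analytic content that a purely topological source like Hatcher leaves implicit, so you were right to single it out as the point needing care.
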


In particular, the closure of each tract in $\CR$ contains only one of the essential singularities. Now we are going to introduce a logarithmic change of variables.

\begin{dfn}[Logarithmic coordinates] \label{def:tracts}
Let $f\in \B^*$ and consider $\mathcal T:=\exp^{-1}(\mathcal V)$ and $H:=\exp^{-1}(W)=H_0\sqcup H_\infty$ where $H_0=\exp^{-1}(W_0)$ and $H_\infty=\exp^{-1}(W_\infty)$ contain, respectively, a left and a right half-plane. A \textit{logarithmic transform}\index{logarithmic transform} of $f$ is a continuous function $F:\mathcal T\to H$ which makes the following diagram commute.
$$
\xymatrix{
\mathcal T \ar[d]_{\ds\exp} \ar[r]^{\ds F} & H\ar[d]^{\ds\exp}\\
\mathcal V \ar[r]_{\ds f} & W
}
$$
The connected components of $\mathcal T$ are called \textit{tracts}\index{tract} of $F$ and can be classified into four types
$$
\mathcal T=:\mathcal T_0^0\sqcup \mathcal T_0^\infty\sqcup \mathcal T_\infty^0\sqcup \mathcal T_\infty^\infty,
$$
where the lower index indicates if the tracts have zero or infinity in their closure and the upper index indicates if they are mapped to $H_0$ or $H_\infty$ by $F$. We define $\mathcal T_0:=\mathcal T_0^0\sqcup \mathcal T_0^\infty$ and $\mathcal T_\infty:=\mathcal T_\infty^0\sqcup \mathcal T_\infty^\infty$.
\end{dfn}

In the entire case, often the expressions `lift' and `logarithmic transform' are used indistinctly to refer to $F$ defined on the tracts. In this paper we reserve the word \textit{lift} for an entire function $\tilde{f}$ such that $\exp\circ \tilde{f}=f\circ \exp$.

\begin{rmk}
Observe that we can obtain $F$ as the restriction of a lift $\tilde{f}$ of $f$ to the set $\mathcal T$. However, since $F$ is only defined on $\mathcal T$, we can add a different integer multiple of $2\pi i$ to $F$ on each tract $T$, and hence $F$ is not necessarily the restriction of a transcendental entire function $\tilde{f}$.
\end{rmk}

\begin{figure}[h!]
\centering
\includegraphics[width=.95\linewidth]{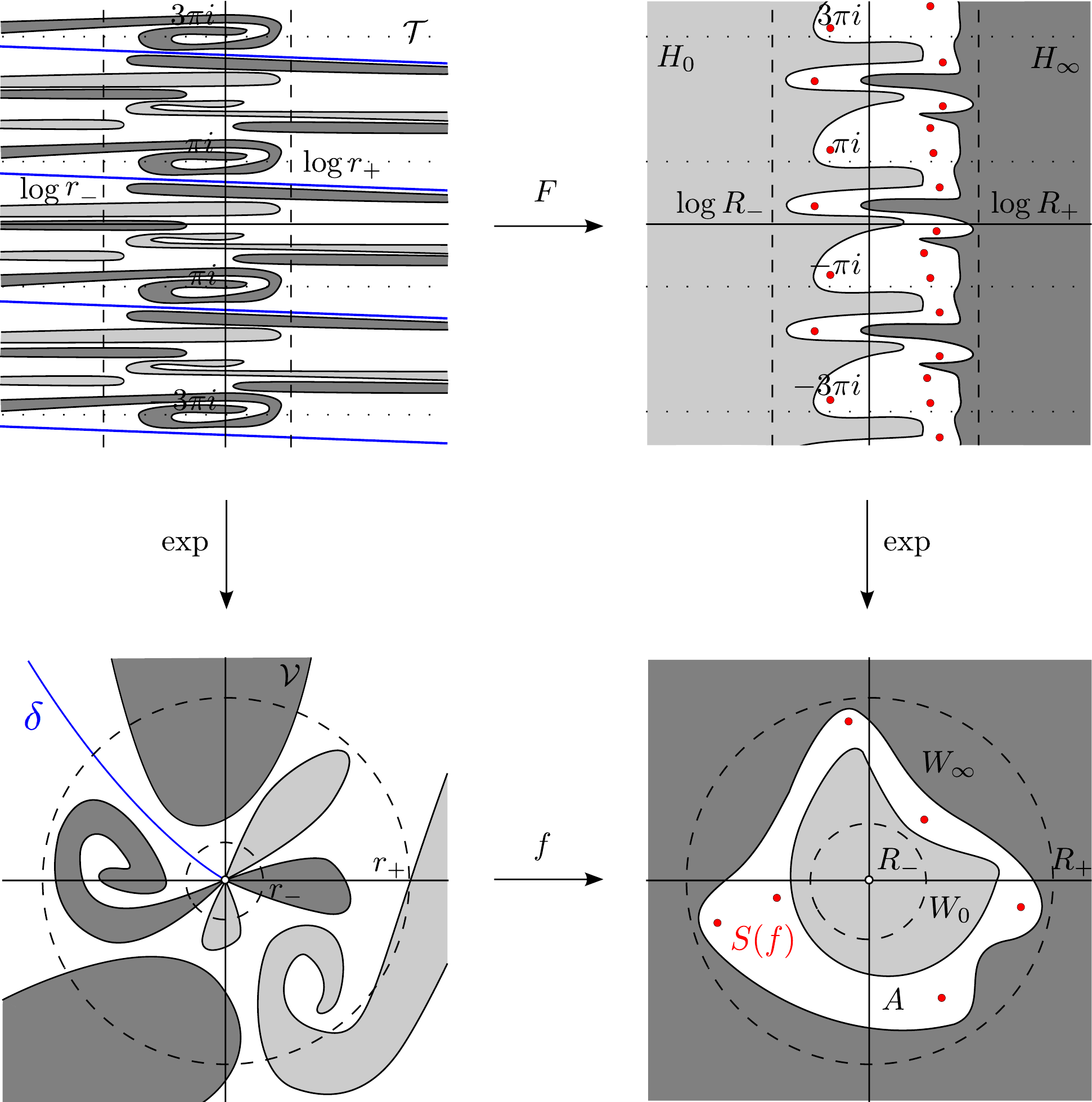}
\caption{Logarithmic coordinates for a function $f\in\B^*$.}
\label{fig:log-coord-star}
\end{figure}

\pagebreak

\begin{thm}
If $f\in \B^*$, then a logarithmic transform $F:\mathcal T\to H$ of $f$ satisfies the following properties:
\begin{enumerate}
\item[(a)] $H$ is the disjoint union of two $2\pi i$-periodic Jordan domains $H_0$ and $H_\infty$ containing, respectively, a left and a right half-plane;
\item[(b)] every component of $\mathcal T$ is an unbounded Jordan domain with real parts either bounded below and unbounded from above or unbounded below and bounded from above;
\item[(c)] the components of $\mathcal T$ have disjoint closures and accumulate only at zero and infinity;
\item[(d)] for every component $T$ of $\mathcal T$, $F:T\to H$ is a conformal isomorphism;
\item[(e)] for every component $T$ of $\mathcal T$, $\exp_{|T}$ is injective;
\item[(f)] $\mathcal T$ is invariant under translation by $2\pi i$.
\end{enumerate}
Moreover, there exists a curve $\delta\subseteq \C^*\setminus \overline{\mathcal V}$ joining zero to infinity, where $\mathcal V=\exp \mathcal T$.
\label{thm:logarithmic-coord}
\end{thm}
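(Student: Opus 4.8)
The plan is to derive properties (a)--(f) and the existence of the separating curve $\delta$ more or less directly from the definitions, transporting known facts about logarithmic coordinates for the entire class $\B$ to the present setting by working near each essential singularity separately. First I would fix the annulus $A$ containing $S(f)$ and the two complementary domains $W_0,W_\infty$; by Lemma~\ref{lem:tracts} every tract $V$ of $f$ is either doubly connected and mapped properly onto $W_i$, or simply connected and mapped as a universal covering. The crucial preliminary observation is that, because $f$ has a logarithmic singularity over $0$ and over $\infty$ (stated just before the definition of logarithmic tract), the tracts that matter are the simply connected ones: a tract whose closure in $\CR$ contains, say, $\infty$ and which is doubly connected would force $f$ restricted to it to be a finite-degree cover of $W_\infty$, which is incompatible with $\infty$ being an essential singularity. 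Hence after passing to $\mathcal V$ and taking $\exp^{-1}$ we may assume every component $T$ of $\mathcal T$ is simply connected and $\exp|_T$ is injective, which is exactly (e); and (f) is immediate since $\mathcal T=\exp^{-1}(\mathcal V)$ is the preimage of a set under $\exp$.

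Next, (a): $H_0$ and $H_\infty$ are $\exp^{-1}$ of the two components of $\C^*\setminus A$. Since $A$ is a topological annulus bounded away from $0$ and $\infty$, $W_0$ and $W_\infty$ are simply connected domains in $\C^*$ whose boundaries are Jordan curves, so $H_0=\exp^{-1}(W_0)$ and $H_\infty=\exp^{-1}(W_\infty)$ are $2\pi i$-periodic simply connected domains; because $W_0$ contains a punctured neighbourhood of $0$ and $W_\infty$ a neighbourhood of $\infty$ in $\CR$, $H_0$ contains a left half-plane $\mathbb H_r^-$ and $H_\infty$ a right half-plane $\mathbb H_r^+$, and each is a Jordan domain (its boundary is a single properly embedded curve, the lift of $\p W_i$, together with its two ends at $\pm\infty$ in the imaginary direction). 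For (b)--(d) I would argue tract by tract: if $T$ is a component of $\mathcal T_\infty^\infty$ say, then $\exp|_T$ is injective and $\exp(T)=V$ is a tract of $f$ with $\infty\in\widehat V$, so $V$ is an unbounded simply connected domain on which $f$ is a universal cover of $W_\infty$; lifting through $\exp$ on the target, $F:T\to H_\infty$ is a conformal isomorphism, which is (d). Then $T$ is a Jordan domain because $H_\infty$ is (conformal isomorphisms of Jordan domains extend to the closures by Carathéodory, using that $\p H_\infty$ is locally connected), giving part of (b). The statement about real parts in (b) follows because $V$ accumulates only at $\infty$ (resp.\ $0$), so $\exp^{-1}(V)=T$ is confined to a region where $|z|$ is large (resp.\ small), i.e.\ where $\re w$ is bounded below (resp.\ above); boundedness on the other side is because $V\subseteq \C^*$ stays away from the \emph{other} singularity. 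Property (c) is the transported version of the corresponding statement for $\B$: the tracts of $f$ in $\C^*$ have pairwise disjoint closures (they are distinct components of $f^{-1}(W_0\cup W_\infty)$ and $\overline{W_0}\cap\overline{W_\infty}=\emptyset$ in $\C^*$) and accumulate only at $0,\infty$ since $f$ is holomorphic on $\C^*$ and $\mathcal V=f^{-1}(W)$ is closed in $\C^*$; applying $\exp^{-1}$ preserves disjointness of closures because $\exp$ is a covering and each $\overline T$ is injectively embedded.

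Finally, for the separating curve $\delta$: in $\C^*$, the complement $\C^*\setminus\overline{\mathcal V}$ contains $A$ and, more to the point, contains points arbitrarily close to $0$ and to $\infty$ that are not in any tract, since the tracts all get mapped \emph{into} $W_0\cup W_\infty$ and cannot cover a full neighbourhood of either essential singularity (again because each singularity is essential, not a pole of finite degree). Concretely I would take a point $a_0\in W_0$ close to $0$ and $a_\infty\in W_\infty$ close to $\infty$ lying in the range-complement appropriately, join them through $A$ by an arc avoiding $\overline{\mathcal V}$ — possible because $A$ is connected and open and $\overline{\mathcal V}\cap A=\emptyset$ — and extend the two ends to $0$ and $\infty$ along radial-type arcs inside $W_0\setminus\overline{\mathcal V}$ and $W_\infty\setminus\overline{\mathcal V}$ respectively; the existence of such arcs reduces to the fact that near each essential singularity $\overline{\mathcal V}$ omits a curve reaching the singularity, which is guaranteed by the structure of logarithmic singularities together with property (c).

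\textbf{Main obstacle.} I expect the genuine work to be in making (c) precise — that the tracts have \emph{disjoint closures} (not merely disjoint interiors) and that they accumulate \emph{only} at $0,\infty$ — and in upgrading each $F:T\to H$ from ``conformal onto its image'' to ``conformal onto all of $H_i$,'' i.e.\ surjectivity of the logarithmic transform on each tract. Both are the $\C^*$-analogues of standard facts in the Eremenko--Lyubich theory for $\B$, and the argument should follow \cite{eremenko-lyubich92} closely, but one must be careful that the two essential singularities do not interact: the key point each time is that a tract with, say, $\infty$ in its closure is bounded away from $0$, so all the entire-case estimates apply verbatim in a punctured neighbourhood of the relevant singularity. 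The curve $\delta$ is then a soft topological consequence and should not present serious difficulty.
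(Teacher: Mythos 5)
Your overall route is the paper's route: transport the Eremenko--Lyubich picture to $\C^*$ by working near each essential singularity separately and using that $\exp$ is a universal cover. Your treatment of (a), (d), (e), (f) is essentially the argument the paper compresses into the single sentence ``these properties follow easily from the fact that the exponential map is a holomorphic cover,'' and it is a correct unwinding of that sentence. The preliminary observation that the doubly-connected alternative in Lemma~\ref{lem:tracts} cannot occur for a tract over an essential singularity is the right reduction. So the easy parts (a)--(f) are fine, modulo one unjustified assertion noted below.

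There is, however, a genuine gap in your construction of the separating curve $\delta$, and your ``Main obstacle'' paragraph mis-locates the difficulty. You dismiss $\delta$ as ``a soft topological consequence'' and place the real work in upgrading $F\colon T\to H$ to a bijection, but the latter is automatic once you know $f\colon V\to W_i$ and $\exp\colon T\to V$, $\exp\colon H_i\to W_i$ are all universal covers: a lift of a universal covering through a universal covering is a homeomorphism. The actual difficulty is precisely $\delta$ when $\mathcal V$ has infinitely many tracts. Your argument at that point (``near each essential singularity $\overline{\mathcal V}$ omits a curve reaching the singularity, which is guaranteed by the structure of logarithmic singularities together with property (c)'') merely restates the claim: accumulating only at $0,\infty$ and having disjoint closures does not by itself produce a curve through the residual set, since infinitely many tract closures can accumulate in complicated ways on the circle of prime ends at a singularity. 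The paper's proof is explicit that the finite-tract case follows from (b) and (c), but that the general case requires Carath\'eodory's theorem together with local connectivity of $\mathcal V$, for which it cites \cite[Lemma~2.1]{benini-fagella15}. Without some such local-connectivity input, your extension of the arc from $A$ out to $0$ and to $\infty$ is unjustified.

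Separately, your parenthetical justification of the disjoint-closures part of (c) --- ``they are distinct components of $f^{-1}(W_0\cup W_\infty)$ and $\overline{W_0}\cap\overline{W_\infty}=\emptyset$'' --- only rules out a tract over $W_0$ touching a tract over $W_\infty$. It does not rule out two distinct components of $f^{-1}(W_\infty)$, say, sharing a boundary point; for that you need that $f$ is a local homeomorphism on $\partial\mathcal V$ (e.g.\ by choosing $A$ so that $\partial A$ avoids the critical values, or by arguing via the lifted Jordan boundaries), and then observing that a point where two tracts meet would have a neighbourhood mapping homeomorphically to a neighbourhood of a point of $\partial W_\infty$, forcing the two tracts to coincide locally. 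This is standard but should be said.
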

\begin{proof}
These properties follow easily from the fact that the exponential map is a holomorphic cover and, in particular, a local homeomorphism. The fact that there exists a curve $\delta\subseteq \C^*\setminus \overline{\mathcal V}$ joining zero to infinity is a straight consequence from (b) and (c) in the case that $\mathcal V$ consists of finitely many tracts. Otherwise, this follows from Carath\'eodory's theorem and the fact that $\mathcal V$ is locally connected (see \cite[Lemma~2.1]{benini-fagella15}). Hence, we can define a continuous branch of the logarithm on $\mathcal T$.
\end{proof}

We denote by $\BL^*$ the class of holomorphic functions $F:\mathcal T\to H$ satisfying properties (a) to (f) in Theorem \ref{thm:logarithmic-coord}, regardless of the fact that they come from a function $f\in\B^*$ or not. The main advantage of working in the class $\BL$ defined in \cite{rrrs11} or, in our case, the class $\BL^*$, is that functions satisfy the following expansivity property \eqref{eq:expansivity} which implies that points in $I(f)$ eventually escape at an exponential rate.  

\begin{lem}
\label{lem:expansivity}
Let $F:\mathcal T\to H$ be a function in the class $\BL^*$. There exists $R>0$ sufficiently large such that if $|\re F(z)|\geqslant R$, then
$$
|F'(z)|\geqslant \frac{1}{4\pi}|\re F(z)|-R.
$$
In particular, there exists $R_0\!=\!R_0(F)\!>\!0$ so that
\begin{equation}
|F'(z)|\geqslant 2 \quad \mbox{ for } |\re F(z)|\geqslant R_0.
\label{eq:expansivity}
\end{equation}\vspace{-10pt}
\index{expansivity property}
\end{lem}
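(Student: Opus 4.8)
The plan is to mimic the classical Eremenko--Lyubich argument \cite[Lemma~1]{eremenko-lyubich92}, which in the entire setting derives expansivity from the fact that each tract is mapped conformally onto a half-plane, together with the Koebe distortion theorem applied in the hyperbolic metric. First I would fix a component $T$ of $\mathcal T$ and use property (d) of Theorem~\ref{thm:logarithmic-coord} to consider the inverse branch $G=(F|_T)^{-1}:H_i\to T$, where $i\in\{0,\infty\}$ is the type of the tract, so that $G$ is a conformal isomorphism onto $T$. Since $H_i$ is a $2\pi i$-periodic Jordan domain containing a half-plane (property (a)), and since $T$ is contained in a half-plane of the form $\{\re z>c\}$ or $\{\re z<-c\}$ by property (b), while the tracts have disjoint closures (property (c)), the image $G(w)$ stays at Euclidean distance $\pi$ from $\partial T$ provided $|\re w|$ is large (using $2\pi i$-periodicity and (f)).

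Next I would estimate $|G'(w)|$. By the Koebe one-quarter theorem applied to $G$ on a disk $D(w,r)\subseteq H_i$ of the largest admissible radius (which is bounded below by a constant, say comparable to the distance from $w$ to the boundary half-plane of $H_i$, hence $\gtrsim |\re w|$ once $|\re w|$ is large enough), we get $\dist(G(w),\partial T)\geqslant \tfrac14 |G'(w)|\cdot r$. Since the left-hand side is at most some fixed bound independent of $w$ — here is where $2\pi i$-periodicity of $T$ and the fact that $\exp_{|T}$ is injective (property (e)), so that $T$ has "width" at most $2\pi$ in the imaginary direction, come in — we deduce $|G'(w)|\leqslant C/|\re w|$ for $|\re w|\geqslant R$ and a universal constant $C$; the constant $1/(4\pi)$ in the statement comes from tracking the $\tfrac14$ and the $2\pi$ explicitly. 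Writing $z=G(w)$, i.e. $w=F(z)$, this reads $|F'(z)|=1/|G'(F(z))|\geqslant |\re F(z)|/C \geqslant \tfrac1{4\pi}|\re F(z)|$, and absorbing the error terms from the crude bounds on the disk radius gives $|F'(z)|\geqslant \tfrac1{4\pi}|\re F(z)|-R$ for $|\re F(z)|\geqslant R$, which is the first assertion. The second assertion \eqref{eq:expansivity} is then immediate: choosing $R_0\geqslant 4\pi(2+R)$ forces $|F'(z)|\geqslant 2$ whenever $|\re F(z)|\geqslant R_0$.

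The main obstacle, compared with the entire case, is handling \emph{both} essential singularities simultaneously and the fact that $\mathcal T$ may have infinitely many components, with tracts of all four types $\mathcal T_0^0,\mathcal T_0^\infty,\mathcal T_\infty^0,\mathcal T_\infty^\infty$ accumulating at $0$ and $\infty$ (property (c)). One must check that the lower bound on the admissible disk radius in $H_i$ and the upper bound on $\dist(G(w),\partial T)$ are \emph{uniform} over all tracts; this uniformity is exactly what properties (a)--(f) guarantee, since $H_0$ and $H_\infty$ are fixed Jordan domains containing fixed half-planes, every tract has imaginary-direction width at most $2\pi$ by (e)--(f), and the real parts of points in a tract tend to $\pm\infty$ by (b). Thus no tract can be "thin" in a way that would degrade the distortion estimate near $0$ or $\infty$, and a single constant works for all of them. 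I would also remark that the argument is insensitive to whether $F$ arises from an actual $f\in\B^*$ or is merely an abstract member of $\BL^*$, which is why the lemma is stated for the whole class $\BL^*$.
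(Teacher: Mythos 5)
Your proposal is correct and follows essentially the same route the paper indicates. The paper does not spell out the argument; it only remarks that the proof is that of \cite[Lemma~1]{eremenko-lyubich92}, relying on properties (a), (d), (e) of Theorem~\ref{thm:logarithmic-coord} together with Koebe's $1/4$-theorem — which is exactly what you carry out: invert $F$ on a tract to get a conformal map $G\colon H_i\to T$, use the half-plane inside $H_i$ (property (a)) to find a disk $D(w,r)\subseteq H_i$ with $r\gtrsim|\re w|$, bound $\dist(G(w),\partial T)$ using injectivity of $\exp_{|T}$ (property (e)), and apply Koebe to obtain $|G'(w)|\leqslant 4\pi/r$ and hence the stated lower bound for $|F'|$. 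One small imprecision: property (e) does not literally say that $T$ has imaginary width at most $2\pi$ (the tract can drift vertically as $\re z$ varies); what it gives is that $T$ contains no two points differing by $2\pi i$, hence no disk of radius greater than $\pi$, so $\dist(G(w),\partial T)\leqslant\pi$. That is the bound actually used, and your computation tracks the constants correctly through to $|F'(z)|\geqslant\frac{1}{4\pi}|\re F(z)|-R$ and then \eqref{eq:expansivity}. Your closing observations on uniformity across the four tract families and on the lemma being an abstract statement about $\BL^*$ rather than about functions arising from $\B^*$ are both apt.
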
 

See \cite[Lemma 1]{eremenko-lyubich92} for the original result for entire functions. The proof relies on properties (a), (d) and (e) of logarithmic transforms, which are common in both settings, and Koebe's 1/4-theorem. 

Sullivan proved that rational maps have no wandering domain \cite{sullivan85}. Following this result, Keen \cite{keen88}, Kotus \cite{kotus87} and Makienko \cite{makienko87} proved independently that transcendental self-maps of $\C^*$ with finitely many singular values have no wandering domains. In \cite{kotus87}, Kotus also showed that finite-type maps in $\C^*$ have no Baker domains. Here we show that bounded-type functions have no escaping Fatou component adapting the proof that Eremenko and Lyubich gave for class $\B$ \cite[Theorem 1]{eremenko-lyubich92}.

\begin{proof}[Proof of Theorem \ref{thm:I4}]
Suppose to the contrary that there is $z_0\in F(f)\cap I(f)$. Then, by normality, there exists some $R>0$ so that $B_0:=B(z_0,R)\subseteq F(f)\cap~I(f)$. Since the sets $B_n:=f^n(B_0)$ are escaping, they are eventually contained in the tracts of $f$ as $n\rightarrow \infty$ and we can assume without loss of generality that $B_n\subseteq \mathcal V$ for all $n\geqslant 0$. Let $C_0:=\log B_0$ and let $C_n:=F^n(C_0)$ for all $n\geqslant 0$. Then \mbox{$\exp(C_n)=B_n$} accumulates to $\{0,\infty\}$ as $n\rightarrow \infty$ and hence $|\re C_n|\rightarrow +\infty$ uniformly as $n\rightarrow \infty$. Take any $\zeta_0\in C_0$ and, for all $n>0$, let $\zeta_n:=F^n(\zeta_0)\in C_n$ and set $d_n:=\mbox{dist}(\zeta_n, \partial C_n)$. Koebe's 1/4-theorem tells us that
$$
d_{n+1}\geqslant \dfrac{1}{4}d_n|F'(\zeta_n)|.
$$
As $n\rightarrow \infty$, since $|\re F(\zeta_n)|\rightarrow +\infty$, by Lemma \ref{lem:expansivity} we have $|F'(\zeta_n)|\rightarrow +\infty$ and hence $d_n\rightarrow +\infty$. But this contradicts property (e) of functions in the class $\BL^*$ because $\mathcal T$ does not contain any vertical segment of length $2\pi$.
\end{proof}

%Denote by $F_T^{-1}$ the inverse of the conformal isomorphism $F:T\to H$. Suppose that $R>0$ is such that $\mathbb H_R^2\subseteq H$.
%
%\begin{proof}
%Let $T$ be a tract of $F$ mapping onto $H_\infty=\mathbb H_R$ for some $R>0$, the case where $F(T)$ is a left half plane is analogous. Consider, for every $z\in T$, the map $\psi:\D\to D(F(z),\ |\re F(z)|-R)$ given by
%$$
%\psi(\zeta):=F(z)+\zeta\bigl(|\re F(z)|-R\bigr).
%$$
%Since $F_T^{-1}\circ \psi:\D \to T$ is a conformal isomorphism, by Koebe 1/4-theorem, 
%$$
%D((F_T^{-1}\circ \psi)(0),\ |(F_T^ {-1}\circ\psi)'(0)|/4)\subseteq (F_T^{-1}\circ \psi)(\D),
%$$
%where
%$$
%(F_T^ {-1}\circ \psi)'(0)=\frac{|\re F(z)|-R}{F'(z)}.
%$$
%Hence,
%$$
%D\left(z,\ \frac{|\re F(z)|-R}{4|F'(z)|}\right)\subseteq F_T^ {-1}(D(F(z),\ |\re F(z)|-R))\subseteq T.
%$$
%Property (e) in the definition of $\BL^*$ says that $\exp|_T$ is univalent, then since the exponential is a $2\pi i$-periodic function $T$ cannot contain any vertical segment of length $2\pi$. Then,
%$$
%\frac{|\re F(z)|-R}{4|F'(z)|}\leqslant \pi \quad \Leftrightarrow \quad \frac{1}{4\pi}|\re F(z)|-R\leqslant |F'(z)|.
%$$
%Observe that there exists always a value $R_0>R$ such that if $|\re F(z)|\geqslant R_0$ then
%$$
%|F'(z)|\geqslant \frac{1}{4\pi}|\re F(z)|-R\geqslant \frac{1}{4\pi}|R_0-R|\geqslant 2,
%$$
%it is enough to pick $R_0>8\pi+R$. 
%\end{proof}

Property (a) in Theorem \ref{thm:logarithmic-coord} says that the set $H$ contains the union of two half-planes of the form
$$
\mathbb H_R^\pm:=\{z\in\C\ :\ |\re z|<R\}=\mathbb H_R^-\sqcup \mathbb H_R^+
$$
for some $R>0$. We call $F$ normalised if $H=\mathbb H_R^\pm$ for some $R>0$ and $F$ satisfies the expansivity property \eqref{eq:expansivity}.      

\begin{dfn}[Normalisation]
We say that a logarithmic transform $F:\mathcal T\rightarrow H$ in $\BL^*$ is \textit{normalised} if $\overline{\mathcal T}\cap \{z\in\C\ :\ \re z=0\}=\emptyset$, the set $H=H_R^\pm$ for some $R>0$ and the expansivity property \eqref{eq:expansivity} is satisfied in all $H$. We denote this class of functions by $\BL^{*n}$.
\end{dfn}

Logarithmic transforms of transcendental entire functions can be normalised so that $H$ is the right half-plane $\mathbb H$. In contrast, in the punctured plane, when we say that $F$ is normalised we need to specify the constant $R$. The next lemma shows that we can always assume that $F$ is in the class $\BL^{*n}$ by restricting the function to a smaller set.

\begin{lem}
Let $F:\mathcal T\rightarrow H$ be a function in the class $\BL^*$. There exists a constant \mbox{$R=R(F)>0$} such that $\mathbb H_R^\pm\subseteq H$ and the restriction of $F$ to $F^{-1}(\mathbb H_R^\pm)$ is a normalised logarithmic transform.
\label{lem:normalisation}
\end{lem}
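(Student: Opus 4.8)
The plan is to produce the required constant $R$ and then check that restricting $F$ to the preimage of the doubly-infinite strip complement $\mathbb H_R^\pm$ yields an element of $\BL^{*n}$; there is nothing deep here, only a bookkeeping argument using the properties already established in Theorem~\ref{thm:logarithmic-coord} and Lemma~\ref{lem:expansivity}. First I would invoke property~(a) of Theorem~\ref{thm:logarithmic-coord}: since $H_0$ and $H_\infty$ are $2\pi i$-periodic Jordan domains containing, respectively, a left and a right half-plane, there is some $R_1>0$ with $\mathbb H^-_{R_1}\subseteq H_0$ and $\mathbb H^+_{R_1}\subseteq H_\infty$, hence $\mathbb H^\pm_{R_1}\subseteq H$. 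Then I would apply Lemma~\ref{lem:expansivity} to obtain $R_0=R_0(F)$ so that $|F'(z)|\geqslant 2$ whenever $|\re F(z)|\geqslant R_0$. Finally, using property~(c) (the components of $\mathcal T$ have disjoint closures and accumulate only at $0$ and $\infty$) together with the curve $\delta\subseteq\C^*\setminus\overline{\mathcal V}$ joining the two essential singularities, the preimage $\exp(\mathcal T)=\mathcal V$ omits a curve from $0$ to $\infty$; lifting by $\exp$ and translating by a suitable multiple of $2\pi i$ on each tract (as allowed by the Remark following Definition~\ref{def:tracts}), one sees that after such a recoordinatisation $\overline{\mathcal T}$ avoids the imaginary axis. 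I would set $R:=\max\{R_0,R_1\}$, possibly enlarged so that this last clearance condition holds.

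Next I would define $G:=F|_{F^{-1}(\mathbb H^\pm_R)}$ with domain $\mathcal T':=F^{-1}(\mathbb H^\pm_R)$ and codomain $H':=\mathbb H^\pm_R$, and verify properties (a)--(f) for $G$. Property~(a) is immediate since $\mathbb H^\pm_R=\mathbb H^-_R\sqcup\mathbb H^+_R$ is a disjoint union of two $2\pi i$-periodic (in fact genuine half-plane) Jordan domains. For~(d): given a component $T$ of $\mathcal T$, the map $F\colon T\to H$ is a conformal isomorphism onto $H$ (either $H_0$ or $H_\infty$); the component of $\mathcal T'$ inside $T$ is $T':=(F|_T)^{-1}(\mathbb H^\pm_R\cap H_i)=(F|_T)^{-1}(\mathbb H^-_R)$ or $(F|_T)^{-1}(\mathbb H^+_R)$, a preimage of a half-plane under a conformal map, hence a Jordan domain, and $F\colon T'\to \mathbb H^\pm_R$ (onto the appropriate component) is a conformal isomorphism. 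Property~(b) — each component of $\mathcal T'$ is an unbounded Jordan domain with real parts bounded on one side — follows because $T'\subseteq T$ and real parts on $T$ already have the stated one-sided boundedness; unboundedness of $T'$ follows since $F\colon T'\to\mathbb H^\pm_R$ is onto an unbounded set and $F$ is continuous. Property~(c) for $\mathcal T'$ is inherited from $\mathcal T$ since $\mathcal T'\subseteq\mathcal T$ (disjoint closures remain disjoint; accumulation still only at $0,\infty$). Property~(e), injectivity of $\exp|_{T'}$, follows from injectivity of $\exp|_T$. Property~(f), invariance of $\mathcal T'$ under translation by $2\pi i$, follows from the same property of $\mathcal T$ together with the $2\pi i$-periodicity of $H'$ and of $F$ on the translated tract.

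Finally I would check the two extra clauses in the definition of $\BL^{*n}$. The clearance condition $\overline{\mathcal T'}\cap\{\re z=0\}=\emptyset$ is inherited from $\overline{\mathcal T}\cap\{\re z=0\}=\emptyset$, which we arranged above by the translation normalisation; and the expansivity property~\eqref{eq:expansivity} holds throughout $H'=\mathbb H^\pm_R$ because on $\mathcal T'=F^{-1}(\mathbb H^\pm_R)$ we have $|\re F(z)|\geqslant R\geqslant R_0$, so Lemma~\ref{lem:expansivity} gives $|F'(z)|\geqslant 2$ everywhere on $\mathcal T'$. This completes the verification that $G\in\BL^{*n}$.

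The only genuinely delicate point — and the one I would treat carefully rather than routinely — is the recoordinatisation that forces $\overline{\mathcal T}$ off the imaginary axis. One must observe that the freedom to add a tract-dependent integer multiple of $2\pi i$ to $F$ (pointed out in the Remark after Definition~\ref{def:tracts}) lets us reposition the $2\pi i$-orbit of each tract, and that by property~(c) together with the separating curve $\delta$ there is a definite gap in $\mathcal V=\exp\mathcal T$ around a ray from $0$ to $\infty$; its logarithm is a $2\pi i$-periodic strip-like region avoiding a vertical line, into which we can slide the coordinate so that no closure of a tract meets $\{\re z=0\}$. Since only finitely many tracts (in the finite case) or a locally finite family (in general, by local connectivity of $\mathcal V$) are involved near any bounded portion of the plane, this can be done consistently; the details are routine once this observation is in place.
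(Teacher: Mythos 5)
Most of your verification of properties (a)--(f) is routine and correct, but the step you yourself flag as ``the only genuinely delicate point'' is handled incorrectly, and this is precisely where the real content of the lemma lies.

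You propose to force $\overline{\mathcal T}$ off the imaginary axis by adding a tract-dependent integer multiple of $2\pi i$ to $F$, invoking the Remark after Definition~\ref{def:tracts}. That freedom changes the \emph{images} $F(T)$, not the tracts $T$ themselves: $\mathcal T = \exp^{-1}(\mathcal V) = \exp^{-1}\bigl(f^{-1}(W)\bigr)$ is determined once $W$ is fixed, and adding $2\pi i k$ to a branch of the logarithm composing with $f\circ\exp$ leaves the domain $\mathcal T$ unmoved. Nor does the vertical $2\pi i$-periodicity of $\mathcal T$ let you slide anything: the imaginary axis is vertical, so a vertical shift cannot avoid it. In fact there is no recoordinatisation that moves $\overline{\mathcal T}$ off $\{\re z = 0\}$; a tract of $\mathcal T_\infty$ may perfectly well dip into the left half-plane before heading to $+\infty$, and no admissible change of $F$ will remove that overlap. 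The clearance condition in the definition of $\BL^{*n}$ has to be achieved by \emph{restriction}, not by a change of coordinates.

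The argument you are missing is the following. Each component of $\mathcal T_0$ has real parts bounded above, and each component of $\mathcal T_\infty$ has real parts bounded below (Theorem~\ref{thm:logarithmic-coord}(b)); moreover every tract lies in a single fundamental strip $B_n$ cut out by $\exp^{-1}(\delta)$. Hence $X_n := \mathcal T_0\cap B_n\cap\mathbb H^+$ and $Y_n := \mathcal T_\infty\cap B_n\cap\mathbb H^-$ are \emph{bounded} sets, so $F(X_n)$ and $F(Y_n)$ have bounded real part. By $2\pi i$-periodicity (Theorem~\ref{thm:logarithmic-coord}(f)), the $F(X_n)$ are vertical translates of $F(X_0)$, so the real-part bound is uniform in $n$: there is $R_1>0$ with $\bigl(F(\mathcal T_0\cap\mathbb H^+)\cup F(\mathcal T_\infty\cap\mathbb H^-)\bigr)\cap\mathbb H_{R_1}^\pm=\emptyset$. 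Taking $R:=\max\{R_0,R_1\}$, with $R_0$ from Lemma~\ref{lem:expansivity}, no point of $F^{-1}(\mathbb H_R^\pm)$ can lie in $\mathcal T_0\cap\overline{\mathbb H^+}$ or $\mathcal T_\infty\cap\overline{\mathbb H^-}$, so the restricted domain clears the imaginary axis. Once this is in place, the rest of your verification (properties (a)--(f) and the expansivity threshold) goes through essentially as you wrote it.
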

\begin{proof}
Suppose that $F$ is not normalised. Let $\{B_n\}$, $n\in\Z$, denote the connected components of the set $\C\setminus \exp^{-1}(\delta)$, where $\delta$ is the curve from Theorem~\ref{thm:logarithmic-coord}. For $n\in\N$, the sets
$$
X_n=\mathcal T_0\cap B_n\cap \mathbb H^+,\quad Y_n=\mathcal T_\infty\cap B_n\cap  \mathbb H^-,
$$
are bounded and hence their images $F(X_n)$ and $F(Y_n)$ have bounded real part. All the sets $F(X_n)$ and $F(Y_n)$, $n\in\N$, are vertical translates of $F(X_0)$ and $F(Y_0)$ and hence $F(\mathcal T_0\cap \mathbb H^+)$ and $F(\mathcal T_\infty\cap \mathbb H^-)$ have bounded real part. Therefore, there exists $R_1>0$ sufficiently large such that
$$
\bigl(F(\mathcal T_0\cap \mathbb H^+)\cup F(\mathcal T_\infty\cap \mathbb H^-)\bigr)\cap \mathbb H_{R_1}^\pm=\emptyset.
$$
Then, if $R_0=R_0(F)>0$ is the constant from Lemma~\ref{lem:expansivity} so that $|F'(z)|>2$ if $|\re F(z)|\geqslant R_0$, it is enough to put $R:=\max\{R_0,R_1\}$.
\end{proof}

The following lemma is a stronger version of the expansivity property \eqref{eq:expansivity} for functions in $\BL^{*n}$, and says that escaping orbits eventually separate at an exponential rate. The construction in the proof of \cite[Lemma 3.1]{rrrs11} can be adapted easily to this setting.

\begin{lem}
Let $F:\mathcal T\rightarrow H$ be in the class $\BL^{*n}$ with $H=\mathbb H_R^\pm$ for some $R>0$. If $T$ is a tract of $F$ and $z,w\in T$ are such that $|z-w|\geqslant 8\pi$ then
$$
|F(z)-F(w)|\geqslant \exp\left(\frac{|z-w|}{8\pi}\right)\cdot \bigl(\min\{|\re F(z)|,\ |\re F(w)|\}-R\bigr).
$$\vspace{-5pt}
\label{lem:strong-expansivity}
\end{lem}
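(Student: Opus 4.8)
The plan is to follow the proof of \cite[Lemma~3.1]{rrrs11}, combining Koebe's theorem with a comparison of hyperbolic metrics, while keeping the normalisation constant $R$ visible throughout. Since the half-planes $\mathbb H_R^+=\{\zeta\in\C:\re\zeta>R\}$ and $\mathbb H_R^-$ are interchanged by $\zeta\mapsto-\zeta$ (a map that changes neither $|F(z)-F(w)|$ nor $|\re F(z)|,|\re F(w)|$), I may assume that $F$ maps $T$ conformally onto $P:=\mathbb H_R^+$; by property~(d) of Theorem~\ref{thm:logarithmic-coord}, $F|_T$ is a conformal isomorphism onto a component of $H$, so in particular $T$ is a simply connected subdomain of $\C$ with $T\neq\C$. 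Write $a:=F(z)$, $b:=F(w)$, so $a,b\in P$, $|\re F(z)|=\re a>R$, $|\re F(w)|=\re b>R$, and set $m:=\min\{\re a,\re b\}-R>0$ and $G:=(F|_T)^{-1}:P\to T$, so that $G(a)=z$ and $G(b)=w$.

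The geometric input is property~(e) of Theorem~\ref{thm:logarithmic-coord}: since $\exp$ is injective on $T$, the tract $T$ contains no open disk of radius larger than $\pi$ (such a disk would contain two points differing by $2\pi i$), and hence $\dist(p,\partial T)\leqslant\pi$ for every $p\in T$. Denoting by $\rho_T$ and $\rho_P$ the densities of the complete hyperbolic metrics of curvature $-1$ of $T$ and $P$, one has $\rho_P(\zeta)=1/(\re\zeta-R)$, while Koebe's $1/4$-theorem gives the lower bound $\rho_T(p)\geqslant 1/\bigl(2\dist(p,\partial T)\bigr)\geqslant 1/(2\pi)$.

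Now let $\gamma$ be the hyperbolic geodesic of $P$ joining $a$ to $b$. Since $G$ is a conformal isomorphism it is a hyperbolic isometry, so, writing $\ell_{\mathrm{euc}}$ and $\ell_{\mathrm{hyp}}$ for Euclidean and hyperbolic length and combining the two density estimates,
$$
|z-w|\;\leqslant\;\ell_{\mathrm{euc}}\bigl(G(\gamma)\bigr)\;\leqslant\;2\pi\,\ell_{\mathrm{hyp},T}\bigl(G(\gamma)\bigr)\;=\;2\pi\,\ell_{\mathrm{hyp},P}(\gamma)\;=\;2\pi\, d_P(a,b),
$$
where $d_P$ is hyperbolic distance in $P$; hence $d_P(a,b)\geqslant|z-w|/(2\pi)\geqslant 4$. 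Inserting this into the standard formula $\cosh d_P(a,b)=1+|a-b|^2/\bigl(2(\re a-R)(\re b-R)\bigr)$ for a half-plane and using a crude lower bound such as $\cosh t-1\geqslant\tfrac14 e^{t}$ for $t\geqslant 4$ yields
$$
|F(z)-F(w)|^2=|a-b|^2\;\geqslant\;\tfrac12\, e^{\,d_P(a,b)}(\re a-R)(\re b-R)\;\geqslant\;\tfrac12\, e^{\,|z-w|/(2\pi)}\,m^2,
$$
so $|F(z)-F(w)|\geqslant\tfrac1{\sqrt2}\,e^{\,|z-w|/(4\pi)}\,m$; since $|z-w|\geqslant 8\pi$ we have $\tfrac1{\sqrt2}\,e^{\,|z-w|/(4\pi)}\geqslant e^{\,|z-w|/(8\pi)}$, which gives the claim with $m=\min\{|\re F(z)|,|\re F(w)|\}-R$.

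There is no serious obstacle: the argument is essentially that of the entire case, and the only things that need care are the elementary bound $\dist(p,\partial T)\leqslant\pi$ coming from injectivity of $\exp$ on tracts, and the uniform bookkeeping of the constant $R$, which in the punctured-plane setting cannot be normalised away because $H$ is not the full right half-plane. The threshold $|z-w|\geqslant 8\pi$ (rather than the $4\pi$ one obtains directly) is exactly what gives the slack needed to absorb the multiplicative constant $1/\sqrt2$.
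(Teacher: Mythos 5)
Your argument is correct and follows essentially the same route the paper intends, which is simply to cite \cite[Lemma~3.1]{rrrs11} and observe that the construction adapts to this setting. The Koebe/hyperbolic-density comparison (using injectivity of $\exp$ on $T$ to get $\dist(p,\partial T)\leqslant\pi$) together with the explicit hyperbolic distance formula on the half-plane are exactly the intended ingredients, and you have correctly tracked the constant $R$, which is the one genuinely new feature of the $\C^*$ normalisation where $H$ is only $\mathbb H_R^{\pm}$ rather than a full half-plane.
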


Next we introduce a subclass of $\BL^*$ consisting of the functions $F$ for which the image $F(\mathcal T)$ covers the whole of $\overline{\mathcal T}$ and have nicer properties.

\begin{dfn}[Disjoint type]
We say that a function $F:\mathcal T\to H$ in the class~$\BL^*$ is of \textit{disjoint type} if $\overline{\mathcal T}\subseteq H$. 
\end{dfn}

If $f\in \B^*$ and $A=\C^*\setminus W$ is an annulus containing $S(f)$, then $f(\C^*\setminus\mathcal T)\subseteq A$, where $\mathcal T=f^{-1}(W)$. Moreover, if $f$ has a logarithmic transform $F$ that is of disjoint type (with $H=\exp^{-1}(W)$), then $f(A)\subseteq A$. Hence $A\subseteq F(f)$ and, in fact, the set $F(f)$ consists of a single doubly-connected component $U$ which is the immediate basin of attraction of a point in $A$. Note that the classification of doubly-connected Fatou components in \cite[Theorem 4]{baker-dominguez98} does not apply because $U$ is not relatively compact in $\C^*$. 

\begin{rmk}
Independently of \cite{rrrs11}, Bara\'{n}ski showed that the Julia set of bounded-type maps in the class $\B$ consists of disjoint hairs that are homeomorphic to $[0,+\infty)$ (we call them dynamic rays) and that the endpoints of these hairs are the only points in $J(f)$ accessible from $F(f)$ \cite[Theorem C]{baranski07}. 
\end{rmk}

\begin{ex}
\label{ex:disjoint-type}
The function $f(z)=\exp\bigl(0.3(z+1/z)\bigr)$ is in the class $\B^*$ and has a logarithmic transform of disjoint type (see Figure~\ref{fig:disjoint-type}).
%From Theorem~\ref{thm:B-and-Bstar} we deduce that if $|\lambda|$ is small enough then $f(z)=\exp(z+\log\lambda+1/z)$ is a function of bounded type in $\C^*$. Taking $\lambda=e^{-10}$, $f$ has an attracting fixed point in $[0,1]$ 
\end{ex}

\begin{figure}[h!]%\margin{add labels\\ fixed point,\\ and also $S(f)$?\\ ~~\\ ~~\\ Is this also\\ example of\\ $f\in\B^*\setminus\mathcal S^*$?}
\label{fig:disjoint-type}
\includegraphics[width=.49\linewidth]{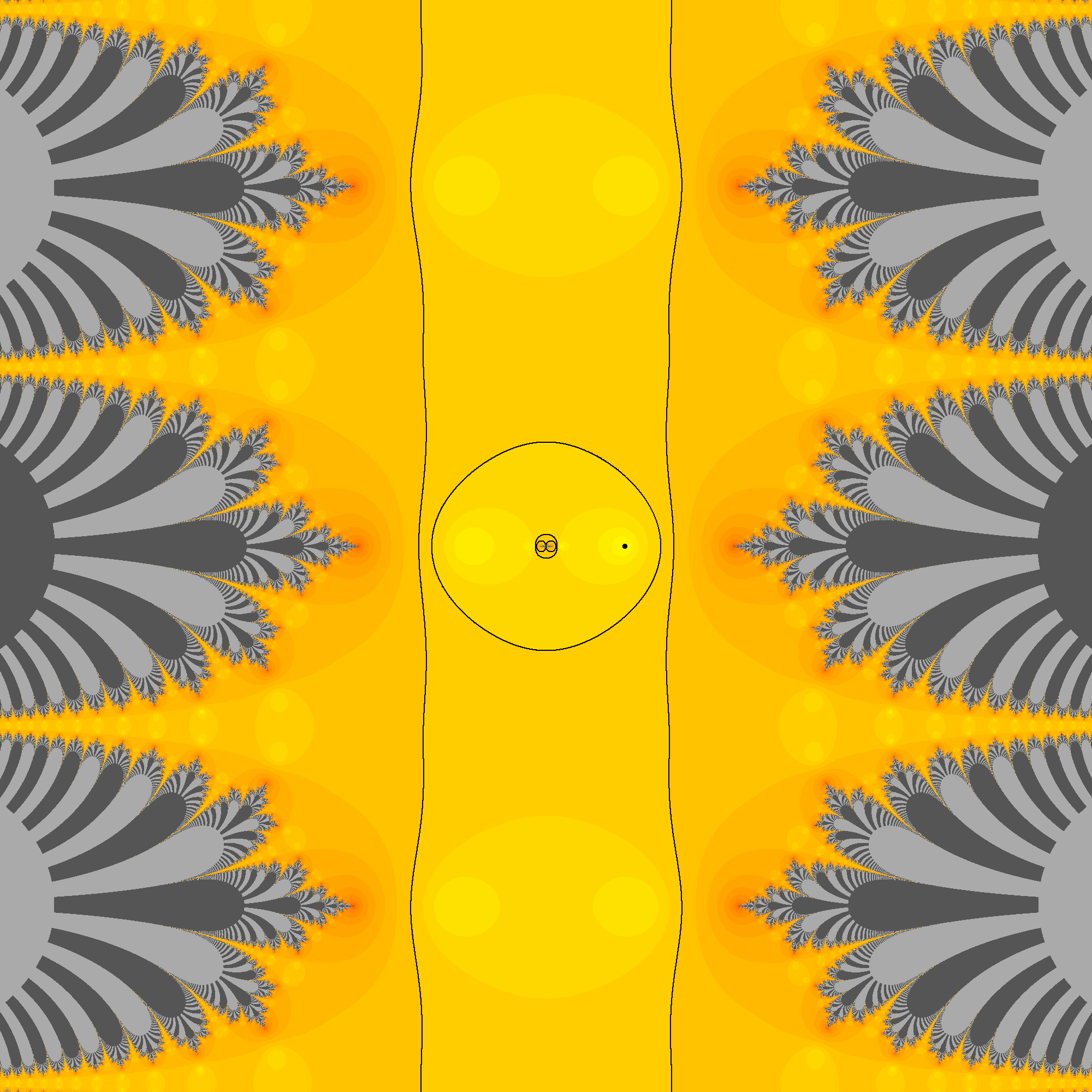}
\includegraphics[width=.49\linewidth]{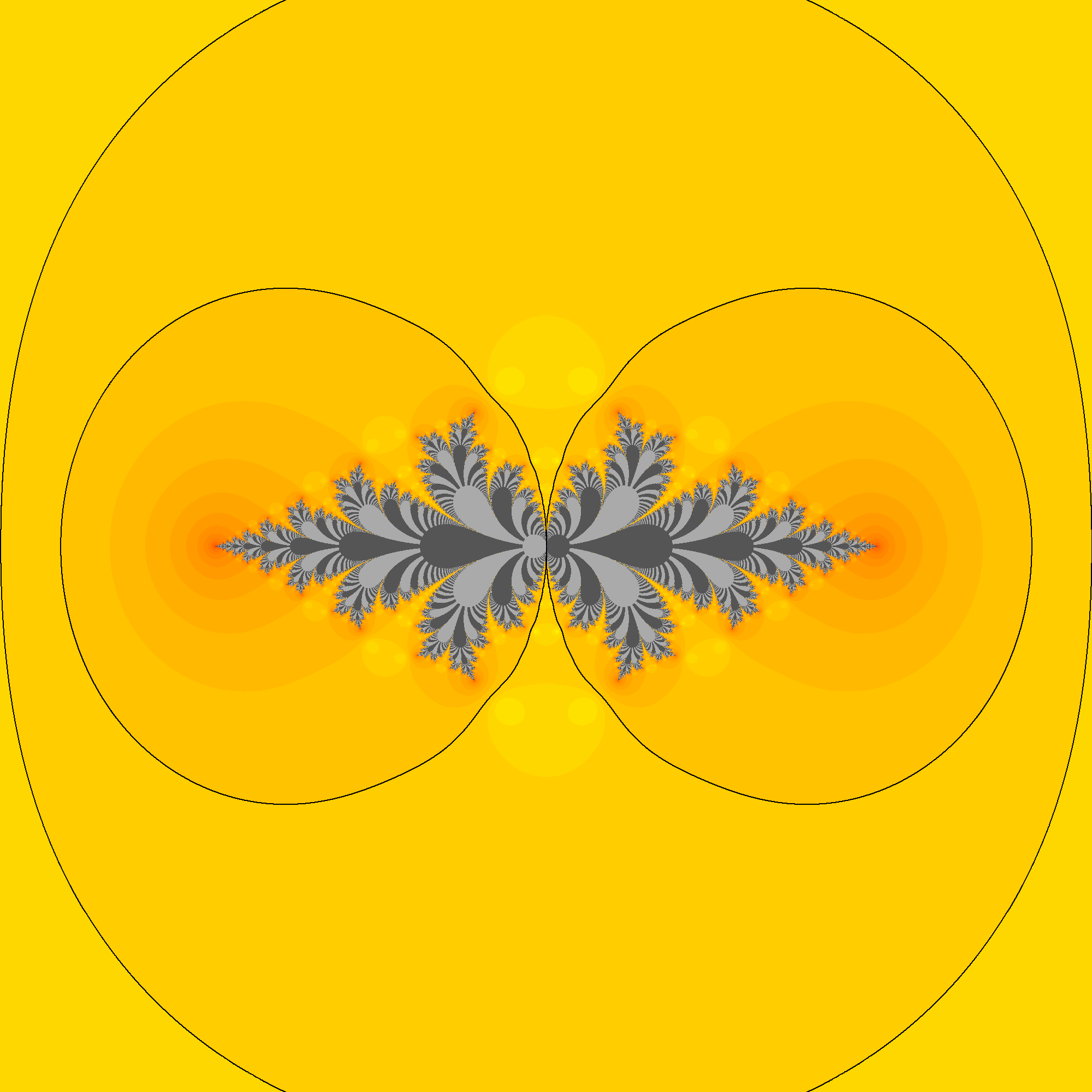}
\caption{Phase space of the function $f(z)=\exp\bigl(0.3(z+1/z)\bigr)$ which has a disjoint-type logarithmic transform (see Example~\ref{ex:disjoint-type}). In orange, the basin of attraction of the fixed point $z_0\simeq 2.2373$.  Left, $z\in [-16,16]+i[-16,16]$; right, $z\in [-0.3,0.3]+i[-0.3,0.3]$.}
\end{figure}

%\begin{figure}[h!]
%\centering
%\includegraphics[width=\linewidth]{imatges/norm-temporal.png}
%\caption{A normalized logarithmic transform $F\in \BL^{*n}$.\red{fer aquesta imatge però normalitzada!}}
%\end{figure}

%Note that given $F:\mathcal T\rightarrow H$ in $\BL^*$ we can always normalize it. By Lemma \ref{lem:non-crossing-tracts} we can choose a suitable set $H$ such that $\mathcal T$ does not intersect a vertical line. \\

Sometimes tracts exhibit nicer properties that make them easier to study. We will see later on that this is the case of finite order functions.

\begin{dfn}[Good geometry properties]
Let $F\in \BL^*$ and let $T$ be a tract of $F$.
\begin{enumerate}
\item[(a)] We say that $T$ has \textit{bounded wiggling} if there exist $K>1$ and $\mu>0$ such that for every $z_0\in \overline{T}$, every point $z$ on the hyperbolic geodesic of $T$ that connects $z_0$ to $\infty$ satisfies 
$$
|\re z|>\frac{1}{K}|\re z_0|-\mu.
$$
In the case $K=1$ and $\mu=0$ we say that $T$ has \textit{no wiggling}. A function $F\in \BL^*$ has \textit{uniformly bounded wiggling} if the wiggling of all tracts of $F$ is bounded by the same constants $K,\mu$. 
\index{bounded wiggling}

\item[(b)] We say that $T$ has \textit{bounded slope} if there exist constants $\alpha,\beta>0$ such that
$$
|\im z-\im w|\leqslant \alpha \max\{|\re z|,|\re w|\}+\beta
$$
for all $z,w\in T$. Equivalently, $T$ contains a curve $\gamma:[0,\infty)\rightarrow T$ such that $|F(\gamma(t))|\rightarrow \pm \infty$ and 
$$
\limsup_{t\rightarrow \infty}\frac{|\im \gamma(t)|}{|\re \gamma(t)|} < \infty.
$$
We say that $T$ has \textit{zero slope} if this limit is zero.
\end{enumerate}
We say $F$ has \textit{good geometry} if the tracts of $F$ have bounded slope and uniformly bounded wiggling.
\label{dfn:good-geometry}
\end{dfn}

\begin{rmk}
\begin{enumerate}
\item[(i)] Observe that it is enough that a tract $T$ from $\mathcal T_{\alpha}$, \mbox{$\alpha\in\{0,\infty\}$}, has bounded slope to ensure that all tracts in $\mathcal T_{\alpha}$ do. We can use the same constants $(\alpha, \beta)$ for $\mathcal T_\infty$ and $\mathcal T_0$: if they have bounded slope with different values $(\alpha_1, \beta_1)$ and $(\alpha_2,\beta_2)$ it is enough to take $\alpha:=\max\{\alpha_1,\alpha_2\}$ and $\beta:=\max\{\beta_1,\beta_2\}$.
\item[(ii)] If $F,G\in \BL^{*n}$ and $G$ has bounded slope, then $G\circ F$ has bounded slope with the same constants as $G$. 
\end{enumerate}
\end{rmk}

\section{Order of growth in $\CS$}

The order of an entire function is defined to be the infimum of $\rho\in\mathbb R\cup \{\infty\}$ such that $\log |f(z)|=\mathcal O(|z|^\rho)$ as $z\to\infty$. Equivalently,
$$
\rho(f)=\limsup_{r\to+\infty} \frac{\log \log M(r,f)}{\log r},
$$
where
$$
M(r,f):=\max_{|z|=r} |f(z)|<+\infty.
$$
Polynomials have order zero and $\exp(z^k)$, $k\in\N$, has order $k$. There are also transcendental entire functions of order zero and of infinite order. 

%\red{ importancia de la noció d'ordre per les funcions enteres, quins articles en parlen.... primer no era important per la dinàmica perque no es preserva pero al final turned out to be... }

When we deal with holomorphic self-maps of $\mathbb C^*$, controlling the growth means looking at how $|f(z)|$ tends to zero or infinity when we approach one of the essential singularities $z=0$ or $z=\infty$. Observe that if $f$ is such map, then $1/f$ is also holomorphic on $\mathbb C^*$, and
$$
m(r,f):=\min_{|z|=r} |f(z)|=\frac{1}{M(r,1/f)}>0.
$$
For simplicity, from now on we will write $M(r)$ and $m(r)$ when it is clear what the function $f$ is. 

%\begin{rmk}
%Throughout this paper, the notation $m(R,f)$ will stand for the minimum modulus as defined above and must not be confused with some mean value of $\log |f(z)|$ over the circle of radius $R$ from Nevanlinna theory.
%\end{rmk}

A priori, the notion of order of growth in this context splits into the following four quantities:
$$
\begin{array}{c}
\ds \rho_{\max}^\infty (f):=\limsup_{r\to+\infty} \frac{\log \log M(r)}{\log r},\qquad  \ds \rho_{\max}^0(f):=\limsup_{r\rightarrow 0} \frac{\log \log M(r)}{-\log r},\vspace{10pt}\\
\ds \rho_{\min}^\infty (f):=\limsup_{r\to+\infty} \frac{\log\bigl(-\log m(r)\bigr)}{\log r},\qquad   \ds \rho_{\min}^0(f):=\limsup_{r\rightarrow 0} \frac{\log\bigl(-\log m(r)\bigr)}{-\log r}.
\end{array}
$$
\noindent
For entire functions, if $f$ has no zeros then $\rho(f)=\rho(1/f)$ as a consequence of the fact that you can write the order in terms of the Nevanlinna characteristic function $T(R,f)$:
$$
\rho (f)=\limsup_{r\to\infty} \frac{\log T(r,f)}{\log r}
$$
and Jensen's formula says that
$$
T(r,f)=T(r,1/f)+\log|f(0)|
$$
(see section 1.2 of \cite{hayman64}). From the general expression of a transcendental self-map of $\C^*$ \cite{bhattacharyya} 
$$
f(z)=z^n\exp(g(z)+h(1/z))
$$
with $n\in\mathbb Z$ and $g,h$ non-constant entire functions, it follows that
$$
\log |f(z)|=n\log |z|+~\re g(z)+~\re h(0)+o(1) \quad \mbox{ as } z\rightarrow \infty,
$$
and therefore
$$
\log M(r,f)=\log M(r, e^g)+O(\log r) \quad \mbox{ as } z\rightarrow \infty.
$$
Note that in a neighbourhood of infinity the term $h(1/z)$ is not relevant and the same happens with $g(z)$ in a neighbourhood of the origin. Then, putting this into our definition of order for $\C^*$ and using Jensen's formula we obtain that
$$
\rho_{\max}^\infty (f)=\rho_{\max}^\infty (e^g)=\rho(e^g)=\rho_{\min}^\infty (e^g)=\rho_{\min}^\infty (f)
$$
and similarly at zero
$$
\rho_{\max}^0 (f)=\rho_{\max}^\infty (e^h)=\rho(e^h)=\rho_{\min}^\infty (e^h)=\rho_{\min}^0 (f).
$$

%\begin{dfn}[Order of growth]
%Let $f$ be a transcendental self-map of $\CS$. We say that $f$ has \textit{finite order} if both
%$$
%\rho_\infty(f):=\rho_{\max}^\infty (f)=\rho_{\min}^\infty (f)\quad \mbox{ and } \quad \rho_0(f):=\rho_{\max}^0 (f)=\rho_{\min}^0 (f)
%$$
%are finite.
%\end{dfn}

\begin{dfn}[Order of growth]
Let $f$ be a transcendental self-map of $\CS$ of the form
$$
f(z)=z^n\exp\bigl(g(z)+h(1/z)\bigr)
$$
with $n\in\Z$ and $g,h$ non-constant entire functions. We say that $f$ has \textit{finite order} if both quantities
$$
\rho_\infty(f):= \rho(e^g)\quad \mbox{ and } \quad \rho_0(f):=\rho(e^h)
$$
are finite.
\end{dfn}

%To compute the order we have to deal with
%$$
%\log M(R,f)=\max_{|z|=R} \log |f(z)| \quad \mbox{and} \quad \log m(R,f)=\min_{|z|=R} \log |f(z)|.
%$$
%that in our case is
%$$
%\log |f(z)|=n\log(|z|)+\re g(z)+\re h(1/z)
%$$
%and hence we have to study the behaviour of the maximum and minimum values of $\re g(z)$ and $\re h(1/z)$ evaluated at a circle of radius $R$. 

%\begin{ex}
%If $g(z)=z^k$, then $\log M(R,f)=O(R^k)$; if $g(z)=\exp z$, then $\log M(R,f)=O(\exp(R))$; and if $g(z)=\sin(z)$, then $\log M(R,f)=O(\cosh R)$.
%\end{ex}

%Concerning the relation between the modulus and the real part of $f(z)$, Wiman-Valiron theory tells us that if $f$ is a transcendental entire function then 
%$$
%\max_{|z|=R} \re f(z) \sim M(R,f)
%$$
%as $R\rightarrow +\infty$ outside a set $E\subseteq (0,+\infty)$ of finite logarithmic measure, i.e. such that
%$$
%|\log E|:=\int_E\frac{dt}{t}<+\infty. 
%$$ 
%On the other hand,
%$$
%\min_{|z|=R}\re f(z)=\max_{|z|=R}\re (-f(z))\sim M(R,-f)=M(R,f)
%$$
%as $R\rightarrow +\infty$ outside a set of finite logarithmic measure. Hence,
%$$
%\rho_{\max}^\infty (f)=\limsup_{R\to\infty} \frac{\log M(R,g)}{\log R} = \rho_{\min}^\infty (f)
%$$
%and in an analogous way $\rho_{\max}^0 (f)=\rho_{\min}^0 (f)$.

\begin{ex}
The functions $f(z)=z^n\exp(P(z)+Q(1/z))$ with $n\in\Z$ and $P,Q\in\C[z]$ have finite order and  $\rho_\infty(f)=\deg P$ and $\rho_0(f)=\deg Q$.
\label{ex:polynomial-type}
\end{ex}

\begin{rmk}
In \cite{keen88} Keen defines the order of such functions using
$$
\widetilde{M}(r,f)=\max_{z\in\partial A_r} |f(z)| \quad \mbox{ and } \quad  \widetilde{m}(r,f)=\min_{z\in\partial A_r} |f(z)|
$$
for $r>0$, where $A_r:=\{z\in\C\ :\ 1/r < |z| < r\}$. It follows from the Maximum principle that $\widetilde{M}(r,f)$ and $\widetilde{m}(r,f)$ are respectively the maximum and minimum of $|f(z)|$ in the whole annulus $A_r$ (in the same way that, for an entire function, we have $M(r)=\max_{z\in D(0,r)}|f(z)|$). In our notation,
$$
\widetilde{M}(r,f)=\max\{M(r),\ M(1/r)\},\quad  \widetilde{m}(r,f)=\min\{m(r),\ m(1/r)\}.
$$
\end{rmk}

%\begin{lem}
%Let $f(z)=z^n\exp(g(z)+h(1/z))$ for $g,h$ non-constant entire functions and $n\in\mathbb Z$. Then 
%$$
%\rho_\infty(f)=\rho(\exp(g(z)))\quad \mbox{ and } \quad \rho_0(f)=\rho(\exp(h(z)))
%$$
%where $\rho$ is the standard order of an entire function.
%\label{lem:compare-orders}
%\end{lem}
%\begin{proof}
%We need to study
%$$
%\log M(R,f)=\max_{|z|=R} \bigl(n\log R+\re g(z)+\re h(1/z)\bigr)
%$$
%but when $R$ tends to $+\infty$, the value of $\re h(1/z)$ tends to the constant $h(0)$, and on the other hand we know that
%$$
%\max_{|z|=R} \re g(z) 
%$$
%grows much faster than $\log R$. Recall that if $|g(z)|<C|z|^k$ for all $|z|>R_0$ then $g(z)$ needs to be a polynomial, and any other entire function grows faster than that. Note that when $z$ tends to zero the roles of $g(z)$ and $h(z)$ are exchanged but we are in the analog situation.
%\end{proof}

Now we will see that, in fact, every holomorphic self-map of $\C^*$ that has finite order necessarily has to be as in Example~\ref{ex:polynomial-type}. We will begin by stating a classical result concerning entire functions of finite order due to P\'{o}lya \cite{polya}. % (see section 14.2 of \cite{hille62v2}).

%\begin{thm}[Polya 1925]
%Let $f$ and $g$ be entire functions, if $g\circ f$ has finite order then either
%\begin{itemize}
%\item $f$ is a polynomial and $g$ has finite order, or
%\item $f$ is not a polynomial but a function of finite order and $g$ has order zero.
%\end{itemize}
%\end{thm}

\begin{lem}
If $f$ is a non-constant entire function of finite order with no zeros, then $f(z)=\exp(h(z))$ and $h$ is a polynomial.
%Every entire function of finite order which has no zeros is the exponential of a polynomial. In particular, if $f$ is an entire function and $\exp(f(z))$ has finite order, then $f(z)$ is a polynomial.
\label{lem:exp-poly}
\end{lem}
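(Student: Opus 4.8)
The statement to prove is Lemma~\ref{lem:exp-poly}: if $f$ is a non-constant entire function of finite order with no zeros, then $f(z)=\exp(h(z))$ where $h$ is a polynomial.

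\medskip

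The plan is to use the Weierstrass/Hadamard factorisation theory for entire functions of finite order. First I would observe that since $f$ is entire and has no zeros, the function $f'/f$ is entire, so $f$ admits a well-defined global analytic logarithm: there exists an entire function $h$ with $f=\exp(h)$. (Concretely, fix a base point $z_0$, choose a value of $\log f(z_0)$, and set $h(z)=\log f(z_0)+\int_{z_0}^z f'(\zeta)/f(\zeta)\,d\zeta$; this is path-independent because $f'/f$ is entire on the simply connected domain $\C$.) It remains to show that $h$ is a polynomial, and this is where the finite-order hypothesis enters.

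\medskip

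The key step is to bound the growth of $h$ using the growth of $f$. Write $h=u+iv$ with $u=\re h$. Then $|f(z)|=e^{u(z)}$, so $u(z)=\log|f(z)|$, and the finite-order assumption $\rho(f)=\rho<\infty$ gives, for every $\eps>0$, a constant $C_\eps$ with $u(z)=\log|f(z)|\leqslant \log M(|z|,f)\leqslant C_\eps|z|^{\rho+\eps}$ for all large $|z|$. So the real part of the entire function $h$ grows at most polynomially. The classical Borel--Carath\'eodory theorem then converts this one-sided bound on $\re h$ into a two-sided bound on $|h|$: for $0<r<R$,
$$
\max_{|z|=r}|h(z)|\leqslant \frac{2r}{R-r}\max_{|z|=R}\re h(z)+\frac{R+r}{R-r}|h(0)|.
$$
Taking $R=2r$ and using the polynomial bound on $\re h$ yields $\max_{|z|=r}|h(z)|=O(r^{\rho+\eps})$ as $r\to\infty$.

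\medskip

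The final step is a standard Cauchy-estimate argument: an entire function $h$ with $|h(z)|=O(|z|^{N})$ for some real $N$ is a polynomial of degree at most $\lfloor N\rfloor$. Indeed, by Cauchy's estimates the $k$-th Taylor coefficient $a_k$ of $h$ satisfies $|a_k|\leqslant r^{-k}\max_{|z|=r}|h(z)|\leqslant C r^{\,\rho+\eps-k}$, which tends to $0$ as $r\to\infty$ whenever $k>\rho+\eps$; choosing $\eps$ small we conclude $a_k=0$ for all $k>\rho$, so $h$ is a polynomial (of degree $\leqslant\rho$). This proves $f=\exp(h)$ with $h$ polynomial.

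\medskip

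I do not expect any genuine obstacle here; the only point requiring a little care is the passage from the one-sided growth bound on $\re h$ to a bound on $|h|$, which is exactly the content of the Borel--Carath\'eodory theorem and is the crux of why ``no zeros $+$ finite order'' forces the logarithm to be polynomial rather than merely of finite order. Everything else is the existence of a global logarithm (immediate since $f$ is zero-free on $\C$) and the Cauchy-estimate characterisation of polynomials among entire functions of polynomial growth.
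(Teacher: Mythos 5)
Your proof is correct, and it is the standard argument for this classical fact (usually given as part of the Hadamard factorization theorem, or attributed to Borel). The paper itself does not prove this lemma; it simply cites it as a classical result of P\'olya and moves on. Your three-step route — global logarithm via simple connectivity of $\C$, Borel--Carath\'eodory to upgrade the one-sided growth bound on $\re h=\log|f|$ to a two-sided bound on $|h|$, then Cauchy estimates to force $h$ to be a polynomial of degree $\leqslant\lfloor\rho\rfloor$ — is complete and accurate; you correctly flag the Borel--Carath\'eodory step as the crux. It is worth noting that the paper already invokes the Borel--Carath\'eodory inequality (stated there as a separate lemma, in the form needed to prove Theorem~\ref{thm:lower-order}), so your proof sits well alongside the paper's toolkit even though the paper chose to cite rather than re-derive this particular fact. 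One small point of rigor: the Borel--Carath\'eodory inequality in the form you quote requires $\max_{|z|=R}\re h(z)\geqslant 0$ (or at least $\geqslant \re h(0)$); this is harmless here because $f$ is non-constant, so $\re h$ is unbounded above by Liouville, but it deserves a word.
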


While there is a huge variety of entire functions of finite order, the next result shows that having finite order in $\C^*$ is a quite restrictive property.

\begin{prop}
Every transcendental self-map of $\CS$ of finite order is of the form
$$
f(z)=z^n\exp (P(z)+Q(1/z))
$$
for some $n\in\mathbb Z$ and $P,Q\in\mathbb C[z]$.
\label{prop:finite-order-implies-poly-type}
\end{prop}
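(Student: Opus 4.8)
The plan is to write $f(z)=z^n\exp\bigl(g(z)+h(1/z)\bigr)$ with $g,h$ non-constant entire, and to show that the finite-order hypothesis forces both $g$ and $h$ to be polynomials. The key observation, already recorded in the excerpt, is that in a neighbourhood of infinity the term $h(1/z)$ contributes only a bounded amount and $z^n$ contributes only $O(\log|z|)$, so that
$$
\log M(r,f)=\log M(r,e^g)+O(\log r)\quad\text{as }r\to+\infty,
$$
and hence $\rho(e^g)=\rho_\infty(f)<+\infty$; symmetrically, passing to $1/z$ (equivalently looking near the origin), $\rho(e^h)=\rho_0(f)<+\infty$. So both $e^g$ and $e^h$ are non-constant entire functions of finite order.

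Next I would invoke Lemma~\ref{lem:exp-poly} (P\'olya's result): a non-constant entire function of finite order with no zeros is of the form $\exp(\text{polynomial})$. Since $e^g$ is manifestly zero-free, entire, non-constant, and of finite order, there is a polynomial $P$ with $e^{g(z)}=e^{P(z)}$; thus $g-P$ is entire with $e^{g-P}\equiv 1$, so $g-P$ is a constant integer multiple of $2\pi i$, and in particular $g$ itself is a polynomial (absorbing the additive constant). The same argument applied to $e^h$ produces a polynomial $Q$ with $h$ a polynomial. Substituting back, $f(z)=z^n\exp\bigl(P(z)+Q(1/z)\bigr)$ with $n\in\Z$ and $P,Q\in\C[z]$, which is exactly the claimed form; moreover $P,Q$ are non-constant because $g,h$ are, consistent with Example~\ref{ex:polynomial-type}.

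The only genuinely delicate point is the first step: justifying rigorously that $\log M(r,f)$ and $\log M(r,e^g)$ differ by $O(\log r)$, and in particular that $e^g$ really does have \emph{finite} order (not just that its order is bounded by $\rho_\infty(f)$ — here they are literally equal by the definition of $\rho_\infty$ given in the excerpt, but one still needs the asymptotic estimate). This requires the expansion $\log|f(z)|=n\log|z|+\re g(z)+\re h(0)+o(1)$ as $z\to\infty$, uniformly on $|z|=r$, which follows from $h(1/z)\to h(0)$ uniformly as $|z|\to\infty$; taking maxima over $|z|=r$ then gives the comparison of $\log M$'s up to an additive $O(\log r)$, and a second $\log$ together with division by $\log r$ shows the two $\limsup$'s agree. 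Everything after that is a direct citation of Lemma~\ref{lem:exp-poly} plus the elementary fact that $e^\phi\equiv 1$ for entire $\phi$ forces $\phi$ constant. I expect no further obstacles; the proposition is essentially P\'olya's theorem applied twice, once at each essential singularity.
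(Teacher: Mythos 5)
Your proof is correct and follows essentially the same route as the paper: reduce to $\rho(e^g)=\rho_\infty(f)<\infty$ and $\rho(e^h)=\rho_0(f)<\infty$ via the $\log M$ comparison, then apply P\'olya's lemma (Lemma~\ref{lem:exp-poly}) at each essential singularity. Your extra remark that $g$ and the polynomial produced by P\'olya differ only by a constant multiple of $2\pi i$ is a small but welcome bit of care that the paper's proof leaves implicit.
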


Keen proved the stronger result that every topological conjugacy class of analytic self-maps of $\C^*$ contains a function of this form \cite[Theorem 1]{keen89}, but we give a direct proof of Proposition \ref{prop:finite-order-implies-poly-type} for completeness.

\begin{proof}
We know that every transcendental self-map of $\C^*$ is of the form
$$
f(z)=z^n\exp\bigl(g(z)+h(1/z)\bigr)
$$
for some $n\in\mathbb Z$ and $g,h$ non-constant entire functions. It is a well-known fact that if $P$ is a polynomial and $f$ is a general entire function, $\rho(P\cdot f)=\rho(f)$. Then 
$$
\rho(e^g)=\rho_\infty(f)<+\infty
$$
and so it follows from Lemma \ref{lem:exp-poly} that $g$ has to be a polynomial. On the other hand,
$$
\rho(e^h)=\rho_0(f)<+\infty
$$
and so $h$ has to be a polynomial as well.
\end{proof}

Keen also showed that, in $\C^*$, finite order implies finite type \cite[Proposition~2]{keen89}. This is very different to what happens for the entire case, where we have functions of finite order in the class $\B$ that are not in the Speiser class $\mathcal S$ of finite-type transcendental entire functions. An example of such a function is given by $\sin(z)/z$ which has order one and infinitely many critical values in any open interval in $\R$ containing the origin. We state Keen's result for future reference.

\begin{lem}
Let $f$ be a transcendental self-map of $\CS$. If $f$ has finite order with $\rho_\infty(f)=p$ and $\rho_0(f)=q$, then $\mbox{sing}(f^{-1})$ consists of at most $p+q$ critical values and the asymptotic values zero and infinity.
\label{lem:finite-order}
\end{lem}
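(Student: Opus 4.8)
The plan is to combine the normal-form structure from Proposition~\ref{prop:finite-order-implies-poly-type} with Keen's counting of preimages. Since $f$ has finite order, Proposition~\ref{prop:finite-order-implies-poly-type} gives $f(z)=z^n\exp\bigl(P(z)+Q(1/z)\bigr)$ with $P,Q$ polynomials of degrees $p=\rho_\infty(f)$ and $q=\rho_0(f)$ respectively. The asymptotic values are then exactly $0$ and $\infty$: along paths to the essential singularity at infinity the term $Q(1/z)$ is bounded, so $|f(z)|=|z|^n\exp\bigl(\re P(z)+O(1)\bigr)$, which forces $f(\gamma(t))\to 0$ or $\infty$ according to the sign of $\re P(\gamma(t))$ (and symmetrically near $0$); no finite asymptotic value can arise. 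So the only task is to bound the number of critical values.

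First I would compute the logarithmic derivative. Writing $f'(z)/f(z)=n/z+P'(z)-Q'(1/z)/z^2$, the critical points of $f$ in $\C^*$ are exactly the zeros of
$$
R(z):=\frac{z^2 f'(z)}{f(z)}=nz+z^2P'(z)-Q'(1/z),
$$
which is a rational function. Clearing the pole at the origin, $z^{q-1}R(z)$ (using $\deg Q'=q-1$) is a polynomial; I would count its degree carefully. The dominant term as $z\to\infty$ is $z^2P'(z)$, of degree $p+1$, so $z^{q-1}R(z)$ has degree $(q-1)+(p+1)=p+q$. Hence $R$ has at most $p+q$ zeros in $\C^*$, so $f$ has at most $p+q$ critical points and therefore at most $p+q$ critical values. (One should also check the degenerate cases, e.g. $p=0$ or $q=0$, where $P'$ or $Q'$ vanishes identically; there the leading behaviour comes from the $nz$ or $-Q'(1/z)$ term and the same bound $p+q$ — indeed a sharper one — holds, and when $p=q=0$ one has $f(z)=cz^n$, which is not transcendental, so that case is excluded.)

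The main obstacle is simply bookkeeping with the degrees: one must track how the pole of $Q'(1/z)$ at the origin and the growth of $z^2P'(z)$ at infinity interact when forming the polynomial $z^{q-1}R(z)$, and make sure no cancellation of leading coefficients can push the degree above $p+q$ (the coefficients of $P'$ and of the $nz$ term live in different degrees, so no such cancellation occurs). Everything else is immediate from Proposition~\ref{prop:finite-order-implies-poly-type} and the elementary description of the asymptotic values given above, so this is a short argument once the normal form is in hand.
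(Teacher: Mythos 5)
The paper does not prove this lemma itself: it states it as a citation of Keen \cite[Proposition~2]{keen89}, so there is no paper argument to compare against. Your direct proof from the normal form of Proposition~\ref{prop:finite-order-implies-poly-type} is essentially correct, and the critical-point count is exactly right. Writing $R(z)=z^2f'(z)/f(z)=nz+z^2P'(z)-Q'(1/z)$ and clearing the pole, $z^{q-1}R(z)$ is a polynomial whose leading term comes from $z^{q+1}P'(z)$ and has degree $p+q$ with leading coefficient $p a_p\neq 0$; the other two contributions have degrees $q$ and $q-1$ respectively, both strictly less than $p+q$ since $p\geqslant 1$, so no cancellation can occur. Moreover the constant term of $z^{q-1}R(z)$ is $-qb_q\neq 0$, so $z=0$ is not a root and all $p+q$ roots (with multiplicity) lie in $\C^*$. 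This gives at most $p+q$ critical points, hence at most $p+q$ critical values. Your worry about degenerate cases $p=0$ or $q=0$ is moot, since a transcendental self-map of $\C^*$ forces $P$ and $Q$ nonconstant, i.e.\ $p,q\geqslant 1$ (consistent with Theorem~\ref{thm:lower-order}).

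The one place where the sketch is slightly imprecise is the exclusion of finite asymptotic values. You argue that $|f(\gamma(t))|=|\gamma(t)|^n\exp(\re P(\gamma(t))+O(1))$ forces $f(\gamma(t))\to 0$ or $\infty$ ``according to the sign of $\re P(\gamma(t))$''. But along a path $\gamma(t)\to\infty$ there is no guarantee that $\re P(\gamma(t))$ has a definite sign: it may remain bounded. The correct observation is that $|P(\gamma(t))|\to\infty$ since $P$ is a nonconstant polynomial, so if $\re P(\gamma(t))$ stays bounded then $|\im P(\gamma(t))|\to\infty$, and then $e^{P(\gamma(t))}$ spirals with bounded modulus but unbounded winding and therefore has no limit. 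Combined with the fact that $Q(1/\gamma(t))\to Q(0)$ is convergent and $\gamma(t)^n$ contributes only $O(\log|\gamma(t)|)$ to the logarithm, one concludes that $f(\gamma(t))$ either tends to $0$, tends to $\infty$, or does not converge, so no finite asymptotic value can arise over $\infty$ (symmetrically over $0$). With this patch your argument is complete.
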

%
%\begin{proof}
%If $f(z)=z^n\exp(P(z)+Q(1/z))$ with $P,Q$ polynomials of degree $p,q\in\mathbb N$ and $n\in\mathbb Z\setminus\{0\}$, the critical points of $f$ are zeros of the following equation:
%$$
%nz+z^2P'(z)-Q'(1/z)=0
%$$
%which is a polynomial equation of degree $p+q$. Therefore, there are $p+q$ critical points counted with multiplicity which can give at most the same number of different critical values. The asymptotic values of the exponential of a rational map are $a=0$ and $a=\infty$. Note that $P(z)+Q(1/z)$ is a rational function for which $0$ and $\infty$ are the only poles. In a neighbourhood of the essential singularity $z=\infty$ the asymptotic paths are the preimages of the positive ($a=\infty$) and negative ($a=0$) real lines by $P(z)$. The same happens for the essential singularity $z=0$ with $Q(z)$. As we have discussed above, having $z^n$ multiplying the exponential part cannot create different asymptotic values. In terms of the logarithmic transform $F(w)=nw+P(e^w)+Q(e^{-w})$, $z=e^w$, any asymptotic path $\alpha(t)$ eventualy must be contained in a band of bounded imaginary part and its real part has to tend to $\pm \infty$. Hence the only option is that $F(\alpha(t))$ tends to $\infty$ as $t\rightarrow +\infty$.
%\end{proof}

%Our result is interesting because this geometric conditions implying the head-start condition are preserved under finite composition and, therefore, our result includes not only functions of finite order but also finite compositions of them.\\

Finally, we show that the tracts of finite order functions have a fairly simple geometry.

\begin{prop}
Let $f$ be a transcendental self-map of $\C^*$ of finite order and let $F\in\BL^{*n}$ be the logarithmic transform of $f$. Then $f$ has a finite number of tracts. Moreover the tracts of $F$ have zero slope and can be chosen to have no wiggling.
\label{thm:finite-order-functions-have-good-geom}
\end{prop}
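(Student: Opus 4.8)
The plan is to start from Proposition~\ref{prop:finite-order-implies-poly-type}, which gives $f(z)=z^{n}\exp\bigl(P(z)+Q(1/z)\bigr)$ with $P,Q$ polynomials; write $p:=\deg P=\rho_\infty(f)$, $q:=\deg Q=\rho_0(f)$, and let $a_p,b_q$ be the leading coefficients. By Lemma~\ref{lem:finite-order} the set $S(f)$ is finite, hence bounded, so we may take the annulus $A$ in the definition of the logarithmic tracts to be a round annulus $A(r_1,r_2)$ with $r_1>0$ as small and $r_2>0$ as large as we wish; then $W_\infty=\{|z|>r_2\}$, $W_0=\{|z|<r_1\}$. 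To count the tracts, note that near infinity $\log|f(z)|=\re P(z)+n\log|z|+\re Q(1/z)$, and since $p\geq 1$ the term $\re P(z)$ dominates; for $r_2$ large the components of $f^{-1}(W_\infty)$ accumulating at $\infty$ are the $p$ ``petals'' where $\re P(z)$ is large, one around each direction $\theta_j$ with $\cos(p\theta_j+\arg a_p)=1$, while $f^{-1}(W_0)$ contributes the $p$ petals around the directions where $\cos(p\theta_j+\arg a_p)=-1$. Running the same argument in the coordinate $w=1/z$ with $Q$ in place of $P$ yields $2q$ tracts accumulating at $0$. Since the closure in $\CR$ of any tract contains exactly one essential singularity, $f$ has precisely $2(p+q)$ tracts; modulo translation by $2\pi i$ the tracts of $F$ are in bijection with these, and all properties below are $2\pi i$-translation invariant.

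Next, the geometry in logarithmic coordinates. For $r_2$ large each petal $V$ around $\theta_j$ lies in the sector $\{z:|z|>R,\ |\arg z-\theta_j|<\pi/(2p)\}$ (off the union of these sectors one has $\re P\to-\infty$), so a branch of $\log$ carries $V$ biholomorphically onto a component $T$ of $\mathcal T$ contained in the horizontal half-strip $\{u:\re u>\log R,\ |\im u-\theta_j|<\pi/(2p)\}$; here $\pi/p\leq\pi<2\pi$, consistent with injectivity of $\exp_{|T}$, and $\re u=\log|z|\to+\infty$ on $T$. The $0$-tracts are handled the same way in the coordinate $w=1/z$, where $\re u\to-\infty$ instead. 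In particular $\im u$ is bounded on every tract, so taking $\gamma$ to be the image of the central ray $\{z=re^{i\theta_j}:r>R\}$ gives $|\im\gamma|$ bounded and $|\re\gamma|\to\infty$, i.e.\ zero slope.

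For the no-wiggling statement I would use that, for $r_2$ large, $P$ (resp.\ $-P$) restricted to a petal where $\re P\to+\infty$ (resp.\ $\to-\infty$) is a conformal isomorphism onto a half-plane $\{\re w>c\}$, since no critical value of $P$ lies in $\{\re w>c\}$. Hence $P\circ\exp_{|T}:T\to\{\re w>c\}$ is a conformal isomorphism sending the prime end of $T$ at $\re u\to+\infty$ to the prime end at $\infty$; as the hyperbolic geodesics of a half-plane ending at $\infty$ are exactly the horizontal rays, the geodesic of $T$ joining $u_0$ to $\infty$ is $\{u\in T:\im P(e^u)=\im P(e^{u_0}),\ \re P(e^u)\geq\re P(e^{u_0})\}$, along which $w=P(e^{u})$ moves rightwards. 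Parametrising by $\re w$ and using $dz/d(\re w)=1/P'(z)$ with $z=e^{u}$, one computes
$$
\frac{d}{d(\re w)}\,|e^{u}|^{2}=2\,\re\!\Bigl(\frac{\bar z}{P'(z)}\Bigr)=\frac{2\,|z|^{2-p}}{p\,|a_p|}\,\cos\bigl(p\arg z+\arg a_p\bigr)\,\bigl(1+o(1)\bigr)>0
$$
throughout the petal, where $\cos(p\arg z+\arg a_p)>0$ and is bounded away from $0$; in the $-P$ petals the opposite sign is cancelled by the extra minus from $(-P)'$. Thus $\re u$ increases strictly along the geodesic, so $\re u\geq\re u_0$, which is no wiggling with $K=1,\mu=0$. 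For the $0$-tracts one replaces $P$ by $Q$ and $z$ by $1/z$, so that $|\re u|=\log|1/z|$ increases along the geodesic instead. Together with the previous paragraph this gives that $F$ has good geometry with no wiggling.

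The main obstacle is making the chain of error terms uniform: one must fix $r_2$ (hence $R$) large enough that $P(z)=a_pz^{p}(1+o(1))$ and $P'(z)=p\,a_p z^{p-1}(1+o(1))$ with errors uniformly small on $\{|z|>R\}$, so that the sign of $\re(\bar z/P'(z))$ is genuinely controlled by $\cos(p\arg z+\arg a_p)$, which is bounded away from zero on each petal once $R$ is large; and symmetrically for $Q$ near $0$. The remaining points are routine: checking that the components of $f^{-1}(W_\infty)$ and $f^{-1}(W_0)$ near $\infty$ really are single petals (using $f\sim e^{Q(0)}z^{n}e^{P}$ there and the conformality of $P$ on petals), that $\exp_{|T}:T\to V$ is a conformal isomorphism (property~(e) of $\BL^*$ together with simple connectivity of $V$), and that shrinking $r_1$ or enlarging $r_2$ only trims the ``mouths'' of the tracts, preserving all of the above.
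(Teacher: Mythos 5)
Your plan follows the paper's proof closely: reduce to $f(z)=z^{n}\exp\bigl(P(z)+Q(1/z)\bigr)$ via Proposition~\ref{prop:finite-order-implies-poly-type}, identify the tracts with the asymptotic sectors where $\re P$ (near $\infty$) or $\re Q$ (near $0$) has a fixed sign, and pass to logarithmic coordinates to see that the tracts of $F$ lie in horizontal half-strips, which gives the zero slope claim. The explicit count of $2(p+q)$ tracts is a correct refinement of the paper's bare finiteness statement, and the use of Lemma~\ref{lem:finite-order} to justify a round annulus is also fine.

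The gap is in the no-wiggling argument. You assert that $P\circ\exp_{|T}:T\to\{\re w>c\}$ is a conformal isomorphism and then compute the geodesic of $T$ as a pullback of a horizontal ray under this map. For that you would need $V=\exp(T)$ to be a component of $P^{-1}(\{\re w>c\})$, i.e.\ a petal of $P$. It is not: $V$ is a component of $f^{-1}(W_\infty)$, and since $\log|f(z)|=n\log|z|+\re P(z)+\re Q(1/z)$, its boundary is not a level curve of $\re P$ (the $n\log|z|$ and $\re Q(1/z)$ terms shift it, the former unboundedly unless $n=0$). The phrase ``can be chosen'' in the statement refers to taking the annulus $A$ (equivalently $R$) sufficiently large, not to redefining the tracts to be $P$-petals, and there is no choice of $W_\infty$ that would make $f^{-1}(W_\infty)$ coincide with $P^{-1}(\{\re w>c\})$ in general. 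So the sentence ``Hence $P\circ\exp_{|T}$ is a conformal isomorphism onto $\{\re w>c\}$'' is false as written, and your parenthetical in the last paragraph that the components of $f^{-1}(W_\infty)$ ``really are single petals'' papers over exactly this point. The repair is to replace $P\circ\exp$ by the actual logarithmic transform $F$: by property (d) of Theorem~\ref{thm:logarithmic-coord}, for normalised $F$ the restriction $F_{|T}:T\to\mathbb H_R^+$ is a genuine conformal isomorphism, the prime end of $T$ at $\re u\to+\infty$ is sent to $\infty$, and on $T$ one has $F(u)=nu+P(e^{u})+Q(e^{-u})+\mbox{const}$, whose derivative is dominated by $e^{u}P'(e^{u})$ as $\re u\to+\infty$, so the sign computation you carried out for $\re\bigl(\bar z/P'(z)\bigr)$ carries over to $\re\bigl(1/F'(u)\bigr)$ with only lower-order corrections. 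The paper itself dispatches this step with a one-line asymptotic remark, so your computational route is in principle more informative, but the conformal map must be $F$, not $P\circ\exp$.
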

\begin{proof}
Suppose that $\rho_\infty(f)=p$ and $\rho_0(f)=q$ with $p,q \geqslant 1$. By Proposition~\ref{prop:finite-order-implies-poly-type},
$$
f(z)=z^n\exp\bigl(P(z)+Q(1/z)\bigr),
$$
where $n\in\Z$ and $P,Q$ are, respectively, polynomials of degree $p,q$. We focus on the tracts whose closure in $\CR$ contains infinity, the case where the closure contains zero is similar. We have
\begin{equation}
|f(z)|=\exp\bigl(\re (az^p)+o(\re(z^p))\bigr)\quad \mbox{ as } z\rightarrow \infty,
\label{eqn:Mr-finite-order}
\end{equation}
where $a\in \C$. Let $\phi=\mbox{arg}(a)$. For large values of $R$, the tracts of $f$ defined by $|f(z)|>R$ are contained in the sectors determined by the preimages of the imaginary axis by the map $az^p$, that is the radial lines of angle $(k\pi+\pi/2-\phi)/p$,\linebreak $k\in\Z$. Tracts that map to a neighbourhood of infinity lie in the sectors containing the radial lines of angle $(2k\pi-\phi)/p$, $0\leqslant k<p$, while tracts that map to a neighbourhood of zero lie in the sectors containing the radial lines of angle \mbox{$((2k+1)\pi-\phi)/p$, $0\leqslant k<p$}. The preimages of radial lines by the exponential function are horizontal lines and hence the tracts of $F$ are contained in horizontal bands and have zero slope. 

Finally, since the boundaries of the tracts tend asymptotically to those horizontal lines, the tracts of $F$ can be chosen to have no wiggling if $R$ is sufficiently large.
\end{proof}

It follows from Proposition~\ref{prop:finite-order-implies-poly-type} that, in the punctured plane, functions of finite order (as well as entire functions with no zeros) can only have integer orders $\rho_0(f)$ and $\rho_\infty(f)$. There are always exactly $2\rho_\infty(f)$ asymptotic paths to infinity corres-\linebreak ponding, asymptotically, to the preimages of the positive (asymptotic value infi- nity) or negative (asymptotic value zero) real line by $z^d$ where $d=\rho_\infty(f)$. Therefore the asymptotic paths alternate as you go around a circle of large radius (see Figure~\ref{fig:log-tracts-finite-order}). Similarly, in a neighbourhood of zero there are $2\rho_0(f)$ asymptotic paths with the same structure. Each of these asymptotic paths is contained in a logarithmic tract and vice versa. 

%\begin{rmk}
%Note that for entire functions, the fact that finite order implies that there are a finite number of tracts follows from the Denjoy-Carleman-Ahlfors theorem (see Lemma \ref{lem:DCA}).
%\end{rmk}

\begin{figure}[h!]
\centering
\includegraphics[width=.48\linewidth]{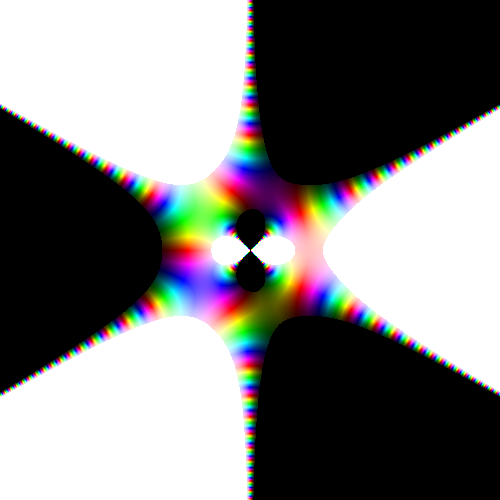} \hspace*{\fill}
\includegraphics[width=.48\linewidth]{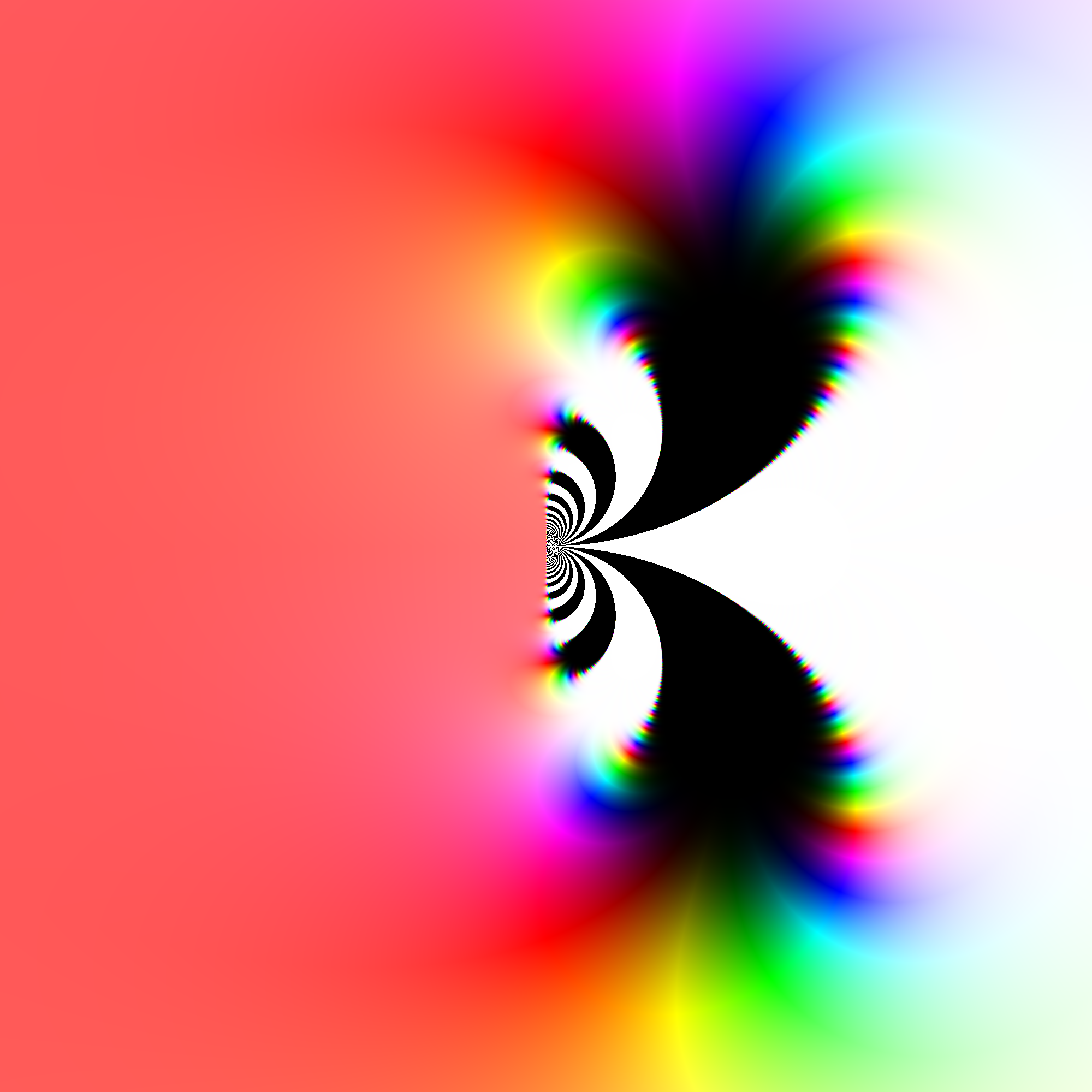} 
\caption{Logarithmic tracts of functions of finite order with $\rho_\infty(f)=3$ and $\rho_0(f)=2$ (left) and infinite order (right). The~color of every point $z\in\C^*$ has been chosen according to the modulus (luminosity) and argument (hue) of $f(z)$.}
\label{fig:log-tracts-finite-order}
\end{figure}

Another basic property of entire functions in the class $\B$ is that they have lower order greater or equal than 1/2 \cite[Lemma 3.5]{rippon-stallard05b}. This is due to the fact that $f$ is bounded on a path to infinity. Remember that the lower order of an entire function is
$$
\lambda(f):=\liminf_{r\rightarrow +\infty} \frac{\log\log M(r,f)}{\log r}.
$$
If $f$ is a transcendental self-map of $\C^*$ we can consider
$$
\lambda_\infty(f):=\liminf_{r\to +\infty} \frac{\log \log M(r,f)}{\log r} \quad \mbox{ and } \quad \lambda_0(f):=\liminf_{r\rightarrow 0} \frac{\log \log  1/m(r,f)}{\log 1/r}.
$$
Recall that Theorem~\ref{thm:lower-order} in the introduction states that in this setting $\lambda_0(f)=\rho_0(f)$ and $\lambda_\infty(f)=\rho_\infty(f)$. To prove it, we shall need also the Borel-Carath\'eodory theorem \cite[Theorem 8]{valiron49}.

\begin{lem}[Borel-Carath\'eodory theorem] 
\label{lem:borel-caratheodory}
Let $f$ be a transcendental entire function and define, for $r>0$,\vspace*{-1pt}
$$
B(r,f):=\min_{|z|=r}\re f(z),\quad A(r,f):=\max_{|z|=r}\re f(z).\vspace*{-1pt}
$$
Then, there is $r_0>0$ and $C>0$ such that\vspace*{-1pt}
$$
B(r)\leqslant M(r)<\frac{R}{R-r}\bigl(4A(R)+C\bigr)\vspace{-5pt}\vspace*{-1pt}
$$
for all $R>r>r_0$. 
\end{lem}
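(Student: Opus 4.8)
The left-hand inequality $B(r)\leqslant M(r)$ is essentially trivial: if $z_1$ realises $\re f(z_1)=B(r,f)$, then $B(r)\leqslant|\re f(z_1)|\leqslant|f(z_1)|\leqslant M(r)$ when $B(r)\geqslant 0$, while for $B(r)<0$ it holds automatically since $M(r)>0$. So the plan is to prove the upper bound, which is the classical Borel--Carath\'{e}odory estimate; I would obtain it from the Schwarz lemma after a normalisation, and then keep track of the constant $C$.

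First I would record two elementary facts about a transcendental entire function $f$. Since $\re f$ is a non-constant harmonic function, the maximum principle shows that $A(R,f)=\max_{|z|=R}\re f$ is strictly increasing in $R$; and it is unbounded, since otherwise $|e^{f}|=e^{\re f}$ would be bounded, forcing $f$ to be constant by Liouville's theorem. Hence there is $r_0>0$ with $A(R,f)>|\re f(0)|$ for every $R>r_0$; this is the $r_0$ of the statement.

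Now fix $R>r_0$ and $r\in(r_0,R)$, and normalise by setting $\tilde f:=f-f(0)$, so that $\tilde f(0)=0$ and $a:=\max_{|z|=R}\re\tilde f=A(R,f)-\re f(0)>0$. By the maximum principle $\re\tilde f<a$ on $D(0,R)$, so $2a-\tilde f$ is zero-free there and
$$
\varphi:=\frac{\tilde f}{2a-\tilde f}
$$
is holomorphic on $D(0,R)$ with $\varphi(0)=0$; the identity $|2a-\tilde f|^{2}-|\tilde f|^{2}=4a\,(a-\re\tilde f)>0$ shows that $\varphi$ maps $D(0,R)$ into the open unit disc. Applying the Schwarz lemma to $w\mapsto\varphi(Rw)$ gives $|\varphi(z)|\leqslant|z|/R$, and inverting the relation, $\tilde f=2a\varphi/(1+\varphi)$, so that for $|z|=r$
$$
|\tilde f(z)|\leqslant\frac{2a\,|\varphi(z)|}{1-|\varphi(z)|}\leqslant\frac{2a\,r}{R-r}.
$$

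Finally I would undo the normalisation: $M(r,f)\leqslant M(r,\tilde f)+|f(0)|\leqslant\frac{2r}{R-r}\bigl(A(R,f)+|f(0)|\bigr)+|f(0)|$, and since $r<R$ and $A(R,f)>0$ one absorbs $\frac{r}{R-r}<\frac{R}{R-r}$, $1<\frac{R}{R-r}$ and $2A(R,f)\leqslant 4A(R,f)$ to reach $M(r,f)<\frac{R}{R-r}\bigl(4A(R,f)+C\bigr)$ with, say, $C:=3|f(0)|+1$. I do not expect a genuine obstacle here; the only points needing a little care are the use of transcendence to guarantee $A(R,f)>0$ for large $R$ — so that the constant $2$ produced by the Schwarz lemma fits inside the stated $4$ — and the bookkeeping of $C$ when passing from $\tilde f$ back to $f$. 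Alternatively, one may simply invoke \cite[Theorem~8]{valiron49} directly.
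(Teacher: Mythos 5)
Your proof is correct, and it is the standard Schwarz-lemma proof of Borel--Carath\'eodory. The paper does not actually reproduce a proof of Lemma~\ref{lem:borel-caratheodory}: it is stated and attributed to \cite[Theorem~8]{valiron49}, which is exactly the route you mention as the alternative at the end. Your derivation fills in the details of that reference correctly: the normalisation $\tilde f=f-f(0)$, the observation (via Liouville applied to $e^{f}$) that $A(R,f)\to\infty$ so that $a=A(R,f)-\re f(0)>0$ for $R>r_0$, the map $\varphi=\tilde f/(2a-\tilde f)$ into $\D$ with the identity $|2a-\tilde f|^{2}-|\tilde f|^{2}=4a(a-\re\tilde f)$, the Schwarz bound $|\varphi(z)|\leqslant |z|/R$, and the inversion $\tilde f=2a\varphi/(1+\varphi)$ giving $M(r,\tilde f)\leqslant 2ar/(R-r)$. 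The only cosmetic remarks: the inequality $B(r)\leqslant M(r)$ needs no case split, since $\re f(z)\leqslant |f(z)|\leqslant M(r)$ for every $z$ with $|z|=r$ already gives $B(r)\leqslant M(r)$; and your bookkeeping in fact shows $C=3|f(0)|$ suffices, so the extra $+1$ is pure slack. Nothing substantive to add.
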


\begin{proof}[Proof of Theorem \ref{thm:lower-order}]
We treat separately the cases where $f$ has finite order and infinite order. For simplicity we only consider $\rho_\infty(f)$ and $\lambda_\infty(f)$, the proof for $\rho_0(f)$ and $\lambda_0(f)$ is completely analogous.

Let $f(z)=z^n\exp\bigl(g(z)+h(1/z)\bigr)$ with $n\in\Z$ and $g,h$ non-constant entire functions. Then, using equation \eqref{eqn:Mr-finite-order},\vspace{-1pt}
$$
\lambda_\infty(f)=\liminf_{r\rightarrow +\infty}\frac{\log\log M(r,f)}{\log r}=\liminf_{r\rightarrow +\infty}\frac{\log A(r,g)}{\log r}.\vspace{-1pt}
$$

Suppose that $\rho_\infty(f)=p<+\infty$. Then, by Proposition \ref{prop:finite-order-implies-poly-type}, $g$ is a polynomial and, since $ar^p$, $a>0$, is an increasing function for $r\geqslant R$ for some $R>0$, it is clear that $\lambda_\infty(f)=\rho_\infty(f)$.

Now suppose that $\rho_\infty(f)=+\infty$. We use Lemma \ref{lem:borel-caratheodory} with $R=2r$, there is $C>0$ and $r_0>0$ such that
$$
M(r)<2\bigl(4A(2r)+C\bigr)\quad \mbox{ for all } r>r_0.\vspace{-5pt}
$$
We have\vspace{-1pt}
$$
\lambda_\infty(f)=\liminf_{r\rightarrow +\infty}\frac{\log A(r,g)}{\log r}\geqslant\liminf_{r\rightarrow +\infty}\frac{\log M(r/2,g)}{\log r}=\lim_{r\rightarrow +\infty}\frac{\log M(r,g)}{\log r}=+\infty
$$
because $g$ is a transcendental entire function.
\end{proof}

Observe that if $F\in\BL^*$, then the tracts of $F$ in each of the sets $\mathcal T_0$ and $\mathcal T_\infty$ can be ordered with respect to the vertical position around infinity. Therefore it makes sense to speak about a tract being in between two other tracts. This ordering is known as the \textit{lexicographic order} (see Definition~\ref{dfn:lexicographic-order}) and we will come back to it later on.

\section{Symbolic dynamics and combinatorics}

Maps in class $\BL^*$ are defined on a set $\mathcal T$, which is a union of tracts, and, therefore, the orbits of some points in $\mathcal T$ are truncated if $F^k(z)\notin \mathcal T$ for some $k\in\N$. We denote by $J(F)$ the set of points that can be iterated infinitely many times by $F$.

%Now we are going to introduce the notion of the Julia set and the escaping set of a logarithmic transform that is only defined on the tracts.

\begin{dfn}[Julia set of $F$]
\index{Julia set}
Let $F:\mathcal T\to H$ be a map in class $\BL^*$. We define the \textit{Julia set} of $F$ to be
$$
 J(F):=\{z\in\overline{\mathcal T}\ :\ F^ { n}(z) \mbox{ is defined and in } \overline{\mathcal T} \mbox{ for all } n\geqslant 0\},
$$
and, for $K>0$, we put
$$
J^K(F):=\{z\in\overline{\mathcal T}\ :\ |\re F^n(z)|\geqslant K \mbox{ for all } n\geqslant 0\}.
$$
\end{dfn}

As we will show in the following lemma, the reason why $J(F)$ is called the Julia set of $F$ is that points of $J(F)$ project to points in $J(f)$ by the exponential map. However, note that in the case that $F\in\BL^*$ is the logarithmic transform of a function $f\in\B^*$, there exists an entire function $\tilde{f}$ that is a lift of $f$ and then $J(F)\subseteq J(\tilde{f})=\exp^{-1} J(f)$ by a result of Bergweiler \cite{bergweiler95}.

\begin{lem}
Let $f$ be a transcendental self-map of $\C^*$ and let $F\in \BL^*$ be a logarithmic transform of $f$. If $F\in \BL^{*n}$, then $\exp J(F)\subseteq J(f)$ and, if $F$ is of disjoint type, then $\exp J(F)=J(f)$.
\end{lem}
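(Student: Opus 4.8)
The plan is to play the semiconjugacy $\exp\circ F=f\circ\exp$ off against Bergweiler's theorem $J(\tilde f)=\exp^{-1}J(f)$, where $\tilde f$ is an entire lift of $f$ (which always exists). First I would note that any two logarithmic transforms of $f$ differ on each tract by an integer multiple of $2\pi i$, and that — since $\mathcal T$ is invariant under translation by $2\pi i$ (property (f)) — both $J(F)$ and $\exp J(F)$ are independent of this choice, so one may assume $F=\tilde f|_{\mathcal T}$. As $\exp^{-1}J(f)$ is $\exp$-saturated, proving $\exp J(F)\subseteq J(f)$ is then equivalent to $J(F)\subseteq J(\tilde f)$, and the disjoint-type equality to $J(F)=J(\tilde f)$.

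For $J(F)\subseteq J(\tilde f)$: let $z\in J(F)$, so $\tilde f^{\,n}(z)=F^{n}(z)\in\overline{\mathcal T}$ for all $n$, and since $F$ is normalised with $H=\mathbb H_R^\pm$ we have $|\re F^{n}(z)|>R\ge R_0$ for all $n\ge 1$; by the expansivity property \eqref{eq:expansivity} this gives $|F'(F^{n}(z))|\ge 2$ for all $n\ge 0$, hence $|(\tilde f^{\,n})'(z)|\ge 2^{\,n}\to\infty$. If $z$ lay in the Fatou set of $\tilde f$, any locally uniform limit $g$ of a subsequence $(\tilde f^{\,n_j})$ would be either a finite holomorphic map — impossible, since then $(\tilde f^{\,n_j})'(z)\to g'(z)$ would be finite — or identically $\infty$. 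The latter I would exclude by an argument in the spirit of the proof of Theorem~\ref{thm:I4}: letting a small disc $D$ about $z$ flow through the tracts, the restriction $\tilde f^{\,n}|_{D}$ stays univalent as long as each earlier image lies in a single tract (where $|F'|\ge 2$), so by Koebe's $1/4$-theorem the inradii of the images grow at least like $2^{\,n}/4$; once such an image of radius exceeding $\pi$ is still contained in one tract it contains two points differing by $2\pi i$, contradicting injectivity of $\exp$ on tracts (property (e)). Hence $z\in J(\tilde f)$, giving $\exp J(F)\subseteq J(f)$ for every $F\in\BL^{*n}$.

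For the reverse inclusion when $F$ is of disjoint type, recall that then $\overline{\mathcal T}\subseteq H$ and $f$ maps the annulus $A=\CS\setminus W$ into itself with $A\subseteq F(f)$; thus (as discussed above) $F(f)$ is the single doubly-connected immediate basin $U$ of an attracting fixed point in $A$, and $J(f)=\CS\setminus U\subseteq\CS\setminus A=W$. Since $J(f)$ is completely invariant, the entire forward orbit of any $\zeta\in J(f)$ stays in $J(f)\subseteq W$, hence in $f^{-1}(W)\cap W=\mathcal V\cap W$; lifting, every $z\in\exp^{-1}(\zeta)$ lies in $\exp^{-1}(\mathcal V\cap W)=\mathcal T\cap H=\mathcal T$, and inductively $F^{n}(z)\in\mathcal T\subseteq\overline{\mathcal T}$ for all $n$, so $z\in J(F)$ and $\zeta\in\exp J(F)$. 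Together with the first part this yields $\exp J(F)=J(f)$.

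The step I expect to be the main obstacle is the exclusion of the constant-$\infty$ normal limit above. Once the disc $D$ first leaves a single tract one loses univalence, and — unlike in Theorem~\ref{thm:I4}, where escaping of the base orbit made this automatic — here $z$ need not escape, so the Koebe estimate has to be run carefully from the first ``escape time'' of $D$ in order still to produce the required disc of radius exceeding $\pi$ inside a tract.
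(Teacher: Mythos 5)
Your reduction through Bergweiler's theorem $J(\tilde f)=\exp^{-1}J(f)$ is a genuinely different framing from the paper, which argues directly with $f$; and your reverse inclusion for disjoint type is essentially the paper's argument. The forward inclusion, however, has a more serious gap than the one you flag. Two issues. First, the Koebe step cannot be made to work even with careful bookkeeping: the normalisation only gives $|F'|\geqslant 2$ on $\mathcal T$, and Koebe then yields $d_{n+1}\geqslant\tfrac14\cdot 2\cdot d_n=\tfrac12 d_n$ for inradii of the successive images, which \emph{decreases}; the blow-up $d_n\to\infty$ used in the proof of Theorem~\ref{thm:I4} relies on the stronger estimate $|F'(w)|\geqslant\tfrac{1}{4\pi}|\re F(w)|-R$ combined with $|\re F^n(z)|\to\infty$, i.e.\ on escaping, which a point of $J(F)$ need not satisfy. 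Second, and more fundamentally, a normal limit $g\equiv\infty$ of a subsequence $(\tilde f^{\,n_j})$ can perfectly well occur with $\re\tilde f^{\,n_j}(z)$ bounded (so that $\im\tilde f^{\,n_j}(z)\to\pm\infty$); in that case $\exp D$ does \emph{not} escape under $f$, so no Koebe-style or Theorem~\ref{thm:I4}-style argument is available in this subcase. The derivative argument is in fact what bites there --- but only if you run the normal-family argument for $\{f^n\}$ rather than $\{\tilde f^n\}$, since $\{\tilde f^{\,n_j}\}$ itself has no finite holomorphic limit in that scenario.

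The repair is to split on normal limits of $\{f^n\}$ on $B_0:=\exp D$, not of $\{\tilde f^{\,n}\}$ on $D$. If some subsequence $f^{n_j}|_{B_0}$ converges locally uniformly to a holomorphic $g$ with $g(z_0)\in\C^*$ (which happens precisely when some accumulation point of $\{f^n(z_0)\}$ lies in $\C^*$), lift after subtracting suitable multiples of $2\pi i$, and use $|(F^{n_j})'(w_0)|\geqslant 2^{n_j}\to\infty$ against the finite limit of the lifted derivatives. Otherwise every accumulation point of $\{f^n(z_0)\}$ lies in $\{0,\infty\}$, so $z_0\in I(f)$, and Theorem~\ref{thm:I4} then gives $z_0\in J(f)$, contradicting $z_0\in F(f)$. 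This is how one should read the paper's ``proceeding as in the proof of Theorem~\ref{thm:I4}'', and once you adopt it the Bergweiler reduction, though correct, is no longer needed.
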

\begin{proof}
Suppose to the contrary that $z_0\in \exp J(F)\cap F(f)\neq \emptyset$. Then proceeding as in the proof of Theorem~\ref{thm:I4} we get a contradiction between the expansivity of $F$ \eqref{eq:expansivity} and the fact that $\mathcal T$ does not contain vertical segments of length $2\pi$. Note that in the normalised case we are using the expansivity with respect to the Euclidean metric, that is, $|F'(z)|\geqslant 2$ for all $z\in\mathcal T$ (see Lemma \ref{lem:expansivity}), while in the disjoint-type case we use the expansivity with respect to the hyperbolic metric on $H$ because $\mathcal T$ is compactly contained in $H$. 

If $F$ is of disjoint type, the inclusion $J(f)\subseteq \exp J(F)$ follows from the fact that $f(\C^*\setminus \mathcal V)\subseteq A$ and hence $F(f)$ consists of the immediate basin of attraction of a point in $\C^*\setminus \mathcal V$ and 
$$
J(f)=\C^*\setminus \bigcup_{n\in\N} f^{-n}(\C^*\setminus \mathcal V).\vspace{-10pt}
$$
\end{proof}

If $f$ is a transcendental self-map of $\C^*$, then the escaping set $I(f)$ consists of all points that accumulate to $\{0,\infty\}$. Essential itineraries describe the way points escape and were introduced in \cite{martipete}. Let us recall the definition here. 

\begin{dfn}[Essential itinerary]
Let $f$ be a transcendental self-map of $\C^*$. We define the \textit{essential itinerary} of a point $z\in I(f)$ to be the symbol sequence $e=(e_n)\in \{0,\infty\}^\mathbb N$ such that
$$
e_n:=\left\{
\begin{array}{ll}
0, & \mbox{ if } |f^n(z)|\leqslant 1,\vspace{10pt}\\
\infty, & \mbox{ if } |f^n(z)|> 1,
\end{array}
\right.
$$
for all $n\in\N$.
\label{dfn:essential-itinerary}
\end{dfn}

For each $e\in\{0,\infty\}^\N$, we denote by $I_e^{0,0}(f)$ the set of escaping points whose essential itinerary is \textit{exactly} $e$, %whose essential itinerary is eventually a shift of $e$,
$$
I_e^{0,0}:=\{z\in I(f)\ :\ \forall n\geqslant 0,\ |f^{n}(z)|>1\Leftrightarrow e_{n}=\infty\},
$$
%$$
%
%$$
and, for $\ell,k\in\N$, we define
$$
I_e^{\ell,k}:=\{z\in I(f)\ :\ \forall n\geqslant 0,\ |f^{n+\ell}(z)|>1\Leftrightarrow e_{n+k}=\infty\}=f^{-\ell}\bigl(I_{\sigma^k(e)}^{0,0}(f)\bigr),
$$
where $\sigma$ denotes the Bernoulli shift map. Finally, we denote by $I_e(f)$ the set of escaping points whose essential itinerary is \textit{eventually a shift of} $e$,
$$
I_e(f):=\{z\in I(f)\ :\ \exists \ell,k\in\N,\ \forall n\geqslant 0,\ |f^{n+\ell}(z)|>1\Leftrightarrow e_{n+k}=\infty\},
$$
or, equivalently,
$$
I_e(f):=\bigcup_{\ell\in\N}\bigcup_{k\in\N} I_{e}^{\ell,k}(f)=\bigcup_{\ell\in\N}\bigcup_{k\in\N} f^{-\ell}\bigl(I_{\sigma^k(e)}^{0,0}(f)\bigr).
$$
We say that two essential itineraries $e_1,e_2\in \{0,\infty\}^\N$ are \textit{equivalent} if $\sigma^m(e_1)=\sigma^n(e_2)$ for some $m,n\in\N$. If $e_1$ and $e_2$ are \textit{not} equivalent, then $I_{e_1}(f)\cap I_{e_2}(f)=\emptyset$.

We now introduce the escaping set for maps in the class $\BL^*$, which is a subset of the Julia set of $F$.

\begin{dfn}[Escaping set of $F$]
Let $F:\mathcal T\to H$ be a map in the class $\BL^*$. We define the \textit{escaping set} of $F$ to be
$$
I(F):=\{z\in J(F)\ :\ \ds\lim_{n\to\infty} |\re F^{n}(z)|=+ \infty\}=J(F)\cap \exp^{-1} I(f).
$$
In terms of $F$, a point $z\in I(F)$ has \textit{essential itinerary} $e=(e_n)\in \{0,\infty\}^\N$ if $\re F^n(z)\leqslant 0$ if and only if $e_n=0$ for all $n\in \N$.
\end{dfn}

Observe that $\exp I(F)\subseteq I(f)$ and, in fact, every point in $I(f)$ eventually enters $\exp I(F)$. As with $J(F)$, if $f$ is a transcendental self-map of $\C^*$ and $\tilde f$ is a lift of~$f$, then $I(F)\subseteq I(\tilde{f})$ but in general these sets are different as $\tilde{f}$ may have points that escape in the imaginary direction and correspond to bounded orbits of~$f$.

%\begin{rmk}
%Note that $J(F)$ and $I(F)$ are not the Julia set and the escaping set of $F$ as an entire function. In particular, there may be points that escape under $F$ that project to points with bounded orbit under $f$.
%\end{rmk}

For every function $F\in\BL^*$, we denote by $\mathcal A$ (respectively $\mathcal A_0^0,\mathcal A_0^\infty,\mathcal A_\infty^0,\mathcal A_\infty^\infty$) the \textit{symbolic alphabet} consisting of all tracts in $\mathcal T$ (respectively $\mathcal T_0^0,\mathcal T_0^\infty,\mathcal T_\infty^0,\mathcal T_\infty^\infty$, see Definition \ref{def:tracts}). We associate a symbol sequence $(T_n)\in \mathcal A^\N$ to each point $z\in J(F)$ that describes to which tract the iterate $F^n(z)$ belongs for all $n\in\N$.

\begin{dfn}[External address of $F$]
\label{dfn:external-address}
Let $F\in \BL^*$ and let $z\in J(F)$. We define the \textit{external address} of $z$, $\mbox{addr}_F(z)$, to be the symbol sequence $\underline{s}=(T_n)\in\mathcal A^\N$ such that $F^n(z)\in \overline{T_n}$ for all $n\in\N$. 
\end{dfn}

\begin{rmk}
The Bernoulli shift map $\sigma:\mathcal A^\N\rightarrow \mathcal A^\N$ mapping the external address $(T_n)$ to $(T_{n+1})$ is a \textit{subshift of finite type} on the set 
$$
\mathcal A^\N=(\mathcal A_0^\infty\times \mathcal A^\N)\sqcup( \mathcal A_\infty^\infty\times \mathcal A^\N)\sqcup (\mathcal A_0^0\times \mathcal A^\N)\sqcup(\mathcal A_\infty^0\times \mathcal A^\N),
$$
where, if $e_0,e_1\in\{0,\infty\}$, the set $\mathcal A_{e_0}^{e_1}\times \mathcal A^\N$ consists of the sequences in $\mathcal A^\N$ whose first symbol is in $\mathcal A_{e_0}^{e_1}$. Observe that the transition graph of $\sigma$ is
%$$
%\xymatrix{
%~\hspace*{-30pt} & \mathcal A_0^\infty\times \mathcal A^\N \ar[r] \ar@<.5ex>[rd] & \mathcal A_\infty^\infty\times \mathcal A^\N \ar[d] & \hspace{-30pt}~ \ar@(ur,dr)\\
%\ar@(dl,ul)[]~ \hspace*{-30pt} &  \mathcal A_0^0\times \mathcal A^\N \ar[u] & \mathcal A_\infty^0\times \mathcal A^\N \ar[l] \ar@<.5ex>[lu]
%} \vspace{10pt}
%$$
\begin{center}
\includegraphics[scale=1]{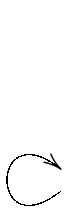}\hspace*{-5pt}
\includegraphics[scale=1]{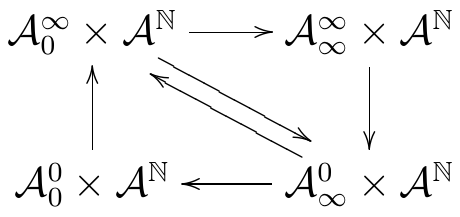}\hspace*{-5pt}
\includegraphics[scale=1]{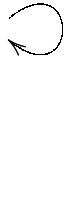}
\end{center}
and, in particular, not all sequences in $\mathcal A^\N$ are external addresses of points in $J(F)$. 
\end{rmk}

We now introduce the notion of admissible external address. Only admissible external addresses can be the external address of a point in $J(F)$.

\begin{dfn}[Admissible external address]
We say that an external address $\underline{s}\in\mathcal A^\N$ is \textit{admissible} if $\underline{s}$ belongs to the set
%, we define the set of all \textit{admissible external addresses with essential itinerary $e$},
$$
\bold{\Sigma}_e:=\prod_{n\in\N} \mathcal A_{e_n}^{e_{n+1}}=\{(T_n)\ :\ T_n\in \mathcal A_{e_n}^{e_{n+1}} \mbox{ for all } n\in\N\},
$$
for some $e=(e_n)\in\{0,\infty\}^\N$. In this case, we say that the external address $\underline{s}$ has \textit{essential itinerary} $e$. We denote by $\bold{\Sigma}$ the set of \textit{all} admissible external addresses. 
\end{dfn}

Note that, if we define 
$$
\mathcal A_0:=\mathcal A_0^0\sqcup \mathcal A_0^\infty\quad \mbox{ and } \quad\mathcal A_\infty:=\mathcal A_\infty^0\sqcup \mathcal A_\infty^\infty.
$$
then an external address $\underline{s}=(T_n)\in \bold{\Sigma}$ has essential itinerary $e=(e_n)$ provided that $T_n\in \mathcal A_0$ if and only if $e_n=0$. In terms of essential itineraries, the corresponding transition graph is the complete graph on two vertices,
$$
\xymatrix{
~ \ar@(dl,ul)[] & \hspace{-30pt} \mathcal A_0\times \mathcal T^\N \ar@<.5ex>[r]  & \mathcal A_\infty\times \mathcal T^\N \ar@<.5ex>[l] \hspace{-30pt} &  \ar@(ur,dr)[]
}~~~~~.
$$
If $F\in\BL^{*n}$, then $z\in I(F)$ has essential itinerary $e$ if and only if $\mbox{addr}(z)$ has essential itinerary $e$. However, if $F$ is not normalised, these two sequences may be different for a certain number of iterates (see Lemma \ref{lem:itin-rays}).

For every admissible external address, we introduce the set of points that have that external address. Note that sometimes we use the term external address to denote a general sequence in $\bold{\Sigma}$, without being necessarily the external address of any point $z\in J(F)$. Therefore, some of the following sets may be empty.

\begin{dfn}[Subsets of $J(F)$]
Let $F$ be a function in the class $\BL^*$. If $\underline{s}\in \bold{\Sigma}$ and $K>0$, we define the sets
$$
J_{\underline{s}}(F):=\{z\in J(F)\ :\ \mbox{addr}_F(z)=\underline{s}\},
$$
$J_{\underline{s}}^K(F):=J_{\underline{s}}(F)\cap J^K(F)$ and $I_{\underline{s}}(F):=J_{\underline{s}}(F)\cap I(F)$. If $e\in\{0,\infty\}^\N$ and $K>0$, we define the sets
$$
J_e(F):=\{z\in J(F)\ :\ \mbox{addr}_F(z)\in \bold{\Sigma}_e\}=\bigcup_{\underline{s}\in \bold{\Sigma}_e} J_{\underline{s}}(F),
$$
$J_e^{K}(F):=J_e(F)\cap J^K(F)$ and $I_e(F):=J_e(F)\cap I(F)$. If $F$ is normalised, then $I_e(F)=J(F)\cap \exp^{-1}I_e^{0,0}(f)$.
\end{dfn}

%\begin{rmk}
%If $F$ is normalised, there is a correspondence between the essential itinerary of point in $I(f)$ and the essential itinerary of the external address of a preimage of it under $\exp$. If we define
%$$
%\mathcal T_0:=\mathcal T_0^0\sqcup \mathcal T_0^\infty\quad \mbox{ and } \quad\mathcal T_\infty:=\mathcal T_\infty^0\sqcup \mathcal T_\infty^\infty,
%$$
%then, for $z\in I_e(f)$, $e_n=0$ if and only if $T_n\in \mathcal T_0$ for all $n\in\N$. 
%
%\end{rmk}

There is a natural way to order the tracts with respect to the vertical position that they are attached to infinity. Using this, we can endow the set of sequences~$\bold{\Sigma}_e$ with the lexicographic order. 

\begin{dfn}[Lexicographic order] 
\label{dfn:lexicographic-order}
Let $F:\mathcal T\to H$ be in the class $\BL^*$. If $T,T'$ are components of $\mathcal T_\infty$, then we say that $T<T'$ if $T'$ is in the \textit{upper} connected component of the intersection of a right half-plane and the complement of $T$. If $T,T'$ are components of $\mathcal T_0$, then we say that $T<T'$ if $T'$ is in the \textit{lower} connected component of the intersection of a left half-plane and the complement of $T$. Finally, if $\underline{s},\underline{s}'\in \bold{\Sigma}_e$ for some $e\in\{0,\infty\}^\N$, then we say that $\underline{s}<\underline{s}'$ if there is $k\in\N$ such that $T_n=T_n'$ for all $n<k$ and $T_k<T_k'$. 
\end{dfn}

The set $\bold{\Sigma}_e$ endowed with the lexicographic order is a totally ordered space. Note that, since the map $F$ preserves the orientation, if $T_1<T_2$ in $\mathcal T_\infty$ and $T$ is a component of $\mathcal T_0$, then with the lexicographic ordering we have $F^{-1}_T(T_1)<F^{-1}_T(T_2)$.

Sometimes it will be useful to consider a partition of the tracts into fundamental domains. The following terminology was introduced in \cite{rempe08}.

\begin{dfn}[Fundamental domain] Let $f\in\B^*$ and let $F:\mathcal T\to H$ be a logarithmic transform of $f$ that is in the class $\BL^*$. Let $\delta\subseteq \C^*\setminus\overline{\mathcal V}$ be the curve joining zero to infinity from Theorem \ref{thm:logarithmic-coord}. 
\begin{enumerate}
\item[(i)] The preimages $\exp^{-1}\delta$ define infinitely many \textit{fundamental strips} $S_n$, $n\in\Z$. Every tract of $F$ is contained in a fundamental strip.
\item[(ii)] For each tract $T_n$ of $F$, the restriction $F_{|T_n}:T_n\rightarrow H$ is a \mbox{one-to-one covering} of either $H_0$ or $H_\infty$. Hence, the set $F^{-1}_{|T_n}\bigl(H\setminus \exp^{-1}\delta\bigr)$ has infinitely many components $F_{n,i}\subseteq T_n$, $i\in\Z$, that we call \textit{fundamental domains} of $F$.
\item[(iii)] Similarly, the preimages $f^{-1}(\delta)$ divide each tract $V_n$ of $f$ into infinitely many sets $D_{n,i}=\exp F_{m,i}\subseteq V_n$, $i\in\Z$, for some $m\in\Z$, that we call \textit{fundamental domains} of $f$.
%\item[(iii)] Similarly, the sets $D_{n,i}:=\exp F_{n,i}\subseteq V_n$, $i\in\Z$, are called \textit{fundamental domains} of~$f$.}
\end{enumerate}
\end{dfn}

\vspace{-60pt}
\begin{figure}[h!]
\centering
\input{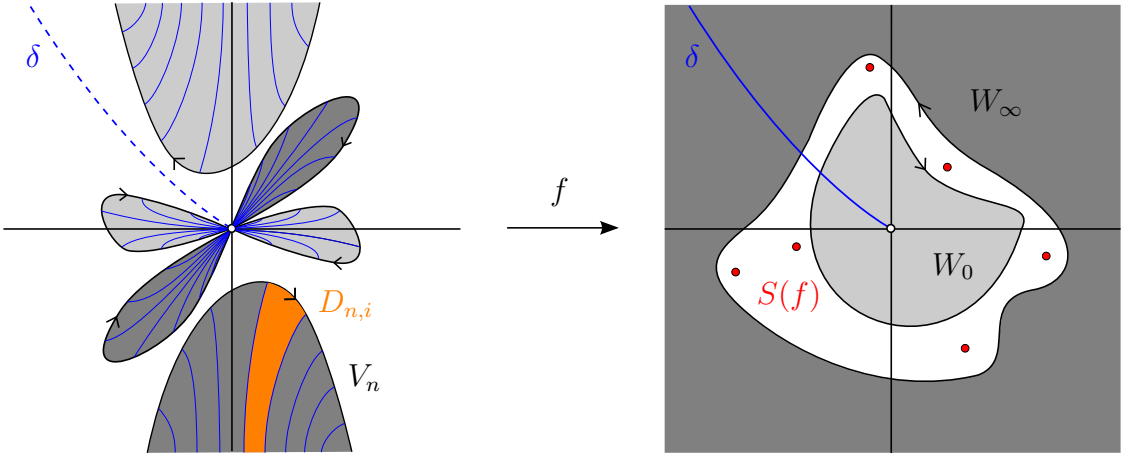}
\caption{Fundamental domains of a function $f$ in the class $\B^*$.}
\end{figure}

Note that sometimes we will refer to a sequence of fundamental domains using only one subindex when we do not need to specify whether two fundamental domains are a subset of the same tract or not.

Since the orbit of every point in $J(F)$ avoids $\exp^{-1}(\delta)$, we can define external addresses in terms of fundamental domains rather than tracts. This is the approach followed, for example, in \cite{benini-fagella15}. However, since each fundamental domain covers a fundamental strip, the fundamental domain $F_n$ is determined by tract $T_n$ where you are and the fundamental strip containing the next tract $T_{n+1}$. Thus, considering external addresses of fundamental domains does not add more information to the symbolic dynamics of $F$. 

We can also consider external addresses for functions $f\in\B^*$ rather than for their logarithmic transforms. In this case, specifying the sequence of tracts in $\mathcal V$ does not capture the whole combinatorics of $f$; we define  the external addresses of $f$ in terms of fundamental domains. Let $\mathcal A_f$ denote the symbolic alphabet consisting of the fundamental domains of $f$.

\begin{dfn}[External address of $f$]
Let $f\in\B^*$ and let $F\in \BL^*$ be a periodic logarithmic transform of $f$. If $z=\exp w$, where $w\in J(F)$, we define the \textit{external address} (under $f$) of $z$, $\mbox{addr}_f(z)$, to be the symbol sequence $\underline{s}=(D_n)\in\mathcal A_f^\N$ such that $f^n(z)\in D_n$ for all $n\in\N$. 
\end{dfn}

The next lemma describes the correspondence between external addresses of $f$ and external addresses of a logarithmic transform $F$ of $f$ (see \cite[Lemma~2.9]{benini-fagella15}).

\begin{lem}
Let $f\in \B^*$ and let $F\in\BL^*$ be a logarithmic transform of $f$. If $z=\exp w$, then the external address $\mbox{\emph{addr}}_f(z)=(D_n)$ is uniquely determined by the external address $\mbox{\emph{addr}}_F(w)=(T_n)$. Conversely, if \mbox{$\mbox{\emph{addr}}_f(z)=(D_n)$}, then $\mbox{\emph{addr}}_F(w)=(T_n)$ is unique up to replacing $T_0$ by a $2k\pi i$-translate of $T_0$ for some $k\in\Z$.
\label{lem:fund-dom-correspondence}
\end{lem}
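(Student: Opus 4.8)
The plan is to transport the orbit of $w$ under $F$ to the orbit of $z=\exp w$ under $f$ by means of the commuting relation $\exp\circ F=f\circ\exp$, which gives $f^n(z)=\exp\bigl(F^n(w)\bigr)$ for every $n\in\N$; the argument is then a bookkeeping exercise with tracts, fundamental strips and fundamental domains, of the same flavour as \cite[Lemma~2.9]{benini-fagella15}.

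First I would set up the ``local dictionary'' between the combinatorics of $F$ and of $f$. By property~(e) of Theorem~\ref{thm:logarithmic-coord}, $\exp$ is injective on each tract of $F$, and since $\delta$ and hence $\exp^{-1}(\delta)$ are $2\pi i$-periodic, the fundamental strips $S_n$ are $2\pi i$-translates of one another and $\exp$ maps each of them biholomorphically onto $\C^*\setminus\delta$. Using also property~(f), the tracts of $F$ lying over a fixed tract of $f$, and the fundamental domains of $F$ lying over a fixed fundamental domain of $f$, each form a single orbit under translation by $2\pi i\Z$; in particular $\exp$ maps each fundamental domain of $F$ biholomorphically onto a fundamental domain of $f$, and over each fundamental domain of $f$ there lies exactly one fundamental domain of $F$ inside each fundamental strip. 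Next, since $F_{|T}$ is a conformal isomorphism of $T$ onto $H_0$ or $H_\infty$ (property~(d)) and $H_0,H_\infty$ are $2\pi i$-periodic (property~(a)), the fundamental domains contained in a tract $T$ are exactly the sets $F_{|T}^{-1}\bigl(H_\varepsilon\cap S_j\bigr)$, $j\in\Z$, where $H_\varepsilon=F(T)$ is determined by $T$. The consequence I would single out is: the fundamental domain of $F$ containing a point $\zeta$ of a tract $T$ is determined by $T$ together with the fundamental strip containing $F(\zeta)$.

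For the forward direction, given $\mathrm{addr}_F(w)=(T_n)$, I would observe that $F^{n+1}(w)\in T_{n+1}$, and $T_{n+1}$ lies in a definite fundamental strip; by the previous paragraph the fundamental domain $\widetilde F_n$ of $F$ containing $F^n(w)$ is therefore a function of the pair $(T_n,T_{n+1})$, and applying $\exp$ identifies $\exp(\widetilde F_n)$ with the fundamental domain $D_n$ of $f$ that contains $f^n(z)$. Hence $(D_n)=\mathrm{addr}_f(z)$ is determined by $(T_n)$. For the converse, given $\mathrm{addr}_f(z)=(D_n)$ and any $w\in\exp^{-1}(z)$, the point $w=F^0(w)$ lies in one of the fundamental domains of $F$ over $D_0$; since $\exp^{-1}(z)$ is a single $2\pi i\Z$-coset and these fundamental domains form a single $2\pi i\Z$-orbit, the freedom in choosing $w$ corresponds precisely to replacing the tract $T_0\supseteq\widetilde F_0$ by one of its $2\pi i\Z$-translates. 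Once $T_0$ — equivalently $w$ — is fixed, I would determine the remaining $T_n$ inductively: if the fundamental domain $\widetilde F_n$ containing $F^n(w)$ is known, then $F(\widetilde F_n)$ is one definite component $H_\varepsilon\cap S_j$ of $H\setminus\exp^{-1}(\delta)$, so $F^{n+1}(w)$ lies in the strip $S_j$ and projects into $D_{n+1}$, forcing $\widetilde F_{n+1}$ to be the unique fundamental domain of $F$ over $D_{n+1}$ contained in $S_j$, which pins down $T_{n+1}$. Using that $F$ is periodic (so that $F^k(w+2\pi i)=F^k(w)$ for $k\geq 1$), the output of this recursion for the lifts $w$ and $w+2\pi i$ differs only in the initial tract, which is exactly the asserted uniqueness.

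The step I expect to demand the most care is the strip-bookkeeping in that induction: checking that $F$ really sends $\widetilde F_n$ into the fundamental strip containing the tract $T_{n+1}$, and that over each fundamental domain of $f$ there is one and only one fundamental domain of $F$ per fundamental strip. Both come down to the compatible $2\pi i$-periodicity of $\mathcal T$, of $H_0$ and $H_\infty$, and of $\exp^{-1}(\delta)$, which is what makes the $2\pi i\Z$ ambiguity introduced by $\exp$ at the zeroth step collapse to a single well-defined choice after one application of $F$ and at every step thereafter.
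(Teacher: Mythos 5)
Your proof is correct and follows essentially the same approach as the paper's: identify the fundamental domain of $F$ containing $F^n(w)$ as a function of the consecutive tract pair $(T_n,T_{n+1})$ for the forward direction, and run the strip-bookkeeping recursion for the converse. You are somewhat more explicit than the paper in two places worth noting: (i) you spell out the $2\pi i\Z$-equivariance that makes $\exp$ set up a bijection between fundamental domains of $F$ in a given strip and fundamental domains of $f$, and (ii) you invoke the $2\pi i$-periodicity of $F$ to conclude that $F^k(w+2\pi i)=F^k(w)$ for $k\geqslant 1$, which is indeed what collapses the ambiguity to the initial tract $T_0$ only; the paper's proof leaves this implicit, although it is exactly the hypothesis under which the stated uniqueness holds (a non-periodic logarithmic transform would shift the entire tail of the sequence, not just $T_0$). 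One small slip in phrasing: $\delta$ itself lives in $\C^*$ and is not ``$2\pi i$-periodic''; it is $\exp^{-1}(\delta)$ that is $2\pi i\Z$-invariant, as a consequence of the periodicity of $\exp$, which is the fact you actually use.
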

\begin{proof}
Let $(T_n)$ be a sequence of tracts of $F$, then the sequence of fundamental domains $(D_n)\subseteq \mathcal V$ is given by $D_n=\exp F_n$ which, in turn, is determined by $T_n$ and $T_{n+1}$.

On the other hand, if $(D_n)$ is a sequence of fundamental domains of $f$, then the tract $T_0\supseteq F_0$, where $\exp F_0=D_0$, is given by the choice of the logarithmic transform $F$, which is unique up to addition of integer multiples of $2\pi i$, and the rest of tracts in the sequence $(T_n)$ satisfy that $T_n$ is the only tract in the fundamental strip $F(F_{n-1})$ containing a component of $\exp^{-1}(D_n)$.
\end{proof}

We say that a sequence of fundamental domains $(D_n)$ is \textit{admissible} if it corresponds to an admissible external address $\underline{s}\in\bold{\Sigma}$. In this paper we use external addresses in terms of tracts mostly and restrict the use of fundamental domains to the moments when we need them, in order to keep the notation simpler.

\section{Unbounded continua in $J_{\underline{s}}(F)$}

\label{sec:unbounded-continua}

A priori, the set $J_{\underline{s}}(F)$ may be empty for some external addresses in $\underline{s}\in\bold{\Sigma}$. Rippon and Stallard showed that, for a general transcendental entire function~$f$, the components of the fast escaping set $A(f)\subseteq I(f)$, which was previously introduced by Bergweiler and Hinkkanen \cite{bergweiler-hinkkanen99}, are all unbounded \cite{rippon-stallard05}. Using similar ideas, Rempe showed that if $f\in \B$ (and the same argument follows for class $\BL$), then every tract $T$ contains an unbounded closed connected set $A$ consisting of points that escape within $T$ \cite[Theorem~2.4]{rempe08}. Sometimes we refer to an unbounded closed connected set $X\subseteq \C$ as an \textit{unbounded continuum}; note, however, that such set is not a continuum in $\C$ as it is not compact, but $X\cup \{\infty\}$ is a continuum in~$\CR$ (see Lemma~\ref{lem:boundary-bumping}).% We remind that a \textit{continuum} is a nonempty compact connected set, and we say that $A\subseteq \C$ is an \textit{unbounded} continuum if $A\cup\{\infty\}$ is a continuum in $\CR$. 

Although \cite[Theorem~2.4]{rempe08} only concerns points that escape within a tract, if $\underline{s}\in \mathcal A^\N$ is a periodic external address, then it follows that $J_{\underline{s}}(F)$ contains an unbounded continuum of escaping points. Indeed, if $\underline{s}=\overline{T_0T_1\hdots T_{p-1}}$ has period $p\in\N$ and $T_k$, $0\leqslant k<p$, are tracts of $F$, then there is a tract $T$ of $F^p$ contained in $T_0$ such that $F^k(T)\subseteq T_k$, $1\leqslant k<p$, and the result follows from applying \cite[Theorem~2.4]{rempe08} to $F^p$ in $T$.

It was remarked in \cite[p. 2107]{baranski-jarque-rempe12} that if $\underline{s}\in \mathcal A^\N$ contains only finitely many symbols, then \cite[Theorem~2.4]{rempe08} can be adapted to show that $J_{\underline{s}}(F)\neq\emptyset$ and hence $J_{\underline{s}}(F)$ contains an unbounded continuum; see \cite[Proposition~2.11]{benini-fagella15} for a detailed proof of this result.

In \cite{rempe07}, Rempe showed that this set can be chosen to be forward invariant. Later on, \cite[Theorem~1.1]{bergweiler-rippon-stallard08} generalised the result of Rempe for transcendental meromorphic functions in $\C$ with tracts (not necessarily in the class $\B$).

%\begin{lem}[Bergweiler, Rippon and Stallard 2008]
%Let $f$ be a meromorphic function with a direct singularity over infinity. Then every tract $T$ of $f$ contains an unbounded component of the set $A(f,T)\subseteq I(f)$ which consists of the points that escape as fast as possible within $T$.
%\label{lem:brs-tracts}
%\end{lem}

For transcendental self-maps of $\C^*$, we can define the fast escaping set $A(f)$ using the iterates of the maximum and minimum modulus functions \cite[Defi- nition~1.2]{martipete}, and the components of $A(f)$ are unbounded in $\C^*$. We remind that a set $X\subseteq \C^*$ is \textit{unbounded} if its closure $\widehat{X}$ in $\CR$ contains zero or infinity. The following lemma is a combination of \cite[Theorem~1.1 and Theorem~1.5]{martipete} and follows from the constructions in their proofs. Remind that $I_e'(f)\subseteq I_e(f)$ is the set of escaping points whose essential itinerary is exactly $e$.

\begin{lem}
Let $f$ be a transcendental self-map of $\C^*$. For each \mbox{$e=(e_n)\in\{0,\infty\}^\N$}, there exists an unbounded closed connected set $A_e\subseteq I_e'(f)$ which consists of fast \mbox{escaping} points and whose closure $\widehat{A}_e$ in $\CR$ contains $e_0$.
\label{lem:martipete}
\end{lem}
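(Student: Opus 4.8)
The plan is to derive Lemma~\ref{lem:martipete} by assembling results from \cite{martipete}, essentially transferring statements about $I_e(f)$ and the fast escaping set $A(f)$ into the sharper form needed here, namely an \emph{unbounded}, \emph{closed}, \emph{connected} set contained in $I'_e(f)$ consisting of fast escaping points and whose closure in $\CR$ contains $e_0$. First I would recall from \cite[Theorem~1.1]{martipete} that for each essential itinerary $e \in \{0,\infty\}^\N$, the set $I'_e(f)$ (points escaping with itinerary \emph{exactly} $e$) meets $J(f)$, and from \cite[Theorem~1.5]{martipete} that the components of the fast escaping set $A(f) \subseteq I(f)$ are unbounded in $\C^*$. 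The point is that the \emph{proofs} of those two theorems are constructive: they produce, for the prescribed itinerary, a nested sequence of compact sets (preimages of suitable annuli or level sets of the maximum/minimum modulus) whose intersection is a connected, closed, unbounded set lying in the fast escaping set and respecting the itinerary. So the proof of the lemma should say that one runs the construction of \cite[Theorem~1.1]{martipete} but keeps track, as in the proof of \cite[Theorem~1.5]{martipete}, of connectedness and fast escape of the limiting set.

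The key steps, in order, are: (1) Fix $e = (e_n) \in \{0,\infty\}^\N$. Using the logarithmic-coordinates picture (Theorem~\ref{thm:logarithmic-coord}) and the expansivity of $F \in \BL^{*n}$ (Lemma~\ref{lem:expansivity}, Lemma~\ref{lem:strong-expansivity}), set up, at each level $n$, a compact ``box'' $B_n$ in the tract region corresponding to the symbol $e_n$ — concretely, a compact piece of a tract whose real part has sign matching $e_n$ and which maps over the next box with definite expansion. (2) Show that $f$ (or its lift in logarithmic coordinates) maps $B_n$ onto a set covering $B_{n+1}$, so that the pullbacks $B_n^{(N)} := (f^{|B_n})^{-1} \circ \cdots \circ (f^{|B_{N-1}})^{-1}(B_N)$ form, for fixed $n$, a decreasing sequence of nonempty compacta (nonemptiness is where one invokes the covering property and the finitely-many-symbols or periodicity remark preceding the lemma — though here $e$ is arbitrary, so one uses the full construction of \cite[Theorem~1.1]{martipete}, which handles arbitrary itineraries). (3) Take $A_e := \bigcap_{N} B_0^{(N)}$ (together with its forward translates to get the right real-part signs); this is a nested intersection of compact connected sets, hence compact and connected by the standard fact, and it is unbounded once one replaces the fixed compact boxes by an exhaustion so that the intersection ``reaches'' the essential singularity $e_0$ — this is exactly the mechanism in \cite[Theorem~1.5]{martipete} that forces components of $A(f)$ to be unbounded. (4) Verify fast escape: by construction the orbit of every point of $A_e$ is dominated below by iterates of the minimum/maximum modulus functions, so $A_e \subseteq A(f)$; and verify that the itinerary of every point of $A_e$ is exactly $e$, so $A_e \subseteq I'_e(f)$; and verify $e_0 \in \widehat{A}_e$, which is immediate since $A_e$ accumulates at the essential singularity labelled by $e_0$.

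I expect the main obstacle to be step (3): simultaneously guaranteeing that $A_e$ is \emph{connected} and \emph{unbounded} while its points all have \emph{exactly} the itinerary $e$. Connectedness of a nested intersection of continua is automatic in $\CR$ (the one-point compactification), so really the subtlety is that the boxes $B_n$ must be chosen large enough (an exhaustion, not a fixed size) for the intersection to stretch out to the essential singularity, yet the pullbacks must still be connected and the covering property must persist — this is a balancing act between the expansion estimates of Lemma~\ref{lem:strong-expansivity} and the geometry of the tracts. Since \cite{martipete} already carries out precisely this balancing act in the proofs of Theorems~1.1 and~1.5, the honest thing for this lemma is to say that it ``follows from the constructions in their proofs'': one combines the connectedness/unboundedness bookkeeping of the $A(f)$-construction with the itinerary-prescription of the $I'_e(f)$-construction, which are compatible because both are built from the same nested-pullback scheme in logarithmic coordinates. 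A short argument to that effect, pointing to the two theorems and noting that the limiting continuum they produce can be taken to respect the prescribed essential itinerary and to have $e_0$ in its closure in $\CR$, is what I would write.
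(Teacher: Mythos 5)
Your bottom line --- that the lemma should simply be credited to the constructions in the proofs of \cite[Theorems~1.1 and~1.5]{martipete} --- is exactly what the paper does; there is no independent proof given, only the remark preceding the statement that the lemma ``is a combination of \cite[Theorem~1.1 and Theorem~1.5]{martipete} and follows from the constructions in their proofs.'' So in that sense you have hit the right target.

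That said, one detail in your sketched step (1) is off and would not survive scrutiny if you actually tried to write out the construction: the lemma is stated for an arbitrary transcendental self-map of $\C^*$, with \emph{no} $\B^*$ hypothesis, but you invoke the logarithmic-coordinates set-up (Theorem~\ref{thm:logarithmic-coord}) and the expansivity lemmas for $F\in\BL^{*n}$ (Lemmas~\ref{lem:expansivity} and~\ref{lem:strong-expansivity}). That entire machinery requires $f\in\B^*$ and so is unavailable here. The constructions in \cite{martipete} that the lemma actually rests on are of Rippon--Stallard type, built from iterates of the maximum and minimum modulus functions $M(r,f)$ and $m(r,f)$ (this is how $A(f)$ is defined in $\C^*$; the paper itself flags this right before the lemma, citing \cite[Definition~1.2]{martipete}), together with a nested covering/pullback argument to respect the prescribed essential itinerary $e$, and a boundary-bumping argument in $\widehat{\C}$ to obtain the unbounded continuum. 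Your steps (2)--(4) and your closing discussion of how to combine connectedness, fast escape, and itinerary-prescription are consistent with that; the mistake is only in which expansion tools underwrite step (1).
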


Lemma~\ref{lem:martipete} implies that the set $J_e(F)$ contains at least one unbounded component. The goal of this section is to show that, under certain hypothesis, the set $J_{\underline{s}}(F)$ contains an unbounded continuum. We begin by stating the Boundary bumping theorem \cite[Theorem~5.6]{nadler92} (see also \cite[Theorem~A.4]{rrrs11}) which implies that if $X\subseteq \CR$ is a compact connected set containing zero or infinity and $E=X\cap \C^*$, then every component of $E$ is unbounded in $\C^*$.

\begin{lem}[Boundary bumping theorem]
Let $X$ be a nonempty compact connected metric space and let $E\subsetneq X$ be nonempty. If $C$ is a connected component of $E$, then $\partial C\cap \partial E\neq \emptyset$ (where boundaries are taken relative to $X$).
\label{lem:boundary-bumping}
\end{lem}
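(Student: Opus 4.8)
The Boundary bumping theorem is a classical fact of continuum theory (\cite[Theorem~5.6]{nadler92}); the plan is to reduce it to the following special case and then treat that case by a chaining/compactness argument. \emph{Core claim:} if $X$ is a continuum, $F\subseteq X$ is closed with $\emptyset\neq F\neq X$, and $C$ is a component of $F$, then $C\cap\partial F\neq\emptyset$ (all boundaries relative to $X$). To prove this I would argue by contradiction: assuming $C\cap\partial F=\emptyset$ gives $C\subseteq\interior F$. The key tool is the standard description of components in a compact metric space: fixing $p\in C$ and, for $\varepsilon>0$, letting $P_\varepsilon$ be the set of points of $F$ joined to $p$ by an $\varepsilon$-chain inside $F$, each $P_\varepsilon$ is relatively clopen in $F$, the family $\{P_\varepsilon\}_{\varepsilon>0}$ decreases, and $\bigcap_{\varepsilon>0}P_\varepsilon$ is the quasicomponent of $p$ in $F$, which equals $C$ because $F$ is compact (\v{S}ura--Bura). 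Since $\partial F$ is compact and $\bigcap_{\varepsilon>0}(P_\varepsilon\cap\partial F)=C\cap\partial F=\emptyset$, some $P_{\varepsilon_0}$ is already disjoint from $\partial F$, hence contained in $\interior F$. Then $U:=P_{\varepsilon_0}$ is closed in $X$ (being closed in the closed set $F$) and open in $X$ (being relatively open in $F$ and contained in the open set $\interior F$); it is nonempty and properly contained in $F\subsetneq X$, contradicting the connectedness of $X$.

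Granting the core claim, I would deduce the general statement as follows. It suffices to produce a point of $\overline{C}\cap\overline{X\setminus E}$: such a point lies in $\overline{E}\cap\overline{X\setminus E}=\partial E$ and, not being interior to $E$, is not interior to the smaller set $C$ either, so it lies in $\partial C\cap\partial E$. Suppose, for contradiction, that $\overline{C}\cap\overline{X\setminus E}=\emptyset$. These are disjoint closed subsets of the compact metric space $X$, so by normality there are disjoint open sets $U\supseteq\overline{C}$ and $V\supseteq\overline{X\setminus E}$. Put $F:=X\setminus V$. Then $F$ is closed, $C\subseteq\overline{C}\subseteq U\subseteq F$, and $F\subseteq X\setminus\overline{X\setminus E}=\interior E\subseteq E$; moreover $F\neq\emptyset$ (it contains $C$) and $F\neq X$ (since $V\supseteq\overline{X\setminus E}\neq\emptyset$ as $E\subsetneq X$). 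Because $C$ is a component of $E$ and $C\subseteq F\subseteq E$, the component of $F$ containing $C$ is a connected subset of $E$ containing $C$, hence equals $C$; so $C$ is a component of the closed set $F$. The core claim now gives $z\in C\cap\partial F$. But $\partial F=F\cap\overline{V}$, so $z\in\overline{V}$, while $z\in C\subseteq U$ with $U$ open and $U\cap V=\emptyset$ --- impossible, since any open neighbourhood of a point of $\overline{V}$ meets $V$. This contradiction establishes the theorem.

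The main obstacle is the core claim, and within it the step where a component of a closed subset $F$ of a continuum is separated from $\partial F$ by a relatively clopen set. This is precisely where compactness of $F$ is indispensable (it is what makes the quasicomponent of a point coincide with its component, and what lets one extract a single $P_{\varepsilon_0}$ from the nested family), and the statement genuinely fails for non-compact $F$. By contrast, the reduction in the second paragraph --- separating $\overline{C}$ from $\overline{X\setminus E}$ and applying the core claim to $F=X\setminus V$ --- is routine point-set topology once the core claim is available.
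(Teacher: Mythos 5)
Your proof is correct. Note that the paper does not actually prove this lemma; it is stated without proof and cited from Nadler's book (with a pointer to Theorem~A.4 of \cite{rrrs11}), so there is no in-paper argument to compare against. For the record, the organization you chose --- first establishing the closed-set version (a component $C$ of a closed, proper, nonempty $F\subseteq X$ meets $\partial F$) by introducing the nested clopen $\varepsilon$-chain sets $P_\varepsilon\subseteq F$, identifying $\bigcap_{\varepsilon>0}P_\varepsilon$ with the quasicomponent and hence, by compactness of $F$, with $C$, and then extracting a single $P_{\varepsilon_0}$ disjoint from the compact set $\partial F$; then reducing the general case to the closed case by normality --- is a standard and complete route. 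It differs in packaging from Nadler's own proof, which first proves a separation lemma for compacta (if no subcontinuum contains both $p$ and $q$, the compactum splits into two disjoint compacta separating them) and derives boundary bumping as a corollary, but the essential ingredient (quasicomponent $=$ component in a compact Hausdorff space) is the same, so the two arguments are morally equivalent. A small observation that tightens your reduction: the inclusion $\overline{C}\cap\overline{X\setminus E}\subseteq\partial C\cap\partial E$ you use is in fact an equality, since any $z\in\partial C\cap\partial E$ lies in $\overline{C}$ and in $\overline{X\setminus E}$ by definition of boundary; so your reformulation loses nothing.
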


First we show that if $J_{\underline{s}}^K(F)\neq \emptyset$ for sufficiently large $K>0$, then the set $J_{\underline{s}}(F)$ contains an unbounded continuum. The following lemma is the analogue of \cite[Lemma~3.3]{rrrs11} for the class $\BL$. We include the proof for completeness.

\begin{prop}
\label{prop:unbounded-BL}
Let $F\in\BL^*$, there exists $K_1(F)\geqslant 0$ such that if $K\geqslant K_1(F)$, for every $\underline{s}\in \bold{\Sigma}$, if $z_0\in J^K_{\underline{s}}(F)$, then there exists an unbounded closed connected set $A\subseteq J_{\underline{s}}(F)$ with $\dist(z_0,A)\leqslant 2\pi$.
\end{prop}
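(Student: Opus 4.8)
The plan is to mimic the proof of \cite[Lemma~3.3]{rrrs11}, exploiting the strong expansivity in Lemma~\ref{lem:strong-expansivity} to run a compactness/diagonal argument over a nested sequence of preimages. First I would reduce to the normalised case: by Lemma~\ref{lem:normalisation} we may assume $F\in\BL^{*n}$ with $H=\mathbb H_R^\pm$, so that \eqref{eq:expansivity} holds everywhere on $\mathcal T$; any unbounded continuum produced for the restricted function is an unbounded continuum for $F$, and since the restriction only affects finitely many initial iterates, the address $\underline{s}$ is unaffected up to a harmless relabelling. Choose $K_1(F):=\max\{R_0(F),R\}+1$ (adjusting by the wiggling-independent constants so that $J^K_{\underline{s}}(F)\neq\emptyset$ forces all iterates to stay in the expanding region). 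Fix $\underline{s}=(T_n)\in\bold{\Sigma}$ and $z_0\in J^K_{\underline{s}}(F)$; write $z_n:=F^n(z_0)\in\overline{T_n}$, so $|\re z_n|\geqslant K$ for all $n$.

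The core step is to build, for each $n\geqslant 1$, a compact connected set $A_n\ni z_0$ in $\overline{T_0}$ on which $F^n$ is defined and maps into $\overline{T_n}$, together with a uniform a priori bound on the diameter of $A_n$. The natural choice is $A_n:=(F^n|_{T_0\cap T_1'\cap\cdots})^{-1}(L_n)$, where $L_n$ is the vertical cross-cut (or, more robustly, the closed rectangle $\{z: |\re z|\leqslant |\re z_n|,\ |\im z - \im z_n|\leqslant 2\pi\}\cap\overline{T_n}$) through $z_n$ inside $\overline{T_n}$; since $F|_{T_j}:T_j\to H$ is a conformal isomorphism (property (d)), the inverse branch along the prescribed sequence of tracts $T_0,T_1,\dots,T_{n-1}$ is well-defined and the pulled-back set $A_n$ is a compact connected subset of $\overline{T_0}$ containing $z_0$. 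To bound $\operatorname{diam} A_n$: if it were $\geqslant 8\pi$, pick $z,w\in A_n$ with $|z-w|\geqslant 8\pi$; applying Lemma~\ref{lem:strong-expansivity} $n$ times along the common itinerary gives $|F^n(z)-F^n(w)|\geqslant \exp(|z-w|/8\pi)\cdot(\text{something}\geqslant K-R\geqslant 1)$, which forces $F^n(z),F^n(w)$ far apart in $\overline{T_n}$, contradicting that $L_n$ has bounded diameter (at most a fixed multiple of $|\re z_n|$)—here one must be slightly careful and instead iterate the expansivity estimate to get a bound on $|z-w|$ of the form $8\pi\log(\text{diam }L_n / (K-R))$, independent of $n$ once we also control $|\re z_n|$; to kill the $|\re z_n|$ dependence one replaces the rectangle by the preimage of just a vertical segment of height $4\pi$ through $z_n$, whose horizontal extent within the Jordan domain $\overline{T_n}$ is controlled via the conformal map and Koebe distortion. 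This yields $\operatorname{diam} A_n\leqslant C(F)$ with $C(F)$ absolute; I will also arrange (shrinking $L_n$ to a subsegment of $L_{n+1}$'s image) that $A_{n+1}\subseteq A_n$.

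Finally, since $(A_n)$ is a nested sequence of nonempty compact connected subsets of $\C$ of uniformly bounded diameter, $A:=\bigcap_n A_n$ is a nonempty compact connected set containing $z_0$. Every $z\in A$ has $F^n(z)$ defined and in $\overline{T_n}$ for all $n$, so $z\in J_{\underline{s}}(F)$; thus $A\subseteq J_{\underline{s}}(F)$. For unboundedness, I would apply the Boundary bumping theorem (Lemma~\ref{lem:boundary-bumping}) in the one-point compactification: consider $\widehat{A}:=A\cup\{\infty\}\subseteq\CR$, which is a compact connected metric space once one checks $A$ is closed in $\C$ and adjoining $\infty$ preserves connectedness—this is where the construction must be refined so that $A$ genuinely reaches $\infty$, i.e.\ rather than intersecting down to a point we must ensure the $A_n$ do not shrink away from infinity. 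The standard fix (as in \cite{rrrs11}) is to intersect instead with $A_n':=A_n\cap\{|\re z|\geqslant n\}$-type sets only after verifying they are nonempty; more precisely one shows each $A_n$ meets the boundary curve $\partial T_0$ or extends toward $\infty$ because $T_0$ is unbounded and $F^n$ restricted to $T_0$ is onto $H$. Then the boundary-bumping theorem applied to $E=\widehat{A}\setminus\{\infty\}$ forces the component of $z_0$ in $E$ to accumulate at $\infty$, giving the desired unbounded continuum. The diameter bound $\operatorname{diam} A\leqslant C(F)$ gives $\dist(z_0,A)\leqslant 2\pi$ after one more normalisation of constants (or: every $A_n$ lies within the tract $T_0$ and one pulls back a segment through $z_0$ itself, so $z_0\in A$ and $\dist(z_0,A)=0\leqslant 2\pi$; the $2\pi$ slack is there to absorb the $2\pi i$-translation ambiguity in choosing the initial tract).

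The main obstacle I anticipate is the unboundedness step: getting a \emph{single} nested intersection that is simultaneously (i) contained in $J_{\underline{s}}(F)$, (ii) of uniformly bounded diameter, and (iii) genuinely unbounded requires the right choice of the sets $L_n$ (vertical segments of fixed height versus growing rectangles) and a careful bookkeeping of how the conformal maps $F|_{T_j}$ distort horizontal extents—this is precisely the delicate part of \cite[Lemma~3.3]{rrrs11}, and in the $\C^*$ setting one additionally has to track which of the four tract-types $T_n$ belongs to and use that the admissibility of $\underline{s}$ guarantees the inverse-branch composition is defined. The expansivity estimates themselves (Lemmas~\ref{lem:expansivity} and~\ref{lem:strong-expansivity}) are already in hand, so the analytic heart is routine; the topology of assembling the continuum is where care is needed.
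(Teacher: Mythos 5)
Your overall plan—normalise, use expansivity to control preimages, take a nested intersection, and finish with boundary bumping—is the right framework, but the central construction is inverted in a way that makes the unboundedness step not merely delicate but impossible, and you half-notice this yourself at the end.

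You propose to pull back \emph{bounded} sets (vertical cross-cuts or $4\pi$-tall rectangles $L_n$ through $z_n$) along the inverse branches, obtaining compact $A_n \ni z_0$ of uniformly bounded diameter, and then intersect. But a nested intersection of compact connected sets of uniformly bounded diameter is itself compact and bounded, so the resulting $A$ cannot be unbounded. Your requirements (ii) ``uniformly bounded diameter'' and (iii) ``genuinely unbounded'' are mutually exclusive; the remark that ``one must ensure the $A_n$ do not shrink away from infinity'' is exactly the symptom. Your proposed fix of intersecting with $\{|\re z|\geqslant n\}$ doesn't rescue it because nothing in your construction guarantees those intersections are nonempty, and if they were you'd lose the diameter bound you rely on for $\dist(z_0,A)\leqslant 2\pi$.

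The paper's proof runs the construction the other way round. Instead of pulling back a bounded piece through $z_n$, it \emph{removes} a ball of radius $2\pi$ around each $F^k(z_0)$ from the closed tract and retains the (unique, by disjointness of $2\pi i$-translates of tracts) \emph{unbounded} component $X_k(\overline{T_k})$. Each inverse branch $F^{-1}_{T_{k-1}}$ maps this unbounded continuum into $T_{k-1}$, again yielding an unbounded set; by expansivity the $2\pi$-ball pulls back to a set of radius $\leqslant \pi$, so the resulting $A_k$ stays within distance $\pi$ of $z_0$ while remaining unbounded, and the $\widehat{A}_k$ (closures in $\CR$) nest. The intersection $A'=\bigcap\widehat{A}_k$ is then a continuum in $\CR$ through $\{0,\infty\}$, and boundary bumping gives an unbounded component $A\subseteq J_{\underline{s}}(F)$ with $\dist(z_0,A)\leqslant 2\pi$. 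Unboundedness is built in at every stage rather than recovered afterwards. If you flip your construction to delete small balls and keep unbounded complements—exactly as in \cite[Lemma~3.3]{rrrs11}, which is the template you cite—your argument goes through; as written, it does not.
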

\begin{proof}
We may assume without loss of generality that $F$ is normalised with \mbox{$H\!=\mathbb H_R^\pm$} for some $R>0$. Let $K_1(F)>0$ be large enough that if $K\geqslant K_1(F)$, then all bounded components of $H~\cap~ \overline{\mathcal T}$ are in the vertical band \mbox{$V_K:=\{z\in \C: |\re z|< K\}$}. Note that the set $V_K$ can only intersect a finite number of tracts in each fundamental strip.

Let $Y\subseteq \C$ be an unbounded continuum such that $Y\setminus B(F^k(z_0),2\pi)$ has exactly\linebreak one unbounded component. In that case we denote this component by $X_k(Y)$. Let $\underline{s}=(T_n)$. For all $k\geqslant 1$, we have that $\emptyset\neq X_k(\overline{T_k})\subseteq H$ and hence $F^{-1}_{|T_{k-1}}$ maps $X_k(\overline{T_k})$\,into\,$T_{k-1}$.\,By\,the\,expansivity\,property\,\eqref{eq:expansivity}, since \mbox{$\mbox{dist}\bigl(F^k(z_0), X_k(T_k)\bigr)\!=2\pi$}, we have that $\mbox{dist}\bigl(F^{k-1}(z_0),F^{-1}_{T_{k-1}}\bigl(X_k(T_k)\bigr)\bigr)\leqslant \pi$ and $X_{k-1}\bigl(F^{-1}_{T_{k-1}}\bigl(X_k(T_k)\bigr)\bigr)\neq \emptyset$. Thus we can define the sets
$$
A_k:=X_0\bigl(F_{T_0}^{-1}\bigl(\cdots \bigl(X_{k-1}\bigl(F^{-1}_{T_{k-1}}\bigl(X_k(T_k)\bigr)\bigr)\bigr)\cdots \bigr)\bigr) \quad \mbox{ for } k\geqslant 1,
$$
and we put $A_0:=X_0(\overline{T_0})$. Observe that here we are using the fact that $\underline{s}\in \bold{\Sigma}$ because $F_{T_k}^{-1}$ is only defined in one of the two components of $H$. 

Let $\widehat{A}_k$ denote the closure of $A_k$ in $\CR$ which is a continuum. By construction,  $\widehat{A}_{k+1}\subseteq \widehat{A}_k$ and $\mbox{dist}(z_0,A_k)\leqslant \pi$, thus
$$
A':=\bigcap_{k\geqslant 0} \widehat{A}_k 
$$
is a continuum in $\CR$ and $A'\setminus \{0,\infty\}$ has a component $A$ with $\mbox{dist}(z_0,A)\leqslant 2\pi$. Finally, by Lemma~\ref{lem:boundary-bumping}, the set $A$ is unbounded in $\C^*$ .
\end{proof}

Next we show that, as in the entire case, if an external address $\underline{s}\in\bold{\Sigma}$ has finitely many symbols, then the set $J_{\underline{s}}(F)$ contains an unbounded continuum. Note that in contrast to the previous proposition, now we need to show that $J_{\underline{s}}(F)\neq \emptyset$. We use the following lemma which is the analogue of \cite[Proposition~2.6]{benini-fagella15} for the class~$\B^*$. 

\begin{lem}
\label{lem:benini-fagella}
Let $F\in\BL^*$ have good geometry and let $\mathcal F$ be a finite union of fundamental domains of~$F$. Then for any $K>0$ sufficiently large, 
$$
F^{-1}\bigl(\{z\in \C\ :\ |\re z|= K\}\bigr)\cap \mathcal F\subseteq \{z\in \C\ :\ |\re z|< K\}.
$$
\end{lem}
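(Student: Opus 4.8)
The plan is to reduce the statement to the geometry of a single tract, using the fact that $\mathcal F$ is a \emph{finite} union of fundamental domains. First I would recall that by property (b) of Theorem~\ref{thm:logarithmic-coord} each fundamental domain $F_{n,i}$ is contained in a tract $T$ of $F$ whose real parts are bounded on one side, say bounded below (the case bounded above being symmetric), and that $F_{|T}:T\to H$ is a conformal isomorphism onto one of $H_0$, $H_\infty$. Write $\psi:=(F_{|T})^{-1}$ for this inverse branch. Since $\mathcal F$ involves only finitely many tracts, and by the bounded slope and uniformly bounded wiggling hypotheses these tracts all behave the same way up to uniform constants, it suffices to prove: for each such tract $T$ there is $K_0=K_0(T)>0$ so that if $\zeta\in H$ has $|\re \zeta|=K$ for some $K\geqslant K_0$, then the point $\psi(\zeta)$ (when it lies in $\mathcal F$) satisfies $|\re\psi(\zeta)|<K$. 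Taking the maximum of the finitely many $K_0(T)$ over the tracts meeting $\mathcal F$ then gives the $K$ in the statement.

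The key step is the following distortion/expansivity estimate. Fix a tract $T$ and a base point $z_*\in T$ with, say, $\re F(z_*)=0$ (if no such point exists, $F(T)=H$ lies entirely on one side and the argument is even easier since $F^{-1}$ contracts real parts towards a bounded set). For $\zeta\in H$ with $\re\zeta=K>0$ large, I want to bound $\re\psi(\zeta)$. By Lemma~\ref{lem:strong-expansivity} applied to the tract $T$, for any $z,w\in T$ with $|z-w|\geqslant 8\pi$ we have
$$
|F(z)-F(w)|\geqslant \exp\!\left(\frac{|z-w|}{8\pi}\right)\cdot\bigl(\min\{|\re F(z)|,|\re F(w)|\}-R\bigr).
$$
Applying this with $z=\psi(\zeta)$ and $w=z_*$ gives an upper bound on $|\psi(\zeta)-z_*|$ that grows only \emph{logarithmically} in $|\zeta-F(z_*)|$, hence only logarithmically in $K$. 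On the other hand, the bounded slope property of $T$ says $|\im z - \im w|\leqslant \alpha\max\{|\re z|,|\re w|\}+\beta$ for all $z,w\in T$, so control on $|\psi(\zeta)-z_*|$, combined with the uniformly bounded wiggling (which pins the real part of points on the hyperbolic geodesic of $T$, hence keeps $\re\psi(\zeta)$ from drifting far below $\re z_*=O(1)$ relative to its distance out the tract), forces $|\re\psi(\zeta)| = O(\log K) = o(K)$. Thus for $K$ large, $|\re\psi(\zeta)|<K$, which is exactly the claim for $T$.

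I would then assemble the pieces: $\mathcal F$ meets only finitely many tracts $T_1,\dots,T_m$; for each $T_j$ the argument above produces $K_0(T_j)$; and for $K\geqslant\max_j K_0(T_j)$ every preimage point in $\mathcal F$ of the set $\{|\re z|=K\}$ lies in $\{|\re z|<K\}$. The main obstacle, and the place requiring care, is the second half of the geometric estimate: translating ``$\psi(\zeta)$ is within distance $O(\log K)$ of the $O(1)$-sized base point'' into ``$|\re\psi(\zeta)|<K$'' really does need both good-geometry hypotheses simultaneously --- bounded slope alone would only bound $\re\psi(\zeta)$ in terms of $\im\psi(\zeta)$, and bounded wiggling is what prevents the geodesic (hence the tract near $\psi(\zeta)$) from doubling back to large real parts while staying close in the Euclidean metric. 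One must therefore route the estimate through the hyperbolic geodesic of $T$ connecting $\psi(\zeta)$ to the essential singularity and invoke Definition~\ref{dfn:good-geometry}(a) to control $|\re z|$ along it, rather than working with the Euclidean diameter directly. Everything else (the finiteness reduction, the symmetric case of tracts with real parts bounded above, and the tracts over $0$ versus over $\infty$) is routine once this core estimate is in place.
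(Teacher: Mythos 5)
The paper itself does not prove this lemma; it is stated as the analogue of \cite[Proposition~2.6]{benini-fagella15} and used as a black box, so there is no ``paper proof'' to compare against. Assessing your argument on its own terms, I see a genuine gap at the central estimate.

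The key step you claim is that applying Lemma~\ref{lem:strong-expansivity} with $z=\psi(\zeta)$ and $w=z_*$ gives $|\psi(\zeta)-z_*|=O(\log|\zeta-F(z_*)|)=O(\log K)$. This breaks down in two places. First, with the base point normalised so that $\re F(z_*)=0$, the quantity $\min\{|\re F(z)|,|\re F(z_*)|\}-R$ is $\leqslant -R<0$, so the inequality of Lemma~\ref{lem:strong-expansivity} is vacuous (indeed, for $F\in\BL^{*n}$ with $H=\mathbb H^\pm_R$ there is no point of $\mathcal T$ with $\re F=0$ at all). That is fixable by taking $|\re F(z_*)|$ a fixed constant $>R$. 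The second problem is not cosmetic: you pass from ``$O(\log|\zeta-F(z_*)|)$'' to ``$O(\log K)$'', which implicitly assumes $|\zeta-F(z_*)|$ is polynomial in $K=|\re\zeta|$. But $|\zeta-F(z_*)|\geqslant |\im\zeta-\im F(z_*)|$, and nothing in your argument constrains $|\im\zeta|$: the vertical line $\{|\re w|=K\}$ meets $H$ in an unbounded set. The place where the imaginary part is actually controlled is the hypothesis that $z$ lies in a \emph{finite} union of fundamental domains $\mathcal F$, so that $\zeta=F(z)$ lies in finitely many fundamental strips $S_m$; one then needs to know (using that $\delta$ can be chosen with bounded slope because the tracts themselves have bounded slope, so $\exp^{-1}\delta$ has bounded slope) that $|\im\zeta|=O(K)$ for $\zeta\in S_m$ with $|\re\zeta|=K$. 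This is precisely where ``finite union of fundamental domains'' and good geometry enter, and your write-up does not address it at all.

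A smaller point: once one has $|\psi(\zeta)-z_*|=O(\log K)$ with a \emph{fixed} base point $z_*$, the conclusion $|\re\psi(\zeta)|\leqslant|z_*|+O(\log K)<K$ is an immediate triangle inequality; neither bounded slope nor bounded wiggling is needed at that stage. Your closing paragraph, which routes the last step through hyperbolic geodesics and Definition~\ref{dfn:good-geometry}(a), misidentifies where the good-geometry hypotheses are actually consumed --- they (together with the choice of $\delta$) are needed to control $|\im\zeta|$ in the expansivity estimate, not to convert ``$\psi(\zeta)$ is close to $z_*$'' into ``$|\re\psi(\zeta)|$ is small.''
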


In the following proposition we adapt the proof of \cite[Proposition~2.11]{benini-fagella15} to our setting. This is based on the ideas of \cite[Theorem~2.4]{rempe08} and will be used later to prove Theorem~\ref{thm:periodic-rays}.

\begin{prop}
Let $F\in\BL^*$. There exists $K_2(f)>0$ such that if $K\geqslant K_2(F)$ and $\underline{s}\in \bold{\Sigma}$ contains finitely many different symbols, then $J_{\underline{s}}^{K}(F)$ contains a continuum whose points have unbounded real part.
\label{prop:periodic-cotinua}
\end{prop}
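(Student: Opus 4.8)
The plan is to adapt the construction from \cite[Theorem~2.4]{rempe08} (following the presentation of \cite[Proposition~2.11]{benini-fagella15}) to the setting of $\BL^*$, using Lemma~\ref{lem:benini-fagella} to control the pullbacks within a finite union of fundamental domains. First I would fix $\underline{s}=(T_n)\in\bold{\Sigma}$ using only finitely many symbols; say all the $T_n$ belong to a finite set of tracts, and let $\mathcal{F}$ be the (finite) union of all fundamental domains contained in those tracts. Choose $K_2(F)>0$ large enough that: $K_2(F)\geqslant K_1(F)$ (the constant from Proposition~\ref{prop:unbounded-BL}); the expansivity property \eqref{eq:expansivity} holds for $|\re F|\geqslant K_2(F)$; and the conclusion of Lemma~\ref{lem:benini-fagella} holds for $\mathcal{F}$ with this $K$. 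Since $F$ has good geometry — note that the hypothesis here is just $F\in\BL^*$, so strictly one needs either to add the good-geometry assumption or to invoke it only where Lemma~\ref{lem:benini-fagella} is applied; I will assume $F$ has good geometry, as this is the case of interest (finite order functions, by Proposition~\ref{thm:finite-order-functions-have-good-geom}) — Lemma~\ref{lem:benini-fagella} tells us that $F^{-1}$ maps the part of the vertical line $\{|\re z|=K\}$ lying in $\mathcal{F}$ strictly inside the band $\{|\re z|<K\}$.

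The core of the argument is a nested-intersection construction. For each $n$, let $\Gamma_n\subseteq \overline{T_n}\cap H$ denote the portion of $\overline{T_n}$ on which $|\re z|\geqslant K$; since $T_n$ is an unbounded Jordan domain with real parts bounded on one side (property (b) of Theorem~\ref{thm:logarithmic-coord}) and $F:T_n\to H$ is a conformal isomorphism (property (d)), one checks that $\Gamma_n$ contains an unbounded closed connected subset — indeed $\Gamma_n$ separates the ``far end'' of $T_n$ from its boundary appropriately. Now pull back: since $\underline{s}\in\bold{\Sigma}$, the branch $F^{-1}_{|T_{n}}$ is well-defined on all of $H\supseteq F(\overline{T_{n+1}})$, and by the choice of $K$ together with Lemma~\ref{lem:benini-fagella}, the set $F^{-1}_{|T_n}(\Gamma_{n+1})$ again lies in the region $|\re z|\geqslant K$ inside $T_n$ — more precisely, a point $w$ with $F(w)\in\Gamma_{n+1}$ satisfies $|\re w|\geqslant K$ because otherwise $F(w)$ would be the image under $F$ of a point of $\mathcal{F}$ with $|\re w|<K$, and Lemma~\ref{lem:benini-fagella} forces $|\re F(w)|<K$, a contradiction. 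Define
$$
C_k := (F^{-1}_{|T_0}\circ F^{-1}_{|T_1}\circ\cdots\circ F^{-1}_{|T_{k-1}})(\Gamma_k)\cap\{|\re z|\geqslant K\},
$$
a decreasing sequence of nonempty closed sets in $\overline{T_0}$, each an unbounded continuum (connectedness is preserved by the continuous branches $F^{-1}_{|T_j}$, unboundedness because each $C_k\cup\{0\text{ or }\infty\}$ is a continuum in $\CR$ and one applies the Boundary bumping theorem, Lemma~\ref{lem:boundary-bumping}, as in the proof of Proposition~\ref{prop:unbounded-BL}).

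Finally, pass to the limit: let $\widehat{C}_k$ be the closure of $C_k$ in $\CR$, so $\widehat{C}_{k+1}\subseteq\widehat{C}_k$ and each $\widehat{C}_k$ is a continuum; then $\bigcap_k\widehat{C}_k$ is a nonempty continuum in $\CR$ meeting $\{0,\infty\}$, and by Lemma~\ref{lem:boundary-bumping} it has a component $A\subseteq\C^*$ that is unbounded. By construction every $z\in A$ satisfies $F^n(z)\in\overline{T_n}$ for all $n$ (hence $z\in J_{\underline{s}}(F)$) and $|\re F^n(z)|\geqslant K$ for all $n$ (hence $z\in J^K(F)$), so $A\subseteq J^K_{\underline{s}}(F)$; moreover every point of $A$ has $|\re z|\geqslant K$ and $A$ is unbounded, so its points have unbounded real part, as required. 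The main obstacle is the verification that the pullbacks stay trapped in $\{|\re z|\geqslant K\}$: this is exactly where good geometry (through Lemma~\ref{lem:benini-fagella}, which needs the bounded-wiggling / bounded-slope structure) is essential, and where the argument genuinely differs from the entire case — in the punctured plane one must simultaneously control escape towards \emph{both} essential singularities, but since $\underline{s}$ is admissible the branches $F^{-1}_{|T_n}$ are unambiguous and the itinerary automatically records which essential singularity each iterate is near, so no extra bookkeeping beyond \eqref{eq:expansivity} and Lemma~\ref{lem:benini-fagella} is needed.
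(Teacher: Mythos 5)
Your overall strategy — pull back along the tracts of $\underline{s}$, intersect with $\{|\re z|\geqslant K\}$, and pass to a nested intersection of continua via the Boundary bumping theorem — is the same as the paper's, but there are two genuine gaps, the second of which is fatal to the argument as written.

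First, a bookkeeping issue: your $\mathcal F$ is defined as the union of \emph{all} fundamental domains contained in the finitely many tracts appearing in $\underline{s}$. Each tract contains \emph{infinitely} many fundamental domains, so this $\mathcal F$ is infinite, whereas Lemma~\ref{lem:benini-fagella} requires a finite union of fundamental domains. The paper chooses, for each ordered pair $(T^s_j,T^s_k)$ of symbols appearing in $\underline{s}$, the unique fundamental domain $F^s_{j,k}\subseteq T^s_j$ with $F(F^s_{j,k})\supseteq T^s_k$ — only $N^2$ fundamental domains — and this finite collection $\mathcal F$ is the one fed into Lemma~\ref{lem:benini-fagella}. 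This is fixable, but it is not a cosmetic difference: the constant $K_2(F)$ genuinely depends on $\mathcal F$ being finite.

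The serious gap is in the assertion that $F^{-1}_{|T_n}(\Gamma_{n+1})\subseteq\{|\re z|\geqslant K\}$, which you justify by: ``otherwise $F(w)$ would be the image under $F$ of a point of $\mathcal F$ with $|\re w|<K$, and Lemma~\ref{lem:benini-fagella} forces $|\re F(w)|<K$.'' Lemma~\ref{lem:benini-fagella} does not say this. It says
$$
F^{-1}\bigl(\{z : |\re z|= K\}\bigr)\cap \mathcal F\subseteq \{z : |\re z|< K\},
$$
i.e.\ \emph{if} $w\in\mathcal F$ and $|\re F(w)|=K$, \emph{then} $|\re w|<K$. Your application reverses this into ``if $w\in\mathcal F$ and $|\re w|<K$, then $|\re F(w)|<K$,'' which is a different (and in general false) statement: a point $w\in F_n$ in the band $\{|\re z|<K\}$ may well have $|\re F(w)|>K$; the lemma only locates the \emph{level set} $\{|\re F|=K\}$, not the sublevel set. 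Because this step fails, your set $C_k$ — which only intersects with $\{|\re z|\geqslant K\}$ at the outermost step — controls $|\re z|$ and $|\re F^k(z)|$, but not $|\re F^j(z)|$ for $0<j<k$. The resulting intersection therefore lands in $J_{\underline{s}}(F)$, not in $J^{K}_{\underline{s}}(F)$, which is what the proposition demands. (Relatedly, a single intersection of a pulled-back continuum with $\{|\re z|\geqslant K\}$ need not remain connected, which you have not addressed.)

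The paper's proof avoids this by intersecting with $\mathbb H_K^\pm$ \emph{at every step} of the pullback, defining
$$
X_n:=F_{|F_{0}}^{-1}\bigl(\cdots\bigl(F_{|F_{n-1}}^{-1}(F_n)\cap \mathbb H_K^\pm\bigr)\cdots \bigr)\cap \mathbb H_K^\pm,
$$
so that membership in $X_n$ forces $|\re F^j(z)|\geqslant K$ for $0\leqslant j<n$ by construction. The real work is then to show that each $X_n$ is a nonempty unbounded continuum, and this is where Lemma~\ref{lem:benini-fagella} is used correctly: it implies that $F^{-1}(\partial\mathbb H_K^\pm)\cap\mathcal F\subseteq\C\setminus\overline{\mathbb H_K^\pm}$, hence each $F_n$ meets $\partial\mathbb H_K^\pm$, and more importantly that the pullback of any unbounded continuum meeting $\partial\mathbb H_K^\pm$ again meets $\partial\mathbb H_K^\pm$ — which guarantees that the intersection with $\mathbb H_K^\pm$ at each stage has a nonempty unbounded component. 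You would need to replace your ``trapped in $\{|\re z|\geqslant K\}$'' step by this argument for the proof to go through.
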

\begin{proof}
Suppose that $\underline{s}=(T_n)$ contains $N$ different symbols for tracts $T^s_1,\hdots,T^s_N$ from $\mathcal T$ and choose, for each $1\leqslant j\leqslant N$, $N$ fundamental domains $F^s_{j,k}\subseteq T^s_j$ so that $F(F^s_{j,k})\supseteq T^s_k$. Let $\mathcal F$ denote the finite collection of fundamental domains $\{F^s_{j,k}\}$, and assume $K_2=K_2(F)>0$ is sufficiently large that Lemma \ref{lem:benini-fagella} holds for $\mathcal F$ and $K>K_2(F)$. Then define $(F_n)$ to be the sequence of fundamental domains from~$\mathcal F$ satisfying that $F_n\subseteq~T_n$ and $T_{n+1}$ lies in $F(F_n)$.

Let $X_0$ be the unbounded component of $F_0\cap \mathbb H_K^\pm$ and, for each $n>0$, let $X_{n}$ be the unique unbounded component of 
$$
F_{|F_{0}}^{-1}\bigl(\cdots\bigl(F_{|F_{n-2}}^{-1}\bigl(F_{|F_{n-1}}^{-1}(F_n)\cap \mathbb H_K^\pm\bigr)\cap \mathbb H_K^\pm\bigr)\cdots \bigr)\cap \mathbb H_K^\pm
$$
where $F^{-1}_{|F_n}$ is the branch of $F^{-1}$ that maps the fundamental strip $F(F_n)\subseteq H$ in which $F_{n+1}$ lies to the fundamental domain $F_n\subseteq T_n$. Note that since $F$ is entire, $F^{-1}_{|F_n}$ maps unbounded sets to unbounded sets.

Lemma \ref{lem:benini-fagella} tells us that $F^{-1}(\partial \mathbb H_K^\pm)\cap \mathcal F\subseteq \C\setminus \mathbb H_K^\pm$ and therefore for each $F_n\in \mathcal F$, necessarily $F_n\cap \partial \mathbb H_K^\pm\neq \emptyset$. Furthermore, if $Y$ is an unbounded continuum with $Y\cap \partial\mathbb H_K^\pm \neq \emptyset$, then by Lemma \ref{lem:benini-fagella} $f^{-1}_{|F_n}(Y)\cap \partial\mathbb H_K^\pm\neq \emptyset$. Thus, since $F_n\cap \partial \mathbb H_K^\pm\neq \emptyset$, we have that $X_n\cap \partial \mathbb H_K^\pm\neq \emptyset$ for all $n\in\N$.

As before, let $\widehat{X}_n$ be the closures of $X_n$ in $\CR$ and define
$$
X':=\bigcap_{k\in\N} \widehat{X}_n
$$
which is an unbounded continuum. Since all $\widehat{X}_n$ intersect $\partial \mathbb H$, $X'\setminus\{0,\infty\}$ has a component $X$ that intersects $\partial \mathbb H_K^\pm$ and is unbounded by Lemma~\ref{lem:boundary-bumping}.
%\blue{By the expansivity of $F$ (see Lemma \ref{lem:expansivity}), there exists $K>0$ large enough that if $z\in \mathcal T\cap \{z\in \C\ :\ |\re z|\geqslant K\}$, then $|\re F(z)|\geqslant 2|\re z|$.
%Since $\underline{s}$ has finitely many symbols, the set $U_n$ is never empty.}
\end{proof}

In particular, Proposition \ref{prop:periodic-cotinua} includes all the periodic external addresses in $\bold{\Sigma}$. Observe that considering external addresses that consist of fundamental domains instead of tracts we would obtain the result that for all such sequences containing only finitely many different fundamental domains of $f$ there is an unbounded continuum consisting of escaping points with that extended external address.

%\begin{rmk} Note that instead of just looking at tracts, you can define more precise external addresses of the form
%$$
%\underline{\tilde s}:=\left(
%\begin{array}{c}
%T_0\\
%F_0
%\end{array}
%\right)\left(
%\begin{array}{c}
%T_1\\
%F_1
%\end{array}
%\right)\cdots \left(
%\begin{array}{c}
%T_n\\
%F_n
%\end{array}
%\right)\cdots
%$$
%according to the fundamental domain $F_n\subseteq T_n$ that contains the point $f^n(z)$. Then, for transcendental entire functions, Benini and Fagella showed that the set $J_{\underline{\tilde s}}(F)$ contains unbounded continua for this more precise external addresses \red{xxx} \cite[Proposition~2.11]{benini-fagella15}, and the same techniques work in the class $\B^*$. In particular, every fundamental domain contains one fixed ray...
%\end{rmk}

\section{Dynamic rays}

In Theorem~\ref{thm:I4} we showed that bounded-type functions have no escaping Fatou components. Instead, escaping points often lie in curves tending to the essential singularities --called \textit{dynamic rays} or, sometimes, \textit{hairs}-- such that in every unbounded proper subset --called \textit{ray tail}-- points escape uniformly. We say that a dynamic ray is \textit{broken} if one of its forward iterates contains a critical point; this concept was introduced in \cite[Definition~2.2]{benini-fagella15}.

\begin{dfn}[Dynamic ray]
Let $f$ be a transcendental self-map of $\C^*$. A \textit{ray tail} of $f$ is an injective curve
$$
\gamma:[0,+\infty)\to I(f)
$$
such that $f^ { n}(\gamma(t))\to \{0,\infty\}$ as $t\to +\infty$ for all $n\geqslant 0$ and $f^{ n}(\gamma(t))\to \{0,\infty\}$ uniformly in $t$ as $n\to \infty$. A \textit{dynamic ray} of $f$ is a maximal injective curve 
$$\gamma:(0,+\infty)\to I(f)$$ such that $\gamma|_{[t,+\infty)}$ is a ray tail for every $t>0$. Similarly, we can define ray tails for any logarithmic transform $F$ of $f$ (only defined on the set $\mathcal T$), and dynamic rays for any lift $\tilde{f}$ of $f$. We shall abuse the notation and use $\gamma$ for both the ray as a set and its parametrization.

We say that a dynamic ray $\gamma$ is \textit{broken} if $f^n(\gamma)$ contains a critical point for $n\in\N$. A non-broken ray $\gamma$ is said to \textit{land} if $\overline{\gamma}\setminus\gamma$ consists of a single point or, in other words, if $\gamma(t)$ has a limit as $t\rightarrow 0$.
\index{dynamic ray}
\end{dfn}

\begin{ex}%\margin{find an\\ example in\\ class $\B^*$?\\ ~\\ prove stronger\\ than Thm 1.2?} 
We give a couple of straightforward examples of dynamic rays~in~$\C^*$.
\begin{enumerate}
\item[(i)] The positive real line is a fixed dynamic ray for $f(z)=\exp(z+1/z)$, and points escape to $\infty$ under iteration. This is an example of a broken ray because the function $f$ has a critical point at $z=1$. 
\item[(ii)] If we now consider the function $g(z)=\exp(-z+1/z)$, the positive real line is again forward invariant but $z=1$ is a repelling fixed point of $g$. In this case, the intervals $(0,1)$ and $(1,+\infty)$ form a cycle of 2-periodic non-broken dynamic rays.
\end{enumerate}
\label{ex:rays}
\end{ex}

Observe that dynamic rays are allowed to land at an essential singularity; that is, the limit of $\gamma(t)$ as $t\to 0$ and $t\to+\infty$ may coincide. The dynamic ray from the following example is non-broken and goes from zero to infinity.

%The dynamic ray in Example~\ref{ex:rays}~(i) is a broken ray that goes from $0$ to $\infty$. In the next example we see that we can also have non-broken rays that go from $0$ to $\infty$.

\begin{ex}
The positive real line is a fixed and non-broken dynamic ray for the function $f(z)=z\exp\bigl(z^2+\exp(-1/z^2)\bigr)$ (see Figure \ref{fig:nice-ray}). 
\label{ex:rays2}
\end{ex}

\begin{figure}[h!]
\centering
\includegraphics[width=.48\linewidth]{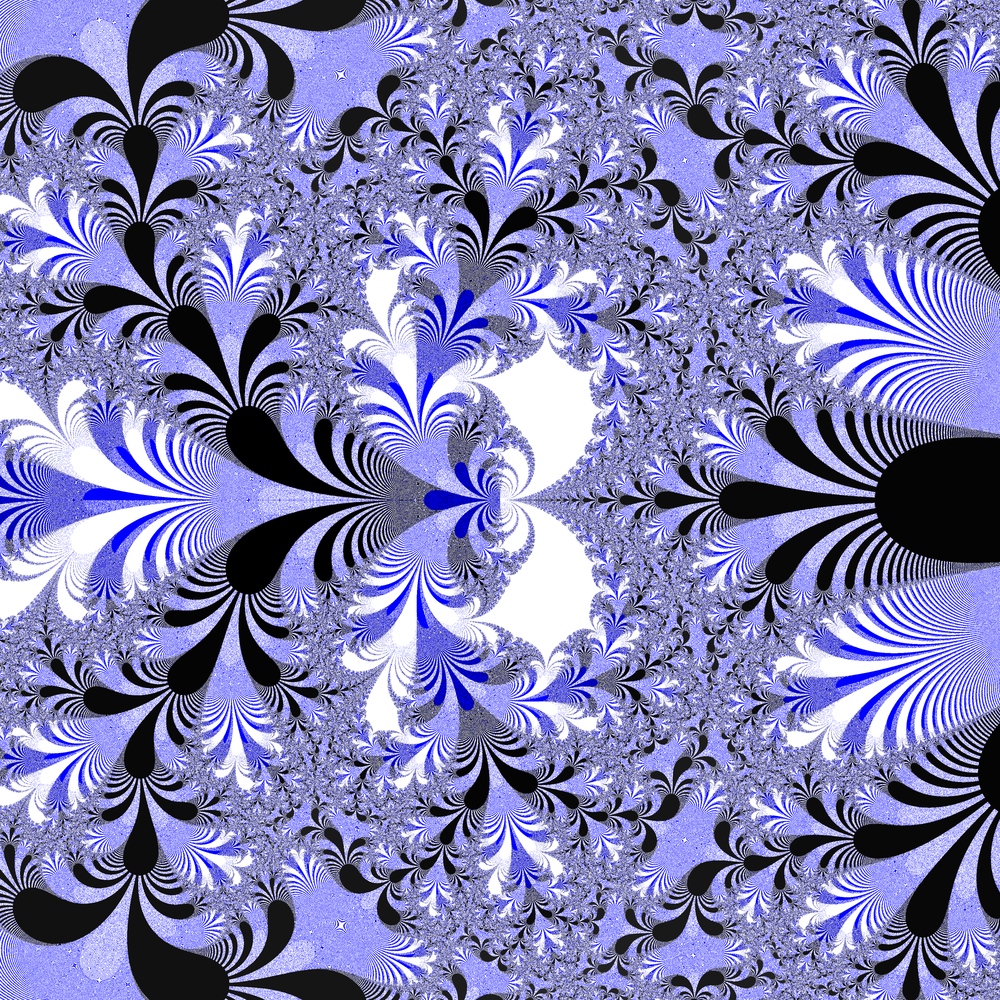} \hspace*{\fill}
\includegraphics[width=.48\linewidth]{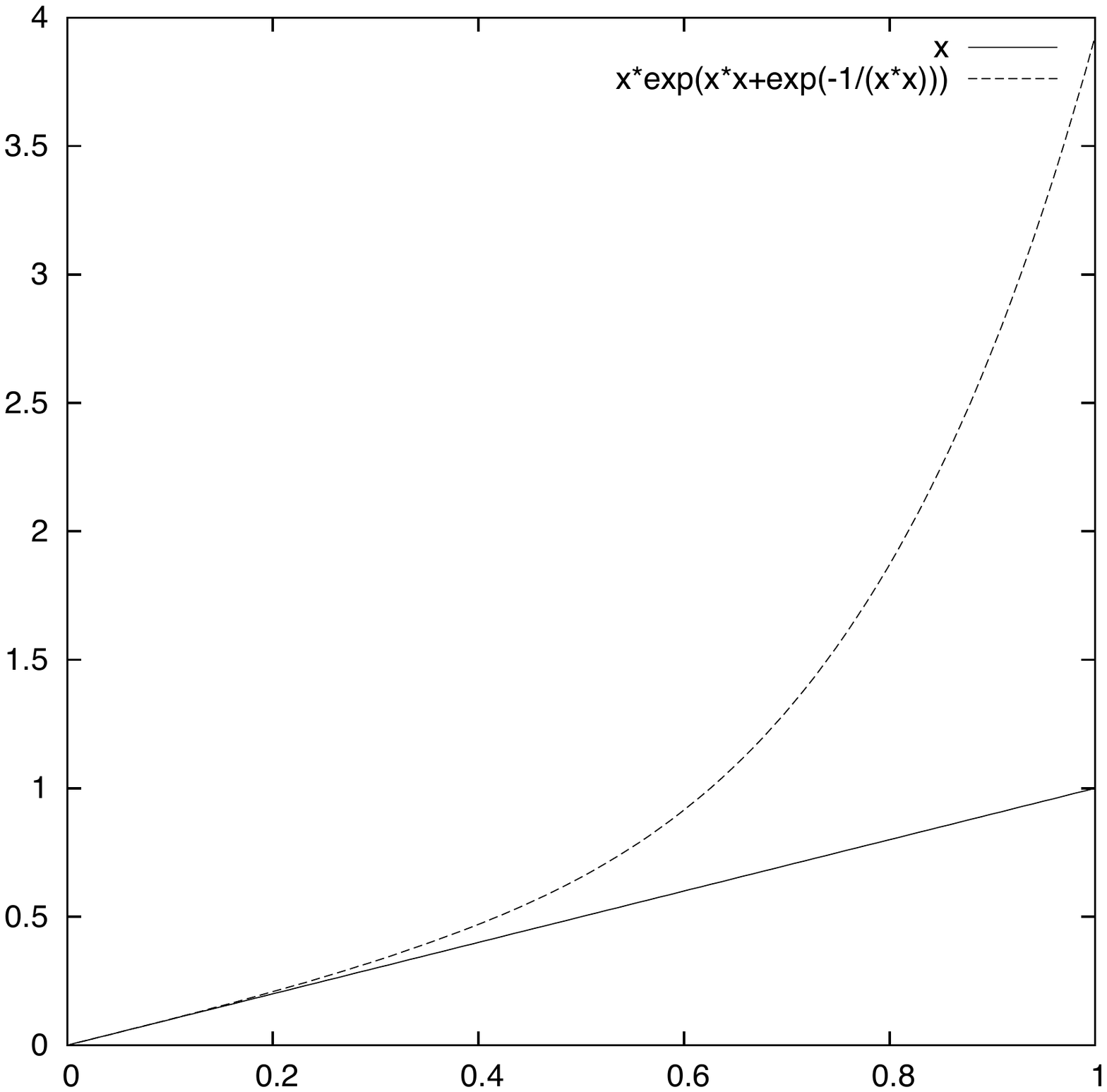} 
\caption{In the left, we have the phase space of the function $f(z)=z\exp(z^2+\exp(-1/z^2))$ from Example \ref{ex:rays2}. In the right, the graph of the restriction of this function to the positive real line.}
\label{fig:nice-ray}
\end{figure}

Since the exponential function is a local homeomorphism, we have the following correspondence between dynamic rays of transcendental self-maps of $\C^*$ and those of their lifts.

\begin{lem}
Let $f$ be a transcendental self-map of $\C^*$ and let $\tilde{f}$ be a lift of~$f$. Then $\gamma$ is a dynamic ray of $f$ if and only if any connected component $\tilde{\gamma}$ of $\exp^{-1}\gamma$ is a dynamic ray of $\tilde{f}$. Furthermore, $\gamma$ lands or is broken if and only if\linebreak $\tilde{\gamma}$ lands or is broken, respectively.
\label{lem:rays-correspondence}
\end{lem}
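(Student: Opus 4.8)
The plan is to establish the correspondence at the level of curves first and then transfer the dynamical properties. Write $E=\exp$, so $E\circ\tilde f=f\circ E$, and recall that $E$ is a covering map, in particular a local homeomorphism. Suppose first that $\gamma\colon(0,+\infty)\to I(f)$ is a dynamic ray of $f$. Since $\gamma$ is an injective curve in $\C^*$ and $E$ is a covering, every connected component $\tilde\gamma$ of $E^{-1}(\gamma)$ is an injective curve in $\C$ that maps homeomorphically onto $\gamma$; fix a lift of the parametrisation so that $E(\tilde\gamma(t))=\gamma(t)$ for $t\in(0,+\infty)$. The semiconjugacy gives $\tilde f^n(\tilde\gamma(t))\in E^{-1}(f^n(\gamma(t)))$ for all $n$. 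To see that $\tilde\gamma$ is a ray tail on $[s,+\infty)$ for every $s>0$, I would argue that $|\re \tilde f^n(\tilde\gamma(t))|\to+\infty$: by Theorem~\ref{thm:I4} and the discussion of logarithmic coordinates, the points $f^n(\gamma(t))$ eventually enter the tracts $\mathcal V$, and there the real parts of the lifted orbit control the moduli via $|E(w)|=e^{\re w}$; uniform escape of $f^n|_\gamma$ to $\{0,\infty\}$ in the spherical metric translates exactly into uniform escape of $|\re \tilde f^n|$ to $+\infty$ along $\tilde\gamma$ — this uses that on a ray tail the $2\pi i$-ambiguity of lifts is locally constant, so the lifted orbit is genuinely well-defined. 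Hence $\tilde\gamma|_{[s,+\infty)}$ is a ray tail of $\tilde f$ and $\tilde\gamma$ is a (non-empty) injective curve each of whose restrictions is a ray tail; maximality of $\tilde\gamma$ follows from maximality of $\gamma$ together with the covering property, since any proper injective extension of $\tilde\gamma$ would project to a proper injective extension of $\gamma$.

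Conversely, if $\tilde\gamma$ is a dynamic ray of $\tilde f$, then $\gamma:=E\circ\tilde\gamma$ is an injective curve provided $\tilde\gamma$ meets no two points of the same fibre of $E$; this holds because on a ray tail the orbit escapes in real part while consecutive points of a fibre differ by $2\pi i$, and more directly because $\tilde\gamma$, being an unbounded curve on which $|\re\tilde f^n|\to+\infty$ uniformly, is eventually contained in a single fundamental strip of $\mathcal T$ (by property~(e) of Theorem~\ref{thm:logarithmic-coord}, $\exp|_T$ is injective), and the bounded initial portion can be shortened if necessary without losing maximality. Then $f^n(\gamma(t))=E(\tilde f^n(\tilde\gamma(t)))$ escapes to $\{0,\infty\}$ uniformly in $t$ as $n\to\infty$, so $\gamma|_{[s,+\infty)}$ is a ray tail of $f$ for every $s>0$, and maximality transfers back by the same covering argument. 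This gives the equivalence in the first sentence.

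For the landing and broken statements: $\gamma$ lands iff $\gamma(t)$ has a limit in $\C^*$ as $t\to0$; since $E$ is a local homeomorphism and $\tilde\gamma(t)$ stays (for small $t$, hence on a bounded portion) in a region where $E$ is injective, $\lim_{t\to0}\gamma(t)$ exists iff $\lim_{t\to0}\tilde\gamma(t)$ exists, because $E$ and a local inverse are homeomorphisms between suitable neighbourhoods — one should note the mild subtlety that the accumulation set $\overline\gamma\setminus\gamma$ of a dynamic ray lies in $\C^*$ or equals an essential singularity, and in either case a local branch of $\log$ at an accumulation point carries the statement across. The broken statement is immediate from $\cp(\tilde f)=E^{-1}(\cp(f))$ (part of Lemma~\ref{lem:cpav}, or directly from $\tilde f' = (f'\circ E)\cdot E'/\,(\text{nonvanishing})$ via $E\circ\tilde f=f\circ E$ and $E'=E\neq0$): a point of $f^n(\gamma)$ is critical for $f$ iff the corresponding point of $\tilde f^n(\tilde\gamma)$ is critical for $\tilde f$. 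I expect the main obstacle to be the bookkeeping around the $2\pi i$-ambiguity of lifts — making precise that along a ray tail the lift of the orbit is honestly well-defined and that ``uniform escape to $\{0,\infty\}$ in the spherical metric'' for $f$ is equivalent to ``uniform escape of $|\re\,\cdot\,|$ to $+\infty$'' for $\tilde f$ — rather than any deep point; everything else is a routine application of the covering-space property of $\exp$.
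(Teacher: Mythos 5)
The paper gives no proof of Lemma~\ref{lem:rays-correspondence}: it is stated as an immediate consequence of the exponential being a local homeomorphism (``Its proof is straightforward''-style omission, like the earlier lemma about $S(\tilde f)$). Your write-up is a reasonable fleshing-out of exactly the covering-space argument the authors had in mind, and the forward direction, the landing correspondence, and the broken correspondence are all handled correctly (for the last of these, the key identity $\tilde f'(z)\,e^{\tilde f(z)}=f'(e^z)\,e^z$ gives $\cp(\tilde f)=\exp^{-1}\cp(f)$ since $e^z$ never vanishes, which is what you use).

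There is, however, a genuine gap in your converse direction, and it is worth flagging because the paper itself calls attention to the underlying phenomenon. You write that if $\tilde\gamma$ is a dynamic ray of $\tilde f$ then ``$f^n(\gamma(t))=E(\tilde f^n(\tilde\gamma(t)))$ escapes to $\{0,\infty\}$ uniformly,'' but a ray tail of the \emph{entire} function $\tilde f$ only guarantees $\tilde f^n(\tilde\gamma(t))\to\infty$ uniformly, not $|\re\tilde f^n(\tilde\gamma(t))|\to\infty$. A lift $\tilde f$ of a transcendental self-map of $\C^*$ can have points escaping to $\infty$ in the imaginary direction; the paper states this explicitly right before the lemma (``$\tilde f$ may have points that escape in the imaginary direction and correspond to bounded orbits of $f$''). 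Such a ray of $\tilde f$ would project to a curve in $\C^*$ whose orbit is bounded, not a dynamic ray of $f$. To close this gap you need an extra ingredient — for instance, that $\tilde\gamma$ is a component of $\exp^{-1}\gamma$ with $\gamma\subseteq I(f)$, or that $\tilde\gamma$ eventually lies in the tracts $\mathcal T$ of a logarithmic transform — to force escape of the real part. Your remark that ``the bounded initial portion can be shortened if necessary without losing maximality'' is also not quite right as stated, since a dynamic ray is by definition a \emph{maximal} such curve and cannot simply be truncated; the cleaner route to injectivity of $\exp$ on $\tilde\gamma$ is to use that $\tilde\gamma$ is a \emph{connected component} of $\exp^{-1}\gamma$, so if it met one of its own $2\pi i\Z$-translates it would coincide with it, which one then rules out.

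These are the only substantive issues; the rest of the proposal is a correct and suitably careful rendering of the ``local homeomorphism'' argument the paper leaves to the reader.
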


It is a well-known result for entire functions that if the postsingular set is bounded then all periodic dynamic rays land. This was first proved for the exponential family \cite{schleicher-zimmer03b, rempe06}. Rempe proved a more general version of the result for Riemann surfaces that applies to maps in the classes $\B$ and $\B^*$ \cite[Theo- rem~B.1]{rempe08}; see also \cite[Theorem~1.1]{deniz14} for an alternative proof of this result for the class $\B$. The same techniques imply the following result in our setting. 
%In the following lemma we state the result for the class $\B^*$.

%\red{
%\begin{rmk}
%\margin{short section\\ at then end?}A dynamic ray $\gamma$ is said to \textit{land} if . Note that in this paper there is not claim concerning the landing of rays. A priori the set $\overline{\gamma}\setminus\gamma$ may contain more than one point for many rays $\gamma$. However, the standard result of landing for entire functions apply to the logarithmic transform $F$ and, a ray $\gamma$ lands for $F$ if and only if $\exp \gamma$ lands for $f$. In particular, \red{who?} showed that all periodic rays of a transcendental entire function (in class $\BL?$) land \red{\ref{}}. See also ... cite also \cite{schleicher-zimmer03b} and ...
%\end{rmk} }

\begin{prop}
\label{lem:landing}
Let $f\in \B^*$ with postsingular set $P(f)$ bounded away from zero and infinity. Then all periodic dynamic rays of $f$ land, and the landing points are either repelling or parabolic periodic points of $f$.
\end{prop}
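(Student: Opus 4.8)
The strategy is to reduce the statement to the known landing result for transcendental entire functions of bounded type via the lift, exactly as the surrounding text suggests. Let $\tilde f$ be a lift of $f$, so that $\exp\circ\tilde f=f\circ\exp$. By Lemma~\ref{lem:rays-correspondence}, a periodic dynamic ray $\gamma$ of $f$ lands if and only if any connected component $\tilde\gamma$ of $\exp^{-1}\gamma$ lands. The issue, as noted in the introduction, is that $\tilde f$ is \emph{not} of bounded type: its singular set lies in a vertical band $\exp^{-1}(S(f))$, which is unbounded in the imaginary direction. So one cannot cite \cite[Theorem~B.1]{rempe08} verbatim for $\tilde f$. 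Instead I would invoke the hyperbolic-contraction argument directly in logarithmic coordinates, where the relevant geometry is controlled.

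The key steps, in order, are as follows. First, pass to a logarithmic transform $F\in\BL^{*}$ of $f$ (using Theorem~\ref{thm:logarithmic-coord}), and after Lemma~\ref{lem:normalisation} assume $F\in\BL^{*n}$ is normalised with $H=\mathbb H_R^\pm$. A periodic dynamic ray $\gamma$ of $f$ with period $p$ lifts to a ray of $F^p$ inside a single tract $T$; since $\gamma$ is periodic (hence not escaping to $\{0,\infty\}$ at its finite endpoint in any uncontrolled way) and $P(f)$ is bounded away from $0,\infty$, the relevant orbit stays, after finitely many iterates, in the region $\mathbb H_{R'}^\pm$ where the strong expansivity of Lemma~\ref{lem:strong-expansivity} applies. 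Second, set up the standard accumulation argument: let $K$ be the set $\overline\gamma\setminus\gamma$, the accumulation set of $\gamma(t)$ as $t\to 0$; this is a compact connected subset of $\CR$ which is forward invariant under (a branch of) $F^p$, and one must show it is a single point. Because $P(f)$ is bounded away from $0$ and $\infty$, the inverse branch $F^{-p}_{T}$ defining $\gamma$ extends to a neighbourhood of $K$ and is uniformly hyperbolically contracting there, using expansivity \eqref{eq:expansivity} together with the fact that the orbit of $K$ avoids a definite neighbourhood of the postsingular set. Third, conclude that $F^{-p}_T$ is an eventually-contracting map of a small hyperbolic neighbourhood of $K$ into itself with a unique fixed point, so $K$ is that single point $z^*$; it is periodic for $F$, and by the contraction estimate its multiplier has modulus $\leq 1$, hence it is repelling or parabolic (it cannot be attracting, Siegel, or Cremer since dynamic rays accumulating on a periodic point force that point onto $J(f)$, and the orbit lies in $\overline{\mathcal T}$). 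Finally, project back by $\exp$: the landing point of $\gamma$ is $\exp(z^*)$, a repelling or parabolic periodic point of $f$, and translate the multiplier statement through the covering $\exp$ (which is a local biholomorphism at $z^*$, so multipliers are preserved).

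The main obstacle is the second step: establishing the uniform hyperbolic contraction of the inverse branch along the (closure of the) ray, despite $\tilde f$ not being of bounded type. One has to work in logarithmic coordinates, where $F$ \emph{is} of bounded type, and carefully check that the postsingular set of $f$ being bounded away from $0,\infty$ translates into the inverse branches $F^{-1}_{T}$ being defined and uniformly contracting in the hyperbolic metric on a fixed neighbourhood of the accumulation set — this is precisely the content of the Rempe-type argument \cite[Theorem~B.1]{rempe08}, adapted by replacing the role of "bounded $P(f)$ in $\C$" with "$P(f)$ bounded away from $0,\infty$ in $\C^*$" and noting that all the geometry needed (expansivity, tract structure, the separating curve $\delta$) is supplied by Theorem~\ref{thm:logarithmic-coord} and Lemmas~\ref{lem:expansivity}--\ref{lem:strong-expansivity}. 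Once this contraction is in hand, the landing and the classification of the landing point as repelling or parabolic follow by the standard snail-lemma/hyperbolic-metric reasoning, and I would present only a sketch of those routine parts, emphasising the differences from the entire case.
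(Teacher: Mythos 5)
Your proposal is correct in spirit, but you take a longer road than the paper does. You rightly observe that the lift $\tilde f$ is never of bounded type, so one cannot feed $\tilde f$ into the landing theorem for entire functions in $\B$; but the paper sidesteps this entirely by noting that Rempe's Theorem~B.1 in \cite{rempe08} is formulated for holomorphic maps on general Riemann surfaces (not just $\C$), and therefore applies \emph{directly} to $f\in\B^*$ on $\C^*$ once $P(f)$ is bounded away from both essential singularities. The paper's proof is thus a one-line citation; you instead reconstruct the underlying hyperbolic-contraction argument in logarithmic coordinates (pass to a normalised $F\in\BL^{*n}$, lift the period-$p$ ray to a tract, show the inverse branch of $F^{p}$ contracts a hyperbolic neighbourhood of the accumulation set into itself, conclude the accumulation set is a single fixed point, and invoke the Snail Lemma to exclude the irrationally indifferent case). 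That reconstruction is essentially the proof of Rempe's theorem specialised to $\C^*$, so it is valid, more self-contained, and illuminating, at the cost of redoing work the paper can cite. One small wording slip in your third step: the map whose multiplier has modulus $\leqslant 1$ is the \emph{inverse} branch $F^{-p}_T$; the multiplier of $F^{p}$ at the landing point is its reciprocal, so it has modulus $\geqslant 1$, and only then does the Snail Lemma (together with the fact that the point lies in $J(f)$) leave the repelling or parabolic cases.
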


%\red{\begin{prop}
%Let $f$ be a transcendental self-map of $\C^*$ of finite order. Then every dynamic ray of $f$ that goes from $0$ to $\infty$ must contain a critical point that ``folds'' the ray.
%\end{prop}
%\begin{proof}
%xxx
%\end{proof}
%}

Next we show that, since points in ray tails escape uniformly, each dynamic ray is contained in a set $I_e(f)$ for some essential itinerary $e\in\{0,\infty\}^\N$.

\begin{lem}
\label{lem:itin-rays}
Let $f$ be a transcendental self-map of $\C^*$ and let $\gamma$ be a dynamic ray of $f$. Then, for every ray tail $\gamma'\subseteq \gamma$, there is $\ell\in\N$ such that all the points in $f^\ell(\gamma')$ have the same essential itinerary. Hence, there exists an essential itinerary $e\in\{0,\infty\}^\N$ such that $\gamma\subseteq I_e(f)$. 
\end{lem}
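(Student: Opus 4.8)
The plan is to exploit the uniform escape along ray tails together with the definition of essential itinerary (Definition~\ref{dfn:essential-itinerary}), which only records whether $|f^n(z)|\leqslant 1$ or $>1$. First I would fix a ray tail $\gamma'\subseteq\gamma$ and use the defining property that $f^n(\gamma'(t))\to\{0,\infty\}$ uniformly in $t$ as $n\to\infty$. This uniform convergence in the spherical metric means there is some $\ell\in\N$ such that for every $n\geqslant \ell$ the whole image $f^n(\gamma')$ is contained in the set $\{|z|<1\}\cup\{|z|>1\}$, i.e.\ it misses the unit circle $\{|z|=1\}$; indeed, uniform accumulation on $\{0,\infty\}$ forces, for $n$ large, $f^n(\gamma')$ to lie in a fixed small neighbourhood of $\{0,\infty\}$ in $\CR$, which is disjoint from the compact unit circle.

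Next I would pass to the replacement ray tail $\gamma'':=f^\ell(\gamma')$, which is itself a ray tail of $f$ (a forward image of a ray tail is a ray tail, since the uniform escape condition is preserved under precomposition by iterates). For each $n\geqslant 0$ the set $f^n(\gamma'')=f^{n+\ell}(\gamma')$ is a \emph{connected} subset of $\C^*$ that is disjoint from $\{|z|=1\}$, hence it lies entirely in one of the two components $\{|z|<1\}$ or $\{|z|>1\}$. Therefore the value of $e_n$ in Definition~\ref{dfn:essential-itinerary} is the same for every point of $\gamma''$; letting $n$ vary this produces a single sequence $e=(e_n)\in\{0,\infty\}^\N$ which is the common essential itinerary of all points of $f^\ell(\gamma')$. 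This proves the first assertion.

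For the second assertion, note that every point $z\in\gamma$ lies in some ray tail $\gamma'_z\subseteq\gamma$ (by maximality, $\gamma|_{[t,+\infty)}$ is a ray tail for each $t>0$). Applying the first part to a single fixed ray tail $\gamma'$ of $\gamma$ gives $\ell,k$ and an essential itinerary $e$ such that the points of $f^\ell(\gamma')$ have essential itinerary exactly the shift $\sigma^{?}e$; more precisely, for any $z\in\gamma$ there is $\ell_z$ with $f^{\ell_z}(z)\in f^\ell(\gamma')$ for a suitable tail, so the essential itinerary of $z$ is eventually a shift of $e$. By the definition of $I_e(f)$ given in the introduction, this means $z\in I_e(f)$, and since $z$ was arbitrary, $\gamma\subseteq I_e(f)$.

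The only mild subtlety—the ``hard part,'' although it is not really hard—is making precise the step where uniform spherical convergence to $\{0,\infty\}$ is converted into the statement that $f^n(\gamma')$ misses the unit circle for all large $n$: one must observe that $\{|z|=1\}$ is a compact subset of $\C^*$ bounded away from both essential singularities in the spherical metric, so that a uniformly small spherical neighbourhood of $\{0,\infty\}$ is disjoint from it. Everything else is a direct unwinding of definitions, using connectedness of $\gamma'$ and its forward images to guarantee that the binary symbol $e_n$ cannot jump within $f^{n+\ell}(\gamma')$.
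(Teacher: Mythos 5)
Your proof of the first assertion is correct and follows the same route as the paper: uniform escape in the spherical metric gives an $\ell$ with $f^n(\gamma')\cap\{|z|=1\}=\emptyset$ for all $n\geqslant\ell$, and connectedness of the images $f^n(\gamma')$ forces a single value of $e_n$ on the whole set. In the second assertion, however, the clause ``for any $z\in\gamma$ there is $\ell_z$ with $f^{\ell_z}(z)\in f^\ell(\gamma')$'' is not justified as written: dynamic rays need not be forward invariant, so an iterate of $z$ has no reason to land on an image of a fixed tail of $\gamma$. The repair is straightforward and is what the paper does: given any two points $z_1,z_2\in\gamma$, choose a single ray tail $\gamma''\subseteq\gamma$ containing both of them (possible since $\gamma|_{[t,+\infty)}$ is a ray tail for every $t>0$), apply your first assertion to $\gamma''$ to obtain $\ell''$ with all points of $f^{\ell''}(\gamma'')$ sharing one essential itinerary, and conclude that the essential itineraries of $z_1$ and $z_2$ agree from index $\ell''$ onward, hence are equivalent. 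This places all of $\gamma$ in a single set $I_e(f)$.
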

\begin{proof}
By definition, ray tails escape uniformly and hence, if $\gamma'$ is a ray tail, there is $\ell\in\N$ such that $f^n(\gamma')\cap \mathbb S^1=\emptyset$ for all $n\geqslant \ell$. Then, all points in $f^\ell(\gamma')$ have the same essential itinerary; that is, $\gamma'\subseteq I_e^{\ell,0}(f)$ for some $e\in\{0,\infty\}^\N$.

Now suppose that $\gamma$ is a dynamic ray with $z_1\in \gamma \cap I_{e_1}(f)$ and $z_2\in \gamma \cap I_{e_1}(f)$. Then there is a ray tail $\gamma'\supseteq \{z_1,z_2\}$ and $\ell\in\N$ such that all points in $f^\ell(\gamma')$ have the same essential itinerary. Thus, $e_1\cong e_2$ and $\gamma\subseteq I_{e_1}(f)=I_{e_2}(f)$.
%Suppose this is not true, then there are $n\in\N$ arbitrarily large such that $f^n(\gamma)\cap W_0\neq \emptyset$ and $f^n(\gamma)\cap W_\infty\neq \emptyset$ where $W_0:=D(0,1)$ and $W_\infty:=\C\setminus \overline{D(0,1)}$.\linebreak For such values of $n$, there are points in $\gamma$ such that $|f^n(z)|=1$. But this contradicts the fact that, by definition, points in a ray tail escape uniformly. Hence, we can take $\ell\in\N$ such that $f^{\ell+n}(\gamma)\cap \mathbb S^1=\emptyset$ for all $n\in\N$.
%
%For each $r>0$, let $\gamma_r=\gamma\cap D(0,r)^c$ if $e_0=\infty$ and $\gamma_r=\gamma\cap D(0,1/r)$ if $e_0=0$. For each $0\leqslant n<\ell$, let $R_n>0$ be sufficiently large that 
%$$
%f^n(\gamma_{R_n})\cap W_{e_n}=\emptyset.
%$$
%Then, it is enough to take
%$$
%R:=\max_{0\leqslant n<\ell} R_n>1 \mbox{ if } e_0=\infty,\quad \mbox{ or } \quad R:=\min_{0\leqslant n<\ell} 1/R_n<1 \mbox{ if } e_0=0
%$$
%and $\gamma'=\gamma_R\subseteq I_e(f)$ as required.
\end{proof}

Actually, since all the images of a dynamic ray are unbounded in $\C^*$, dynamic rays are asymptotically contained into tracts which are preimages of the neighbourhood $W$ of the set $\{0,\infty\}$. Furthermore, each dynamic ray is asymptotically contained in exactly one of the fundamental domains of the function $F$.

In the following proposition we show that, in order to prove Theorem \ref{thm:main}, we only require that every escaping point has an iterate that is on a ray tail (see \cite[Proposition 2.3]{rrrs11}). 

\begin{prop}
Let $f$ be a transcendental self-map of $\C^*$ and let $z\in I(f)$. Suppose that some iterate $f^{k}(z)$ is on a ray tail $\gamma_k$ of $f$. Then either $z$ is on a ray tail, or there is some $n\leqslant k$ such that $f^{n}(z)$ belongs to a ray tail that contains an asymptotic value of $f$. 
%In particular, there is a curve $\gamma_0$ connecting $z$ to $\infty$ such that $f^{j}|_{\gamma_0}$ tends to $\infty$ uniformly (in fact, $f^{k}(\gamma_0)\subseteq \gamma_k$).
\label{prop:escaping-points-on-rays}
\end{prop}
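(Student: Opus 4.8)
The plan is to pull back the ray tail $\gamma_k$ step by step along the orbit $z, f(z), \dots, f^k(z)$, using at each stage a local inverse branch of $f$, and to keep track of when such a branch fails to extend to a curve reaching all the way to the essential singularities. First I would set $\gamma_k \subseteq I(f)$ to be the given ray tail through $f^k(z)$. Proceeding by downward induction on $j = k, k-1, \dots, 0$, I want to produce a ray tail $\gamma_j$ through $f^j(z)$ with $f(\gamma_j) \subseteq \gamma_{j+1}$ — or else stop early. Since $\gamma_{j+1}$ is an injective curve to $\{0,\infty\}$ consisting of escaping points and $f^j(z) \in f^{-1}(\gamma_{j+1})$, I can take the connected component $\gamma_j$ of $f^{-1}(\gamma_{j+1})$ containing $f^j(z)$. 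Because $f$ is an open holomorphic map, away from critical points this component is again an injective curve and $f$ maps it homeomorphically onto $\gamma_{j+1}$ (or onto a subcurve of it); the escaping and uniform-escaping properties are inherited because $f^n|_{\gamma_j}$ equals $f^{n-1}|_{\gamma_{j+1}} \circ f$ for $n \geq 1$, and for $n = 0$ one uses that $\gamma_j$ is bounded away from $0,\infty$ near $f^j(z)$ while its tail escapes along $\gamma_{j+1}$. Thus in the good case the induction delivers a ray tail $\gamma_0$ through $z$, giving the first alternative.

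The issue is that the pulled-back curve $\gamma_j$ need not be a full ray tail: the component of $f^{-1}(\gamma_{j+1})$ through $f^j(z)$ might fail to accumulate at $\{0,\infty\}$, terminating instead at a finite point of $\C^*$. Here I would argue that such a point must be an asymptotic value of $f$. Indeed, parametrise $\gamma_{j+1}: [0,\infty) \to I(f)$ with $\gamma_{j+1}(t) \to \{0,\infty\}$ as $t \to \infty$, and lift along $f$: there is a maximal interval $[0,\tau)$ (or the relevant half) over which the lift starting at $f^j(z)$ is defined, giving an injective curve $\tilde\gamma: [0,\tau) \to \C^*$ with $f \circ \tilde\gamma = \gamma_{j+1}$ on that interval. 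If $\tau = \infty$ and $\tilde\gamma(t)$ accumulates only at $\{0,\infty\}$, we are in the good case. Otherwise, either $\tilde\gamma(t)$ converges to some $w_0 \in \C^*$ as $t$ approaches the end of its interval — in which case $f'(w_0) = 0$ (the inverse branch is obstructed by a critical point) and, since $\tilde\gamma$ escapes nowhere, $w_0$ itself lies on a continuation which must hit a singular behaviour — or $\tilde\gamma$ has no limit but stays in a compact part of $\C^*$, which cannot happen for an injective curve that is a genuine lift over a curve going to $\infty$ in the parameter unless it accumulates on a point that is omitted over a tail, i.e.\ an asymptotic value. The cleanest formulation: the failure of the pull-back to be a ray tail forces the existence of a point $v$ on $\gamma_{j+1}$ (hence on a ray tail) together with a curve $\tilde\gamma \to \{0,\infty\}$ with $f \circ \tilde\gamma \to v$, so $v \in \mathrm{AV}(f)$; then $f^j(z)$ lies on the ray tail $\gamma_{j+1}$... wait — more precisely, $v = f(w)$ for the endpoint $w$ of $\tilde\gamma$, and the relevant iterate $f^{j+1}(z)$ (or the first index $n$ where the obstruction occurs) sits on the ray tail $\gamma_{j+1}$ containing the asymptotic value $v$, which is the second alternative with $n = j+1 \leq k$. (If the obstruction is a critical value rather than an asymptotic value, then the downward induction continues past it — a critical point lies in $\C^*$ and is not an obstruction to the curve reaching the singularities, only to injectivity of the branch, which can be handled by passing to a subcurve — so the only genuine stopping condition is the asymptotic one.)

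I would then assemble these cases: either the induction runs all the way to $j = 0$, producing a ray tail through $z$; or at the first index $n \leq k$ where a pull-back fails to be a ray tail, the curve $\gamma_n$ through $f^n(z)$ is a ray tail containing an asymptotic value of $f$. In both cases the conclusion of the proposition holds. The main obstacle I expect is the careful topological bookkeeping in the bad case: showing rigorously that an injective lift of $\gamma_{j+1}|_{[t_0,\infty)}$ which does not itself escape to $\{0,\infty\}$ must have its relevant endpoint realise an asymptotic value — this is where one needs that $f$ is a covering over a punctured neighbourhood of each essential singularity (the logarithmic-singularity structure recalled at the start of Section 3, via Lemma \ref{lem:tracts}), so that once $\gamma_{j+1}$ enters a tract its lift either also enters a tract (and escapes) or is blocked by a singular value of the inverse. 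Distinguishing the critical-value case (harmless, induction continues) from the asymptotic-value case (the genuine stopping condition) is the delicate point, and parallels the entire-function argument of \cite[Proposition 2.3]{rrrs11}.
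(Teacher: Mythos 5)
Your proposal follows the paper's argument: pull $\gamma_k$ back along the orbit using a maximal lift, observe that if the lift runs for infinite parameter time it must escape to $\{0,\infty\}$ (since otherwise its limit would be a point of $\C^*$ mapping to an essential singularity), identify the only genuine obstruction to continuing the lift as an asymptotic value on $\gamma_{j+1}$ (critical points being passable by choosing a branch of the inverse), and finish by downward induction. The middle of your case analysis is somewhat muddled --- a maximal lift converging to $w_0\in\C^*$ at finite parameter time need not have $f'(w_0)=0$, and can in any case always be continued, so this case simply cannot occur for a maximal lift; and ``stays compact without a limit'' is ruled out rather than producing the asymptotic value --- but your ``cleanest formulation'' paragraph lands on the same dichotomy (lift escapes to $\{0,\infty\}$ over a finite parameter interval $\Rightarrow$ $\gamma_{j+1}(\tau)\in\operatorname{AV}(f)$) that the paper's proof uses.
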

\begin{proof}
Suppose that $\gamma_k:[0,\infty)\to\C^*$ is a parametrization of such a ray tail and $\gamma_k(0)=f^{k}(z)$. Let $\gamma_{k-1}:[0,T)\to \C^*$ be a maximal lift of $\gamma_k$ such that $\gamma_{k-1}(0)=f^{k-1}(0)$ and $f(\gamma_{k-1}(t))=\gamma_k(t)$. If $T=\infty$, then $\gamma_{k-1}(t)$ must tend to zero or infinity as $t\to+\infty$, otherwise we would have
$$
f(z_0)=f\left(\lim_{t\to\infty}\gamma_{k-1}(t)\right)=\lim_{t\to\infty}f\left(\gamma_{k-1}(t)\right)=\lim_{t\to\infty}\gamma_k(t)\in\{0,\infty\}
$$
which is a contradiction. Thus, $f^{(k-1)}(z)$ is on a ray tail. Now consider the case that $T<\infty$ and let 
$$
w:=\lim_{t\to T} \gamma_{k-1}(t)\in \CR.
$$
Again, it cannot happen that $f(w)\in\{0,\infty\}$ because $\gamma_k(T)$ would be an asymptotic value, so $f(w)=\gamma_k(t_0)$ for some $t_0\in [0,\infty)$. In this case, $\gamma_{k-1}$ could be extended, contradicting its maximality. Note that if $w$ was a critical point we would need to choose a branch of the inverse. Thus, $w\in\{0,\infty\}$ and $\gamma_k(T)$ is an asymptotic value of $f$ (possibly zero or infinity). Then either we have found a ray tail $\gamma_{k-1}\subseteq f^{-1}(\gamma_k)\subseteq I(f)$ connecting $f^{(k-1)}(z)$ to one of the essential singularities or $\gamma_k$ contains an asymptotic value. The result follows from applying the above reasoning inductively. 
\end{proof}

%we will proceed by induction. At each step, due to the above reasoning, we can either construct a new ray tail connecting $f^{ j} (z)$ to $0$ or $\infty$ or we find an asymptotic value of $f$. After $k$ steps, if we have not found an asymptotic value, we are going to have a ray tail $\gamma_0$ connecting $z$ to $0$ or $\infty$.

Note that Proposition~\ref{prop:escaping-points-on-rays} can also be proved by applying its version for entire functions to a lift $\tilde{f}$ of $f$ and then use the correspondence from Lemma~\ref{lem:rays-correspondence}.

We conclude this section by stating a result about escaping points that follows from the expansivity property \eqref{eq:expansivity} in Lemma \ref{lem:expansivity} (see \cite[Lemma 3.2]{rrrs11} for the analogue result on entire functions).

\begin{lem}
\label{lem:properties-of-BLstar}
Let $F:\mathcal T\rightarrow H$ be in the class $\BL^{*n}$ with $H=\mathbb H_R^\pm$ for some $R>0$.
If $z,w\in J_{\underline{s}}(F)$ for some external address $\underline{s}$ and $z\neq w$ then
$$
\lim_{k\rightarrow +\infty}\max\{|\re F^k(z)|,\ |\re F^k(w)|\}=+\infty.
$$
\end{lem}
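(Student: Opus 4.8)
The plan is to argue by contradiction and exploit the strong expansivity of normalised logarithmic transforms (Lemma~\ref{lem:strong-expansivity}). Suppose $z,w\in J_{\underline{s}}(F)$ with $z\neq w$ but
$$
\liminf_{k\to+\infty}\max\{|\re F^k(z)|,\ |\re F^k(w)|\}=:C<+\infty.
$$
Since both orbits stay in $\overline{\mathcal T}$ and follow the same external address $\underline{s}=(T_n)$, for each $k$ the points $F^k(z)$ and $F^k(w)$ lie in the closure of the same tract $T_k$. Because $H=\mathbb H_R^\pm$ and $F$ is expansive, the bound on the real parts forces the orbits to stay, infinitely often, in a bounded region in the real direction; I would first note that along a subsequence $k_j$ with $\max\{|\re F^{k_j}(z)|,|\re F^{k_j}(w)|\}\leqslant C$, the points $F^{k_j}(z), F^{k_j}(w)$ both lie in $\overline{T_{k_j}}\cap \{|\re \zeta|\leqslant C\}$.

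The key step is to control $|F^k(z)-F^k(w)|$ from below using Lemma~\ref{lem:strong-expansivity}. First I would observe that the Euclidean distance $|F^k(z)-F^k(w)|$ cannot tend to $0$: if it did, then since $z\neq w$ there is a first index where the two orbits are in the same tract with small separation, and then applying the expansivity inequality~\eqref{eq:expansivity} backwards (i.e.\ $F^{-1}_{|T_{k-1}}$ contracts by a factor at least $1/2$) would contradict the positivity of $|z-w|$; more precisely, $|F^k(z)-F^k(w)|\geqslant 2^k|z-w|$ whenever all intermediate iterates have $|\re F^j(\cdot)|\geqslant R_0$, which holds eventually on $J_{\underline{s}}(F)$ after restricting to $J^{R_0}$ — and here one uses that points of $J_{\underline{s}}(F)$ eventually have large real parts or, if not, one passes to a forward iterate where they do, using that $\underline{s}$ being the common address keeps them paired. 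Hence there is $\delta>0$ and $N$ with $|F^k(z)-F^k(w)|\geqslant \delta$ for all $k\geqslant N$. Then, for $k\geqslant N$, once $|F^k(z)-F^k(w)|\geqslant 8\pi$ (which happens eventually by iterating the factor-$2$ estimate), Lemma~\ref{lem:strong-expansivity} applied in the tract $T_k$ gives
$$
|F^{k+1}(z)-F^{k+1}(w)|\geqslant \exp\!\left(\frac{|F^k(z)-F^k(w)|}{8\pi}\right)\bigl(\min\{|\re F^k(z)|,|\re F^k(w)|\}-R\bigr).
$$

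The final step is to derive the contradiction. The separations $d_k:=|F^k(z)-F^k(w)|$ grow at least doubly-exponentially once they pass $8\pi$, provided $\min\{|\re F^k(z)|,|\re F^k(w)|\}$ stays bounded below by $R+1$, say. If that minimum is $\geqslant R+1$ infinitely often, then $d_k\to\infty$ super-exponentially; but $F^{k}(z),F^{k}(w)$ lie in $\overline{T_k}$, and on the subsequence $k_j$ both have real part at most $C$ in absolute value — combined with the bounded-slope / bounded-wiggling geometry (or simply the fact that in $\mathbb H_R^\pm$ a tract restricted to $\{|\re\zeta|\leqslant C\}$, although unbounded vertically, cannot contain two points of a paired orbit that are being pushed apart while both returning to this strip), one gets a contradiction. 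The cleanest route, which I would take, is: either $\min\{|\re F^k(z)|,|\re F^k(w)|\}\to\infty$, in which case both $|\re F^k(z)|$ and $|\re F^k(w)|\to\infty$ and there is nothing to prove (the $\max$ certainly diverges), or this minimum stays bounded along a subsequence, and then the orbit of one of the two points returns infinitely often to a fixed vertical strip $\{|\re\zeta|\leqslant C\}$; but $d_k\geqslant 2^{k-N}\delta$ eventually exceeds any bound while $\re$ of one point is bounded, forcing $|\re F^k|$ of the \emph{other} point — and hence the $\max$ — to diverge, which is exactly the assertion.

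\textbf{Main obstacle.} The delicate point is the bookkeeping to ensure the two orbits remain genuinely \emph{paired} in the same tract at every step so that Lemma~\ref{lem:strong-expansivity} (stated for two points in a single tract $T$) applies — this is where the hypothesis $z,w\in J_{\underline{s}}(F)$ with a \emph{common} external address $\underline{s}$ is essential — together with arranging that one may assume, after finitely many iterates, that $|\re F^j|\geqslant R_0$ so that the factor-$2$ expansion $d_{j+1}\geqslant 2d_j$ is available to boost $d_k$ past the $8\pi$ threshold of Lemma~\ref{lem:strong-expansivity}. I expect the argument to require first reducing to $J^{R_0}_{\underline{s}}(F)$ (or noting that $J_{\underline{s}}(F)\subseteq \overline{J^{R_0}_{\underline{s}}(F)}$ up to a bounded initial segment of the orbit), after which the expansivity estimates chain together cleanly.
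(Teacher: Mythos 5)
Your overall setup is sound up to a point: since $F$ is in $\BL^{*n}$, the expansivity $|F'|\geqslant 2$ holds on all of $\mathcal T$ (you do not actually need to reduce to $J^{R_0}_{\underline s}(F)$ or worry about an initial segment — this is built into the definition of \emph{normalised}), and because each half-plane $\mathbb H^{\pm}_R$ is convex and $F^{-1}_{|T_k}$ contracts by a factor $\leqslant 1/2$ on it, one indeed gets $d_k:=|F^k(z)-F^k(w)|\geqslant 2^k|z-w|\to\infty$. Invoking Lemma~\ref{lem:strong-expansivity} once $d_k\geqslant 8\pi$ is also the right idea, and the hypothesis that $z,w$ share the external address $\underline{s}$ is exactly what keeps them paired in a common tract at every step, as you say.

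The gap is in the final step. You argue that since $d_k\to\infty$, if $\min\{|\re F^k(z)|,|\re F^k(w)|\}$ stays bounded along a subsequence, then $|\re|$ of the \emph{other} point, and hence the $\max$, must diverge. This does not follow: the Euclidean separation $d_k$ between two points of $\overline{T_k}$ can be entirely in the imaginary direction. A tract in $\BL^{*n}$ can wiggle (for instance, alternately dip into a strip $\{R\leqslant|\re\zeta|\leqslant C\}$ and stretch to large real part, or be the log of a spiralling tract of $f$), and the only structural constraints available — no vertical segment of length $2\pi$, $\exp|_T$ injective, disjoint closures of $T+2\pi i k$ — do not bound the diameter of $\overline{T_k}\cap\{|\re\zeta|\leqslant C\}$. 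So ``one real part bounded plus $d_k$ huge'' is entirely consistent with the other real part also staying bounded. You partly sense this, which is why you mention ``combined with the bounded-slope / bounded-wiggling geometry'' as a possible repair; but bounded slope and bounded wiggling (Definition~\ref{dfn:good-geometry}) are \emph{not} hypotheses of this lemma, which is stated for arbitrary $F\in\BL^{*n}$, and are only introduced later to deduce the head-start condition. The parenthetical ``or simply the fact that [such a tract] cannot contain two points of a paired orbit that are being pushed apart while both returning to this strip'' is asserted, not proved, and is the very statement at issue.

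Concretely, what is missing is an argument that actually \emph{uses} the quantitative lower bound of Lemma~\ref{lem:strong-expansivity} (the term $\min\{|\re F(z)|,|\re F(w)|\}-R$) in combination with an \emph{upper} bound on the next separation $d_{k+1}$ in terms of the real parts at step $k+1$ — it is precisely the interplay between ``$F$ is an isometry from the hyperbolic metric of $T_k$ to that of $H_{e_{k+1}}$'' and ``$\lambda_{T_k}\geqslant 1/(2\pi)$'' that lets one trade a bound on $\max\{|\re F^{k+1}(z)|,|\re F^{k+1}(w)|\}$ for a contradiction. That step never appears in your write-up; the factor-$2$ growth of $d_k$ by itself, together with boundedness of one real part, is simply not enough. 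Note also that the paper does not write out a proof of this lemma but refers to \cite[Lemma~3.2]{rrrs11}, which supplies exactly this missing ingredient.
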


Observe that this does not imply that neither the point $z$ nor $w$ escape because both points may have an unbounded orbit but with a subsequence where their iterates are bounded. In the next section we will introduce a condition for $F$ (see Definition~\ref{dfn:hsc}) which implies that, in the situation of Lemma~\ref{lem:properties-of-BLstar}, both points $z$ and $w$ escape, and hence all points in $J_{\underline{s}}(F)$ except possibly one must escape.

Lemma\,\ref{lem:strong-expansivity},\,Lemma\,\ref{lem:properties-of-BLstar}\,and\,Proposition\,\ref{prop:unbounded-BL}\,correspond\,respectively\,to\,Lemma\,3.1, Lemma~3.2 and Theorem~3.3 in \cite[Section~3]{rrrs11} and constitute the main tools to prove Theorem~\ref{thm:main} in the next section.

\section{Proof of Theorem \ref{thm:main}}

\label{sec:main}

In this section we adapt the results in \cite[Sections 4 and 5]{rrrs11} to our setting. Since the proof Theorem \ref{thm:main} follows closely that of \cite[Theorem 1.2]{rrrs11}, we only sketch it and emphasize the differences between them.

The head-start condition is designed so that every escaping point is mapped eventually to a ray tail and hence we are able to apply Proposition \ref{prop:escaping-points-on-rays} and conclude that either the point itself is in a ray tail or some iterate is in a ray tail that contains a singular value. 

\begin{dfn}[Head-start condition]
\index{head-start condition}
Let $F:\mathcal T\rightarrow H$ be a function in the class~$\BL^*$. We first define the \textit{head-start condition} for tracts, then for external addresses and finally for logarithmic transforms.
\begin{itemize}
\item Let $T,T'$ be two tracts in $\mathcal T$ and let $\varphi:\R_+\to\R_+$ be a (not necessarily strictly) monotonically increasing continuous function with $\varphi(x)>x$ for all $x\in \R_+$. We say that the pair $(T,T')$ satisfies the \textit{head-start condition} for $\varphi$ if, for all $z,w\in \overline{T}$ with $F(z),F(w)\in \overline{T'}$,
$$
|\re w|> \varphi (|\re z|) \Rightarrow |\re F(w)|> \varphi(|\re F(z)|).\vspace{2pt}
$$
\item We say that an external address $\underline{s}\in\bold{\Sigma}$ satisfies the \textit{head-start condition} for $\varphi$ if all consecutive pairs of tracts $(T_k,T_{k+1})$ satisfy the head-start condition for $\varphi$, and if for all distinct $z,w\in J_{\underline{s}}(F)$, there is $M\in \N$ such that $|\re F^M(z)|>\varphi(|\re F^M(w)|)$ or $|\re F^M(w)|>\varphi(|\re F^M(z)|)$.\vspace{5pt}
\item We say that $F$ satisfies a \textit{head-start condition} if every external address of $F$ satisfies the head-start condition for some $\varphi$. If the same function $\varphi$ can be chosen for all external addresses, we say that $F$ satisfies the uniform head-start condition for $\varphi$.
\end{itemize}\label{dfn:hsc}
\end{dfn}

Notice that in the second part we require that the head-start condition cannot be a void condition for any itinerary. Furthermore, if $|\re F^M(z)|>\varphi(|\re F^M(w)|)$ and the head-start condition is satisfied for that pair of tracts then for all $n>M$, $|\re F^n(z)| >\varphi (|\re F^M(w)|)$.

The head-start condition allows us to order the points in $J_{\underline{s}}(F)$ by the growth of the absolute value of their real parts.

\begin{dfn}[Speed ordering]
Let $\underline{s}\in\bold{\Sigma}$ be an external address satisfying the head-start condition for a function $\varphi$. For $z,w\in J_{\underline{s}}(F)$, we say that $z\succ w$ if there exists $K\in\N$ such that $|\re F^K(z)|>\varphi(|\re F^K(w)|)$. We extend this order to the closure $\widehat{J_{\underline{s}}}(F)$ in $\CR$ by the convention that $0,\infty \succ z$ for all $z\in J_{\underline{s}}(F)$.
\index{speed ordering}
\end{dfn}

Note that although a dynamic ray may contain both zero and infinity in its closure in $\CR$, ray tails are a subset of $\mathcal T$ and hence contain either zero or infinity.

The head-start condition implies that the speed ordering is a total order on the set  $\widehat{J}_{\underline{s}}(F)$: if there were $M_1,M_2\in\N$ such that $|\re F^{M_1}(z)|>\varphi(|\re F^{M_1}(w)|)$ and $|\re F^{M_2}(w)|>\varphi(|\re F^{M_2}(z)|)$ then we would get a contradiction because once we are in one of these situations and the head-start condition is satisfied then it is preserved by iteration, that is, for example, if $|\re F^{M_1}(z)|>\varphi(|\re F^{M_1}(w)|)$, then $|\re F^{n}(z)|>\varphi(|\re F^{n}(w)|)$ for all $n>M_1$. Therefore $z\succ w$ if and only if there exists $n_0\in \N$ such that  $|\re F^n(z)|>|\re F^n(w)|$ for all $n>n_0$, and hence the speed ordering does not depend on the choice of the function $\varphi$. 

\begin{lem}
Let $\underline{s}\in\bold{\Sigma}_e$, $e\in\{0,\infty\}^\N$, be an external address that satisfies the head-start condition for a function $\varphi$. Then the order topology induced by the speed ordering~$\succ$ on $\widehat{J}_{\underline{s}}(F)$ coincides with the topology as a subset of $\CR$ and, in particular, every connected component of $\widehat{J}_{\underline{s}}(F)$ is an arc.

Moreover, there exists $K'>0$ independent of $\underline{s}$ such that $J_{\underline{s}}^{K'}(F)$ is either empty or contained in the unique unbounded component of $J_{\underline{s}}(F)$, which is an arc to the essential singularity $e_0$ all of whose points escape except possibly its finite endpoint.
\label{lem:ray-tails}
\end{lem}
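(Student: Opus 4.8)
The plan is to mirror the structure of \cite[Section~4]{rrrs11}, transporting their argument for the class $\BL$ to our class $\BL^*$, with the main changes forced by the fact that $H$ now has two components and that the normalisation requires a definite constant $R$. First I would address the \emph{order topology} statement. Since $\underline{s}\in\bold{\Sigma}_e$ satisfies the head-start condition for $\varphi$, the discussion preceding the lemma already shows that $\succ$ is a total order on $\widehat{J}_{\underline{s}}(F)$, with $0,\infty$ as the top element(s). To identify the order topology with the subspace topology of $\CR$, I would argue as follows: given $z\in J_{\underline{s}}(F)$, one uses the expansivity property~\eqref{eq:expansivity} together with Lemma~\ref{lem:strong-expansivity} to show that points $w$ with $w\succ z$ are exactly those which, from some iterate on, strictly dominate $z$ in $|\re F^n(\cdot)|$; and that the sets $\{w: w\succ z\}$ and $\{w : z\succ w\}$ are open in the subspace topology. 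The key quantitative input is that if $|\re F^k(w)|$ and $|\re F^k(z)|$ are both large and ordered the right way, then by the strong expansivity estimate the horizontal separation $|\re F^{k}(w)-\re F^{k}(z)|$ grows at an exponential rate, so nearby points (in $\CR$) cannot have flipped order --- this gives continuity of the order in both directions. Conversely, compactness of $\widehat{J}_{\underline{s}}(F)$ in $\CR$ plus the Hausdorff property of the order topology forces the two topologies to agree. Once the topologies coincide, $\widehat{J}_{\underline{s}}(F)$ is a compact totally ordered space in which every point has the order topology, hence each connected component is an arc (a subspace of $\CR$ that is connected and linearly ordered with the order topology is an arc; this is the same topological fact used in \cite{rrrs11} via \cite[Section~4]{rrrs11}).

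Next I would handle the \emph{unbounded component} and the role of $K'$. By Lemma~\ref{lem:normalisation} we may assume $F\in\BL^{*n}$ with $H=\mathbb H_R^\pm$. Take $K' := \max\{K_1(F), R_0(F), R\}$ where $K_1(F)$ is the constant from Proposition~\ref{prop:unbounded-BL} and $R_0(F)$ from Lemma~\ref{lem:expansivity}; the point is that $K'$ depends only on $F$, not on $\underline{s}$. If $J_{\underline{s}}^{K'}(F)=\emptyset$ there is nothing to prove, so suppose $z_0\in J_{\underline{s}}^{K'}(F)$. By Proposition~\ref{prop:unbounded-BL}, there is an unbounded closed connected set $A\subseteq J_{\underline{s}}(F)$ with $\dist(z_0,A)\le 2\pi$; its closure $\widehat{A}$ in $\CR$ is a continuum containing exactly one of $\{0,\infty\}$ --- and that one is $e_0$, because all iterates of points with external address $\underline{s}\in\bold{\Sigma}_e$ lie (from the start) in tracts of type $e_n$, so their real parts have sign dictated by $e_n$, and in particular $\re F^0 = \re(\cdot)$ has the sign forcing accumulation to $e_0$ (this uses property~(b) of Theorem~\ref{thm:logarithmic-coord} and that $F$ is normalised so tracts avoid the imaginary axis). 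Then $\widehat{A}\setminus\{0,\infty\}$ contains $A$, an unbounded connected subset of $J_{\underline{s}}(F)$, and since by the first part every component of $\widehat{J}_{\underline{s}}(F)$ is an arc, the component containing $A$ is an arc with $e_0$ as one endpoint. Uniqueness of the unbounded component follows from Lemma~\ref{lem:properties-of-BLstar} applied along the arc: any two points of $J_{\underline{s}}(F)$ are comparable under $\succ$, and an unbounded arc in the order topology is a ``tail'' $\{w : w\succeq z\}$ for its minimal point $z$; two distinct unbounded components would give two $\succ$-incomparable tails, contradicting totality of $\succ$. That every point of this arc escapes except possibly the finite endpoint: for any $w$ on the arc other than the endpoint, there is $w'$ on the arc with $w'\succ w$ hence (by the head-start condition and its persistence under iteration) $|\re F^n(w)| \to \infty$, since $w'$ dominates $w$ from some iterate on and $w'$ has unbounded real parts; more carefully, one shows $J_{\underline s}^{K'}(F)$ meets every initial segment of the arc below the endpoint, and then the expansivity~\eqref{eq:expansivity} forces escape of all such points. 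Finally $J_{\underline{s}}^{K'}(F)$ itself lies in this component because, by~\eqref{eq:expansivity} again, a point with $|\re F^n|\ge K'\ge R_0$ for all $n$ is $\succ$-comparable to (and in fact escapes like) the points of $A$, so it cannot lie in a bounded component --- any bounded component would be an arc with both endpoints in $\{0,\infty\}$ or disjoint from them, and a point whose real parts stay $\ge K'$ cannot sit on such a bounded arc without violating strong expansivity (Lemma~\ref{lem:strong-expansivity}), since the arc would then contain a pair of points whose forward separations stay bounded.

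I expect the main obstacle to be the careful verification that the order topology agrees with the $\CR$-subspace topology on $\widehat{J}_{\underline{s}}(F)$ --- this is where the two-essential-singularities feature and the normalisation constant genuinely intervene, because one must check that an order-interval $(a,b)_\succ$ is open in $\CR$ uniformly in how it straddles the two half-planes $\mathbb H_R^+$ and $\mathbb H_R^-$ (a point may have iterates switching sides according to its essential itinerary $e$, unlike the entire case where everything eventually lives in one half-plane). The resolution is that the speed ordering only compares $|\re F^n(\cdot)|$, which is insensitive to the side, so the estimates from Lemma~\ref{lem:strong-expansivity} and~\eqref{eq:expansivity} go through verbatim once we work with $\underline{s}\in\bold{\Sigma}_e$ of fixed essential itinerary; the subshift-of-finite-type nature of $\bold{\Sigma}$ does not cause extra trouble here because the essential itinerary $e$ is constant along $J_{\underline{s}}(F)$. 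The remaining arguments (uniqueness of the unbounded component, escape of all but the endpoint, and $J_{\underline s}^{K'}\subseteq$ that component) are then formally identical to \cite[Section~4]{rrrs11}, and I would present them as such, indicating only the substitutions $\mathbb H\leadsto\mathbb H_R^\pm$, ``$\infty$'' $\leadsto$ ``$e_0$'', and the use of Proposition~\ref{prop:unbounded-BL} in place of \cite[Theorem~3.3]{rrrs11}.
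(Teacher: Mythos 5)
Your overall strategy coincides with the paper's proof: show the speed-ordering intervals are open in the $\CR$-subspace topology using the persistence of the head-start inequality under iteration, invoke compactness of $\widehat{J}_{\underline{s}}(F)$ plus Hausdorffness of the order topology to get a homeomorphism, apply the order characterisation of the arc, then feed Proposition~\ref{prop:unbounded-BL} in for the existence of the unbounded component and Lemma~\ref{lem:properties-of-BLstar} and \eqref{eq:expansivity} for the escape and absorption claims. The substitutions you flag (two half-planes, $e_0$ in place of $\infty$, a definite normalisation constant $R$) are exactly the ones the paper makes, and your observation that the speed ordering only compares $|\re F^n(\cdot)|$, hence is blind to which half-plane an iterate lives in, is the right reason these substitutions are harmless.

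There is, however, a directional slip in your escape argument that would make that step fail as written. You take $w$ on the arc (not the endpoint), pick $w'$ with $w'\succ w$, and conclude $|\re F^n(w)|\to\infty$ ``since $w'$ dominates $w$.'' But $w'\succ w$ gives $|\re F^n(w')|>\varphi(|\re F^n(w)|)$ for large $n$, i.e.\ information about $w'$, not $w$: it is perfectly consistent with $w$ remaining bounded while $w'$ escapes. The correct direction is the opposite one. Since the finite endpoint is the \emph{minimum} of the arc in the speed ordering, any $w$ that is not the endpoint admits some $z$ on the arc with $w\succ z$. Then the head-start condition yields $|\re F^n(w)|>\varphi(|\re F^n(z)|)>|\re F^n(z)|$ for all $n\ge M$, so $|\re F^n(w)|=\max\{|\re F^n(z)|,|\re F^n(w)|\}$ for $n\ge M$, and Lemma~\ref{lem:properties-of-BLstar} forces this maximum, hence $|\re F^n(w)|$, to tend to infinity. (This is precisely the paper's appeal to \cite[Corollary~4.5]{rrrs11}: $w\succ z$ implies $w\in I_{\underline{s}}(F)$.) Your fallback sentence (``$J_{\underline{s}}^{K'}(F)$ meets every initial segment below the endpoint, then \eqref{eq:expansivity} forces escape'') is also not self-sufficient: meeting $J_{\underline{s}}^{K'}(F)$ near $w$ does not by itself give $w\in I(F)$, because $J^{K'}_{\underline{s}}(F)$ consists of points with uniformly large real parts, not of escaping points. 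With the direction corrected, the rest of your argument (existence, accumulation at $e_0$ via the sign of $\re$ on $\mathcal T_{e_0}$, uniqueness of the unbounded component from the arc structure, and $J^{K'}_{\underline{s}}(F)\subseteq A$ from \eqref{eq:expansivity}) is sound and parallels the paper.
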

\begin{proof}
The first part follows from the fact that the map $\mbox{id}:\widehat{J_{\underline{s}}}(F)\to (\widehat{J_{\underline{s}}}(F), \prec)$ is an homeomorphism (see \cite[Theorem 4.4]{rrrs11}). Indeed, for all $a\in \widehat{J_{\underline{s}}}(F)$, the sets\vspace{-5pt}
$$
(a,+\infty)_\prec:=\{z\in \widehat{J_{\underline{s}}}(F)\ :\ a\prec z\},\quad
(-\infty,a)_\prec:=\{z\in \widehat{J_{\underline{s}}}(F)\ :\ z\prec a\},\vspace{-2pt}
$$
are open sets in $\widehat{J_{\underline{s}}}(F)$ with the subspace topology of $\CR$: let $k\in\N$ be minimal with the property that $|\re F^k(a)|>\varphi (|\re F^k(z)|)$ then, by continuity, this inequality holds in a neighbourhood of $z$. Since $\widehat{J_{\underline{s}}}(F)$ and the order topology is Hausdorff, the map $\mbox{id}^{-1}$ is continuous as well. The theorem follows from the order characterisation of the arc (see \cite[Theorem A5]{rrrs11}).

For the second part, if $K$ is the constant from Lemma \ref{lem:properties-of-BLstar}(ii) and $J_{\underline{s}}^{K}(F)\neq \emptyset$, then $J_{\underline{s}}^{K}(F)$ has an unbounded component $A$ which is an arc to $\infty$. Since $e_0$ is the largest element of $\widehat{J_{\underline{s}}}(F)$ in the speed ordering, the set $\widehat{J_{\underline{s}}}(F)$ has only one unbounded component. Using the head-start condition, it can be shown that if $z,w\in J_{\underline{s}}(F)$ and $w\succ z$ then $w\in I_{\underline{s}}(F)$ (see \cite[Corollary 4.5]{rrrs11}). Finally, the fact that $J_{\underline{s}}^{K'}(F)\subseteq A$ for some $K'>K$ follows from the expansivity of $F$ (see \cite[Proposition 4.6]{rrrs11}).
%If $z\succ w$, then $z\in I(F)$ and, in particular, $J_{\underline{s}}\setminus I_{\underline{s}}$ consists of at most one point (see \cite[Corollary 4.5]{rrrs11}). This follows from Lemma \ref{lem:properties-of-BLstar}, if $z\succ w$ then $|\re F^n(z)|>|\re F^n(w)|$ for all sufficiently large $n$ and therefore
%$$
%\infty=\lim_{k\to\infty}\max\bigl\{|\re F^k(z)|,\ |\re F^k(w)|\bigr\}=\lim_{k\to\infty} |\re F^k(z)|,
%$$
%so $z\in I(F)$.
%
%Finally, 
%
%
\end{proof}

Like in the entire case, the following theorem can be deduced from Lemma \ref{lem:ray-tails} (see \cite[Theorem 4.2]{rrrs11}).

\begin{thm}
Let $F\in\BL^*$ satisfy a head-start condition. Then, for every esca- ping point $z$, there exists $k\in \N$ such that $F^k(z)$ is on a ray tail $\gamma$. This ray tail is the unique arc in $J(F)$ connecting $F^k(z)$ to either zero or infinity (up to reparametrization).
\label{thm:ray-tails}
\end{thm}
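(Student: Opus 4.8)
The plan is to mimic the proof of \cite[Theorem~4.2]{rrrs11} using the two halves of Lemma~\ref{lem:ray-tails} together with Proposition~\ref{prop:escaping-points-on-rays}. Fix an escaping point $z\in I(F)$, and let $\underline{s}=\mbox{addr}_F(z)\in\bold{\Sigma}$ be its external address, with essential itinerary $e\in\{0,\infty\}^\N$. By hypothesis $\underline{s}$ satisfies the head-start condition for some $\varphi$, so Lemma~\ref{lem:ray-tails} applies to every shift $\sigma^k(\underline{s})$ as well (the head-start condition passes to consecutive subpairs of tracts, and the separation requirement on $J_{\sigma^k(\underline s)}(F)$ follows from that on $J_{\underline s}(F)$ by pulling back). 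First I would show that some iterate $F^k(z)$ lands in a ray tail. Since $z\in I(F)$, we have $|\re F^n(z)|\to+\infty$, so there is $k\in\N$ with $|\re F^n(z)|\geqslant K'$ for all $n\geqslant k$, where $K'$ is the constant from the second part of Lemma~\ref{lem:ray-tails} (note $K'$ is independent of the address). Then $F^k(z)\in J^{K'}_{\sigma^k(\underline s)}(F)$, so this set is nonempty, hence by Lemma~\ref{lem:ray-tails} it is contained in the unique unbounded component $A$ of $J_{\sigma^k(\underline s)}(F)$, which is an arc to the essential singularity $e_k$ all of whose points escape except possibly the finite endpoint. Parametrising $A$ by the order topology (which by the first part of Lemma~\ref{lem:ray-tails} agrees with the subspace topology from $\CR$), the portion of $A$ from $F^k(z)$ to $e_k$ is an injective curve $\gamma:[0,+\infty)\to I(F)$; since all its points escape and it is an unbounded proper piece of the arc, $F^n_{|\gamma}\to e_k$, and in fact the escape is uniform in $t$ by the expansivity property~\eqref{eq:expansivity} (this is where one invokes the argument behind \cite[Proposition~4.6]{rrrs11}: the head-start condition forces the real parts along $\gamma$ to dominate those of $F^k(z)$, which tend to infinity). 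Hence $\gamma$ is a ray tail, proving the first assertion.

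For the uniqueness statement I would argue as follows. Suppose $\gamma_1,\gamma_2$ are two ray tails in $J(F)$ each connecting $F^k(z)$ to one of $\{0,\infty\}$. Both are subsets of $\overline{\mathcal T}$ that can be iterated forever, so by Lemma~\ref{lem:itin-rays}-type reasoning (or directly, since a ray tail is eventually trapped in a single tract at each step) each has a well-defined external address; but as they share the point $F^k(z)$ and points of a ray tail are eventually confined to one fundamental domain at each iterate, both have external address $\sigma^k(\underline s)$ off an initial segment — after possibly passing to a further forward iterate we may assume both lie in $J_{\underline t}(F)$ for a common address $\underline t$. Now $\widehat{J_{\underline t}}(F)$ with the speed ordering $\succ$ is a \emph{totally} ordered space whose order topology is the subspace topology, and each connected component is an arc (Lemma~\ref{lem:ray-tails}); the point $F^k(z)$ lies in exactly one such component, and that component has a unique largest element, namely the essential singularity $e_0$, which is the only point of $\widehat{J_{\underline t}}(F)\setminus\mathcal T$ in it. Since an arc has exactly one subarc joining a given interior point to a given endpoint, the subarc from $F^k(z)$ to that essential singularity is unique; any ray tail from $F^k(z)$ to $\{0,\infty\}$ must be exactly this subarc, up to reparametrisation. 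This gives uniqueness.

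The main obstacle I expect is the uniqueness part, specifically making rigorous that any ray tail through $F^k(z)$ has (eventually) the \emph{same} external address $\sigma^k(\underline s)$ and lands in the \emph{same} connected component of $\widehat{J_{\underline t}}(F)$ as $F^k(z)$ itself. A ray tail is only required to escape uniformly; one must check it is eventually contained in the tracts (true since its images are unbounded in $\C^*$, as remarked after Lemma~\ref{lem:itin-rays}), and then that its address is forced. Once inside a fixed tract at each step, the address is pinned down, but one has to be careful that different ray tails through the same point cannot diverge into different tracts at some finite stage — this is precisely what the head-start condition rules out, since it makes $\widehat{J_{\underline t}}(F)$ connected-component-wise an arc with the essential singularity as unique maximal element, leaving no room for two distinct arcs from $F^k(z)$ to $\{0,\infty\}$. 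A secondary technical point is transporting the conclusion from the forward iterate $F^k(z)$ back down: but here one simply notes the statement is about $F^k(z)$ itself, so no pull-back of the ray is needed, and the remark that $\gamma$ is unique \emph{up to reparametrisation} absorbs any ambiguity in the parametrisation coming from the order isomorphism with $[0,+\infty)$.
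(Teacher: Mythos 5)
Your proposal is correct and follows the same route the paper takes, which is the argument of \cite[Theorem~4.2]{rrrs11}: iterate $z$ until $|\re F^n(z)|\geqslant K'$ for all $n\geqslant k$, apply the second part of Lemma~\ref{lem:ray-tails} to place $F^k(z)$ in the unique unbounded component of $J_{\sigma^k(\underline{s})}(F)$, and read off the ray tail and its uniqueness from the arc structure. Two small simplifications are available: the worry about whether $\sigma^k(\underline{s})$ satisfies the head-start condition is unnecessary, since the hypothesis that $F$ satisfies a head-start condition already covers every realized external address; and for uniqueness, the reason any arc in $J(F)$ through $F^k(z)$ has external address $\sigma^k(\underline{s})$ is purely topological (a connected subset of $J(F)$ has each $F^n$-image inside $\overline{\mathcal T}$, which is a disjoint union of tract closures, so the address is constant along the arc), rather than a consequence of the head-start condition, which is then what pins down the arc inside $J_{\sigma^k(\underline{s})}(F)$.
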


Observe that Theorem \ref{thm:ray-tails} together with Proposition \ref{prop:escaping-points-on-rays} imply that if $f$ is a transcendental self-map of $\C^*$ and $z\in I(f)$, then either $z$ is on a ray tail, or there is some $n\leqslant k$ such that $f^{n}(z)$ belongs to a ray tail that contains an asymptotic value of $f$.  

Previously we have seen that if $f$ has finite order then any logarithmic transforms~$F$ of $f$ has good geometry in the sense of Definition \ref{dfn:good-geometry}. To complete the proof of Theorem \ref{thm:main} we show that functions of good geometry satisfy a head-start condition.

\begin{thm}
Let $F\in\BL^{*n}$ be a function with good geometry. Then $F$ satisfies a linear head-start condition.
\label{thm:good-geom-imply-HS}
\end{thm}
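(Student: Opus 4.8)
The plan is to follow \cite[Section~5]{rrrs11}, where the analogous statement is proved for entire functions with good geometry. The local structure of logarithmic coordinates in the class $\BL^{*n}$ is identical to the entire case, and good geometry provides uniformly bounded wiggling constants $K,\mu$ together with bounded-slope constants $\alpha,\beta$ that can be chosen common to $\mathcal T_0$ and $\mathcal T_\infty$ (see the remarks after Definition~\ref{dfn:good-geometry}), so the argument will in fact give a \emph{uniform} linear head-start function $\varphi(x)=Ax+B$. By Definition~\ref{dfn:hsc} there are two things to establish: (1) every admissible pair of tracts $(T,T')$ satisfies the head-start condition for $\varphi$; and (2) for any two distinct points $z,w\in J_{\underline s}(F)$ there is $M\in\N$ with $|\re F^M(z)|>\varphi(|\re F^M(w)|)$ or $|\re F^M(w)|>\varphi(|\re F^M(z)|)$.

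First I would record the two geometric inputs. Bounded slope gives at once, for any tract $T$ and $z,w\in\overline T$,
$$
\bigl|\,|\re z|-|\re w|\,\bigr|\ \leqslant\ |z-w|\ \leqslant\ (2+\alpha)\max\{|\re z|,|\re w|\}+\beta,
$$
so inside a tract the Euclidean distance is comparable, up to a fixed affine error, with the difference of the quantities $|\re(\cdot)|$; in particular $|\re z|\to\infty$ along the unique end of $T$. Next, $F\colon T\to H_i$ is a conformal isomorphism onto a half-plane component $H_i$ of $H=\mathbb H_R^\pm$ (Theorem~\ref{thm:logarithmic-coord}(d)), hence a hyperbolic isometry; since $F$ extends to a homeomorphism of prime-end compactifications and $z\to\infty$ in $T$ forces $F(z)\to\infty$ in $H_i$, it carries the end of $T$ to the prime end $\infty$ of $H_i$, and therefore maps the hyperbolic geodesic of $T$ from a point $z_0$ to its end onto the horizontal ray in $H_i$ issuing from $F(z_0)$ along which $|\re(\cdot)|$ increases to $+\infty$. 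Feeding the wiggling bound $|\re z|>\tfrac1K|\re z_0|-\mu$ along that geodesic into the explicit form of the hyperbolic metric of the half-plane $H_i$ yields the $\BL^{*n}$-analogue of the geometric lemma \cite[Lemma~5.1]{rrrs11}, a quantitative comparison, inside a single tract, of the difference of the quantities $|\re(\cdot)|$ with the hyperbolic distance. The two-ended structure of $\C^*$ is felt only here: tracts in $\mathcal T_0$ have real parts tending to $-\infty$ and those in $\mathcal T_\infty$ to $+\infty$, so every estimate is phrased through $|\re(\cdot)|$, but the computations are otherwise those of \cite{rrrs11}.

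With these in hand, claim~(1) follows as in \cite[Section~5]{rrrs11}. If $z,w\in\overline T$ with $F(z),F(w)\in\overline{T'}$ and $|\re w|>\varphi(|\re z|)$, where $\varphi(x)=Ax+B$ with $A\geqslant 1$ and $B\geqslant 8\pi$, then the displayed slope inequality inside $T$ gives $|z-w|\geqslant|\re w|-|\re z|>(A-1)|\re z|+B\geqslant 8\pi$, so Lemma~\ref{lem:strong-expansivity} applies and
$$
|F(z)-F(w)|\ \geqslant\ \exp\!\left(\frac{|z-w|}{8\pi}\right)\bigl(\min\{|\re F(z)|,|\re F(w)|\}-R\bigr);
$$
comparing this exponential lower bound with the affine upper bound for $|F(z)-F(w)|$ coming from the slope inequality inside $T'$, and using the geometric comparison of the previous paragraph through the isometry $F$, a short computation gives $|\re F(w)|>\varphi(|\re F(z)|)$ as soon as $A$ and $B$ are chosen large enough in terms of $\alpha,\beta,K,\mu,R$ only; these constants being uniform, so are $A$ and $B$. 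For claim~(2), suppose $z\neq w$ in $J_{\underline s}(F)$ are never $\varphi$-separated; then $|\re F^n(z)|$ and $|\re F^n(w)|$ stay within a fixed affine ratio for every $n$, so Lemma~\ref{lem:properties-of-BLstar} forces both to tend to $+\infty$, while the slope inequality inside $\overline{T_n}$ bounds $|F^n(z)-F^n(w)|$ above by a fixed linear function of $\max\{|\re F^n(z)|,|\re F^n(w)|\}$. This is impossible: once the two orbits are $8\pi$-apart Lemma~\ref{lem:strong-expansivity} makes $|F^n(z)-F^n(w)|$ grow superlinearly, and before that $|F'|\geqslant 2$ on all of $\mathcal T$ (which holds for $F\in\BL^{*n}$, by normalisation and Lemma~\ref{lem:expansivity}) prevents $|F^n(z)-F^n(w)|$ from decreasing to $0$. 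Hence $F$ satisfies the uniform linear head-start condition for~$\varphi$.

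The main obstacle I anticipate is the second geometric input --- establishing the $\BL^{*n}$-analogue of \cite[Lemma~5.1]{rrrs11} with constants uniform over the (possibly infinite) family of tracts --- and, more bookkeeping than conceptual, carrying both essential singularities through the argument by systematically replacing real parts with $|\re(\cdot)|$. Once these are dealt with, the analytic core of the proof is just the strong expansivity estimate of Lemma~\ref{lem:strong-expansivity}, exactly as in the entire setting.
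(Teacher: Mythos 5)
Your proposal follows essentially the same approach as the paper: it reduces the claim to the bounded-slope and uniformly-bounded-wiggling arguments of \cite[Section~5]{rrrs11}, rephrased in terms of $|\re(\cdot)|$ so that both ends of $\C^*$ are treated symmetrically. Where the paper simply cites \cite[Lemma~5.2]{rrrs11} for the eventual separation of orbits and \cite[Proposition~5.4]{rrrs11} for the one-step preservation of the head start, you unpack these into direct arguments from Lemma~\ref{lem:strong-expansivity}, Lemma~\ref{lem:properties-of-BLstar}, and the hyperbolic-isometry encoding of bounded wiggling, which is precisely the content of the cited results, so both proofs defer the same technical computations.
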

\begin{proof}
Let $\underline{s}\in\bold{\Sigma}$ be an external address and suppose that $F$ has bounded slope with constants $(\alpha,\beta)$. Then the orbits of any two points $z,w\in J_{\underline{s}}(F)$ eventually separate far enough one from the other. More precisely, if $K\geqslant 1$, there exist a constant $\delta=\delta(\alpha,\beta,K)>0$ such that if $|z-w|\geqslant \delta$, then 
$$
|\re F^n(z)|>K|\re F^n(w)|+|z-w|\quad \mbox{ or }\quad |\re F^n(w)|>K|\re F^n(z)|+|z-w|,
$$
for all $n\geqslant 1$ (see \cite[Lemma 5.2]{rrrs11}). Hence the external address $\underline{s}$ satisfies the second part of the head-start condition with the linear function $\varphi(x)=Kx+\delta$.

It remains to check that if $\underline{s}=(T_n)$, for all $k\in\N$ and for all $z,w\in T_k$ such that $F(z),F(w)\in T_{k+1}$,
$$
|\re w|>K|\re z|+\delta \quad \Rightarrow\quad |\re F(w)|>K|\re F(z)|+\delta.
$$
We skip the technical computations from this proof which are identical to the ones for the entire case, and just observe that this follows from the fact that the tracts of $F$ have uniformly bounded wiggling with constants $K$ and $\mu$ for some $\mu>0$ if and only if the conditions 
$$
\begin{array}{c}
|\re w|>K|\re z|+M'\vspace{5pt}\\
|\im F(z)-\im F(w)|\leqslant \alpha \max\{|\re F(z)|,|\re F(w)|\}+\beta
\end{array}
$$
imply that $|\re F(w)|>K|\re F(z)|+M'$ whenever $z,w\in T$, for some $M'>0$, and hence $F$ satisfies the uniform linear head-start condition with constants $K$ and $M$ for some $M>0$ (see \cite[Proposition 5.4]{rrrs11}).
\end{proof}

%\blue{
%\begin{lem}
%Let $F,G\in\BL^{*n}$ be functions of bounded slope satisfying a linear head-start condition. Then $F\circ G$ satisfies a linear head-start condition. 
%\end{lem}
%\begin{proof}
%...
%\end{proof}
%}

Finally we prove Theorem \ref{thm:main} concerning the existence of dynamic rays for compositions of finite order transcendental self-maps of $\C^*$.

\begin{proof}[Proof of Theorem \ref{thm:main}]
Let $f_1,\hdots,f_n$ be finite order transcendental self-maps of $\C^*$ for some $n\geqslant 1$. By Theorem \ref{thm:lower-order}, the functions $f_i$ are in class $\B^*$. Composing the functions $f_i$ with affine changes of variable, we can assume that each $f_i$ has a normalised logarithmic transform $F_i:\mathcal T_i\rightarrow \mathbb H_{R_i}^\pm\in \BL^{*n}$ for some $R_i>0$.

By Proposition \ref{thm:finite-order-functions-have-good-geom}, each $F_i$ has good geometry and hence, by Theorem \ref{thm:good-geom-imply-HS}, they satisfy linear head-start conditions. Just as for functions in $\BL$, linear head-start conditions are preserved by composition in $\BL^*$ (see \cite[Lemma 5.7]{rrrs11}). If $F_1$ has bounded slope and all $F_i$ satisfy uniform linear head-start conditions, then the function $F:=F_n\circ\cdots\circ F_1\in\BL^{*}$, which is a logarithmic transform of $f=f_n\circ\cdots\circ f_1\in~\B^*$, has bounded slope and satisfies a uniform linear head-start condition when restricted to a suitable set of tracts.

Finally, we can apply Theorem \ref{thm:ray-tails} and Proposition \ref{prop:escaping-points-on-rays} to conclude that every point $z\in I(f)$ is on a ray tail that joins $z$ to either zero or infinity.
%Also note that if $F$ satisfies a linear head-start condition with $\varphi(x)=....$ then increasing... also.
\end{proof}

\begin{rmk}
The proof of Theorem~\ref{thm:main} relies on normalised logarithmic transforms. However, it is possible to carry on the same ideas using only disjoint-type functions, so that the resulting function $F$ is also of disjoint type (see \cite[Theorem~5.10]{rrrs11} and \cite[Theorem~C]{baranski07}).
%\red{Speak about DT maps and say that although we cannot normalise them, it is possible to show that... and hence the $J(f)$ consists of hairs.}
\end{rmk}

\section{Periodic rays and Cantor bouquets}

\label{sec:Cantor-bouquets}

In Section~\ref{sec:unbounded-continua} we observed that the set $J_{\underline{s}}(F)$ may be empty for some $\underline{s}\in \bold{\Sigma}$. For transcendental entire functions in the exponential family, $f_\lambda(z)=\lambda e^z$, $\lambda\neq 0$, there is a characterization of which external addresses give rise to hairs, and this led to the notion of \textit{exponentially bounded} (or \textit{admissible}) external addresses in that context (see \cite{schleicher-zimmer03}). In particular, every periodic external address \mbox{is~exponentially} bounded. Observe that the term admissible has a different meaning in this context. 

Bara\'nski, Jarque and Rempe \cite{baranski-jarque-rempe12} studied the set of dynamic rays for the functions considered in \cite{rrrs11} and \cite{baranski07} and showed that they have uncountably many rays organised in a Cantor bouquet (see Definition~\ref{dfn:cantor-bouquet}). In this section we adapt their techniques to study the set of dynamic rays constructed in Section~\ref{sec:main}.

We begin by proving Theorem~\ref{thm:periodic-rays}, which states that if $f\in \B^*$ satisfies the hypothesis of Theorem~\ref{thm:main} and $(D_n)$ is an admissible external address of $f$ which contains finitely many symbols, then $f$ has a unique (nonempty) dynamic ray with that external address. Furthermore, if the postsingular set $P(f)$ is bounded, then the dynamic ray lands.

%\begin{prop}
%Let $f\in\B^*$ and let $F\in\BL^*$ be a logarithmic transform of $f$. If $\underline{s}\in \mathcal A$ is a periodic external address, then $J_{\underline{s}}(F)\neq \emptyset$.\margin{proof?}
%\end{prop}

\begin{proof}[Proof of Theorem~\ref{thm:periodic-rays}]
By Proposition~\ref{prop:periodic-cotinua}, there exists an unbounded continuum $A\subseteq \mathcal V$ of escaping points with external address $(D_n)$. Let $F$ be a periodic logarithmic transform of $f$, and let $\underline{s}=(T_n)$ be the external address that corresponds to the sequence of fundamental domains $(D_n)$ of $f$ by Lemma~\ref{lem:fund-dom-correspondence}. By Theorem~\ref{thm:main}, the set $J_{\underline{s}}(F)$ is a dynamic ray $\tilde{\gamma}$, and the projection $\gamma=\exp\tilde{\gamma}$ is a dynamic ray of~$f$ with external address $(D_n)$. Finally, by Lemma~\ref{lem:landing}, since $P(f)$ is bounded, all periodic rays land.
\end{proof}

This implies, for example, that each fundamental domain $D$ of $f$ contains exactly one fixed ray because the constant external address $(D_n)$ with $D_n=D$ for all $n\in\N$ is unique.

%%It turns out that for \textit{any} transcendental self-map of $\C^*$, every sequence in $\{0,\infty\}^\N$ is realised as an essential itinerary of a point in $J(f)$ \orange{(}see \cite[Theo- rem~1.1]{martipete}\orange{)}. We give a more precise version of the result stated in \cite{martipete}, which follows from the construction in its proof.
%%
%%\begin{lem}
%%Let $f$ be a transcendental self-map of $\C^*$. For each $e\in \{0,\infty\}^\N$, the set $I_e^0(f)\cap J(f)$ contains infinitely many points of arbitrarily small modulus, if $e_0=0$, or arbitrarily large modulus, if $e_0=\infty$.
%%\label{lem:I1}
%%\end{lem}

In Lemma~\ref{lem:martipete}, which summarizes some results from \cite{martipete}, we saw that if $f$ is a transcendental self-map of $\C^*$ and $e\in\{0,\infty\}^\N$, then the set $I_e'(f)$ contains an unbounded closed connected subset $A_e$. Furthermore, if $f\in \B^*$ and satisfies the hypothesis of Theorem~\ref{thm:main}, then Theorem~\ref{thm:periodic-rays} implies that the set $I_e'(f)$ contains a ray tail; note that a dynamic ray may intersect the unit circle and hence contain points that are not in $I_e'(f)$. Therefore, in this case, since the set $\{0,\infty\}^\N$ has uncountably many non-equivalent sequences $e$ and two such sequences give disjoint sets $I_e(f)$, the escaping set $I(f)$ contains uncountably many rays.

As explained in the introduction, a stronger result is true, namely Theorem \ref{thm:cantor-bouquet}, which states that for every essential itinerary \mbox{$e\in\{0,\infty\}^\N$,} the set $I_e'(f)$ contains a \textit{Cantor bouquet} and, in particular, uncountably many hairs. With the goal in mind of proving this theorem, we start by giving a precise definition of a Cantor bouquet (see \cite[Definition~1.2]{aarts-oversteegen93}).

%The goal of this section is to prove Theorem \ref{thm:cantor-bouquet} that is 
%a stronger version of the previous result for the case that $f\in\B^*$ and has finite 
%order, or is a finite composition of such maps. This result says that, for every essential 
%itinerary \mbox{$e\in\{0,\infty\}^\N$,} the set $I_e(f)$ contains a \textit{Cantor bouquet} and, 
%in particular, uncountably many hairs. 

\begin{dfn}[Cantor bouquet]
\label{dfn:cantor-bouquet}
A set $B\subseteq [0,+\infty)\times (\R\setminus \Q)$ is called a \textit{straight brush} if the following properties are satisfied:
\begin{itemize}
\item[(a)] The set $B$ is a closed subset of $\R^2$.
\item[(b)] For every $(x,y)\in B$, there exists $t_y\geqslant 0$ such that $\{x : (x,y)\in B\}=[t_y,+\infty)$. 
\item[(c)] The set $\{y : (x,y)\in B \mbox{ for some } x\}$ is dense in $\R\setminus \Q$. Moreover, for every $(x,y)\in B$, there exist two sequences of hairs attached respectively at $\beta_n,\gamma_n\in\R\setminus\Q$ such that $\beta_n<y<\gamma_n$ for all $n\in\N$, and $\beta_n,\gamma_n\rightarrow y$ and $t_{\beta_n},t_{\gamma_n}\rightarrow t_y$ as $n\rightarrow \infty$.
\end{itemize}
The set $[t_y,+\infty)\times \{y\}$ is called the \textit{hair attached at $y$} and the point $(t_y,y)$ is called its \textit{endpoint}. A \textit{Cantor bouquet} is a set $X\subseteq \C$ that is ambiently homeomorphic to a straight brush.
\end{dfn}

First we are going to show that, for each essential itinerary $e\in\{0,\infty\}^\N$, the set $J(F)$ contains an \textit{absorbing set} $X_e$ consisting of hairs so that every point in the set
$I_e(F)$ enters $X_e$ after finitely many iterations (see \cite[Theorem 4.7]{rrrs11}). Recall that, for $e\in\{0,\infty\}^\N$, we defined the set
%$$
%J_e(F):=\bigcup_{\ell\in \N}~\bigcup_{k\in\N} F^{-\ell}\bigl(J_{\sigma^k(e)}(F)\bigr)
%$$
%$$
%J_e(F):=\{z\in J(F)\ :\ \exists \ell,k\in\N,\ F^\ell(z)\in J_{\sigma^k(e)}(F)\}
%$$
$$
J_e(F):=\{z\in J(F)\ :\ \mbox{addr}_F(z)\in \bold{\Sigma}_e\}=\bigcup_{\underline{s}\in \bold{\Sigma}_e} J_{\underline{s}}(F).
$$
It will be helpful to use the following notation: for each $e\in\{0,\infty\}^\N$, we define the set of sequences
$$
\bold{\Sigma}_e^+:=\bigcup_{n\in\N} \sigma^n\bigl(\bold{\Sigma}_e\bigr)
$$
and the set
$$
J_e^+(F):=\{z\in J(F)\ :\ \mbox{addr}_F(z)\in \bold{\Sigma}_e^+\}=\bigcup_{n\in\N} J_{\sigma^n(e)}(F)
$$
which is forward invariant.
  %Note that $F^{-1}\bigl(J_{\sigma^k(e)}(F)\bigr)$ is not necessarily in $\mathcal T$, by $F^{-\ell}\bigl(J_{\sigma^k(e)}(F)\bigr)$ we mean 
%$$
%F^{-1}\bigl(\cdots\bigl(F^{-1}\bigl(F^{-1}\bigl(J_{\sigma^k(e)}(F)\bigr)\cap \mathcal T\bigr)\cap \mathcal T\bigr)\cdots \bigr)\cap \mathcal T.
%$$

\begin{prop}
\label{prop:brush}
Suppose that $F\in\BL^*$ satisfies a head-start condition. Then, for every $e\in\{0,\infty\}^\N$, there exists a closed subset $X_e\subseteq J_e^+(F)$ with the following properties:
\begin{enumerate}
\item[(a)] $F(X_e)\subseteq X_e$.
\item[(b)] The connected components of $X_e$ are closed arcs to infinity all of whose points except possibly of its endpoint escape.
\item[(c)] Every point in $I_e(F)$ enters the set $X_e$ after finitely many iterations.
\end{enumerate}
If $F$ is of disjoint type, then we may choose $X_e=J_e^+(F)$ and if $F$ is $2\pi i$-periodic, then $X_e'$ can also be chosen to be $2\pi i$-periodic.
\end{prop}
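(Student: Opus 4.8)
The plan is to mimic the construction of the absorbing set in \cite[Theorem~4.7]{rrrs11}, adapting the bookkeeping so that it is carried out separately within each set $\bold{\Sigma}_e$ rather than over all of $\bold{\Sigma}$, and so that the two essential singularities play symmetric roles. Fix $e\in\{0,\infty\}^\N$. First I would invoke Lemma~\ref{lem:ray-tails}: there is a constant $K'>0$, independent of the address, such that for every $\underline{s}\in\bold{\Sigma}$ the set $J_{\underline{s}}^{K'}(F)$ is either empty or contained in the unique unbounded component of $J_{\underline{s}}(F)$, which is an arc to the essential singularity $e_0$ all of whose points escape except possibly the finite endpoint. The natural first candidate is therefore
$$
\widetilde{X}_e:=\bigcup_{\underline{s}\in\bold{\Sigma}_e}J_{\underline{s}}^{K'}(F),
$$
but this set need not be forward invariant, because applying $F$ shifts the address to $\sigma(\underline{s})\in\bold{\Sigma}_{\sigma(e)}$ and may push points temporarily below the threshold $|\re z|\geqslant K'$. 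To fix this I would enlarge the threshold along the orbit: following \cite{rrrs11}, choose for each $\underline s$ the arc $C_{\underline s}$ to be the unbounded component of $J_{\underline s}(F)$ and set
$$
X_e:=\Bigl\{z\in J_e^+(F)\ :\ |\re F^n(z)|\geqslant K'\ \text{ for all }n\geqslant 0\Bigr\}\cap\bigcup_{\underline{s}\in\bold{\Sigma}_e^+}C_{\underline s},
$$
i.e. the points whose whole forward orbit stays in the ``fast'' part of the unique unbounded arc of its address. Property (a), $F(X_e)\subseteq X_e$, is then immediate from the definition and the shift-invariance of $\bold{\Sigma}_e^+$, and closedness of $X_e$ follows because $J(F)$ is closed in $\overline{\mathcal T}$ and the conditions $|\re F^n(z)|\geqslant K'$ are closed.

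For property (b) I would argue that each component of $X_e$ is a subarc of some $C_{\underline s}$, $\underline s\in\bold{\Sigma}_e^+$. By Lemma~\ref{lem:ray-tails} the speed ordering $\succ$ is a total order on $\widehat{J_{\underline s}}(F)$ inducing the subspace topology, and $e_0$ is its maximum; the condition defining $X_e$ cuts out an up-set of this order intersected with $C_{\underline s}$, hence (using the order characterisation of the arc, \cite[Theorem~A5]{rrrs11}) it is again an arc with $e_0$ in its closure. That all points except possibly the endpoint escape is exactly the escaping statement from Lemma~\ref{lem:ray-tails}, since on $C_{\underline s}$ every point strictly above the finite endpoint in the speed ordering escapes. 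For property (c), let $z\in I_e(F)$. Then $z$ has essential itinerary eventually a shift of $e$, so some iterate $F^\ell(z)$ lies in $J_{\underline t}(F)$ for some $\underline t\in\bold{\Sigma}_{\sigma^k(e)}$, and since $z$ escapes, $|\re F^n(z)|\to\infty$; thus there is $N\geqslant\ell$ with $|\re F^n(z)|\geqslant K'$ for all $n\geqslant N$. By Lemma~\ref{lem:ray-tails} applied to the address of $F^N(z)$, the point $F^N(z)$ then lies in the unbounded arc $C_{\sigma^N(\underline t)}$, and its whole forward orbit stays above $K'$; hence $F^N(z)\in X_e$. Finally, the two special cases: if $F$ is of disjoint type then by Proposition~\ref{prop:unbounded-BL} and the disjoint-type version of Lemma~\ref{lem:ray-tails} the whole of each $J_{\underline s}(F)$ is already an arc to $e_0$ consisting of escaping points (except the endpoint), so no thresholding is needed and one may take $X_e=J_e^+(F)$; if $F$ is $2\pi i$-periodic, the set $X_e$ as constructed is automatically $2\pi i$-invariant because $\mathcal T$, $F$, the addresses and the function $\varphi$ in the head-start condition are all $2\pi i$-periodic, so no modification is required (the statement's $X_e'$ being a harmless renaming).

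The main obstacle I anticipate is not the forward invariance or closedness, which are formal, but verifying property (c) with \emph{uniform} constants and in particular making sure the threshold $K'$ from Lemma~\ref{lem:ray-tails} can indeed be chosen independently of $\underline s$ — this is where the head-start hypothesis and Lemma~\ref{lem:properties-of-BLstar} are really used, and where one must be careful that, unlike in the entire case, the normalisation constant $R$ appears in the expansivity estimate \eqref{eq:expansivity} and must be absorbed into $K'$. A secondary subtlety is that in $\C^*$ an arc ``to infinity'' may mean an arc to $0$ for half the addresses, so one must phrase (b) in terms of the essential singularity $e_0$ attached to $\underline s$ and check that the boundary-bumping argument of Lemma~\ref{lem:boundary-bumping} handles both cases symmetrically; this is routine given Theorem~\ref{thm:logarithmic-coord}(b)--(c) but needs to be stated. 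Beyond these points the proof is a direct transcription of \cite[Theorem~4.7]{rrrs11}, so I would present it by quoting that argument and indicating only the places where the two essential singularities and the subshift-of-finite-type structure of $\bold{\Sigma}$ require the above adjustments.
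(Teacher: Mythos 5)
Your construction differs from the paper's in a way that creates a genuine gap in property~(b). The paper simply sets $X_e':=\bigcup_{\underline s\in\bold\Sigma_e}C_{\underline s}$ (the union of the unbounded components of $J_e(F)$) and $X_e:=\bigcup_{n\in\N}X_{\sigma^n(e)}'$, with \emph{no} thresholding; then each component is literally one of the arcs $C_{\underline s}$ from Lemma~\ref{lem:ray-tails}, so (b) is immediate, and the constant $K'$ is used \emph{only} to prove (c), via $J_{\sigma^n(e)}^{K'}(F)\subseteq X_e$. You instead intersect with the set $\{z:|\re F^n(z)|\geqslant K'\text{ for all }n\geqslant 0\}$, and then need to argue that $X_e\cap C_{\underline s}$ is still an arc. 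Your justification — that the threshold condition cuts out an up-set of the speed ordering — is not correct. If $z$ satisfies $|\re F^n(z)|\geqslant K'$ for all $n$ and $w\succ z$, the head-start condition only guarantees $|\re F^n(w)|>\varphi(|\re F^n(z)|)\geqslant K'$ for all $n$ \emph{beyond} the first index $M$ at which the separation is triggered; for $n<M$ there is no control on $|\re F^n(w)|$, which may well dip below $K'$. So $J^{K'}_{\underline s}(F)=C_{\underline s}\cap J^{K'}(F)$ need be neither an up-set nor even an interval in the speed order, and hence your $X_e$ may have components that are not arcs. (This is precisely why the paper keeps the whole arc $C_{\underline s}$ and reserves the threshold for the absorbing statement (c), where one only needs \emph{some} tail of the orbit to land in $X_e$, not the whole orbit to stay above $K'$.)

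The rest of your argument is sound and matches the paper in spirit: forward invariance of the unbounded-component union is exactly what the paper uses, the (c) argument via escaping orbits eventually staying above $K'$ is the same, and the disjoint-type and $2\pi i$-periodic cases are handled the same way (for the disjoint-type case the paper additionally exhibits $J_e(F)\cup\{\infty\}$ as a nested intersection of unbounded continua to see that \emph{every} component is already unbounded, so $X_e=J_e(F)$ works). If you drop the thresholding from the definition of $X_e$ and keep $K'$ only in the proof of (c), the rest of your write-up goes through and essentially reproduces the paper's proof. Incidentally, your closedness argument also needs a small repair once you drop the threshold: the paper deduces closedness from boundary bumping (Lemma~\ref{lem:boundary-bumping}), identifying $\widehat X_e$ with the component of $J_e^+(F)\cup\{\infty\}$ containing $\infty$, rather than from the closedness of the conditions $|\re F^n(z)|\geqslant K'$.
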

\begin{proof}
Let $X_e'$ be the union of all unbounded components of the set $J_e(F)$, and define the set
$$
X_e:=\bigcup_{n\in\N} X_{\sigma^n(e)}'.%F^n(X'_e).
$$
Since unbounded components of $J(F)$ map to unbounded components of $J(F)$ by $F$, we have $F(X_e')\subseteq X_{\sigma(e)}'$ and hence $X_e$ is forward invariant.

By Lemma~\ref{lem:boundary-bumping}, the closure $\widehat{X}_e$ in $\hat{\C}$ is the connected component of \mbox{$J_e^+(F)\cup\{\infty\}$} that contains infinity and hence the set $X_e$ is closed. By Lemma~\ref{lem:ray-tails}, the set~$X_e$ consists of arcs to infinity all of whose points except possibly of its endpoint escape.

Let $K'\geqslant 0$ be the constant from Lemma \ref{lem:ray-tails}, independent of $\underline{s}\in\bold{\Sigma}$, so that $J_{\underline{s}}^{K'}(F)$ is either empty or contained in the unbounded component of $J_{\underline{s}}(F)$ which is contained in $X_e$ if $\underline{s}\in \bold{\Sigma}_e^+$. Then (c) follows from the fact that points in $I_e(F)$ enter a set $J_{\sigma^n(e)}^{K'}(F)\subseteq X_e$, $n\in\N$, after finitely many iterations.

Finally, if $F$ is of disjoint type, then
$$
J_e(F)\cup \{\infty\}=\bigcup_{\underline{s}\in \bold{\Sigma}_e} \,\bigcap_{n\in \N} \left( F_{|T_{0}}^{-1}\bigl(\cdots F_{|T_{n-2}}^{-1}\bigl(F_{|T_{n-1}}^{-1}(\overline{H}_{e_n})\bigr)\cdots\bigr)\cup \{\infty\}\right)
$$
which is a union of nested intersections of unbounded continua, hence every component of $J_e(F)$ is an unbounded continuum and we can choose $X_e=J_e(F)$. If $F$ is a $2\pi i$-periodic function, then the set $X_e'$ is also $2\pi i$-periodic.
\end{proof}

Following \cite{baranski-jarque-rempe12}, the strategy to prove Theorem \ref{thm:cantor-bouquet} will be, for each essential itinerary $e\in\{0,\infty\}^\N$, to compactify the space of admissible external addresses~$\bold{\Sigma}_e$ by adding a \textit{circle of addresses at infinity} to show the set $X_e'$ (and hence $X_e$) contains a Cantor bouquet. 

\begin{lem}
For every $e\in\{0,\infty\}^\N$, there exists a totally ordered set $\tilde{\mathcal S}_e\supseteq \bold{\Sigma}_e$, where the order on $\tilde{\mathcal S}_e$ agrees with the lexicographic order on $\bold{\Sigma}_e$, and such that 
\begin{enumerate}
\item[(a)] with the order topology, the set $\tilde{\mathcal S}_e$ is homeomorphic to $\mathbb R\cup \{-\infty,+\infty\}$;
\item[(b)] the set $\bold{\Sigma}_e$ is dense in $\tilde{\mathcal S}_e$.
\end{enumerate}
%Furthermore, we can define a topology on $\tilde{H}_e:=\overline{H_{e_0}}\cup \tilde{\mathcal S_e}$ that agrees with the induced topology on $\mathcal H$ and such that $\tilde{H}_e$ is homeomorphic to the closed unit disc.
\end{lem}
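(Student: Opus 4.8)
\textbf{Strategy and the structure of $\bold{\Sigma}_e$.} The plan is to take $\tilde{\mathcal S}_e$ to be the Dedekind completion of the linearly ordered set $(\bold{\Sigma}_e,<)$, enlarged by a least element $-\infty$ and a greatest element $+\infty$, and then to recognise the result as an interval via the order characterisation of the arc. First I would record the structure of $\bold{\Sigma}_e=\prod_{n\in\N}\mathcal A_{e_n}^{e_{n+1}}$. Throughout we may assume $\bold{\Sigma}_e\neq\emptyset$, which is the only case relevant here: for the maps in Theorem~\ref{thm:cantor-bouquet} one has $\rho_\infty,\rho_0\geq 1$, so each of the four alphabets $\mathcal A_0^0,\mathcal A_0^\infty,\mathcal A_\infty^0,\mathcal A_\infty^\infty$ is nonempty (in fact infinite, by $2\pi i$-periodicity). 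By Theorem~\ref{thm:logarithmic-coord} the tracts of $F$ are countably many, have pairwise disjoint closures and accumulate only at $0$ and $\infty$; combined with the $2\pi i$-invariance (property (f)) and the vertical order of Definition~\ref{dfn:lexicographic-order}, this shows that every factor $\mathcal A_{e_n}^{e_{n+1}}$ is a nonempty countable totally ordered set in which every element has an immediate predecessor and an immediate successor and which has neither a least nor a greatest element, i.e. it is order-isomorphic to $\Z$.

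\textbf{Order properties of $\bold{\Sigma}_e$.} Next I would verify three order properties of $(\bold{\Sigma}_e,<)$. (i) It has no least and no greatest element, since no factor does. (ii) It has no pair of consecutive elements, hence is dense-in-itself: given $\underline s<\underline s'$ agreeing on the first $k$ coordinates and with $T_k<T_k'$, the sequence with prefix $(T_0,\dots,T_{k-1},T_k)$ whose $(k{+}1)$-st entry is chosen strictly above $T_{k+1}$ in $\mathcal A_{e_{k+1}}^{e_{k+2}}$ (possible since that factor has no maximum) and whose later entries are arbitrary lies strictly between $\underline s$ and $\underline s'$. (iii) It is separable: the countable set of eventually-constant sequences is order-dense in $\bold{\Sigma}_e$ by the same prefix argument, and in a linearly ordered space without consecutive elements an order-dense set is topologically dense.

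\textbf{The completion and conclusion.} Now let $\overline{\bold{\Sigma}_e}$ be the Dedekind completion of $(\bold{\Sigma}_e,<)$, taken with the convention that makes the canonical order-embedding $\bold{\Sigma}_e\hookrightarrow\overline{\bold{\Sigma}_e}$ have topologically dense image — which holds precisely because, by (ii), $\bold{\Sigma}_e$ is dense-in-itself and has no jumps. One checks in the usual way that $\overline{\bold{\Sigma}_e}$ inherits no jumps, remains dense-in-itself, and still has no least and no greatest element, while becoming Dedekind-complete, and that it is separable because $\bold{\Sigma}_e$ is dense in it. Set $\tilde{\mathcal S}_e:=\{-\infty\}\sqcup\overline{\bold{\Sigma}_e}\sqcup\{+\infty\}$ with the order extending $<$; by construction its order restricts to the lexicographic order on $\bold{\Sigma}_e$. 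Then $\tilde{\mathcal S}_e$ is a separable, Dedekind-complete, dense-in-itself linear order with a least and a greatest element; hence it is a compact connected linearly ordered space with more than one point, and so by the order characterisation of the arc \cite[Theorem~A5]{rrrs11} it is homeomorphic to $[0,1]$, and therefore to $\R\cup\{-\infty,+\infty\}$. This gives (a); and (b) is the density statement already established, the two endpoints $\pm\infty$ belonging to the closure of $\bold{\Sigma}_e$ because $\bold{\Sigma}_e$ has no least and no greatest element.

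\textbf{Main obstacle.} The argument is pure order topology; the only delicate point is the passage to the Dedekind completion — choosing the convention for cuts so that $\bold{\Sigma}_e$ embeds with dense image and so that no new jumps are created (the naive version, which doubles the principal cuts, would fail the density requirement). One can instead avoid the abstract completion and build $\tilde{\mathcal S}_e$ by hand, adjoining to $\bold{\Sigma}_e$ one \emph{gap address} for each finite prefix $(T_0,\dots,T_{k-1})$ together with a sign $\pm$, identifying $\big((T_0,\dots,T_k),+\big)$ with $\big((T_0,\dots,T_k'),-\big)$ whenever $T_k'$ is the immediate successor of $T_k$, and adding two further points $\pm\infty$ for the empty prefix; checking directly that this ordered set is complete, separable and dense-in-itself then realises the ``circle of addresses'' picture of \cite{baranski-jarque-rempe12}.
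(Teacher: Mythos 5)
Your proposal is correct and is essentially the approach of the paper, which simply defers to the construction of intermediate entries and intermediate external addresses in \cite[Section~5]{baranski-jarque-rempe12}: the Dedekind completion with $\pm\infty$ adjoined is exactly the set of intermediate external addresses $T_0\hdots T_{n-1}S_n$, with the gap entries $S_n$ filling the cuts between adjacent symbols and $\pm\infty$ of each factor alphabet filling the cuts at its two ends, as you yourself observe in your closing paragraph. The only phrase that needs adjusting is ``eventually-constant sequences'': since the alphabets $\mathcal A_{e_n}^{e_{n+1}}$ vary with $n$, no nonconstant $e$ admits literally eventually-constant addresses; instead fix one reference address $\underline{r}\in\bold{\Sigma}_e$ and take the countable set of addresses that agree with the appropriate shift of $\underline{r}$ from some index on, which is order-dense by your prefix argument.
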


This is done by defining \textit{intermediate entries} of each set $\mathcal T_{e_0}^{e_1}$, $e_0,e_1\in\{0,\infty\}$, symbols which correspond to entries in between pairs of adjacent tracts as well as to limits of sequences of tracts. We then add \textit{intermediate external addresses} to the set $\bold{\Sigma}_e$, that is, finite sequences of the form $\underline{s}=T_0T_1\hdots T_{n-1}S_n$, where $T_j\in\mathcal T_{e_j}^{e_{j+1}}$, $0\leqslant j<n$, and $S_n$ is an intermediate entry of the set $\mathcal T_{e_n}^{e_{n+1}}$. We refer to \cite[Section 5]{baranski-jarque-rempe12} for the details. 

We can define a topology on the set $\tilde{H}_e:=\overline{H_{e_0}}\cup \tilde{\mathcal S_e}$ that agrees with the induced topology on $H$ and such that $\tilde{H}_e$ is homeomorphic to the closed unit disc. Then, in this topology, the closure $\tilde{X_e}$ of the set $X_e$ from Proposition \ref{prop:brush} is a \textit{comb}, a compactification of a straight brush, with the arc $\tilde{\mathcal S}_e$ as base.

\begin{dfn}[Comb]
A \textit{comb} is a continuum $X$ containing an arc $B$ called the \textit{base} of the comb such that
\begin{enumerate}
\item[(a)] the closure of every component of $X\setminus B$ is an arc with exactly one endpoint in the base $B$;
\item[(b)] the intersection of the closures of any two hairs is empty;
\item[(c)] the set $X\setminus B$ is dense in $X$.
\end{enumerate}
\end{dfn}

% where the hairs are attached to an arc $B$ called the \textit{base} of the comb (see \cite[3.9]{aarts-oversteegen93}). In this case, $\tilde{X_e}$ is a comb with  (see \cite[Proposition 6.2]{baranski-jarque-rempe12}).

The fact that a Cantor bouquet consists of uncountably many hairs comes from the fact that a perfect set is uncountable. We introduce now the concept of (one-sided) hairy arc, a comb where every hair is accumulated by other hairs.

\begin{dfn}[Hairy arc]
A \textit{hairy arc} is a comb with base $B$ and an order $\prec$ on $B$ such that if $b\in B$ and $x$ belongs to the hair attached at $b$, then there exist sequences $(x_n^+)$ and $(x_n^-)$, attached respectively at points $b_n^+,b_n^-\in B$, such that $b_n^-\prec b\prec b_n^+$ and $x_n^-,x_n^+\rightarrow x$ as $n\rightarrow \infty$. A \textit{one-sided hairy arc} is a hairy arc with all its hairs attached to the same side of the base.
\end{dfn}

Given a straight brush, it is easy to see that we can add a base to obtain a hairy arc. Aarts and Oversteegen showed that one-sided hairy arcs (and, in particular, straight brushes) are ambiently homeomorphic, and hence the converse of the previous statement is also true \cite[Theorem~4.1]{aarts-oversteegen93}.

\begin{lem}
Let $X$ be a one-sided hairy arc with base $B$. Then $X\setminus B$ is ambiently homeomorphic to a straight brush.
\label{lem:ha-to-brush}
\end{lem}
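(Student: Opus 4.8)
The plan is to obtain this directly from the Aarts--Oversteegen classification of one-sided hairy arcs \cite[Theorem~4.1]{aarts-oversteegen93}: the statement is precisely the converse of the elementary observation, recalled just above, that attaching a base to a straight brush produces a one-sided hairy arc.

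I would begin by fixing a model. Starting from the standard straight brush $S\subseteq[0,+\infty)\times(\R\setminus\Q)$ of Definition~\ref{dfn:cantor-bouquet}, one attaches to it the arc $B_0$ along which the hairs of $S$ accumulate, indexed by $\R\cup\{\pm\infty\}$, obtaining a continuum $X_0$. Conditions (a)--(c) of Definition~\ref{dfn:cantor-bouquet} yield that $X_0$ is a comb with base $B_0$: each hair of $S$ is now a compact arc meeting $B_0$ in exactly one point, distinct hairs have disjoint closures, and $X_0\setminus B_0$ is a dense copy of $S$; moreover, the two-sided accumulation of hairs together with the matching convergence of the values $t_y$ shows that $X_0$ is, in fact, a \textit{one-sided} hairy arc, with $X_0\setminus B_0=S$ a straight brush. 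This realizes the observation preceding the lemma.

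Now let $X$ be an arbitrary one-sided hairy arc with base $B$. By \cite[Theorem~4.1]{aarts-oversteegen93} there is an ambient homeomorphism $h$ of the plane carrying $X$ onto $X_0$ and $B$ onto $B_0$; restricting, $h$ maps $X\setminus B$ ambiently onto $X_0\setminus B_0$, which is a straight brush, and composition yields the lemma. The step that must be handled with care is that the homeomorphism produced by \cite{aarts-oversteegen93} respects the bases (equivalently, that the base is intrinsic enough to the one-sided hairy arc structure for this matching to be available), and, as a bookkeeping matter, that $B_0$ --- naturally situated ``at infinity'' --- is placed in the plane so that $X_0\setminus B_0$ is literally a straight brush in the sense of Definition~\ref{dfn:cantor-bouquet}; once the definitions of this paper are matched with those of \cite{aarts-oversteegen93}, the remaining verifications are routine and no idea beyond \cite[Theorem~4.1]{aarts-oversteegen93} is required.
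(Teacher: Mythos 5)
Your proposal is correct and takes essentially the same route as the paper, which offers no separate proof of the lemma but derives it, exactly as you do, from \cite[Theorem~4.1]{aarts-oversteegen93} together with the observation that attaching a base to a straight brush yields a one-sided hairy arc. The one caveat you raise --- that the ambient homeomorphism must carry base to base --- is the right thing to check, and it does hold for the Aarts--Oversteegen construction (the base is part of the structure they classify), so nothing beyond their theorem is needed.
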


In order to show that $X_e$ contains a Cantor bouquet, we prove that every hair in $X_e'$ is accumulated by hairs of the same set from both sides. To do so, we adapt the proof of \cite[Proposition 7.3]{baranski-jarque-rempe12}.

\begin{prop}
Let $F:\mathcal T\rightarrow H$ be a $2\pi i$-periodic function in the class $\BL^{*}$, and let $e\in\{0,\infty\}^\N$ and $\tau>0$. Then there exists $\tau'\geqslant \tau$ such that for every \mbox{$z_0\in J_e^{\tau'}(F)$}, there exist sequences $z_n^-,z_n^+\in J_e^{\tau}(F)$ with $\mbox{addr}(z_n^-)<\mbox{addr}(z_0)<\mbox{addr}(z_n^+)$ for all $n\in\N$ and $z_n^-,z_n^+\rightarrow z_0$ as $n\rightarrow \infty$.
\label{lem:accumulation}
\end{prop}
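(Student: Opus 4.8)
The plan is to mimic the strategy of \cite[Proposition~7.3]{baranski-jarque-rempe12}, transported to the punctured-plane setting via the combinatorial framework of Section~\ref{sec:Cantor-bouquets}. The key observation is that, for a $2\pi i$-periodic $F\in\BL^*$, every tract $T$ of $F$ has infinitely many $2\pi i$-translates, all of which are again tracts of $F$ with the same essential-singularity index and the same image half-plane; moreover, these translates are arbitrarily close to $T$ in the lexicographic order (a vertical translate by $2\pi i$ lies ``one step'' away in the ordering on $\mathcal A_\infty$ or $\mathcal A_0$, and iterating produces translates that accumulate). The idea is therefore: given $z_0\in J_e^{\tau'}(F)$ with $\mathrm{addr}(z_0)=(T_n)$, build the perturbed external addresses by modifying the symbol $T_n$ at a large index $n$, replacing it by a $2\pi i$-translate $T_n'$ (above or below, for the $+$ and $-$ sequences respectively), and keeping all other entries unchanged on a long initial block, then tailing off into an admissible completion. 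Admissibility is automatic here: since $T_n'$ has the same upper/lower index as $T_n$, the pair $(T_{n-1},T_n')$ and $(T_n',T_{n+1})$ are still legal transitions, so the modified sequence lies in $\bold{\Sigma}_e$.

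First I would fix the constant. By Lemma~\ref{lem:ray-tails} there is $K'>0$, independent of $\underline{s}$, such that $J_{\underline{s}}^{K'}(F)$ is either empty or contained in the unique unbounded component of $J_{\underline{s}}(F)$, which is a ray tail. Set $\tau':=\max\{\tau,K'\}+C$ where $C$ is an expansivity buffer to be fixed below; then $z_0\in J_e^{\tau'}(F)$ lies on the ray tail $\gamma_{(T_n)}$ and in particular $|\re F^k(z_0)|\ge\tau'$ for all $k$. Next, for each large $n$, let $(T_m^{(n),\pm})$ be the address agreeing with $(T_m)$ for $m\ne n$ and with $T_n^{(n),\pm}=T_n\pm 2\pi i\,k_n$ for a suitable $k_n\ge 1$ chosen so that the lexicographic position interleaves correctly, $\mathrm{addr}(z_0)<(T_m^{(n),+})$ and $(T_m^{(n),-})<\mathrm{addr}(z_0)$; such $k_n$ exist and can be taken $k_n\to$ (the smallest admissible value, namely $1$) since vertical translates of a fixed tract are cofinal towards it in the order. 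These modified addresses are admissible, have essential itinerary $e$, and by Proposition~\ref{prop:periodic-cotinua}/Lemma~\ref{lem:ray-tails} each $J_{(T_m^{(n),\pm})}^{\tau}(F)$ is nonempty (for $n$ large, since the modified address still contains only finitely many symbols up to $2\pi i$-periodicity, or one applies Proposition~\ref{prop:unbounded-BL} directly) and contains a ray tail; pick $z_n^\pm$ on it with $|\re z_n^\pm|\ge\tau$. The ordering of addresses gives $\mathrm{addr}(z_n^-)<\mathrm{addr}(z_0)<\mathrm{addr}(z_n^+)$.

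The remaining — and genuinely substantive — step is the convergence $z_n^\pm\to z_0$. Here I would use the strong expansivity of $F$ (Lemma~\ref{lem:strong-expansivity}) together with the uniform bounded geometry. The points $z_0$ and $z_n^\pm$ share the same first $n$ symbols, their first $n$ iterates all stay in the half-plane $|\re\,\cdot\,|\ge\tau$, and the orbits of $z_0$ and $z_n^\pm$ only diverge at time $n$, where the divergence is a $2\pi i$-translate of a tract that is then expanded; running the contraction of the inverse branches $F_{|T_0}^{-1}\circ\cdots\circ F_{|T_{n-1}}^{-1}$ backwards from time $n$, expansivity contracts the $O(1)$ discrepancy at step $n$ by a factor that is exponentially small in $n$ (each inverse branch contracts by $\ge 1/2$ by \eqref{eq:expansivity}, in fact much more by Lemma~\ref{lem:strong-expansivity} once real parts are large). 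Hence $|z_n^\pm-z_0|\to 0$. I expect the main obstacle to be precisely making this contraction estimate uniform and honest: one must check that the backward orbit does not exit the tracts, that the relevant inverse branches are well defined on a neighbourhood containing both the $z_0$-orbit and the nearby perturbed point (this is where Koebe / the ``head-start'' room, and choosing $\tau'$ large enough relative to $\tau$, enters), and that the $2\pi i$-translate at step $n$ really sits in $\overline{H}$ so that $F_{|T_n}^{-1}$ applies — all of which are handled exactly as in \cite[Section~7]{baranski-jarque-rempe12} after replacing ``$\mathbb H$'' by ``$\mathbb H_R^\pm$'' and using $2\pi i$-periodicity in place of the $2\pi i$-periodicity already present there. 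I would then conclude by noting that density of $\bold{\Sigma}_e$ in $\tilde{\mathcal S}_e$ plus this two-sided accumulation is exactly the hypothesis needed to invoke Lemma~\ref{lem:ha-to-brush} in the proof of Theorem~\ref{thm:cantor-bouquet}.
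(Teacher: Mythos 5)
Your overall strategy — perturb the address at a late index $n$ by a $2\pi i$-translate and use expansivity to pull the perturbation back into a vanishing error — is the same qualitative idea as the paper, and the lexicographic-order and admissibility observations are correct. However, your \emph{construction} of the points $z_n^\pm$ has two genuine gaps that the paper's argument avoids by being more explicit.

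First, you attempt to produce $z_n^\pm$ by invoking existence results. Proposition~\ref{prop:periodic-cotinua} only applies to external addresses containing finitely many distinct symbols, but no such hypothesis is placed on $z_0$ here, so the address $(T_m)$ and its modifications $(T_m^{(n),\pm})$ may well contain infinitely many symbols. Proposition~\ref{prop:unbounded-BL} requires $J^K_{\underline{s}}(F)\neq\emptyset$ as an \emph{input}, so it cannot supply the missing point. The parenthetical ``finitely many symbols up to $2\pi i$-periodicity'' is not what those results require. Second, even granting nonemptiness, you take $z_n^\pm$ to be an arbitrary point on the corresponding ray tail with $|\re z_n^\pm|\geqslant\tau$. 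That does not fix the point, and there is no reason such a point should have $F^n(z_n^\pm)$ at distance $O(1)$ from $F^n(z_0)$; the ray tail is a whole continuum, and its points can be arbitrarily far out. Without that $O(1)$ control at time $n$, the contraction argument you sketch does not start.

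The paper resolves both problems at a stroke by \emph{defining} the perturbed point directly: let $\varphi_n:H_{e_n}\to H_{e_0}$ be the branch of $F^{-n}$ with $\varphi_n(F^n(z_0))=z_0$, and set
$$
z_n^\pm:=\varphi_n\bigl(F^n(z_0)\pm 2\pi i\bigr).
$$
This is well defined because each $F|_{T_k}\colon T_k\to H$ is a conformal isomorphism, so there is nothing to prove nonempty. The $2\pi i$-periodicity of both $H$ and $F$ gives $F^{n}(z_n^\pm)=F^n(z_0)\pm 2\pi i\in T_n\pm 2\pi i$ and $F^{n+j}(z_n^\pm)=F^{n+j}(z_0)$ for $j\geqslant 1$, so membership in $J_e^\tau(F)$ for the tail of the orbit is automatic; the discrepancy at time $n$ is \emph{exactly} $2\pi$; and the expansivity~\eqref{eq:expansivity} then yields $|F^k(z_n^\pm)-F^k(z_0)|\leqslant 2\pi\cdot 2^{-(n-k)}\leqslant\pi$ for $0\leqslant k<n$, which both keeps the early orbit in $\{|\re\,\cdot\,|\geqslant\tau\}$ after taking $\tau'=\max\{R,\tau\}+\pi$ and gives $|z_n^\pm-z_0|\to 0$. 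So your proposal should be repaired by replacing the appeal to ray-tail existence plus an arbitrary point by this explicit inverse-branch construction; once that is done, the rest of your reasoning (ordering of addresses, contraction) goes through essentially as you describe.
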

\begin{proof}
Let $R_0$ be the constant from Lemma \ref{lem:expansivity} so that $\mathbb  H_R^\pm\subseteq H$ and $|F'(z)|\geqslant 2$ for $|\re z|\geqslant R_0$. Let $n\geqslant 1$, and let $\varphi_n:H_{e_n}\rightarrow H_{e_0}$ be the branch of $F^{-n}$ that maps $F^n(z_0)$ to $z_0$. Set $\tau':=\max\{R,\tau\}+\pi$ and define
$$
z_n^{\pm}:=\varphi_n\bigl(F^n(z_0)\pm 2\pi i\bigr)\in J_e^{\tau}(F).
$$
Then $\mbox{addr}(z_n^-)<\mbox{addr}(z_0)<\mbox{addr}(z_n^+)$ for all $n$. Finally, since $F$ is expanding with respect to the Euclidean metric on $\mathbb H_R^\pm$, the maps $\varphi_n$ are contractions and $z_n^{\pm}\rightarrow z_0$ as $n\rightarrow \infty$.
\end{proof}

Note that given any logarithmic transform $F$ of a function $f\in \B^*$ we can modify it to obtain a periodic logarithmic transform $\hat{F}$ of $f$ by adding a suitable multiple of $2\pi i$ to $F$ on each of its tracts.

Finally we sketch the proof of Theorem \ref{thm:cantor-bouquet}. The main idea is to use the existence of a potential function $\rho$ that `straightens' the brush $X_e'$ (see \cite[Proposition~7.1]{baranski-jarque-rempe12}).

\begin{proof}[Proof of Theorem \ref{thm:cantor-bouquet}]
Let $F\in \BL^*$ be $2\pi i$-periodic and satisfy a uniform head-start condition and let $X_e'$ denote the union of the unbounded components of $J_e(F)$ as in Proposition \ref{prop:brush}. For each $e\in\{0,\infty\}^\N$, consider the set 
$$
Z_e:=\{z\in X_e'\ :\ \rho\bigl(F^j(z)\bigr)\geqslant K \mbox{ for all } j\geqslant 0\}\cup \tilde{\mathcal S}_e,
$$
where $\rho$ is a $2\pi i$-periodic continuous function that is strictly increasing on the hairs and such that $\rho(z_n)\rightarrow \infty$ if and only if $|\re z_n|\rightarrow +\infty$. Then, there exists $R>0$ sufficiently large so that 
$$
J_e^{R}(F)\subseteq Z_e\subseteq \tilde{X}_e
$$
and hence $Z_e$ is a comb. Then Lemma \ref{lem:accumulation} together with the fact that $F$ satisfies a uniform head-start condition imply that $Z_e$ is a hairy arc and, by Lemma \ref{lem:ha-to-brush}, $Z_e\setminus \tilde{\mathcal S}_e$ is ambiently homeomorphic to a straight brush. We can choose the set $X_e$ from Proposition \ref{prop:brush} to be $2\pi i$-periodic and so both $J_e(F)$ and $\exp(J_e(F))$ contain an absorbing Cantor bouquet. Note that all the points in $\exp(J_e(F))$ belong to $I_e^{0,0}(f)$ except, possibly, the finite endpoints of the hairs.

Finally, if $F$ is of disjoint type, then the closure of $J_e(F)$ in $\tilde{H}_e$ is a one-sided hairy arc, and hence both $J_e(F)$ and $\exp(J_e(F))$ are Cantor bouquets.
\end{proof}

\bibliography{bibliography.bib}

\end{document}